\theoremstyle{theorem}
\newtheorem{theorem}{Theorem}[section]
\newtheorem{proposition}[theorem]{Proposition}
\newtheorem{lemma}[theorem]{Lemma}
\newtheorem{corollary}[theorem]{Corollary}
\newtheorem*{GPRC}{Generalized Property R Conjecture}
\newtheorem*{SGPRC}{Stable Generalized Property R Conjecture}
\newtheorem*{WGPRC}{Weak Generalized Property R Conjecture}
\newtheorem*{ELT}{Equivariant Loop and Sphere Theorems}
\newtheorem*{rep@theorem}{\rep@title}
\newcommand{\newreptheorem}[2]{%
\newenvironment{rep#1}[1]{%
 \def\rep@title{#2 \ref{##1}}%
 \begin{rep@theorem}}%
 {\end{rep@theorem}}}
\theoremstyle{definition}
\newtheorem{remark}[theorem]{Remark}
\newcommand{\Z}{\mathbb{Z}}
\newcommand{\C}{\mathbb{C}}
\newcommand{\N}{\mathbb{N}}
\newcommand{\Q}{\mathbb{Q}}
\newcommand{\R}{\mathbb{R}}
\newcommand{\id}{\text{id}}
\newcommand{\Int}{\text{Int}}
\newcommand{\Aa}{\mathcal A}
\newcommand{\Dd}{\mathcal D}
\newcommand{\Kk}{\mathcal K}
\newcommand{\Ll}{\mathcal L}
\newcommand{\Rr}{\mathcal R}
\newcommand{\Ss}{\mathcal S}
\newcommand{\Tt}{\mathcal T}
\newcommand{\wh}[1]{\widehat{#1}}
\newcommand{\A}{\alpha}
\newcommand{\n}{\beta}
\newcommand{\g}{\gamma}
\newcommand{\pd}{\partial}
\newcommand{\lk}{\ell k}
\newcommand{\emp}{\emptyset}
\newcommand{\X}{\times}
\newcommand{\wt}{\widetilde}
\def\@seccntformat#1{%
  \protect\textup{\protect\@secnumfont
    \ifnum\pdfstrcmp{subsection}{#1}=0 \bfseries\fi
    \csname the#1\endcsname
    \protect\@secnumpunct
  }%
}  
\begin{document}

\rhead{\thepage}
\lhead{\author}
\thispagestyle{empty}


\raggedbottom
\pagenumbering{arabic}
\setcounter{section}{0}


\title{Generalized square knots and homotopy 4--spheres}
\date{\today}

\author{Jeffrey Meier}
\address{Department of Mathematics, University of Georgia, 
Athens, GA 30602}
\email{jeffrey.meier@uga.edu}
\urladdr{http://jeffreymeier.org} 

\author{Alexander Zupan}
\address{Department of Mathematics, University of Nebraska-Lincoln, Lincoln, NE 68588}
\email{zupan@unl.edu}
\urladdr{http://www.math.unl.edu/~azupan2}

\begin{abstract}
	The purpose of this paper is to study geometrically simply-connected homotopy 4--spheres by analyzing $n$--component links with a Dehn surgery realizing $\#^n(S^1\times S^2)$.  We call such links \emph{$n$R-links}.  Our main result is that a homotopy 4--sphere that can be built without 1--handles and with only two 2--handles is diffeomorphic to the standard 4--sphere in the special case that one of the 2--handles is attached along a knot of the form $Q_{p,q} = T_{p,q}\#T_{-p,q}$, which we call a \emph{generalized square knot}.  This theorem subsumes prior results of Akbulut and Gompf.
	
	Along the way, we use thin position techniques from Heegaard theory to give a characterization of 2R-links in which one component is a fibered knot, showing that the second component can be converted via trivial handle additions and handleslides to a derivative link contained in the fiber surface.  We invoke a theorem of Casson and Gordon and the Equivariant Loop Theorem to classify handlebody-extensions for the closed monodromy of a generalized square knot $Q_{p,q}$.  As a consequence, we produce large families, for all even $n$, of $n$R-links that are potential counterexamples to the Generalized Property R Conjecture.  We also obtain related classification statements for fibered, homotopy-ribbon disks bounded by generalized square knots.
	
\end{abstract}

\maketitle

\section{Introduction}\label{sec:intro}

The Smooth 4--Dimensional Poincar\'e Conjecture (S4PC) asserts that if $X$ is a \emph{homotopy 4--sphere}, a closed, smooth 4--manifold homotopy equivalent to the standard 4--sphere $S^4$, then $X$ is diffeomorphic to $S^4$.  The topological version of the S4PC was established by Freedman~\cite{Fre_The-topology-of-four-dimensional_82}, and the S4PC is the final unsettled case of the Generalized Poincar\'e Conjecture.  In 1987, David Gabai resolved the famous Property R Conjecture~\cite{Gab_Foliations-III_87}, showing that the unknot is the only knot in $S^3$ that admits a Dehn surgery yielding $S^1\times S^2$. This result can be viewed as initial progress toward a positive resolution of the S4PC, since it follows that a homotopy 4--sphere built with no 1--handles and a single 2--handle must be diffeomorphic to $S^4$.  In this paper, we extend this classification to a broader family of handle decompositions.  We refer to the knot $Q_{p,q} = T_{p,q}\#T_{-p,q}$ as a \emph{generalized square knot}.

\begin{theorem}
	\label{thmx:PC}
	Suppose that $X$ is a homotopy 4--sphere that can be built with no 1--handles and two 2--handles such that the attaching sphere of one of the 2--handles is a generalized square knot $Q_{p,q}$.  Then $X$ is diffeomorphic to $S^4$.
\end{theorem}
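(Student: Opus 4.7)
The plan is to realize $X$ via an integer-framed 2R-link in $S^3$ whose first component is $Q_{p,q}$, and then to exploit the two structural theorems that are headline results of the paper: the characterization of 2R-links with a fibered component, and the classification of handlebody-extensions of the closed monodromy of $Q_{p,q}$.

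First, since $X$ is a homotopy 4-sphere with no 1-handles and exactly two 2-handles, an Euler-characteristic count forces the handle decomposition to contain two 3-handles and a 4-handle. Cancellation against the 3-handles then requires the surgery on the attaching link $L = Q_{p,q}\cup J$ to yield $\#^2(S^1\times S^2)$, making $L$ a 2R-link. By the paper's characterization theorem for 2R-links containing a fibered component, after trivially stabilizing $L$ (adding canceling 1-2 handle pairs) and performing handleslides---operations that change neither the 2R-link condition nor the diffeomorphism type of $X$---I may assume $J$ lies as a derivative inside the fiber surface $F$ of $Q_{p,q}$.

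Next, with $J\subset F$, performing $0$-surgery on $Q_{p,q}$ yields a 3-manifold $M$ fibering over $S^1$ with closed fiber $\widehat{F}$ and monodromy the closed monodromy $\widehat{\varphi}_{p,q}$; the additional surgery on $J$ converts $M$ into $\#^2(S^1\times S^2)$. Via the Casson-Gordon machinery, this data corresponds to an extension of $\widehat{\varphi}_{p,q}$ over a handlebody $H$ with $\partial H=\widehat{F}$, obtained by compressing $\widehat{F}\setminus\nu(J)$ through the disks furnished by the Equivariant Loop Theorem. The paper's classification of handlebody-extensions of $\widehat{\varphi}_{p,q}$ therefore pins $J$ down to a finite model list of derivatives on $F$ (up to the allowable stable equivalence), and it is these models that I must analyze.

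For each model derivative, the task reduces to a Kirby calculus verification: using the explicit description of $J$ as a curve on the fiber of $Q_{p,q}$, I would show that the handle presentation $L = Q_{p,q}\cup J$ can be reduced via handleslides, isotopies, and the two available 3-handles to the standard presentation of $S^4$. The existing results of Akbulut and Gompf for the classical square knot provide templates for this simplification, and the evident $(p,q)$-symmetry of the monodromy of $Q_{p,q}$ should allow a uniform argument across the model list. The main obstacle, I expect, is this final step: the paper explicitly produces \emph{large families} of 2R-link partners to $Q_{p,q}$ that are potential counterexamples to the Generalized Property R Conjecture, so the derivatives $J$ cannot in general be unlinked by three-dimensional moves in $S^3$. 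The delicate point is to show that even when $L$ is nontrivial as a 2R-link, the two 3-handles of $X$ can still cancel the 2-handles in the closed 4-manifold; the argument must genuinely use the four-dimensional flexibility afforded by the 3-handles, via the explicit handlebody-extension structure, rather than three-dimensional unlinking of $L$.
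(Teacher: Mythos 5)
Your setup is correct and matches the paper's through the reduction: pass to a $2$R-link $Q_{p,q}\cup J$, invoke Theorem~\ref{thmx:CGequiv} to move $J$ (after stabilization and handleslides) to a CG-derivative on the fiber, and invoke the classification of handlebody-extensions to constrain the possibilities. However, the proposal then stalls at exactly the point where the real work begins, and it also contains a factual error that conceals the size of the remaining task: the classification (Proposition~\ref{prop:zero}, Theorem~\ref{thmx:extensions}) produces an \emph{infinite} family of CG-derivatives $\Ll_{c/d}$ indexed by $c/d\in\Q$ with $c$ even, not a ``finite model list.'' So a case-by-case Kirby-calculus verification modeled on Akbulut--Gompf templates is not even well-posed as a strategy without some organizing induction, and the paper is explicit that its techniques make no use of fishtail neighborhoods and differ qualitatively from those earlier arguments.

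More fundamentally, you correctly flag that one cannot hope to unlink these $2$R-links by three-dimensional handleslides (they are offered as potential GPRC counterexamples), and that ``the argument must genuinely use the four-dimensional flexibility afforded by the $3$--handles''---but you provide no mechanism for this, and this mechanism is the entire content of Sections~\ref{sec:autom} and~\ref{sec:std}. The paper's key move is to twist along the vertical tori $W_0$ and $W_\infty$ in $Y_Q$. The twist $\Tt_\infty$ along the swallow-follow torus is isotopic to the identity of $S^3$, so it gives honest isotopies $Q\cup\Ll_{c/d}\simeq Q\cup\Ll_{(c-2d)/d}$ (Lemma~\ref{lem:SF_twist}). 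The twist $\Tt_0$ is \emph{not} trivial in $S^3$, but Lemma~\ref{lem:twistq} and Proposition~\ref{prop:bopit} show it extends over the traces to a diffeomorphism $X_{c/d}\to X_{c/(d-2c)}$, and that the two resulting handle decompositions become handleslide-equivalent after adding two Hopf pairs. Combined with the Farey-type reduction $\lambda_{c/d}\rightsquigarrow\lambda_0$ of Lemma~\ref{twister}, this produces the required induction down to the base case $c/d=0$, for which $Q\cup\Ll_0$ has genuine Property~R (Proposition~\ref{prop:zeror}). Without the torus-twist extension across the $4$--manifold (and the Hopf-pair stabilization that lets one compare handle decompositions after twisting), you have a correct reduction and a correctly diagnosed obstacle, but no proof.
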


At first glance, this class may appear somewhat restricted; however, it includes a number of historically important examples of homotopy 4--spheres.  The first such example was the Akbulut-Kirby sphere $\Sigma_0$, which was introduced by Cappell and Shaneson in 1976~\cite{CapSha_Some-new-four-manifolds_76}, studied in detail by Akbulut and Kirby in 1985~\cite{AkbKir_A-potential-smooth_85}, and shown to be standard by Gompf in 1991~\cite{Gom_Killing-the-Akbulut-Kirby_91}.  Subsequently, Gompf drew handlebody diagrams for an infinite family $\{\Sigma_m\}$ of Cappell-Shaneson homotopy spheres in 1991~\cite{Gom_On-Cappell-Shaneson-4-spheres_91}.  This family remained one of the most prominent classes of potential counterexamples to the S4PC (see~\cite{manmachine}) until Akbulut showed that each $\Sigma_m$ is standard in his celebrated 2010 paper~\cite{Akb_Cappell-Shaneson-homotopy_10}.  Another infinite family $H(n,k)$ generalizing $\Sigma_0$ was introduced and standardized by Gompf~\cite{Gom_Killing-the-Akbulut-Kirby_91} (cf. Figure~14 of~\cite{GomSchTho_Fibered-knots_10}).  Each of these examples satisfies the hypotheses of Theorem~\ref{thmx:PC}, so the present approach subsumes the proofs that these manifolds are standard.  Moreover, the methods here are qualitatively different than the other approaches; whereas past results involved techniques to simplify specific handle decompositions, our work is a more flexible characterization of a substantially larger collection of homotopy 4-spheres.

A 4--manifold that can be built without 1--handles is called \emph{geometrically simply-connected}.  If $X$ is a geometrically simply-connected 4--manifold that can be built with a single 0--handle, $n$ 2--handles, $n$ 3--handles, and a single 4--handle, then $\chi(X) = 2$ and $X$ is a homotopy 4--sphere. Since the attaching map of the 3--handles is unique up to isotopy~\cite{LauPoe_A-note-on-4-dimensional_72}, the manifold $X$ is completely characterized by the attaching spheres of the 2--handles, an $n$--component link $L$ in $S^3$ with a Dehn surgery to the manifold $\#^n(S^1 \X S^2)$, which we denote by $Y_n$. (The framings and linking numbers for $L$ are determined by this Dehn surgery and must all be zero.)  We call an $n$--component link $L$ with the property that 0--surgery on $L$ yields $Y_n$ an \emph{R-link} (or an \emph{$n$R-link} when we wish to emphasize the number of components).  Conversely, every R-link $L$ determines a handle decomposition of a 4--manifold we denote $X_L$, and the above arguments imply that $X_L$ is a geometrically simply-connected homotopy 4--sphere.

In this vein, Gabai's result establishes that the unknot is the only 1R-link. This simple structure quickly disappears for $n>1$, since handleslides of $L$ preserve the result of Dehn surgery.  The Generalized Property R Conjecture (GPRC) asserts that, modulo handleslides, the only R-link is the unlink.

\begin{GPRC}
	Every R-link is handleslide-equivalent to an unlink.
\end{GPRC}

If $U$ is an unlink, then the induced handle decomposition of $X_U$ contains canceling 2--handle/3--handle pairs, implying that $X_U$ can be built with only a 0--handle and 4--handle, so $X_U$ is diffeomorphic to the standard $S^4$.  The same is true for any link $L$ handleslide-equivalent to $U$, and thus, the GRPC implies the S4PC for geometrically simply-connected 4--manifolds.  In this case, we way that $L$ has \emph{Property R}.  There are other, weaker versions of the GPRC, which also have the same implication. We denote the split union of two links $L_1$ and $L_2$ by $L_1\sqcup L_2$.

\begin{SGPRC}
	For every R-link $L$, there is a 0--framed unlink $U$ such that $L \sqcup U$ is handleslide-equivalent to an unlink.
\end{SGPRC}

A \emph{Hopf pair} is a Hopf link where one component is 0--framed, while the other is decorated with a dot and encodes a 4--dimensional 1--handle in the standard way. (See~\cite{GomSti_4-manifolds-and-Kirby_99} for details regarding handlebody calculus for 4--manifolds.)

\begin{WGPRC}
	For every R-link $L$, there is a 0--framed unlink $U$ and a split collection of Hopf pairs $V$ such that $L \sqcup U \sqcup V$ is handleslide-equivalent to an unlink and a split collection of Hopf pairs.
\end{WGPRC}

As above, if $L$ satisfies the weak/stable GPRC, we say that $L$ has \emph{Weak/Stable Property~R}.  If $L$ has Stable Property R, then the handle decomposition of $X_L$ can be converted to the standard handle decomposition of $S^4$ after adding some canceling 2--handle/3--handle pairs (corresponding to the unlink $U$).  If $L$ has Weak Property R, the handle decomposition of $X_L$ can be made standard after adding both canceling 1--handle/2--handle pairs and canceling 2--handle/3--handle pairs.  It follows from Cerf Theory that the Weak GPRC is equivalent to the S4PC for geometrically simply-connected 4--manifolds.

Following \cite{GomSchTho_Fibered-knots_10}, we say that a given knot $K$ in $S^3$ has (\emph{weak}/\emph{stable}) \emph{Property $n$R} if for every $n$R-link $L$ having $K$ as a constituent knot, $L$ has (Weak/Stable) Property $n$R.  Using this language, we can give a slightly stronger restatement of  Theorem~\ref{thmx:PC}.

\begin{theorem}
\label{thmx:weak}
	Every generalized square knot $Q_{p,q}$ has Weak Property 2R; moreover, any 2R-link containing $Q_{p,q}$ an be simplified after adding at most two Hopf pairs.
\end{theorem}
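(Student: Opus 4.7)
The plan is to exploit the fibered structure of $Q_{p,q}$ in three steps. Fix a 2R--link $L = Q_{p,q}\cup J$ with $0$--surgery yielding $Y_2$. Because $Q_{p,q}$ is fibered, with fiber surface $\Sigma_{p,q}$ obtained by gluing the fibers of $T_{p,q}$ and $T_{-p,q}$ along a splitting arc in their connected-sum sphere, I would first apply the characterization of 2R--links with a fibered component promised in the abstract: via handleslides and trivial handle additions, the component $J$ can be isotoped to a simple closed curve $\gamma\subset\Sigma_{p,q}$, so that $L$ is handleslide-equivalent to a derivative link $Q_{p,q}\cup\gamma$.

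The next step is to translate the R--link condition into a statement about handlebody-extensions of the closed monodromy $\hat\phi_{p,q}$ of $Q_{p,q}$ acting on the capped fiber $\hat\Sigma_{p,q}$. The requirement that $0$--surgery on $\gamma$ inside the mapping torus of $\hat\phi_{p,q}$ produces $Y_2$ forces $\gamma$ to compress into a handlebody-extension of $(\hat\Sigma_{p,q},\hat\phi_{p,q})$. Exploiting the natural finite-order symmetry swapping the two torus-knot summands of $Q_{p,q}$, I would then apply the Equivariant Loop Theorem together with the Casson--Gordon theorem to produce an explicit, equivariant classification of such handlebody-extensions, and hence of the possible derivatives $\gamma$ up to the monodromy action. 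This classification is the technical heart of the argument and is presumably the source of the new families of potential GPRC counterexamples promised in the abstract.

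The final step is to verify Weak Property $2$R for each derivative on the classification list. For each classified $\gamma$ one would exhibit a handleslide sequence, augmented by at most two $1$--handle/$2$--handle cancellations, that converts $Q_{p,q}\cup\gamma$ into an unlink split off from Hopf pairs; morally, each torus-knot summand of $Q_{p,q}$ contributes one block of topology that is undone by a single Hopf pair once $\gamma$ has been put in a controlled position on $\Sigma_{p,q}$. The main obstacle is twofold. First, the classification from Step~2 is expected to include derivatives that are \emph{not} handleslide-equivalent to unlinks, so the Hopf pairs are genuinely indispensable and one cannot hope to prove plain Property $2$R by the same method. Second, the quantitative bound of \emph{two} Hopf pairs must be maintained uniformly across both Step~1 and Step~3; in particular, the fibered-component reduction of Step~1 must be arranged to introduce no trivial handle additions for links containing $Q_{p,q}$, and the case-by-case standardization of Step~3 must never exceed two for any $\gamma$ in the list. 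Controlling this count uniformly across the full classification is the chief technical difficulty.
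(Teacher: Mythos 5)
Your Steps~1 and~2 track the paper's structure reasonably well: Step~1 is the paper's Theorem~\ref{thmx:CGequiv} (the 2R-link is stably equivalent to $Q\cup L^+$ with $L^+$ a CG-derivative), and Step~2 is the paper's Proposition~\ref{prop:zero}, which uses the Equivariant Loop Theorem to show every CG-derivative is, up to stabilization, one of the links $\Ll_{c/d}$ lifted from a pillowcase slope. One correction in Step~2: the group acting for the Equivariant Loop Theorem is the full cyclic group $\Z_{pq}$ generated by the extension of the closed monodromy (which has order $pq$), not the $\Z_2$ symmetry swapping the two torus-knot summands; the latter reflection $\varrho$ plays only an auxiliary role.

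The genuine gap is in Step~3. You correctly flag the chief technical difficulty --- establishing the uniform bound of two Hopf pairs across the entire classified family $\{\Ll_{c/d}\}$ --- but you offer no mechanism to resolve it, and the mechanism is the actual content of Sections~6 and~7. The paper does not exhibit a case-by-case handleslide sequence for each $\Ll_{c/d}$. Instead it introduces two torus twists $\Tt_0$ and $\Tt_\infty$ along vertical tori $W_0, W_\infty$ in the Seifert fibered manifold $Y_Q$. The twist $\Tt_\infty$ is an isotopy of $S^3$ (it twists along a swallow-follow torus whose meridian bounds a disk in $S^3$), so $Q\cup\Ll_{c/d}$ and $Q\cup\Ll_{(c\pm 2d)/d}$ are isotopic links with no stabilization cost (Lemma~\ref{lem:SF_twist}). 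The twist $\Tt_0$ is not an isotopy, but it extends to a diffeomorphism $\frak T_0\colon X_{c/d}\to X_{c/(d-2c)}$ of the ambient CG-spheres, and the crucial Proposition~\ref{prop:bopit} shows the resulting two handle decompositions of $X_{c/(d-2c)}$ agree after adding exactly two Hopf pairs; this gives a ``conservation'' statement --- $Q\cup\Ll_{c/d}$ plus two Hopf pairs simplifies if and only if $Q\cup\wt\tau_0(\Ll_{c/d})$ plus two Hopf pairs does --- so the bound of two propagates across all of $\Q_\infty$ rather than having to be re-established per slope. Everything then collapses to the single base case $Q\cup\Ll_0$, which has honest Property R (Proposition~\ref{prop:zeror}). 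Your ``moral'' picture that each torus-knot summand of $Q_{p,q}$ accounts for one Hopf pair is not where the two comes from: the two Hopf pairs arise because the $Q\cup V$-based handle decomposition of the trace $X_Q$ (from Proposition~\ref{prop:zeror}) differs from the minimal one-handle decomposition along $Q^*$ by exactly two canceling $2$/$3$-handle pairs, which dualize to two $1$/$2$-handle pairs, i.e.\ two Hopf pairs, independently of $p$ and $q$.
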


As mentioned above, this proves the S4PC for a class of geometrically simply-connected homotopy 4--spheres, including those standardized by Gompf in 1991~\cite{Gom_Killing-the-Akbulut-Kirby_91} and Akbulut in 2010~\cite{Akb_Cappell-Shaneson-homotopy_10}.  Notably, our approach differs dramatically from previous work; in particular, no (explicit) use of a fishtail neighborhood is made here. See Subsection~\ref{subsec:classical} for details.

\begin{corollary}
\label{corox:classical}
	The Cappell-Shaneson homotopy 4--spheres $\Sigma_m$ and the Gompf homotopy 4--spheres $H(n,k)$ are standard.
\end{corollary}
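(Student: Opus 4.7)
The plan is to reduce Corollary~\ref{corox:classical} directly to Theorem~\ref{thmx:PC} by verifying, family by family, that the hypotheses of the latter are satisfied: that each manifold in question admits a handle decomposition with no 1--handles and exactly two 2--handles (together with the $n$ 3--handles and single 4--handle required to close it off), and that one of those 2--handles is attached along a knot of the form $Q_{p,q}=T_{p,q}\#T_{-p,q}$. Since Theorem~\ref{thmx:PC} makes no hypothesis on the second 2--handle beyond the global requirement that the link be an R-link (which is automatic for a homotopy 4--sphere built from two 2--handles, two 3--handles, a 0--handle, and a 4--handle), this is really a question of reading off the existing pictures in the literature.

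For the Cappell--Shaneson family $\Sigma_m$, I would start from the handle diagrams drawn by Gompf in \cite{Gom_On-Cappell-Shaneson-4-spheres_91}, which already present $\Sigma_m$ with a single 0--handle, two 2--handles, two 3--handles, and a 4--handle (and no 1--handles). The first step is to perform the standard sequence of isotopies and handleslides in those diagrams that identifies one of the two attaching knots as a connected sum $T_{p,q}\# T_{-p,q}$; for $m=0$ this is the classical square knot $T_{2,3}\#T_{-2,3}=Q_{2,3}$ appearing in the original Akbulut--Kirby picture~\cite{AkbKir_A-potential-smooth_85}, and for general $m$ the corresponding torus--knot pair is read off from the Cappell--Shaneson monodromy. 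Once one component of the 2--handle link is identified as $Q_{p,q}$, Theorem~\ref{thmx:PC} applies and yields $\Sigma_m\cong S^4$.

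For the family $H(n,k)$ introduced in \cite{Gom_Killing-the-Akbulut-Kirby_91} (see also Figure~14 of \cite{GomSchTho_Fibered-knots_10}), the argument has the same shape. These manifolds are again presented by handle diagrams with no 1--handles and two 2--handles, and an analogous bookkeeping of the diagram exhibits one of the two attaching circles as a generalized square knot $Q_{p,q}$ for appropriate $(p,q)$ depending on $(n,k)$. Applying Theorem~\ref{thmx:PC} then gives $H(n,k)\cong S^4$.

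The only real work is this diagrammatic verification: showing that in each of the two families, after at most a bounded sequence of isotopies and handleslides in the known handle pictures, one of the two attaching knots can literally be written as $T_{p,q}\#T_{-p,q}$. This step is where the main obstacle lies, since a priori the published diagrams need not display the square-knot summand in an obvious way. However, these identifications have been carried out in the prior standardization work of Gompf~\cite{Gom_Killing-the-Akbulut-Kirby_91} and Akbulut~\cite{Akb_Cappell-Shaneson-homotopy_10}, so the job is to extract from those proofs the intermediate fact that a generalized square knot appears as an attaching circle, rather than to rebuild those diagrammatic reductions from scratch; details of this extraction are deferred to Subsection~\ref{subsec:classical}.
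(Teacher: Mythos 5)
Your approach is the paper's approach: reduce to Theorem~\ref{thmx:PC} by reading off from the published handle pictures that each family admits a decomposition with no 1--handles and two 2--handles, one attached along a generalized square knot. Two factual points need correcting before this plan executes as stated, though. First, Gompf's Figure~8 in~\cite{Gom_On-Cappell-Shaneson-4-spheres_91} does \emph{not} already present $\Sigma_m$ without 1--handles: that diagram has two dotted circles, and the proof relies on Gompf's instructions (pp.~130--131) for trading them away, replacing each dotted circle by a full twist (one of each sign) on the strands passing through it and cancelling the $xy$ 2--handle against a 3--handle, which is nontrivial preprocessing. Second, for the Cappell--Shaneson spheres $\Sigma_m$ there is no $m$-dependent torus-knot pair ``read off from the Cappell--Shaneson monodromy'': after the twist substitution, the surviving attaching circle $xz$ is the fixed square knot $Q_{3,2}$ for every $m$ (this is the content of Figure~\ref{fig:Gompf}). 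The $m$-dependence lives entirely in the other 2--handle $\alpha$, which Theorem~\ref{thmx:PC} places no constraint on. For $H(n,k)$ the analogous identification is $Q_{n+1,n}$, taken from the discussion in Section~8 of~\cite{GomSchTho_Fibered-knots_10}.
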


The main theorem is also interesting from the perspective of the GPRC and the Stable GPRC, since the consensus appears to be that neither of these two conjectures is likely to be true.  In \cite{GomSchTho_Fibered-knots_10}, Gompf, Scharlemann, and Thompson produced a family $\{L_n\}$ of potential counterexamples to the GPRC (building on work of Akbulut and Kirby \cite{AkbKir_A-potential-smooth_85} and Gompf~\cite{Gom_Killing-the-Akbulut-Kirby_91}), in which each $L_n$ is a 2--component R-link with a square knot component.  If $L_n$ has Property R, then the trivial group presentation
$$P_n = \langle x,y \, | \, xyx=yxy, x^n= y^{n+1} \rangle$$
satisfies the Andrews-Curtis Conjecture~\cite{AndCur_Free-groups_65}, which is widely believed \emph{\textbf{not}} to be the case when $n \geq 3$.  See \cite{GomSchTho_Fibered-knots_10} for further details about the Andrews-Curtis Conjecture.

The family $\{L_n\}$ of 2R-links, which have the property that one component is the square knot $Q_{3,2}$, was further studied and characterized by Scharlemann in \cite{Sch_Proposed-Property_16}.  We expand on Scharlemann's characterization to produce, for each generalized square knot $Q_{p,q}$, an infinite family of R-links having $(p-1)(q-1)$ components, most of which appear to be potential counterexamples to the GPRC.  These are the first potential counterexamples having more than two components.

\begin{proposition}
\label{propx:nR}
	Fix a generalized square knot $Q_{p,q}$. For $n = (p-1)(q-1)$ and for any $c/d\in\Q$ with $c$ even, there is an $n$R-link $L_{c/d}^{p,q}$ contained in a fiber for $Q_{p,q}$.
\end{proposition}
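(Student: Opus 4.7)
The plan is to realize $L_{c/d}^{p,q}$ as the derivative link of a fibered homotopy-ribbon disk $D_{c/d}\subset B^4$ for $Q_{p,q}$; the $n$R-link property will then follow from a standard handle-cancellation argument.

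First, I would use that $Q_{p,q}=T_{p,q}\# T_{-p,q}$ is fibered with fiber $F\cong F_{p,q}\nat F_{-p,q}$, a surface of genus $n=(p-1)(q-1)$ with one boundary component. The mirror symmetry between $T_{p,q}$ and $T_{-p,q}$ produces the obvious ribbon disk $D_0$ for $Q_{p,q}$, and the exterior of $D_0$ in $B^4$ fibers over $S^1$ with fiber a handlebody $H_0$ compressing $F$ along the Lagrangian coming from the connect-sum structure.  Hence $D_0$ is a fibered homotopy-ribbon disk, and by the Casson--Gordon correspondence such disks are in bijection with handlebody-extensions of the closed monodromy $\wh{\varphi}_{p,q}$ of $Q_{p,q}$.

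Second, I would invoke the classification of handlebody-extensions for $\wh{\varphi}_{p,q}$ that the paper obtains via Casson--Gordon and the Equivariant Loop Theorem (announced in the abstract) to produce, for each $c/d\in\Q$ with $c$ even, a handlebody $H_{c/d}$ bounded by the closed fiber to which $\wh{\varphi}_{p,q}$ extends.  Heuristically, $H_{c/d}$ is obtained from $H_0$ by a rational $(c,d)$-twist along an essential annulus inherited from the Seifert-fibered structures on the torus-knot summands, with the parity condition on $c$ enforcing the integer framing required so that the derivative consists of $0$-framed circles.  Each such extension packages into a fibered homotopy-ribbon disk $D_{c/d}$, whose exterior admits a handle decomposition with a single $0$-handle and $n$ $1$-handles; isotoping the co-cores of these $1$-handles into a single fiber yields the link $L_{c/d}^{p,q}\subset F$ with $n$ components.

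Finally, to see that $L_{c/d}^{p,q}$ is an $n$R-link, attaching $n$ $0$-framed $2$-handles along $L_{c/d}^{p,q}$ geometrically cancels the $1$-handles of the disk exterior, so $0$-surgery on $L_{c/d}^{p,q}$ yields $\#^n(S^1\X S^2)$, as required.  The main obstacle -- and the reason the proposition depends on the paper's heavier machinery -- is step two: constructing the $\Q$-parameterized family of handlebody-extensions with the correct parity constraint on $c$.  The Equivariant Loop Theorem is precisely what is needed to promote algebraic/combinatorial candidate extensions of $\wh{\varphi}_{p,q}$ to genuine topological handlebody-extensions, while Casson--Gordon provides the translation back to homotopy-ribbon disks and their derivatives; once this classification is in hand, the construction of $L_{c/d}^{p,q}$ and the verification that it lies in $F$ are essentially formal.
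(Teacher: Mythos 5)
Your overall architecture is in the right ballpark — construct a $\Q$-indexed family of handlebody extensions of $\wh\varphi_{p,q}$, read off their derivative links in a fiber, and conclude the $n$R-link property — but the attribution of tools in step two is off, and the final handle-cancellation is wrong as written. The paper does \emph{not} use the Equivariant Loop Theorem to produce the family of extensions: it constructs the multi-curves $\Ll_{c/d}$ explicitly, as preimages $\rho^{-1}(\lambda_{c/d})$ of rational-slope curves on the pillowcase orbifold $S^2(p,q,p,q)$ under the $pq$-fold cyclic branched covering $\rho\colon\wh F\to S$ coming from the Seifert-fibered structure on $Y_Q$. The parity condition on $c$ is a covering-space phenomenon (Proposition~\ref{prop:lift_parity}): when $c$ is even the lift has $pq$ components cutting $\wh F$ into planar pieces, while when the numerator is odd the lift is a single separating curve and cannot serve as a cut system — this has nothing to do with choosing $0$-framings. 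To promote the resulting multi-curve to a genuine handlebody extension one only needs Casson and Gordon's algebraic criterion (Proposition~\ref{cggen}), verified in Lemma~\ref{lem:cg} by the one-line observation that $\wh\varphi$ permutes the curves of $\Ll_{c/d}$ and hence preserves the normal subgroup they generate. The Equivariant Loop Theorem enters only later, in Proposition~\ref{prop:zero} and Theorem~\ref{thmx:extensions}, to prove the \emph{completeness} of this list of extensions; your proposal thus reaches for strictly heavier machinery than the existence statement requires, and moreover the existence half of that classification is exactly the statement you are trying to prove.

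The final cancellation argument is also not correct as stated. The disk exterior $B_\Phi\setminus D_{c/d}\cong H_{c/d}\times_{\phi_{c/d}}S^1$ is \emph{not} a $4$-dimensional $1$-handlebody: by Lemma~\ref{handlebundle} it has a handle decomposition with $n+1$ $1$-handles and $n$ $2$-handles (its fundamental group is free-by-cyclic, not free of rank $n$). Consequently, attaching $n$ $0$-framed $2$-handles along $L_{c/d}^{p,q}$ does not ``geometrically cancel'' down to a ball. The paper's route (Proposition~\ref{CGR}) is to build the CG-ball $B_\Phi$ from a $2$-handle on $Q$ plus $H\times_\Phi S^1$, read off a relative handle decomposition with $n+1$ $2$-handles attached along $Q\cup L$ and $n+1$ $3$-handles, conclude $Q\cup L$ is an R-link, and then use that $F\setminus L$ is planar to slide $Q$ over $L$ to an unknot and cancel one $2$--/$3$-handle pair, yielding the $n$R-link $L$ alone. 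That two-step argument is what your proposal needs in place of the direct cancellation.
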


In Section~\ref{sec:tris}, we revisit a program by which to disprove the GPRC and Stable GPRC using the theory of 4--manifold trisections introduced by Gay and Kirby \cite{GayKir_Trisecting-4-manifolds_16}.  We show how to associate a natural trisection to the homotopy 4--sphere $X_L$ corresponding to an R-link $L$, and we describe explicit trisection diagrams for these trisections in the case of the 4--manifolds $X_{L^{p,q}_{c/d}}$ associated to the R-links of Theorem~\ref{propx:nR}.  An R-link $L$ satisfies the Stable GPRC precisely when these natural trisections have a certain stable property.

The relevant characteristics of a generalized square knot are that they are ribbon, fibered, and have periodic monodromy. In the course of proving Theorem~\ref{thmx:PC}, we also prove the next theorem, which may be of independent interest. By the \emph{closed monodromy} of a fibered knot $K$ in $S^3$, we mean the monodromy of the associated closed surface-bundle obtained as 0--surgery on $K$.

\begin{theorem}
\label{thmx:CGequiv}
	If $L = Q \cup J$ is a 2R-link and $Q$ is nontrivial and fibered, then there is an unlink $U$ such that $Q \cup J\sqcup U$ is handleslide-equivalent to $Q \cup L^+$, such that
	\begin{enumerate}
		\item $L^+$ is $n$--component link with $n = g(Q)$,
		\item $L^+$ is contained in a fiber $F$ of $Q$, and
		\item the closed monodromy of $Q$ extends over the handlebody determined by $L^+$.
	\end{enumerate}
\end{theorem}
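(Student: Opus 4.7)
My plan is to pass to the closed surface bundle $M_Q = S^3_0(Q)$, which is a $\widehat{F}$-bundle over $S^1$ of fiber genus $g = g(Q)$ with monodromy $\phi$ equal to the closed monodromy of $Q$. The hypothesis that $Q \cup J$ is a 2R-link translates to the statement that $J$, regarded as a knot in $M_Q$, admits a 0-surgery to $\#^2(S^1 \times S^2)$; in particular, $J$ is null-homologous in $M_Q$ since $H_1(M_Q) \cong \Z$ while $H_1(\#^2(S^1 \times S^2)) \cong \Z^2$. Isotopies of $J$ in $M_Q$ correspond precisely to handleslides of $J$ over $Q$ in $S^3$, so one may replace $J$ by any knot in its ambient isotopy class in $M_Q$. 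Similarly, stabilizing by a split 0-framed unknot $U$ in $S^3$ corresponds, after surgery on $Q$, to surgering $J$ along a split 0-framed meridian.

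The first step is to use the fibration $\pi\colon M_Q \to S^1$ as a sweepout and place $J$ in thin position with respect to $\pi$, using the Scharlemann-Thompson thin-position machinery. After a small isotopy, $\pi|_J$ becomes a Morse function whose critical levels cut $J$ into collections of properly embedded arcs in the fibers, and null-homology forces the number of maxima to equal the number of minima. I would then invoke a Rubinstein-Scharlemann graphic-style argument, playing the fibration sweepout against a sweepout by the Heegaard surface of $\#^2(S^1 \times S^2)$ that is induced after 0-surgery on $J$, to drive down the width. The main obstacle, and the step that I expect to require the greatest care, is to show that thin position (perhaps after stabilization by an unknot $U$ and additional handleslides) forces $J$ into a single fiber $F$. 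The challenge is that $J$ may initially wind many times in the $S^1$-direction, and eliminating those extrema using only handleslides over $Q$ and trivial handle additions is delicate; this is precisely where I anticipate the unlink $U$ appearing, to cancel critical levels that cannot be removed by pure isotopy of $J$ in $M_Q$.

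Once $L^+ = J \cup U$ has been arranged to lie in a single fiber $F$, the three conclusions follow by Heegaard-theoretic bookkeeping. For (1), the component count $n = g(Q)$ is pinned down as follows: 0-surgery along $L^+ \subset F \subset M_Q$ with fiber framings produces a manifold with $b_1 = 2$, while $M_Q$ has $b_1 = 1$, so the components of $L^+$ must span a rank-$g$ sublattice in $H_1(\widehat{F})$. Combined with the requirement that the surgered manifold be $\#^2(S^1\times S^2)$ rather than a manifold with torsion or extra handles, this forces $L^+$ to be a cut system for $\widehat{F}$, hence to have exactly $g$ components. Conclusion (2) holds by construction. For (3), attaching 2-handles to $\widehat{F}\times [0,1] \subset M_Q$ along $L^+$ with surface framings yields a compression body that caps off to a handlebody $V$ with $\partial V = \widehat{F}$; cutting $M_Q$ along $\widehat{F}$ and re-gluing via $\phi$, the resulting 2-handle attachment gives the closed manifold the structure of a double of $V$ glued by $\phi$, which is $\#^2(S^1\times S^2)$ if and only if $\phi$ extends over $V$.
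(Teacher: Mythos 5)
Your proposal correctly identifies the starting move (cite Scharlemann--Thompson, as the paper does via its Corollary~4.3 of that reference, to isotope $J$ into a single closed fiber $\wh F$ of $Y_Q$ with the surface framing) but then misidentifies where the real work lies and, as a result, skips it entirely. Getting $J$ into a fiber is a citation, not the hard part. The hard part is producing the $g$-component link $L^+ \subset \wh F$ from the single knot $J$ and showing that $L^+$ is a CG-derivative, and here your argument has a genuine gap. You write ``Once $L^+ = J\cup U$ has been arranged to lie in a single fiber,'' but $L^+$ is not $J$ together with a split unlink --- a split unlink $U$ pushed into a fiber just bounds disks there and contributes nothing toward a cut system. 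The paper instead converts the surgered manifold into a \emph{Heegaard double} $(Y_2;S_1,\Delta_1,h_1)$ with gluing map $h_1 = \wh\varphi$ and then repeatedly \emph{untelescopes} it (their Proposition on untelescoping and its iterates) exactly $g-1$ times until it becomes the standard Heegaard double of $\#^2(S^1\times S^2)$; the curves $c_1^+,\dots,c_g^+$ of $L^+$ are the co-core boundary curves left behind at each untelescoping stage. The component count in (1) therefore comes out of the untelescoping count, not from a homological sublattice argument (your argument is circular: it says \emph{if} $L^+$ is a cut system then it has $g$ components, without producing the cut system).

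Your sketch of (3) is also off. You claim that surgery along $L^+$ recognizes $\#^2(S^1\times S^2)$ if and only if $\wh\varphi$ extends over the handlebody $V$ determined by $L^+$; but surgery on a $g$-component cut system in a fiber of $Y_Q$ would give $\#^{g+1}(S^1\times S^2)$, not $\#^2(S^1\times S^2)$, so this criterion is not what is being checked. The actual verification that $\wh\varphi$ extends is the content of the paper's ``Claim 1,'' a delicate induction that tracks, through the untelescoping, that the system $L_n^+ = \{c_1^+,\dots,c_n^+\}$ and the system $L_n^- = \{\pi(c_n^-),\wh\varphi(c_1^+),\dots,\wh\varphi(c_{n-1}^+)\}$ determine the same compression body at every stage, concluding at $n=g$ that $\wh\varphi(L^+)$ and $L^+$ bound the same handlebody. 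Finally, the stable-equivalence statement (your ``handleslides over $Q$ and trivial handle additions'') is a separate ``Claim 2'' in the paper, in which the unlink $U$ is realized as parallel push-offs of $J$ in the fiber that are successively deposited as the $c_i^+$ during untelescoping; the unlink does not appear for the reason you anticipate (cancelling critical levels in thin position). Without the untelescoping framework and the two claims, the proof as proposed does not go through.
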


The proof of Theorem~\ref{thmx:CGequiv} revolves around the theory of Heegaard splittings of 3--manifolds and thin position arguments initiated by Scharlemann and Thompson \cite{ST}.  This theorem could potentially be used to prove that all fibered, homotopy-ribbon knots have Weak Property 2R. 

The link $L^+$ in Theorem~\ref{thmx:CGequiv} has a special name; we call it a \emph{Casson-Gordon derivative}, in reference to the seminal work of Casson and Gordon characterizing the monodromies for fibered, homotopy-ribbon knots \cite{CasGor_A-loop-theorem_83}:  A fibered knot $K$ is homotopy-ribbon in a homotopy 4-ball if and only if the closed monodromy of $K$ extends across a handlebody. Moreover, such an extension encodes a fibered, homotopy-ribbon disk-knot bounded by $K$. (By a \emph{disk-knot} we mean a properly embedded disk $D$ in homotopy 4--ball $B$.) Thus, the following classification of fibered, homotopy-ribbon disk-knots bounded by generalized square knots is closely related to Theorem~\ref{thmx:PC}.

\begin{theorem}
\label{thmx:disks}
	There is a family $\{(B_{c/d},R_{c/d})\}$ of fibered, homotopy-ribbon disk-knots  for $(S^3,Q_{p,q})$, indexed by $c/d\in\Q$ with $c$ even, such that
\begin{enumerate}
	\item $(B_0,R_0)$ is the product ribbon disk $(B^3,T_{p,q}^\circ) \X I$;
	\item The members of $\{(B_{c/d},R_{c/d})\}$ are pairwise non-diffeomorphic rel-$\partial$;
	\item For any fibered, homotopy-ribbon disk-knot $(B,R)$ for $(S^3,Q_{p,q})$, we have $(B,R)\in \{(B_{c/d},R_{c/d})\}$; and
	\item The members of $\{(B_{c/d},R_{c/d})\}$ have diffeomorphic exterior.
\end{enumerate}
\end{theorem}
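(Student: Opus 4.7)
The plan is to translate the classification of fibered, homotopy-ribbon disk-knots into a classification of handlebody-extensions of the closed monodromy of $Q_{p,q}$. By the Casson-Gordon theorem cited in the excerpt, a fibered knot $K$ bounds a fibered, homotopy-ribbon disk-knot in a homotopy 4-ball if and only if its closed monodromy $\phi \colon F \to F$ extends over a handlebody $H$ with $\partial H = F$, and such extensions classify the disk-knots up to rel-$\partial$ equivalence (modulo conjugation by self-diffeomorphisms of $H$ whose boundary restrictions commute with $\phi$). Because $Q_{p,q} = T_{p,q} \# T_{-p,q}$ is a connected sum of torus knots whose open-book monodromies have finite order, the closed monodromy of $Q_{p,q}$ is periodic, which sets up the application of the Equivariant Loop Theorem.

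With this reduction in place, the classification of extensions proceeds in two steps. First, for each handlebody extension $(H,\phi')$, apply the Equivariant Loop Theorem to a complete system of meridian disks for $H$ to obtain a $\phi'$-invariant such system; its boundaries form a $\phi$-equivariant link $L^+ \subset F$ which, combined with Theorem~\ref{thmx:CGequiv}, is a Casson-Gordon derivative with $n = g(Q_{p,q}) = (p-1)(q-1)$ components. Second, invoke Proposition~\ref{propx:nR} to match such equivariant $L^+$'s with the explicit family $\{L_{c/d}^{p,q}\}$ parametrized by $c/d \in \Q$ with $c$ even, and take $(B_{c/d},R_{c/d})$ to be the disk-knot associated to $L_{c/d}^{p,q}$ under the Casson-Gordon correspondence. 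Exhaustiveness of the match is precisely conclusion (3).

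Conclusions (1) and (4) are then expected to fall out directly. For (1), at $c/d = 0$ the cutting system is the standard one (curves essentially coming from the meridian structures on the two torus-knot summands), so the extension is the product handlebody and the corresponding disk-knot is $(B^3,T_{p,q}^\circ) \X I$. For (4), all exteriors fiber over $S^1$ with the same abstract handlebody $H$ as fiber, and the extensions $\phi'_{c/d}$ can be shown to be pairwise conjugate as automorphisms of $H$ ignoring the boundary parametrization, so their mapping tori are diffeomorphic as abstract 4-manifolds. The main obstacle is property (2), rel-$\partial$ distinctness of the family: I expect non-equivalence at the level of abstract equivariant cutting systems to be a clean combinatorial check on the fiber, but promoting this to non-equivalence of the embedded disk-knots rel-$\partial$ requires a sharper invariant sensitive to how $H$ attaches to the boundary fibration, most naturally a peripheral invariant of the disk-knot group, a twisted Alexander-style invariant, or a direct rigidity argument showing that no self-diffeomorphism of $H$ whose boundary restriction commutes with $\phi$ can carry the cutting system of $L_{c/d}^{p,q}$ onto that of $L_{c'/d'}^{p,q}$ for $c/d \neq c'/d'$.
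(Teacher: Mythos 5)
The framework you set up — Casson--Gordon correspondence, Equivariant Loop Theorem to equivariantize a cut system, and matching against the family $\{\Ll_{c/d}\}$ — is exactly the paper's route to part (3) (via Proposition~\ref{prop:zero}) and part (1) (via Lemma~\ref{lem:product}). Part (4) in the paper is Proposition~\ref{prop:HBB_total}(1), established not by an abstract conjugacy claim but by explicitly lifting the pillowcase twists $\tau_0,\tau_\infty$ to fiber-preserving torus twists $\Tt_0,\Tt'_\infty$ on $Y_Q$ (Lemmas~\ref{lem:even_torus} and~\ref{lem:odd_twist}) and then extending these over the handlebody bundles; your sketch here is in the same spirit but leaves the conjugacy assertion unverified, and constructing those twists is a genuine chunk of work.

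The real gap is part (2), which you flag as the main obstacle but leave as a list of candidate invariants without carrying out any of them. The paper's argument (Proposition~\ref{prop:disk_rel}) is quite different from the twisted-Alexander or peripheral-subgroup ideas you float: suppose $(B_{c/d},R_{c/d})$ and $(B_{c'/d'},R_{c'/d'})$ are diffeomorphic rel-$\partial$. Then gluing the two disk exteriors along $Y_Q$ produces a 2--knot exterior that fibers over $S^1$ with fiber $M^\circ = (H_{c/d}\cup_{\wh F}H_{c'/d'})^\circ$, while the rel-$\partial$ diffeomorphism identifies this with the double of $Z_{c'/d'}$, whose fiber is a punctured $\#^g(S^1\times S^2)$. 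The infinite cyclic cover of a 2--knot exterior is unique and is a product with the fiber, so these two fibers are homotopy equivalent; by the Sphere Theorem $M = H_{c/d}\cup_{\wh F}H_{c'/d'}$ is then reducible. The crux is then Lemma~\ref{prop:rational_branch}: by recognizing $H_{c/d}\cup_{\wh F}H_0$ as the $(p,q)$--branched cover $\Sigma_{p,q}(K[c/d])$ of $S^3$ over a two-component rational link and applying the Equivariant Sphere Theorem, one shows that $H_{c'/d'}\cup_{\wh F}H_{c''/d''}$ is irreducible unless $c'/d'=c''/d''$. Without this doubling-and-branched-cover step your argument cannot conclude; none of the invariants you name is computed, and it is not at all evident that, say, a twisted Alexander polynomial would separate these disk-knots, so part (2) remains unproven in your proposal.
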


Finally, we return to the notion of extending a mapping class across a handlebody.  Long showed that there exists a fibered knot whose closed (pseudo-Anosov) monodromy admits extensions over two distinct handlebodies~\cite{Lon_On-pseudo-Anosov-maps_90}.  In general, for a knot with pseudo-Anosov monodromy, only finitely many extensions are possible~\cite{CasLon_1985_Algorithmic-compression}.  We give the following analogue of Long's result for generalized square knots.  By the theorem of Casson and Gordon, each CG-derivative $L_{c/d}^{p,q}$ gives rise to an extension $\phi^{p,q}_{c/d}$ of the closed monodromy $\wh\varphi^{p,q}$ of the generalized square knot $Q_{p,q}$.

\begin{theorem}
\label{thmx:extensions}
	Every handlebody-extension of $\wh\varphi^{p,q}$ is isotopic to $\phi_{c/d}^{p,q}$, for some $c/d\in\Q$ with $c$ even, and each $\phi_{c/d}^{p,q}$ represents an extension of $\wh\varphi^{p,q}$ over a distinct handlebody for each choice of $c/d\in\Q$ with $c$ even.
\end{theorem}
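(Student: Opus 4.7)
The strategy is to derive Theorem \ref{thmx:extensions} from Theorem \ref{thmx:disks} via the Casson-Gordon theorem, which provides a bijective correspondence between handlebody-extensions of the closed monodromy of a fibered knot and rel-$\partial$ diffeomorphism classes of fibered, homotopy-ribbon disk-knots it bounds. By construction, $\phi_{c/d}^{p,q}$ is the extension produced from $(B_{c/d}, R_{c/d})$ under this correspondence, with underlying handlebody $H_{c/d}$ determined (as in Theorem \ref{thmx:CGequiv}) by the derivative link $L_{c/d}^{p,q}$ furnished by Proposition \ref{propx:nR}.

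For the first assertion, let $(H, \phi)$ be any handlebody-extension of $\wh\varphi^{p,q}$. Casson-Gordon produces from it a fibered, homotopy-ribbon disk-knot $(B, R)$ for $(S^3, Q_{p,q})$. Theorem \ref{thmx:disks}(3) identifies $(B, R)$ with some $(B_{c/d}, R_{c/d})$, and naturality of the Casson-Gordon correspondence then forces $(H, \phi)$ to be isotopic to $\phi_{c/d}^{p,q}$.

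The second assertion is more delicate, since we must distinguish the underlying handlebodies themselves and not merely the extensions. The handlebody $H_{c/d}$ is identified by its cut system on $\wh F$: concretely, $H_{c/d}$ is a collar of $\wh F$ inside $S^3_0(Q_{p,q})$ capped off by 2--handles along the components of $L_{c/d}^{p,q}$, so two such handlebodies (with induced boundary identifications with $\wh F$) coincide if and only if the cut systems $L_{c/d}^{p,q}$ and $L_{c'/d'}^{p,q}$ are isotopic on $\wh F$. The plan is to exploit the explicit twisting construction of $L_{c/d}^{p,q}$ from Proposition \ref{propx:nR} to extract a numerical invariant of the cut system --- for instance a distinguished homology class in $H_1(\wh F)$, or an algebraic intersection number against a fixed reference family of curves coming from the product ribbon disk $(B_0, R_0)$ --- that depends faithfully on the twist parameter $c/d$. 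The main obstacle is verifying that this invariant genuinely separates all pairs $c/d\neq c'/d'$: this requires careful bookkeeping with how the twist interacts with the periodic monodromy action on $\wh F$, and in particular ruling out coincidences produced by the symmetries built into the generalized square knot. Once this faithfulness is established, distinct rationals $c/d$ with $c$ even yield non-isotopic cut systems and hence pairwise distinct handlebodies, completing the proof.
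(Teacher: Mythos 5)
Your proposed route for the first assertion — deducing it from Theorem~\ref{thmx:disks}(3) via the Casson--Gordon correspondence — is not circular (the paper's proof of Theorem~\ref{thmx:disks}(3) rests on Proposition~\ref{prop:zero}, not on Theorem~\ref{thmx:extensions}), and it could likely be made to work with some care about working rel-$\partial$; the paper instead just applies Proposition~\ref{prop:zero} directly to a cut system for $H$, which is cleaner since both statements ultimately come from the same Equivariant Loop Theorem argument.

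The second assertion is where the proposal has a genuine gap, and in fact a substantive error. You claim that two handlebodies $H_{c/d}$ and $H_{c'/d'}$ (each realized as a collar of $\wh F$ capped off with 2--handles) ``coincide if and only if the cut systems $L^{p,q}_{c/d}$ and $L^{p,q}_{c'/d'}$ are isotopic on $\wh F$.'' This is false: a handlebody is determined by a cut system on its boundary only up to handleslides, not up to isotopy. So any numerical invariant of a cut system up to isotopy — a homology class, an intersection number against a reference family — cannot by itself distinguish the handlebodies, because inequivalent cut systems can still bound the same handlebody. (Moreover, you acknowledge that identifying such an invariant and establishing its faithfulness is left open, so the step is incomplete even setting aside the error.) The paper avoids this pitfall entirely: it considers the closed 3--manifold $H_{c/d}\cup_{\wh F} H_{c'/d'}$ — a topological invariant of the pair of handlebodies, not of any particular cut system — and proves in Lemma~\ref{prop:rational_branch}, using the Equivariant Sphere Theorem and the primality and non-splitness of the rational link $K[c/d]$, that this union is irreducible unless $c/d = c'/d'$. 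Since $H_{c/d}\cup_{\wh F} H_{c/d}$ is always reducible (being $\#^g(S^1\times S^2)$), equality of the handlebodies forces $c/d=c'/d'$. You would need to replace the invariant-of-cut-systems idea with something like this handlebody-level argument.
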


The common element of many of these theorems is the rational number $c/d$:  For a fixed $p$ and $q$, the exterior $B_{c/d}\setminus\nu(R_{c/d})$ is given by the handlebody-bundle $H\times_{\phi_{c/d}}S^1$, and the link $L_{c/d}$ bounds a cut system for $H$ in this extension.

\subsection*{Organization}

In Section~\ref{sec:prelim}, we state general preliminary material and give detailed discussions of disk-knots, R-links, and fibered, homotopy-ribbon knots in the context of the theorem of Casson and Gordon.
In Section~\ref{sec:stable2R}, we turn our attention to the theory of Heegaard splittings of 3--manifolds and apply thin position arguments to prove Theorem~\ref{thmx:CGequiv}.
In Section~\ref{sec:square}, we give a detailed account of generalized square knots, including a careful analysis of the fibrations of their exteriors and of their 0--surgeries.
In Section~\ref{sec:L0}, we give a detailed analysis of the simplest Casson-Gordon derivative for a generalized square knot and show that this link has Property R.
In Section~\ref{sec:autom}, we describe a pair of automorphisms of the Seifert fibered space obtained as zero-surgery on a generalized square knot that are given by twisting along vertical tori.  These automorphisms are the key ingredient in the final part of the proof of our main results.
In Section~\ref{sec:std}, we give proofs of Theorems~\ref{thmx:PC} and~\ref{thmx:weak} by considering certain handle decompositions of the Casson-Gordon homotopy 4--spheres corresponding to extensions of the closed monodromy of generalized square knots that are well adapted to the automorphisms referenced above.
In Section~\ref{sec:class_CG}, we turn our attention to a final analysis of monodromy extensions and disk-knots and prove Theorems~\ref{thmx:disks} and~\ref{thmx:extensions}.
 In Section~\ref{sec:tris}, we give trisections for Casson-Gordon homotopy 4--spheres and  discuss connections between the theory of trisections, the GPRC, and the Slice-Ribbon Conjecture arising from considerations of R-links and fibered, homotopy-ribbon knots.  

\subsection*{Acknowledgements}

The authors would like to thank the following people for their interest in this project and for helpful conversations: Mark Brittenham, Christopher Davis, Bob Gompf, Cameron Gordon,  Kyle Larson, Tye Lidman, Tom Mark, Maggie Miller, Marty Scharlemann, and Abby Thompson.

The first author was supported by NSF grants DMS-1400543 and DMS-1758087, and the second author was supported by NSF grant DMS-1664578 and NSF-EPSCoR grant OIA-1557417.

\section{Preliminaries}
\label{sec:prelim}

We begin with some standard declarations.  All manifolds are smooth and orientable unless specified.  If $Y \subset X$, we let $\nu(Y)$ denote an open regular neighborhood of $Y$ in $X$, and for ease of notation, we let $X \setminus Y = X - \nu(Y)$. The term \emph{$n$--dimensional genus $g$ handlebody} refers to the compact orientable $n$--manifold constructed by attaching $g$ $n$--dimensional 1--handles to an $n$--dimensional 0--handle.  We use the word \emph{handlebody} to mean a 3--dimensional handlebody; otherwise, we will specify dimension.  Let~$L$ be a framed link in $S^3$, with components $L_1$ and $L_2$ (and possibly others).  A \emph{handleslide} of $L_1$ over $L_2$ is the process by which $L$ is replaced with $L' = (L - L_1) \cup L_1'$, where $L_1'$ is the framed knot obtained by connecting $L_1$ to $L_2$ with a band. (See Section~5 of~\cite{GomSti_4-manifolds-and-Kirby_99} for complete details.)  If a link $L'$ can be obtained from $L$ by a finite sequence of handleslides, we say $L$ and $L'$ are \emph{handleslide-equivalent}.  If $U$ and $U'$ are unlinks and $L \sqcup U$ is handleslide-equivalent to $L' \sqcup U'$, we say $L$ and $L'$ are \emph{stably equivalent}.  Note that two stably equivalent R-links $L$ and $L'$ give rise to diffeomorphic 4--manifolds $X_L$ and $X_{L'}$.  A \emph{curve} contained in a surface $\Sigma$ is a free homotopy class of a simple loop that does not bound a disk in $\Sigma$ and is not parallel to a component of $\pd \Sigma$.

\subsection{Slice knots and links}
\label{subsec:Slice}
\ 

Throughout this section, let $B$ be a homotopy 4--ball; i.e., $B$ is a smooth, contractible 4--manifold with $\partial B \cong S^3$.  By~\cite{Fre_The-topology-of-four-dimensional_82}, $B$ is homeomorphic to $B^4$, the standard smooth 4--ball; it is unknown in general whether $B$ and $B^4$ are diffeomorphic.  A collection $\Dd$ of smooth, properly embedded disks in $B$ is called a \emph{disk-link}, or a \emph{disk-knot} if $\Dd$ is a single disk. A disk-link is called \emph{homotopy-ribbon} if the natural inclusion map $(S^3,\partial \Dd)\hookrightarrow(B,\Dd)$ induces a surjection $\pi_1(S^3\setminus\partial \Dd)\twoheadrightarrow\pi_1(B\setminus \Dd)$.  A disk-link $\Dd$ in $B^4$ is called \emph{ribbon} if $\Dd$ can be isotoped to have no local maxima with respect to the radial height function on $B^4$.

A link $L\subset S^3$ is called \emph{slice in $B$} (resp., \emph{homotopy-ribbon in $B$}) if $(S^3,L) = \partial(B,\Dd)$ for a disk-link (resp., homotopy-ribbon disk-link) $\Dd$ in some homotopy 4-ball $B$.  If $L$ is slice in $B^4$ (resp., homotopy-ribbon in $B^4$), we simply call $L$ \emph{slice} (resp., \emph{homotopy-ribbon}).  Finally, if $L$ bounds a ribbon disk-link in $B^4$, we say that $L$ is \emph{ribbon}.  These collections of links are related as follows:
$$\{\text{ribbon links}\}\subset\{\text{homotopy-ribbon links}\}\subset\{\text{slice links}\}.$$
Moreover, it is unknown whether any of the above set inclusions are set equalities. The notion of homotopy-ribbon links was introduced in~\cite{CasGor_A-loop-theorem_83}, while the notions of slice knots and ribbon knots date back to Fox~\cite{Fox_A-quick-trip_62,Fox_Some-problems_62}, who posited the famous Slice-Ribbon Conjecture, which asserts that every slice knot is ribbon.


For a link $L \subset S^3$, we will set the convention that $Y_L$ denotes the 3--manifold obtained by zero-framed Dehn surgery on each component of $L$.  In addition, define the \emph{exterior} of $L$ to be $E_L = S^3 \setminus L$.  Similarly, if $\Dd$ is a disk link in $B$, we define the \emph{exterior} of $\Dd$ to be $E_{\Dd} = B \setminus \Dd$.

\begin{lemma}
If $L$ is the boundary of a disk link $\Dd \subset B$, then $\pd E_{\Dd} = Y_L$.
\end{lemma}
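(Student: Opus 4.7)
The plan is to identify $\partial E_{\Dd}$ explicitly as a Dehn filling of $E_L$ and then verify that the filling slopes are precisely the $0$-framings of the components of $L$. First, fix tubular neighborhoods $\nu(D_i) \cong D_i \times D^2$ of each component $D_i$ of $\Dd$, chosen so that $\nu(D_i) \cap \partial B$ is a standard tubular neighborhood $L_i \times D^2$ of $L_i = \partial D_i$ in $S^3$. This produces an explicit decomposition
\[
\partial E_{\Dd} \;=\; E_L \;\cup_{\partial}\; \bigsqcup_i (D_i \times \partial D^2),
\]
where each $T_i := D_i \times \partial D^2$ is a solid torus glued along its boundary $\partial D_i \times \partial D^2$ to the component of $\partial E_L$ over $L_i$. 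So $\partial E_{\Dd}$ is manifestly a Dehn filling of $E_L$, and the task reduces to identifying the filling slopes.

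The meridian disk of $T_i$ is $D_i \times \{\mathrm{pt}\}$ for any $\mathrm{pt} \in \partial D^2$, with boundary $\lambda_i := \partial D_i \times \{\mathrm{pt}\}$ a longitude of $L_i$ (namely, the pushoff induced by the product framing of $\nu(D_i)$). To see that $\lambda_i$ represents the $0$-framing, observe that $\lambda_i$ bounds the parallel disk $D_i \times \{\mathrm{pt}\}$, which is disjoint from $L_i = \partial D_i \times \{0\}$; hence $[\lambda_i] = 0$ in $H_1(B \setminus L_i)$. The claim then reduces to showing the inclusion-induced map $H_1(S^3 \setminus L_i) \to H_1(B \setminus L_i)$ is an isomorphism. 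Both groups are infinite cyclic generated by a meridian of $L_i$: the first by Alexander duality in $S^3$; the second by combining contractibility of $B$ with the long exact sequence of $(B, B \setminus L_i)$ and the Thom isomorphism $H_2(B, B \setminus L_i) \cong H_0(L_i) \cong \Z$. Since the inclusion visibly sends meridian to meridian, the map is an isomorphism, and $[\lambda_i] = 0$ in $H_1(S^3 \setminus L_i)$ as well. Thus $\lambda_i$ is the $0$-framed longitude of $L_i$, and the Dehn filling description of $\partial E_{\Dd}$ coincides with the surgery description of $Y_L$.

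The only real subtlety I anticipate is that $L_i$ sits on $\partial B$ rather than in the interior, which makes the tubular-neighborhood conventions and the Thom-isomorphism calculation above slightly awkward to state directly. I would handle this by first enlarging $B$ with an external collar $\partial B \times [0,1]$ and pushing $L$ slightly into the interior: this operation changes neither the diffeomorphism type of $B$, nor that of $E_{\Dd}$, nor the relevant homotopy types, and it allows the standard interior-submanifold machinery to apply verbatim.
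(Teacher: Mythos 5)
Your route---explicitly identifying the filling solid tori $D_i \times \partial D^2$ and then showing the induced longitude $\lambda_i = \partial D_i \times \{\mathrm{pt}\}$ is the Seifert longitude of $L_i$---is a reasonable, more local alternative to the paper's argument, which instead computes $H_1(\partial E_{\Dd}) \cong \Z^c$ from the factorization $E_L \hookrightarrow \partial E_{\Dd} \hookrightarrow E_{\Dd}$ and reads the framing off the global $H_1$ count.  However, your key homological step is wrong.  The circle $L_i$ has codimension $3$ in the $4$--manifold $B$, not codimension $2$, so the Thom isomorphism gives $H_2(B, B\setminus L_i) \cong H_{2-3}(L_i) = 0$, not $H_0(L_i)$.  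Running the long exact sequence of $(B, B\setminus L_i)$ with $\widetilde H_*(B) = 0$ then yields $H_1(B\setminus L_i) = 0$.  (This is also visible from general position: any loop in $B$, and any nullhomotopy of such a loop, can be pushed off a codimension-$3$ circle, so $\pi_1(B\setminus L_i) \cong \pi_1(B) = 1$.)  Consequently the map $H_1(S^3\setminus L_i) \to H_1(B\setminus L_i)$ is the zero map $\Z \to 0$, not an isomorphism, and your conclusion $[\lambda_i] = 0$ in $H_1(S^3\setminus L_i)$ does not follow.

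The fix is to remove the disk rather than the boundary circle: work with $B\setminus D_i$ (or with $E_{\Dd}$ itself), which is a codimension-$2$ complement.  There excision gives $H_2(B, B\setminus D_i) \cong H_2(D^2, D^2\setminus\{0\}) \cong \Z$, hence $H_1(B\setminus D_i) \cong \Z$ generated by a meridian of $D_i$; the inclusion $S^3\setminus L_i \hookrightarrow B\setminus D_i$ sends meridian to meridian and so does induce an isomorphism on $H_1$; and your parallel disk $D_i \times\{\mathrm{pt}\}$ already lies in $B\setminus D_i$, so $[\lambda_i]=0$ there and the argument closes as you intended.  Note that this is essentially why the paper's proof works with $E_{\Dd}$ throughout rather than with the exterior of $L$ in $B$: it is the disk exterior that carries the meridian in degree one.
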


\begin{proof}
The boundary of the exterior $E_\Dd$ admits the following decomposition:
$$\partial E_\Dd = E_L \cup (\overline{\partial\nu(\Dd) \cap \Int(B)}).$$

The second factor is diffeomorphic to $c$ disjoint copies of $S^1\times D^2$.  Thus, $\partial E_\Dd$ is the result of some Dehn surgery on $L$.  Note that $H_1(E_L) = H_1(E_\Dd) = \Z^c$, and the map on $H_1$ induced by the inclusion $E_L \hookrightarrow E(\Dd)$ is an isormophism.  Note, however, that this inclusion factors as $E_L \hookrightarrow \pd E_\Dd \hookrightarrow E_\Dd$, and thus $H_1(\pd E_\Dd) = \Z^c$ as well.  It follows that the framing of the Dehn surgery on $L$ yielding $\pd E_\Dd$ is the 0--framing, so that $\pd E_\Dd = Y_L$.
\end{proof}

Recall that an $n$--component link $L$ in $S^3$ is an \emph{R-link} if zero-framed surgery on $L$ gives $Y_n=\#^n(S^1\times S^2)$.

\begin{proposition}\label{lem:cR=>hrib}
	Every R-link is homotopy-ribbon in a homotopy 4--ball.
\end{proposition}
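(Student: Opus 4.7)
The plan is to produce $B$ and $\Dd$ directly from the handle structure of $X_L$. Since $L$ is an $n$-component R-link, we have $X_L = B^4 \cup H_2 \cup H_3 \cup H_4$ as a homotopy 4-sphere, where $H_2$ consists of the $n$ zero-framed 2-handles attached along $L$, $H_3$ of the $n$ 3-handles, and $H_4$ of the 4-handle. I set $B := \overline{X_L - B^4}$, the closure of the complement of the 0-handle, and let $\Dd$ be the union of the cores of the 2-handles in $H_2$. Then $\Dd$ is a properly embedded disk-link in $B$ whose boundary $\partial \Dd = L$ lies on the 3-sphere $\partial B = \partial B^4$.

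The first step is to verify that $B$ is a homotopy 4-ball. Writing $X_L = B^4 \cup_{S^3} B$, van Kampen gives $\pi_1(B) \cong \pi_1(X_L) = 1$, and a short Mayer-Vietoris computation using $H_*(X_L) \cong H_*(S^4)$ yields $H_*(B) = 0$ for $* > 0$. Hence $B$ is a simply-connected CW-complex with vanishing reduced homology, and so is contractible.

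The second step is to analyze $E_\Dd = B \setminus \Dd$ up to homotopy. Turning $B$ upside down produces a dual handle decomposition $B = V \cup G_2$, where $V = \natural^n(S^1 \X B^3)$ is built from the dual 0-handle (dual to $H_4$) together with the $n$ dual 1-handles (dual to $H_3$), and $G_2$ is the collection of $n$ dual 2-handles (dual to the 2-handles in $H_2$). Under this dualization, the core of each original 2-handle of $H_2$ is precisely the cocore of the corresponding dual 2-handle in $G_2$. Deleting an open neighborhood of the cocore $\{0\}\X D^2$ from a 2-handle $D^2 \X D^2$ leaves $(D^2 \setminus \{0\}) \X D^2$, which deformation retracts onto the attaching region $S^1 \X D^2 \subset \partial V$. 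Thus $E_\Dd$ deformation retracts onto $V$, so $\pi_1(E_\Dd) \cong \pi_1(V)$ is free of rank $n$.

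The final step is the $\pi_1$ chase. The inclusion $E_L \hookrightarrow E_\Dd$ factors as $E_L \hookrightarrow \partial E_\Dd = Y_L = Y_n \hookrightarrow E_\Dd$. The first map $\pi_1(E_L) \twoheadrightarrow \pi_1(Y_n)$ is the quotient by the normal closure of the zero-framed longitudes of $L$, by the standard 0-surgery van Kampen computation. The second map $\pi_1(Y_n) \to \pi_1(E_\Dd)$ is induced by $\partial V \hookrightarrow V$, and is a $\pi_1$-isomorphism since the $S^1$-factor in each $S^1 \X S^2$ summand of $Y_n$ maps to the $S^1$-factor in the corresponding $S^1 \X B^3$ summand of $V$. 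The composition is surjective, certifying that $\Dd$ is homotopy-ribbon with boundary $L$. The one step requiring real care is the dual handle bookkeeping needed to identify $E_\Dd \simeq V$; once that is in place, the $\pi_1$ chase is routine.
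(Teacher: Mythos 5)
Your proof is correct and follows essentially the same route as the paper's: define $B = X_L \setminus \Int(B^4)$ and $\Dd$ as the cores of the 2--handles, show $B$ is a homotopy 4--ball, and identify $E_\Dd$ (up to the duality/retraction you spell out) with a 4--dimensional genus-$n$ handlebody so that $\pi_1(E_L) \twoheadrightarrow \pi_1(Y_L) \xrightarrow{\cong} \pi_1(E_\Dd)$. The only cosmetic differences are that you check contractibility of $B$ via van Kampen and Mayer--Vietoris rather than via $\chi(B)=1$, and that you unpack the dual-handle bookkeeping that the paper summarizes in one sentence ("composed of $n$ 3--handles and a 4--handle").
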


\begin{proof}
	Suppose $L\subset S^3$ is an R-link, and let $B$ be the 4--manifold obtained by attaching zero-framed 2--handles to the components of $L$ and capping off the resulting surgery manifold, which is $Y_n$ by hypothesis, with $n$ 3--handles and a 4--handle (i.e. $B$ is $X_L$ without its 0--handle).  Let $\Dd$ denote the cores of the 2--handles.  Then $\partial (B,\Dd) = (S^3,L)$, so it remains to show that $B$ is a homotopy 4--ball and that $\Dd$ is a homotopy-ribbon disk knot.
	
	The first claim follows from the fact that $B$ is built from $S^3$ without 1--handles, so it is simply-connected and $\chi(B) = 1$.  This implies, by theorems of Whitehead and Hurewicz, that $B$ is homotopic to a point (Corollary 4.33 of~\cite{Hat_Algebraic-topology_02}).
	
	To verify the second claim, observe that $Y_L$ is obtained by Dehn filling $E_L$, and thus the inclusion $E_L \hookrightarrow Y_L$ induces a surjection $\pi_1(E_L)\twoheadrightarrow\pi_1(Y_L)$.  In addition, $Y_L = \#^n (S^1 \X S^2) = \pd (E_{\Dd})$, where $E_{\Dd}$ is a 4--dimensional handlebody of genus~$n$, since it is composed of $n$ 3--handles and a 4--handle. Hence, $\pi_1(Y_L) = \pi_1(E_{\Dd})$, the free group on $n$ letters, and the inclusion $Y_L \hookrightarrow E_{\Dd}$ induces an isomorphism of fundamental groups.  It follows that $\pi_1(E_L)$ surjects onto $\pi_1(E_{\Dd})$, and $L$ is homotopy-ribbon in $B$.
\end{proof}

Note that the proof shows something even stronger:  Every $n$--component R-link $L$ is the boundary of a homotopy-ribbon disk-knot whose complement has free fundamental group of rank $n$.

\subsection{Fibered, homotopy-ribbon knots}
\label{subsec:FhRKs}
\ 

Let $X$ be a compact manifold, and let $\Phi\colon X\to X$ be a diffeomorphism.  The \emph{mapping torus} is the identification space
$$X\times_\Phi S^1 = (X\times I)/\sim,$$
where $I = [0,1]$ and $\sim$ is the equivalence relation $(x,1)\sim(\Phi(x),0)$ for all $x\in X$.  Note that in the case that $\partial X\not=\emptyset$, the boundary of a mapping torus is a mapping torus:
$$\partial(X\times_\Phi S^1) \cong \partial X\times_{\Phi\vert_{\partial X}}S^1.$$
The map $\Phi$ is called the \emph{monodromy}, and, for each $\theta\in S^1 = \{e^{2\pi i \theta} \in \C\}$, the submanifold $X\times_\Phi\{\theta\} = (X \times \{\theta\})/\sim \,\,\,\subset (X \X I)/\sim$ is called a \emph{fiber}. Recall that a knot $K\subset S^3$ is called \emph{fibered} if the knot exterior is the mapping torus
$$E_K \cong F\times_\varphi S^1,$$
with $\varphi\vert_{\partial F} = \id$.

Suppose that $K$ is a fibered knot, with 0--framed filling on $E_K$ denoted $Y_K$, as above.  Then $Y_K = \left(F \X_{\varphi} S^1\right) \cup (D^2 \X S^1)$, where $\pd F \X_{\varphi} \{\theta\} = \pd D^2 \X \{\theta\}$ for all $\theta \in S^1$, and thus this gluing has the effect of capping off each fiber with a disk.  Moreover, using $\varphi\vert_{\partial F} = \id$, we can (uniquely) extend $\varphi$ as the identity over this disk to a diffeomorphism $\wh\varphi\colon \wh F\to \wh F$, where $\wh F$ is the closed surface $F\cup D^2$.  It follows that $Y_K$ is a closed surface bundle $\wh F\times_{\wh\varphi}S^1$.  We call $\wh\varphi$ the \emph{closed monodromy} of $K$.

We say that a diffeomorphism $\wh\varphi$ of a closed surface $\wh F$ admits an \emph{extension} if there is a handlebody $H$ with $\partial H = \wh F$ and a diffeomorphism $\Phi\colon H\to H$ such that $\wh\varphi = \Phi\vert_{\wh F}$. Note that we are restricting our attention exclusively to the case where the monodromy extends over a handlebody, as opposed to the more general cases where it might extend over a compression body or a more general type of 3--manifold. An elegant characterization of fibered, homotopy-ribbon knots was given by Casson and Gordon.

\begin{theorem}\cite{CasGor_A-loop-theorem_83}\label{thm:CGEX}
	A fibered knot $K$ in $S^3$ is homotopy-ribbon in a homotopy 4-ball if and only if the closed monodromy for $K$ admits an extension.
\end{theorem}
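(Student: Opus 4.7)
The proof splits into two directions. The reverse direction ($\Leftarrow$) is a direct construction, while the forward direction ($\Rightarrow$) is the heart of the matter and relies on a loop theorem for $4$--dimensional duality spaces.

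For the reverse direction, given an extension $\Phi\colon H\to H$ of $\wh\varphi$, the plan is to form the handlebody bundle $N = H\times_\Phi S^1$, observe that $\partial N = \partial H\times_{\wh\varphi} S^1 = \wh F\times_{\wh\varphi} S^1 = Y_K$, and glue $N$ to the trace of $0$--surgery on $K$ along this common boundary to build a $4$--manifold $B$ with $\partial B = S^3$. The disk $D$ is taken to be the core of the $2$--handle, extended by $K\times I\subset S^3\times I$ to $\partial B$, so that $\partial D = K$. A Van Kampen computation, exploiting that the inclusion $\partial H\hookrightarrow H$ is $\pi_1$--surjective (and hence so is $Y_K\hookrightarrow N$), gives $\pi_1(B)=1$; combined with $\chi(B)=1$, this shows $B$ is a homotopy $4$--ball. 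The same surjectivity, applied to the composition $\pi_1(E_K)=\pi_1(F)\rtimes\Z \twoheadrightarrow \pi_1(H)\rtimes\Z = \pi_1(E_D)$, certifies that $(B,D)$ is homotopy-ribbon.

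For the forward direction, suppose $(B,D)$ is a fibered homotopy-ribbon disk-knot for $K$ and set $E_D = B\setminus\nu(D)$, so that $\partial E_D = Y_K$ already fibers over $S^1$ with fiber $\wh F$ and monodromy $\wh\varphi$. The first step is to extend the classifying map $\partial E_D\to S^1$ across $E_D$: the obstruction is a class in $H^2(E_D,\partial E_D;\Z)$ that vanishes because $B$ is a homotopy $4$--ball and $D$ a single disk, so $E_D$ has particularly simple (co)homology. The second step is to promote this extended map to an honest smooth fibration $E_D\to S^1$ with compact $3$--manifold fiber $W$ and $\partial W = \wh F$, using transversality and a Stallings-type fibration criterion on the infinite cyclic cover. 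The third and hardest step is to show that $W$ is a handlebody: here the homotopy-ribbon hypothesis is essential, and one applies Casson--Gordon's loop theorem for duality spaces, concluding that any essential simple loop in $\wh F=\partial W$ that dies in $W$ bounds an embedded compressing disk, so iterated compression collapses $W$ to a handlebody $H$. The monodromy of the resulting fibration is the desired extension $\Phi\colon H\to H$ of $\wh\varphi$.

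The primary obstacle is this third step. Because $E_D$ is a $4$--manifold rather than a $3$--manifold, classical Papakyriakopoulos-style tower arguments do not apply directly; instead one must work in the infinite cyclic cover and combine Poincar\'e--Lefschetz duality with the finiteness forced by the surjection $\pi_1(E_K)\twoheadrightarrow\pi_1(E_D)$ to extract the needed compressing disks. This is precisely where the homotopy-ribbon condition enters the proof, and without it the fiber of the extended map can genuinely fail to be a handlebody.
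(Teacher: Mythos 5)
Your reverse direction is essentially the standard construction (and mirrors the paper's Proposition~\ref{CGR} and Lemma~\ref{lem:cR=>hrib}): glue the handlebody bundle $H\times_\Phi S^1$ to the trace of $0$--surgery on $K$, compute $\pi_1$ and $\chi$, and check the $\pi_1$--surjectivity that certifies homotopy-ribbon. That part is fine.

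The forward direction, however, has a genuine gap at step 2, and this gap is what the entire Casson--Gordon machinery was built to avoid. You claim that the map $\partial E_D\to S^1$ extends to an honest smooth fibration $E_D\to S^1$ with a compact $3$--manifold fiber $W$, citing ``a Stallings-type fibration criterion on the infinite cyclic cover.'' No such criterion exists in dimension~$4$. Stallings' theorem is a $3$--manifold result; Farrell's fibering theorem requires ambient dimension at least~$6$ and has a Whitehead-torsion obstruction even there; and there is no surgery-theoretic fibering criterion for smooth $4$--manifolds. Nor should you expect $E_D$ itself to fiber: Corollary~\ref{coro:CGEX2} produces \emph{some} fibered pair $(B_\Phi, R_\Phi)$ realizing the extension, not that the given pair $(B,D)$ has fibered exterior. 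Indeed, if $B$ is an exotic homotopy $4$--ball, there is no reason for $E_D$ to be diffeomorphic to a handlebody bundle. The actual Casson--Gordon argument never establishes that $E_D$ fibers; it works directly with the infinite cyclic cover $\widetilde{E_D}$, which is a noncompact $4$--manifold with $\partial\widetilde{E_D}=\wh F\times\R$. One shows, using Poincar\'e--Lefschetz duality and the surjection $\pi_1(\wh F)\twoheadrightarrow\pi_1(\widetilde{E_D})$ (coming from the homotopy-ribbon hypothesis via commutator subgroups), that $\widetilde{E_D}$ is a \emph{duality space}, and then one applies their loop theorem for duality spaces to $\wh F\subset\partial\widetilde{E_D}$ to produce disjoint simple closed curves on $\wh F$ normally generating $\ker(\pi_1(\wh F)\to\pi_1(\widetilde{E_D}))$. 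These curves define a handlebody $H$, and since the deck transformation carries $\wh F\times\{0\}$ to $\wh F\times\{1\}$ through a product region in $\partial\widetilde{E_D}$, the kernel is $\wh\varphi_*$--invariant; Proposition~\ref{cggen} then yields the extension. Your step 3 is internally inconsistent on exactly this point: if $W$ really were a compact $3$--manifold fiber, the classical loop theorem for $3$--manifolds would suffice and there would be no need for the duality-space version, whereas your discussion of why Papakyriakopoulos-style towers fail (``because $E_D$ is a $4$--manifold'') only makes sense if you are in fact working with the noncompact $4$--dimensional cover $\widetilde{E_D}$, not with a $3$--manifold $W$. Replacing the unjustified fibering step with the homological/duality argument on $\widetilde{E_D}$ is precisely the content of Casson and Gordon's paper.
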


The characterization also relates an extension of the monodromy of a homotopy-ribbon knot to the topology of a homotopy-ribbon disk exterior via the following corollary.

\begin{corollary}\cite{CasGor_A-loop-theorem_83}\label{coro:CGEX2}
	Suppose $K$ is a fibered knot in $S^3$ with monodromy~$\varphi$.  If $K$ is homotopy-ribbon in some homotopy 4--ball $B$, then there is an extension $\Phi$ of $\wh \varphi$ and a disk $R_\Phi$ in a homotopy 4--ball $B_{\Phi}$ such that that
\[ B_{\Phi} \setminus R_\Phi \cong H \X_{\Phi} S^1.\]
\end{corollary}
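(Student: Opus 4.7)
The plan is to apply Theorem~\ref{thm:CGEX} to obtain an extension $\Phi$ and then explicitly construct the disk-ball pair $(B_\Phi, R_\Phi)$ by capping off the mapping torus $W = H \times_\Phi S^1$ with a 4-dimensional 2-handle.

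First, by hypothesis and Theorem~\ref{thm:CGEX}, there exist a handlebody $H$ with $\partial H = \wh F$ and a diffeomorphism $\Phi\colon H \to H$ restricting to $\wh\varphi$ on the boundary. Form $W = H \times_\Phi S^1$, so that $\partial W = \wh F \times_{\wh\varphi} S^1 = Y_K$. Inside $\partial W = Y_K$, the dual knot $K^*$ (the core of the solid torus filled in during 0-surgery on $K$) appears as the vertical circle $\{*\} \times_{\wh\varphi} S^1$, where $*$ lies in the capping disk $\delta \subset \wh F$ on which $\wh\varphi$ is the identity, and $K^*$ inherits a canonical product framing from the fibration. Define $B_\Phi = W \cup h^2$, where $h^2$ is a 4-dimensional 2-handle attached along $K^*$ with this framing, and let $R_\Phi$ be the core disk of $h^2$. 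On the boundary, this 2-handle attachment performs Dehn surgery on $K^*$ with the framing that reverses the 0-surgery on $K$, so $\partial B_\Phi = S^3$, $\partial R_\Phi = K$, and
$$ B_\Phi \setminus \nu(R_\Phi) \cong W = H \times_\Phi S^1.$$

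The remaining task is to verify $B_\Phi$ is a homotopy 4-ball. Since $W$ fibers over $S^1$, $\chi(W) = 0$ and therefore $\chi(B_\Phi) = 1$. For the fundamental group, $\pi_1(W) = \pi_1(H) \rtimes_{\Phi_*} \langle t \rangle$, where $t$ corresponds to the $S^1$-direction and represents the class of $K^*$. Attaching $h^2$ kills $t$, giving
$$\pi_1(B_\Phi) = \pi_1(H)/\langle\langle x\,\Phi_*(x)^{-1} : x \in \pi_1(H) \rangle\rangle.$$
The main obstacle is showing this quotient is trivial, which relies crucially on $K$ living in $S^3$: the knot group $\pi_1(E_K) = \pi_1(F) \rtimes_{\varphi_*} \langle t \rangle$ is normally generated by the meridian $t$ (since $S^3 = E_K \cup_{\mu_K} D^2$ is simply connected), forcing $\pi_1(F) = \langle\langle y\,\varphi_*(y)^{-1} : y \in \pi_1(F) \rangle\rangle$. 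Pushing this forward along the surjection $\pi_1(F) \twoheadrightarrow \pi_1(\wh F) \twoheadrightarrow \pi_1(H)$ induced by the inclusions $F \hookrightarrow \wh F = \partial H \hookrightarrow H$ transfers the normal generation to $\pi_1(H)$, yielding $\pi_1(B_\Phi) = 1$. Combined with $\chi(B_\Phi) = 1$, Poincar\'e-Lefschetz duality together with the Hurewicz and Whitehead theorems imply $B_\Phi$ is contractible, hence a homotopy 4-ball.
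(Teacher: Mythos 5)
Your construction is correct and essentially mirrors the one the paper uses to build the Casson--Gordon ball $B_\Phi$ in the proof of Proposition~\ref{CGR}: the paper attaches a $0$--framed $2$--handle to $S^3\times I$ along $K$ and then caps the resulting $Y_K$ boundary component with $H\times_\Phi S^1$, whereas you build the same $4$--manifold from the inside out, starting from $W = H\times_\Phi S^1$ and attaching a $2$--handle along the dual knot $K^*\subset Y_K=\partial W$. One small terminology slip: with $h^2$ attached along $K^*$, the properly embedded disk with boundary $K\subset S^3=\partial B_\Phi$ and exterior $W$ is the \emph{co-core} of $h^2$, not the core (the core has boundary $K^*$, which lies in the interior of $B_\Phi$). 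The genuine value you add beyond the paper's treatment, which simply cites Casson and Gordon for this corollary, is the explicit verification that $\pi_1(B_\Phi)=1$: because $K\subset S^3$, the knot group $\pi_1(E_K)=\pi_1(F)\rtimes_{\varphi_*}\langle t\rangle$ is normally generated by the meridian $t$, so that $\pi_1(F)=\langle\langle y\,\varphi_*(y)^{-1}\rangle\rangle$, and pushing this relation forward along the surjections $\pi_1(F)\twoheadrightarrow\pi_1(\wh F)\twoheadrightarrow\pi_1(H)$, which intertwine $\varphi_*$, $\wh\varphi_*$, and $\Phi_*$, yields $\pi_1(H)=\langle\langle x\,\Phi_*(x)^{-1}\rangle\rangle$, hence $\pi_1(B_\Phi)=1$. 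This argument is clean and correct, and it pinpoints exactly where the hypothesis $K\subset S^3$ is used.
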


The next lemma and discussion following it outlines a connection between the Casson-Gordon Theorem and the notion of R-links introduced above.  The lemma is well-known, but we offer a proof that is motivated by the techniques used later in this paper.

\begin{lemma}\label{handlebundle}
	Suppose that $\wh F$ is a genus $g$ surface bounding a handlebody $H$, and let~$\Phi:H \rightarrow H$ be a diffeomorphism.  Then $H \X_{\Phi} S^1$ has a handle decomposition with a 0--handle, $g+1$ 1--handles, and $g$ 2--handles.
\end{lemma}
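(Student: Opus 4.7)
The plan is to construct a handle decomposition of $M = H \times_{\Phi} S^1$ as the ``twisted product'' of a handle decomposition of $H$ with the standard decomposition of $S^1$ having one $0$-cell and one $1$-cell. In the untwisted case $\Phi = \id$, the product manifold $H \times S^1$ admits a handle decomposition in which each pair consisting of an $i$-handle of $H$ and a $j$-handle of $S^1$ contributes an $(i+j)$-handle; the same count will persist for general $\Phi$, with only the attaching maps changing.

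Begin by choosing a handle decomposition of $H$ with one $0$-handle $h^0$ and $g$ one-handles $h^1_1,\ldots,h^1_g$. Take a small closed arc $I_0 \subset S^1$ around the $0$-cell, and let $I_1$ denote its complementary arc. Since the fibration $\pi\colon M \to S^1$ is trivial over $I_0$, the preimage $\pi^{-1}(I_0) \cong H \times I_0$ inherits a $4$-dimensional handle decomposition from the product: one $0$-handle $h^0 \times I_0$ and $g$ one-handles $h^1_i \times I_0$. This accounts for $g+1$ of the required handles. Next, attach $\pi^{-1}(I_1) \cong H \times I_1$ along $H \times \partial I_1$, using the identity on one end and $\Phi$ on the other. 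This attachment contributes one additional $4$-dimensional $1$-handle $h^0 \times I_1$ (with feet on small disks in $h^0 \times \{0\}$ and $h^0 \times \{1\}$ matched via $\Phi$, closing up the $S^1$ direction) and $g$ two-handles $h^1_i \times I_1$ (completing the gluing across the $1$-handles of $H$), yielding the claimed count $1 + (g+1) + g$.

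The main obstacle is verifying that these twisted attaching data genuinely reconstruct $M$ rather than merely some $4$-manifold with the same handle counts. I would check this by explicitly tracing the attaching data: the new $1$-handle realizes the generator $t$ corresponding to the $S^1$ factor, and each $2$-handle $h^1_i \times I_1$ is attached along a circle that traverses $h^1_i \times \{0\}$, the new $1$-handle, and $\Phi(h^1_i) \times \{1\}$, encoding the relation $t a_i t^{-1} = \Phi_*(a_i)$ in $\pi_1(M) = \pi_1(H) \rtimes_{\Phi_*} \Z$. As a consistency check, $\chi(M) = \chi(H)\chi(S^1) = 0 = 1 - (g+1) + g$, and when $\Phi = \id$ the decomposition specializes to the usual product handle decomposition of $H \times S^1$.
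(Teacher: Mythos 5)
Your proof is correct, and it is essentially the paper's argument read in the opposite direction. The paper constructs the dual handle decomposition --- starting from a collar $(\wh F \X_{\Phi|_{\wh F}} S^1) \X I$ of the boundary, attaching $g$ 2--handles along a cut system $L$ for $H$, then $g+1$ 3--handles and a 4--handle --- and explicitly inverts; you instead build upward from the 0--handle by taking a twisted product of a handle decomposition of $H$ with the standard handle decomposition of $S^1$. These are dual descriptions of the same underlying decomposition of $H \X_\Phi S^1$ into two thickened fibers $H \X I$ (your $\pi^{-1}(I_0)$ and $\pi^{-1}(I_1)$ are the paper's collar of $H \X_\Phi \{0\}$ and its complement $H \X_\Phi [\epsilon,1-\epsilon]$), with your 1--handles $h^1_i \X I_0$ dual to the paper's 2--handles attached along $L$. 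The paper's orientation is the one used directly in Proposition~\ref{CGR}, where the 2--handles attached along $K \cup L$ describe the CG-ball $B_\Phi$; your direct construction is a touch more transparent for reading off $\pi_1(H \X_\Phi S^1)\cong\pi_1(H)\rtimes_{\Phi_*}\Z$, as you observe.
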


\begin{proof}
	We will show that $H \X_{\Phi} S^1$ can be constructed by gluing $g$ 4--dimensional 2--handles to $(\wh F \X_{\Phi\vert_{\wh F}} S^1) \X I$ followed by attaching $(g+1)$ 3--handles and a 4--handle.  Inverting this decomposition gives the desired result.  Let $L$ be a collection of $g$ pairwise disjoint curves in $\wh F$ that bound a collection $\Dd$ of disks in $H$.  Then $\wh F$ can be capped off with $g$ 3--dimensional 2--handles along $L$ and one 3--dimensional 3--handle to obtain $H$, and thus a collar $\wh F \X I$ can be capped off with $g$ 4--dimensional 2--handles along $L$ and one 4--dimensional 3--handle to obtain $H \X I$.  Consider a collar neighborhood of $H \X_{\Phi} \{0\} = H \X_{\Phi} \{1\} \subset H \X_{\Phi} S^1$, whose complement is $H \X_{\Phi} [\epsilon,1-\epsilon] \cong H \X I$.  Since $H \X I$ is a 4--dimensional genus $g$ handlebody, it can be built with $g$ 3--handles and a 4--handle.  Thus, $H \X_{\Phi} S^1$ can be obtained by attaching $g$ 2--handles to $(\wh F \X_{\Phi\vert_{\wh F}} S^1) \X I$ along $L$ followed by attaching $(g+1)$ 3--handles and a 4--handle.
\end{proof}



Let $F$ be a genus $g$ Seifert surface for a knot $K \subset S^3$.  A $g$--component link $L = L_1\cup\cdots\cup L_g$ contained in $F$ is called a \emph{derivative} for $K$ if the classes $[L_i]$ are independent in $H_1(F)$ and if $\lk(L_i,L_j) = 0$ for all $i,j$, where $\lk(L_i,L_i)$ is calculated with a pushoff of $L_i$ in $F$.  In light of the previous lemma, suppose that $K$ is a fibered knot with Seifert surface $F$ and monodromy $\varphi$.  Let $L \subset F$ be a derivative for $K$, and let $H$ be the (abstract) handlebody determined by $L$.  We call $L$ a \emph{CG-derivative} (short for \emph{Casson-Gordon derivative}) if the closed monodromy $\wh \varphi$ admits an extension to $H$.  CG-derivatives are central to this paper, as indicated by Theorem \ref{thmx:CGequiv} and the next proposition.

\begin{proposition}\label{CGR}
Suppose $K$ is a fibered knot with CG-derivative $L$.  Then both $L$ and $K \cup L$ are R-links.
\end{proposition}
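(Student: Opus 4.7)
The plan is to establish both claims by first proving that $K \cup L$ is a $(g+1)$-component R-link using Lemma~\ref{handlebundle}, then deducing the claim for $L$ by recognizing $K$ as an unknot in $Y_L$ and invoking prime decomposition.

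First I would tackle $K\cup L$. Using the CG-derivative hypothesis, I fix an extension $\Phi\colon H\to H$ of the closed monodromy $\wh\varphi$, where $H$ is the handlebody in which $L$ bounds a cut system of disks. Applying Lemma~\ref{handlebundle} to $W = H\times_\Phi S^1$, which has boundary $Y_K$, and reading the handle decomposition dually (from the boundary inward), $W$ is built from $Y_K \times I$ by attaching $g$ four-dimensional $2$--handles along $L \subset \wh F \times \{\theta\}\subset Y_K \times \{1\}$ with the $F$--framing, followed by $(g{+}1)$ $3$--handles and a $4$--handle. The vanishing self-linking in the definition of derivative identifies the $F$--framing with the $0$--framing in $S^3$, so this $2$--handle step realizes $0$--surgery on $L$ in $Y_K$, which equals $0$--surgery on $K\cup L$ in $S^3$ (since $K$ and $L$ are disjoint with vanishing linking numbers). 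The resulting $3$--manifold is then capped off by $(g{+}1)$ $3$--handles and a $4$--handle, forcing it to be $\#^{g+1}(S^1\times S^2)$ and establishing that $K\cup L$ is an R-link.

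For the second claim, I would show that $K$ bounds an embedded disk in $Y_L$. The derivative conditions imply that $L$ is a cut system for $\wh F$: the vanishing linking numbers force the Seifert form, and hence the intersection form $J = V - V^T$, to vanish on $\mathrm{span}([L_i])$, so these $g$ disjoint classes span a Lagrangian of $H_1(\wh F)$, which for a disjoint collection of simple closed curves is equivalent to being a cut system. Consequently, cutting $F$ along $L$ produces a sphere with $2g+1$ boundary components: $K$ together with a parallel pushoff pair $L_i^\pm$ for each $L_i$. Viewing this surface in $S^3\setminus\nu(L)$, each $L_i^\pm$ is an $F$--pushoff of $L_i$, and since the $F$--framing is the $0$--framing, after $0$--surgery on $L$ every $L_i^\pm$ is parallel to a meridian of the newly glued-in solid torus at $L_i$ and hence bounds a disk there. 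Capping off along these $2g$ disjoint disks yields a surface $F' \subset Y_L$ with $\partial F' = K$ and Euler characteristic $(1-2g) + 2g = 1$; hence $F'$ is a disk.

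Finally, since $F$ and $F'$ agree in a neighborhood of $K$, the disk framing induced by $F'$ equals the Seifert framing on $K$, i.e., the $0$--framing. Therefore $0$--surgery on $K$ inside $Y_L$—which is exactly $Y_{K\cup L}$—is the standard $0$--surgery on an unknot with disk framing, and gluing $F'$ to a meridian disk of the new solid torus at $K$ produces a separating $2$--sphere, identifying the result as $Y_L \# (S^1 \times S^2)$. Combining with the first part gives $Y_L \#(S^1\times S^2) = \#^{g+1}(S^1\times S^2)$, and the Kneser--Milnor prime decomposition theorem yields $Y_L = \#^g(S^1\times S^2)$. The main obstacle will be cleanly tracking framings across successive surgeries on $S^3$, $Y_K$, and $Y_L$ (and confirming that disjoint $F$--pushoffs cap off by disjoint disks); conceptually, Lemma~\ref{handlebundle} supplies the key input for $K \cup L$, and the connect-sum identity then propagates the result to $L$.
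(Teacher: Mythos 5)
Your proof of the first claim (that $K\cup L$ is an R-link) is essentially the paper's argument: both invoke Lemma~\ref{handlebundle} to realize $H\times_\Phi S^1$ as a relative cobordism built from $Y_K\times I$ by attaching $g$ zero-framed $2$--handles along $L$ followed by $(g{+}1)$ $3$--handles and a $4$--handle, which forces the boundary after the $2$--handle stage to be $\#^{g+1}(S^1\times S^2)$. For the second claim, however, your route is genuinely different. The paper stays four-dimensional: it handleslides $K$ over the components of $L$ inside the planar surface $F\setminus L$ until $K$ bounds a disk disjoint from $L$, then cancels the corresponding $2$--handle/$3$--handle pair in $B_\Phi$, leaving a relative decomposition whose $2$--handles are attached along $L$ alone. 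You instead work three-dimensionally: you cap off $F\setminus\nu(L)$ with the $2g$ meridian disks of the new solid tori in $Y_L$ (valid because the surface framing is the $0$--framing) to exhibit an embedded disk $F'\subset Y_L$ bounded by $K$; then $Y_{K\cup L}\cong Y_L\#(S^1\times S^2)$, and Kneser--Milnor uniqueness gives $Y_L\cong\#^g(S^1\times S^2)$. Your approach buys a concrete picture of why $K$ becomes trivial after surgery on $L$, while the paper's keeps the whole argument in the handle-calculus language it will use again later.

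Two small remarks. First, the sphere obtained by gluing $F'$ to a meridian disk of the dual solid torus at $K$ is \emph{non-separating}, not separating, since it has intersection number one with the core of that torus. The standard way to reach your conclusion is either to observe that a non-separating $2$--sphere in a closed orientable $3$--manifold exhibits an $S^1\times S^2$ connect summand, or, more cleanly, to observe that since $K$ bounds a disk it lies in a $3$--ball $B\subset Y_L$, so the surgery is local and replaces $B$ by a punctured $S^1\times S^2$. Either fix gives $Y_{K\cup L}\cong Y_L\#(S^1\times S^2)$, so the argument survives. Second, the detour through the Seifert form to conclude that $L$ spans a Lagrangian is unnecessary: the intersection form already vanishes on $\mathrm{span}([L_i])$ simply because the $L_i$ are pairwise disjoint, and connectivity of $F\setminus L$ follows from independence of the $[L_i]$ in $H_1(F)$ (a proper nonempty subcollection separating $F$ would have a signed homology sum equal to zero). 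Both the paper and your proof rely on this connectivity, so it is worth stating the correct justification.
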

\begin{proof}
As above, let $F$ be a genus $g$ fiber for $K$, let $H$ be the handlebody determined by $L$, let $\varphi$ be the monodromy of $K$, and let $\Phi$ be an extension of $\wh \varphi$ to $H$.  We construct a compact 4--manifold $B_{\Phi}$ by the following process:  First, attach a 0--framed 2--handle to $S^3 \X I$ along $K \X \{1\}$.  The resulting 4--manifold has two boundary components, one diffeomorphic to $S^3$ and the other diffeomorphic to $Y_K$, the result of 0--surgery on $K$.  Since $Y_K = \wh F \X_{\wh \varphi} S^1$, we can cap off this boundary component with $H \X_{\Phi} S^1$ to get a compact 4--manifold we call $B_{\Phi}$, where $\pd B_{\Phi} \cong S^3$.  By Lemma~\ref{handlebundle}, $B_{\Phi}$ has a handle decomposition relative to its boundary with $g+1$ 2--handles and $g+1$ 3--handles, and thus the attaching link for the 2--handles, namely $K \cup L$, is an R-link in $S^3$.

To see that $L$ is also an R-link by itself, we note that $F \setminus L$ is a connected planar surface with $2g+1$ boundary components, one of which corresponds to $K$.  As such, there is a sequence of handleslides of $K$ over the components of $L$ that takes $K$ to $K'$, where $K'$ bounds a disk in $F \setminus L$.  Thus, after handleslides, the 2--handle that attaches to $K'$ in the handle decomposition of $B_{\Phi}$ cancels a 3--handle, and so $B_{\Phi}$ can be built with $g$ 2--handles and $g$ 3--handles, where $L$ is the R-link that serves at the attaching link for the 2--handles.
\end{proof}

We call the manifold $B_\Phi$ a \emph{Casson-Gordon homotopy 4--ball}, or \emph{CG-ball}, for short.  Since $\partial B_\Phi = S^3$, we can cap off $B_\Phi$ with a standard $B^4$ to obtain a homotopy four-sphere $X_\Phi$, which we call a \emph{Casson-Gordon homotopy 4--sphere}, or \emph{CG-sphere}, for short.

Let $R_\Phi$ denote the core of the 2--handle that is attached along $K$ in the handle decomposition of $B_\Phi$ described above.  We have seen (cf. Corollary~\ref{coro:CGEX2}) that $R_\Phi$ is a fibered, homotopy-ribbon disk for for $K$ in $B_\Phi$.  We call $R_\Phi$ a \emph{Casson-Gordon disk}, or \emph{CG-disk} for short.  We refer to $(B_\Phi, R_\Phi)$ as a \emph{CG-pair}.

Finally, Casson and Gordon also provided a useful criterion to decide whether a given derivative is a CG-derivative, using only the the action $\wh\varphi_*$ of the closed monodromy $\wh\varphi$ on $\pi_1(\wh F)$.  For any derivative $L$ for $K$ in $F$, let $N$ denote the normal subgroup of $\pi_1(\wh F)$ generated by the homotopy classes of the components of $L$.  Observe that if $L$ is a CG-derivative, then $\wh \varphi_*(N) = N$, since there is an extension $\Phi$ of $\wh \varphi$ and thus $\wh \varphi$ preserves the kernel of the map $\pi_1(\wh F)\twoheadrightarrow \pi_1(H)$ induced by inclusion, which is equal to $N$.  Casson and Gordon strengthened this connection with the following converse.

\begin{proposition}\cite{CasGor_A-loop-theorem_83}
\label{cggen}
	Let $L \subset F$, where $F$ is a Seifert surface for a fibered knot $K$ with closed monodromy $\wh \varphi$, such that $F \setminus L$ is a connected planar surface, and let $N$ be the normal subgroup of $\pi_1(\wh F)$ generated by the homotopy classes of the components of $L$.  If $\wh \varphi_*(N) = N$, then $L$ is a CG-derivative.
\end{proposition}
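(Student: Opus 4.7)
The strategy is to build the handlebody $H$ directly from the cut system $L$, show that $\wh\varphi(L)$ bounds a second cut system for the same $H$ via the loop theorem, and then assemble an extension of $\wh\varphi$ over $H$ by cutting $H$ along both cut systems and invoking the Alexander trick on the resulting balls.

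Because $F\setminus L$ is a connected planar surface, $L$ has exactly $g = g(\wh F)$ components and $\wh F\setminus L$ is a $2g$--punctured sphere. Hence $L$ forms a cut system on $\wh F$ which determines a genus--$g$ handlebody $H$ with $\partial H = \wh F$, in which the components of $L$ bound pairwise disjoint meridian disks $D_1,\ldots,D_g$. A standard Van Kampen argument identifies $N$ with the kernel of the surjection $\pi_1(\wh F)\twoheadrightarrow \pi_1(H)$ induced by inclusion, so our task is to construct an extension $\Phi\colon H\to H$ of $\wh\varphi$.

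The next step is to show that $\wh\varphi(L)$ bounds a complete cut system in $H$. The hypothesis $\wh\varphi_*(N)=N$ implies that each $\wh\varphi(L_i)$ is null-homotopic in $H$, so by the loop theorem it bounds an embedded disk in $H$. Applying the loop theorem inductively---at each stage, cut $H$ along the disks already produced and reapply inside the resulting smaller handlebody---yields pairwise disjoint embedded disks $D_1',\ldots,D_g'$ with $\partial D_i'=\wh\varphi(L_i)$. Since $\wh\varphi$ is a homeomorphism of $\wh F$, the complement $\wh F\setminus \wh\varphi(L)$ is again a connected planar surface, so cutting $H$ along $\{D_i'\}$ produces a connected $3$--manifold with sphere boundary, which must be a ball by Alexander's theorem. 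Thus $\{D_i'\}$ is a complete cut system for $H$.

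Finally, let $B_1$ and $B_2$ denote the $3$--balls obtained by cutting $H$ along $\{D_i\}$ and $\{D_i'\}$, respectively. The map $\wh\varphi$ restricts to a homeomorphism of planar surfaces $\wh F\setminus L \to \wh F\setminus \wh\varphi(L)$. For each $i$, choose any homeomorphism $D_i\to D_i'$ that agrees with $\wh\varphi$ on $L_i$ and use it on both sides of the cut; combining these yields a homeomorphism $\partial B_1\to \partial B_2$. By the Alexander trick, this extends to a homeomorphism $B_1\to B_2$, which upon reidentifying the cut disks descends to a homeomorphism $\Phi\colon H\to H$ with $\Phi\vert_{\wh F}=\wh\varphi$, exhibiting $L$ as a CG-derivative. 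The main technical obstacle is the disjointness of the disks $D_i'$: a single application of the loop theorem does not produce a disjoint family, so the inductive cut-and-compress argument above (or equivalently, an innermost-disk reduction of intersections between successive disks) is required.
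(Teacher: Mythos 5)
The paper cites this proposition directly from Casson--Gordon without proof, so there is nothing in the source to compare against. Your strategy --- build $H$ from the cut system $L$, identify $N$ with $\ker\bigl(\pi_1(\wh F)\to\pi_1(H)\bigr)$, deduce that each $\wh\varphi(L_i)$ is null-homotopic in $H$, promote this to a disjoint family of embedded disks via Dehn's lemma with innermost-disk disjointification, and then extend $\wh\varphi$ by cutting along both cut systems and applying the Alexander trick --- is the standard approach and it is sound. Two small imprecisions: the step producing an embedded disk with prescribed boundary $\wh\varphi(L_i)$ is Dehn's lemma rather than the loop theorem (the loop theorem only furnishes a disk whose boundary is \emph{some} nontrivial element of the kernel); and the appeal to Alexander's theorem to conclude the cut-open $3$--manifold is a ball is misplaced, since a compact orientable $3$--manifold with $S^2$ boundary need not be a ball. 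The correct, elementary justification is that cutting a handlebody along a non-separating properly embedded disk yields a handlebody of one lower genus, and each $D_i'$ is non-separating because $\wh F\setminus\wh\varphi(L)$ is connected and planar.

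A more substantive point: the conclusion ``$L$ is a CG-derivative'' packages two conditions under the paper's definition --- that $L$ is a \emph{derivative} (the $[L_i]$ are independent in $H_1(F)$ and $\lk(L_i,L_j)=0$ for all $i,j$, including self-linking with the surface framing) and that $\wh\varphi$ extends over the handlebody $H$. You prove only the extension. Independence is immediate from the cut-system hypothesis, but vanishing of the linking matrix is an extrinsic condition on the embedding $F\subset S^3$ and is not a consequence of surface topology alone; it must be derived from the extension. One way: given $\Phi$, form $Z=H\times_\Phi S^1$ and the homotopy $4$--ball $B_\Phi$ as in Proposition~\ref{CGR}; the meridian disks in a single handlebody fiber extend by vertical annuli to disjoint properly embedded disks $\wt D_i\subset B_\Phi$ whose boundaries are the $L_i$ with their $F$--framings, and disjointness together with $H_2(B_\Phi)=0$ forces $\lk(L_i,L_j)=0$ and the $F$--framing to agree with the $0$--framing. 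Without some such remark the argument establishes only that $\wh\varphi$ extends over $H$, which is formally weaker than the stated conclusion.
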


\section{Stable equivalence classes of 2R-links}
\label{sec:stable2R}

In this section, we prove Theorem \ref{thmx:CGequiv}, which asserts that if $L = Q \cup J$ is a 2R-link and $Q$ is fibered, then $L$ is stably equivalent to the link $Q \cup L^+$, where $L^+$ is a CG-derivative for $Q$.  The machinery used in the proof of this theorem includes a decomposition called a Heegaard double (cf. \cite{GomSchTho_Fibered-knots_10}), along with ideas from thin position of Heegaard splittings.  The notation of this section is basically self-contained; we will use letters and symbols here that denote unrelated objects elsewhere.

Let $S$ be a closed surface with one or two components; in the two-component case, suppose neither component is a 2--sphere.  Consider the product $S \X I$, letting $S^+ = S \X \{1\}$ and $S^- = S \X \{0\}$.  Let $\Delta^+$ be a pair of disks contained in $S^+$, let $\Delta^-$ be a pair of disks in $S^-$, and let $\Delta$ be the four disks $\Delta^+ \cup \Delta^-$.  We require that if $S$ is disconnected, then $\Delta^{\pm}$ contains one disk in each component of $S^{\pm}$.  Attach 1--handles $H^{\pm}$ to $S^{\pm}$ along $\Delta^{\pm}$.  We let $\Sigma^{\pm}$ denote the resulting two boundary components of $(S \X I) \cup H^- \cup H^+$, noting that $\Sigma^+$ and $\Sigma^-$ are connected, even if $S$ is disconnected.  Finally, suppose $h\colon\Sigma^+ \rightarrow \Sigma^-$ is a diffeomorphism.  Then we can build a 3--manifold $Y$ by gluing $\Sigma^+$ to $\Sigma^-$ via $h$, and we call such a decomposition $(Y; S, \Delta,h)$ a \emph{Heegaard double}, observing that $S$, $\Delta$, and $h$ uniquely determine $Y$.  We let $c^{\pm}$ denote the boundary of the co-core $D^{\pm}$ of the 1--handle $H^{\pm}$, so that $c^{\pm}$ bounds a compressing disk for $\Sigma^{\pm}$.  Note that $c^{\pm}$ is non-separating if and only if $S$ is connected, and thus either both $c^+$ and $c^-$ are separating or both are non-separating in $\Sigma^{\pm}$.  In addition, requiring that $S$ does not have a 2--sphere component in the disconnected case guarantees that $c^{\pm}$ is an essential curve in $\Sigma^{\pm}$. See Figure~\ref{fig:HeegDouble}.

The definition of a Heegaard double can be generalized to allow $H^{\pm}$ to represent multiple 1--handles, but all Heegaard doubles in the present article will be of the type described above, where $\chi(S) = \chi(\Sigma^{\pm})+2$. The observant reader will note that this definition is not the same as that of \cite{GomSchTho_Fibered-knots_10}; however, we can obtain their version from ours by cutting $S \X I$ open along $S \X \{1/2\}$.  This yields two compression bodies, $C^+$ and $C^-$, where $\pd_- C^+$ and $\pd_- C^-$ are identified via the identity map and $h\colon \pd_+ C^+ \rightarrow \pd_+ C^-$ is the other gluing map.  If $(Y;S,\Delta,h)$ is a Heegaard double, we let $Y^* = C^- \cup_h C^+$, so that $Y^* = Y \setminus (S \X \{1/2\})$.  Note that the decomposition $Y^* = C^- \cup_h C^+$ is a Heegaard splitting.  This Heegaard splitting is called \emph{reducible} if there is an essential curve $c \in \pd_+ C^+$ such that $c$ bounds a disk in $C^+$ and $h(c)$ bounds a disk in $C^-$. 

\begin{figure}[h!]
	\centering
	\includegraphics[width=.2\textwidth]{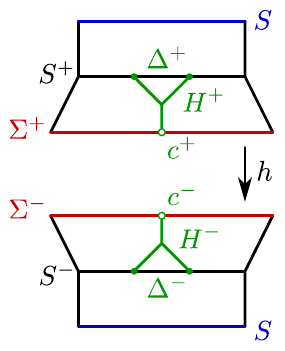}
	\caption{A schematic of a Heegaard double $(Y;S,\Delta,h)$, where $Y$ is split into two compression bodies $C^\pm$ with $\Sigma^\pm = \partial_+C^\pm$ and $\partial_-C^- = \partial_-C^+ = S$.}
	\label{fig:HeegDouble}
\end{figure}

The next lemma also appears as Proposition 4.2 in~\cite{GomSchTho_Fibered-knots_10}.

\begin{lemma}
\label{samecurve}
	Suppose $(Y;S,\Delta,h)$ is a Heegaard double such that $h(c^+)$ is isotopic to $c^-$ in $\Sigma^-$.
	\begin{enumerate}
		\item If $c^{\pm}$ is non-separating in $\Sigma^{\pm}$, then $Y \cong (S^1 \X S^2) \# Y'$, where $Y'$ is a fibered 3--manifold with fiber $S$.
		\item If $c^{\pm}$ is separating in $\Sigma^{\pm}$, then either $Y \cong Y' \# Y''$ or $Y \cong (S^1 \X S^2) \# Y'$, where $Y'$ and $Y''$ are fibered 3--manifolds with fibers given by the two components of $S$. 
	\end{enumerate}
\end{lemma}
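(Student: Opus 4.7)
The plan is to extract from the hypothesis $h(c^+) \simeq c^-$ an embedded 2--sphere $P \subset Y$, and then read off the decomposition of $Y$ based on whether $P$ separates. After isotoping $h$, assume $h(c^+) = c^-$; the co-core disks $D^\pm \subset H^\pm$ then have boundaries identified by the gluing on $\pd_+$, so $P = D^+ \cup D^-$ is an embedded 2--sphere in $Y$.

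Next, identify the closed 3--manifold $Y_0$ obtained by cutting $Y$ along $P$ and capping the two resulting sphere boundaries with 3--balls. The key observation is that removing a neighborhood of $D^\pm$ from $C^\pm$ undoes the 1--handle $H^\pm$: each $C^\pm \setminus \nu(D^\pm)$ is homeomorphic to $S \X I$ (the two 3--ball halves of $H^\pm$ left after the cut are absorbed into the collar on $\Delta^\pm$). Because $h(c^+) = c^-$, the map $h$ descends to a diffeomorphism $\tilde h \colon S \to S$ on the compressed surfaces. After capping the sphere boundaries, $Y_0$ is realized as two copies of $S \X I$ glued by the identity on one end and by $\tilde h$ on the other, which is precisely the mapping torus $S \X_{\tilde h} S^1$, a fibered 3--manifold with fiber $S$.

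Finally, determine whether $P$ separates $Y$. In case (1), $S$ is connected and $c^+$ is non-separating on $\Sigma^+$; any curve $\alpha \subset \Sigma^+$ meeting $c^+$ once transversely, pushed slightly into $C^+$, yields a loop in $Y$ meeting $P$ in a single point, so $P$ is non-separating and $Y \cong (S^1 \X S^2) \# Y_0$. In case (2), $S = S_1 \sqcup S_2$, and $c^\pm$ separates $\Sigma^\pm$ into planar subsurfaces $\Sigma^\pm_1, \Sigma^\pm_2$, with $\Sigma^\pm_i$ abutting $S_i$ inside $C^\pm$. Accordingly, $C^\pm \setminus \nu(D^\pm)$ splits into two connected components $A^\pm_i \cong S_i \X I$, and the key dichotomy is whether $h$ preserves the labeling (that is, $h(\Sigma^+_i) = \Sigma^-_i$ for both $i$) or swaps it. Modeled as a bipartite graph on $\{A^+_1, A^+_2, A^-_1, A^-_2\}$ with edges coming from the $\pd_-$ gluing and the $\pd_+$ gluing, preservation produces two components (so $P$ separates); the two components cap to mapping tori with fibers $S_1$ and $S_2$, giving $Y \cong Y' \# Y''$. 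Swapping produces a connected graph (so $P$ is non-separating), and $Y \cong (S^1 \X S^2) \# Y_0$, with $Y_0 = S \X_{\tilde h} S^1$ a fibered 3--manifold whose monodromy exchanges the two fiber components.

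The main obstacle is the careful bookkeeping that identifies $Y_0$ with the mapping torus and that reads off the separating behavior of $P$ from the gluings. In particular, the hypothesis $h(c^+) = c^-$ is what allows $h$ to descend to a well-defined diffeomorphism $\tilde h$ of $S$ in the first place, and it is essential for both the bundle structure on $Y_0$ and the component analysis in case (2).
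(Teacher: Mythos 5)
Your proof is correct, and it rests on the same core geometric observation as the paper's: the co-core disks $D^{\pm}$ glue along $h(c^+)=c^-$ to produce an embedded sphere $P=D^+\cup D^-$, and compressing $C^{\pm}$ along $D^{\pm}$ trivializes each to $\pd_-C^{\pm}\X I\cong S\X I$. The packaging differs slightly: the paper first cuts $Y$ along $S\X\{1/2\}$ to obtain the Heegaard splitting $Y^*=C^-\cup_h C^+$, observes that $P$ (viewed in $Y^*$) is a reducing sphere, expresses $Y^*$ as a connected sum, and then reglues the two boundary components of $Y^*$; you instead cut-and-cap along $P$ directly in $Y$, identify the capped result $Y_0$ as the mapping torus $S\X_{\tilde h}S^1$ outright, and then read off the connected-sum type from whether $P$ separates $Y$. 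Your route is a bit more self-contained in that it never invokes the intermediate reducible-splitting formalism, and it makes the mapping-torus structure of $Y_0$ explicit rather than implicit in the final regluing. The bipartite-graph bookkeeping in case (2) is a clean way to organize the two sub-cases and matches the dichotomy the paper identifies. One small slip: the subsurfaces $\Sigma^{\pm}_1,\Sigma^{\pm}_2$ that $c^{\pm}$ cuts $\Sigma^{\pm}$ into are not planar in general (each is a component of $S$ with one disk removed, so its genus equals the genus of that component of $S$); the rest of your argument does not rely on planarity, so this is a terminological error only.
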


\begin{proof}
	The assumption $h(c^+) = c^-$ implies that the Heegaard splitting $Y^* = C^- \cup_h C^+$ is reducible.  If $c^-$ is non-separating, this Heegaard splitting can be expressed as the connected sum of a genus one splitting of $S^1 \X S^2$ and the splitting given by compressing $C^{\pm}$ along $D^{\pm}$ and gluing the resulting pieces together along the compressed positive boundary surfaces.  But $C^{\pm}$ compressed along $D^{\pm}$ is the trivial compression body $\pd_- C^{\pm} \X I$, and thus $Y^* = (S^1 \X S^2) \# ((\pd_- C^+ \X I) \cup (\pd_- C^- \X I))$.  To recover $Y$ from $Y^*$, we glue the two boundary components of $\pd_- C^{\pm} \X I$, yielding the desired result.

For the second statement, suppose that $h(c^+) = c^-$ is separating, so that the disks $D^+$ and $D^-$ glue together to bound a reducing sphere $P^*$ for the Heegaard splitting $Y^* = C^- \cup_h C^+$.  Cutting $Y^*$ open along $P^*$ and capping off the resulting 2--sphere boundary components with 3--balls has the same effect as compressing $C^+$ along $D^+$ to get $(C^+)'\sqcup(C^+)''$ and $C^-$ along $D^-$ to get $(C^-)'\sqcup(C^-)''$, where $(C^{\pm})'\sqcup(C^\pm)'' = \pd_- C^{\pm} \X I \cong S \X I$.  In this case, $S$ has two components, $S'$ and $S''$.  Then $Y^* = (S' \X I) \#_{P^*} (S'' \X I)$, and to recover $Y$ from $Y^*$ we glue the boundary components of $Y^*$.  There are two possibilities here:  In the first case, boundary components of $S' \X I$ are identified and boundary components of $S'' \X I$ are identified, in which case $Y = Y' \# Y''$.  In the second case, $S' \X \{1\}$ is glued to $S'' \X \{0\}$ and $S'' \X \{1\}$ is glued to $S' \X \{0\}$.   Here the reducing sphere $P^*$ for $Y^*$ is a non-separating sphere for $Y$, and cutting $Y$ open along $P^*$ and capping off with 3--balls yields a fibered manifold with fiber $S'$ (equivalently, $S''$).  Thus, $Y \cong (S^1 \X S^2) \# Y'$, completing the proof.
\end{proof}

As an example of the type of splitting arising in Lemma \ref{samecurve}, let $S = S^2$, let $\Delta^{\pm} \subset S^{\pm}$ be parallel copies of two disks in $S$, and let $h$ be the identity map on the torus, giving rise to a Heegaard double $(Y;S,\Delta,h)$.  By (1) of Lemma \ref{samecurve}, $Y \cong (S^1 \X S^2) \# Y'$, where $Y'$ fibers over the 2--sphere.   It follows that $Y' = S^1 \X S^2$, and so $Y \cong Y_2 = \#^2 (S^1 \X S^2)$.  We call this the \emph{standard Heegaard double} of the manifold $Y_2$; it will feature prominently in arguments below.

We say that the Heegaard splitting $Y^* = C^- \cup_h C^+$ is \emph{weakly reducible} if there exist essential curves $c \in \pd_+ C^+$ bounding a compressing disk for $C^+$ and $c' \in \pd_+ C^-$ bounding a compressing disk for $C^-$ with the property that $h(c)$ and $c'$ are isotopic to disjoint curves in $\Sigma^-$.  The next lemma is a classical result from the theory of Heegaard splittings.

\begin{lemma}
\label{weakred}
\cite{SchTho_Surgery-on-a-knot_09}
	If $(Y;S,\Delta,h)$ is a Heegaard double and $S \X \{1/2\}$ is compressible in $Y$,  then the Heegaard splitting $Y^* = C^- \cup_h C^+$ is weakly reducible.
\end{lemma}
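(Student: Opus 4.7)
The plan is to extract a weak reduction of the Heegaard splitting $Y^* = C^- \cup_h C^+$ from a compressing disk for $S \times \{1/2\}$ in $Y$, via a minimal-intersection analysis against the Heegaard surface $\Sigma$. First, I would choose a compressing disk $D$ for $S$ in $Y$ realizing the minimum of $|D \cap \Sigma|$, after also arranging $D \cap S = \partial D$ via standard surgery. The key initial observation is that $S = \partial_- C^\pm$ is the negative boundary of each compression body and hence incompressible in each $C^\pm$ (the inclusion $\pi_1(\partial_- C^\pm) \hookrightarrow \pi_1(C^\pm)$ is injective, and $S$ has no $2$--sphere components by hypothesis). Since $S$ separates $Y \setminus \Sigma \cong C^- \cup_S C^+$ into the two compression bodies, a disk $D$ disjoint from $\Sigma$ would lie entirely in one $C^\pm$ and give a compressing disk for $\partial_- C^\pm$, a contradiction. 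Hence $D \cap \Sigma$ is a nonempty collection of circles in $\mathrm{int}(D)$.

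Next, standard innermost-circle surgery in both $D$ and $\Sigma$, combined with the minimality of $|D \cap \Sigma|$, forces every circle of $D \cap \Sigma$ to be essential in $\Sigma$ and every innermost-in-$D$ subdisk cut off by such a circle to be a compressing disk for $\Sigma$ in one of the two compression bodies. The regions of $D \setminus \Sigma$ alternate between $C^+$ and $C^-$ as one crosses intersection circles; when the nesting pattern of the circles of $D \cap \Sigma$ in $D$ has any branching, innermost subdisks automatically appear on both sides of $\Sigma$. Two such disjoint subdisks then have disjoint essential boundary curves on $\Sigma$ that bound compressing disks on opposite sides, which is precisely the required weak reduction of the splitting $Y^* = C^- \cup_h C^+$.

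The remaining case --- a linearly-nested chain of intersection circles in $D$ producing only a single innermost subdisk --- I expect to be the main obstacle. Here the outermost annular region of $D$ lies in the compression body opposite the innermost subdisk and has essential boundary both on $S$ and on $\Sigma$, giving a non-trivial vertical annulus between $\partial_-$ and $\partial_+$ of that compression body. Exploiting the compression-body structure of this vertical annulus --- either by boundary-compressing it against the cut-system of the containing compression body to manufacture a new compressing disk for $\Sigma$, or by using it to perform a further isotopy of $D$ that reduces $|D \cap \Sigma|$ and contradicts minimality --- produces a second compressing disk on the opposite side of $\Sigma$ whose boundary is disjoint from that of the original innermost subdisk, completing the weak reduction in all cases.
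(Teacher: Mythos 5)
The paper does not prove this lemma; it cites it to Scharlemann--Thompson~\cite{SchTho_Surgery-on-a-knot_09}, so your outline has to stand on its own. The framework is right: take a compressing disk $D$ for $S$ with $D\cap S=\partial D$ and $|D\cap\Sigma|$ minimal; observe that $D$ cannot be made disjoint from $\Sigma$, since $\partial_- C^\pm = S$ is incompressible in each compression body; and use irreducibility of $C^\pm$ together with minimality to conclude that every circle of $D\cap\Sigma$ is essential in $\Sigma$ and that every innermost-in-$D$ subdisk is a compressing disk for $\Sigma$ into one side or the other. The break comes at the key combinatorial step: it is \emph{not true} that branching in the nesting of $D\cap\Sigma$ forces innermost subdisks on both sides. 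The regions of $D\setminus\Sigma$ do alternate sides, so all leaves of the nesting tree at the \emph{same parity of depth} lie on the \emph{same} side, and this is perfectly compatible with branching. Concretely, the outermost region of $D$ can be a pair of pants in $C^+$ with boundary $\partial D\cup c_1\cup c_2$, where $c_1$ and $c_2$ each bound an innermost subdisk of $D$ lying in $C^-$; there is branching at the root, yet both innermost disks compress into $C^-$, and your argument produces no compressing disk for $C^+$ disjoint from them.

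Your fallback argument for the remaining case is also not carried through. It can be made to work when $|D\cap\Sigma|=1$: the outermost region is then an incompressible spanning annulus $A$ in one compression body, which is automatically $\partial$-incompressible (any $\partial$-compressing disk would have a boundary arc in $\partial C^\pm$ joining the disjoint components $\partial_- C^\pm$ and $\partial_+ C^\pm$), hence $A$ is vertical, and so its $\partial_+$ boundary circle is isotopic in $\Sigma^\pm$ off the co-core of the $1$--handle, yielding a weak reduction. But for a chain of length two or more --- and certainly for the branching example above --- the innermost compressing disk is not adjacent to the outermost region, and verticality of the outermost annulus (when it even is an annulus) says nothing about whether the co-core curve $c^\pm$ can be isotoped off the boundary of the \emph{innermost} disk, which may be nested several levels deep. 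The proposed moves (``boundary-compress against the cut-system'' or ``find a further isotopy reducing $|D\cap\Sigma|$'') are exactly where the substance of the lemma lies, and you have not justified either one. In short, the case that all innermost subdisks of $D$ land on the same side of $\Sigma$ is the crux, and your proposal does not resolve it.
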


We will use Lemma \ref{weakred} in our analysis of Heegaard doubles on the manifold $Y_2$; note that every incompressible surface in $Y_2$ is a 2--sphere; thus, if $(Y_2; S,\Delta,h)$ is a Heegaard double of $Y_2$ and $g(S) > 0$, then the Heegaard splitting $Y_2^* = C^- \cup_h C^+$ is weakly reducible.  In the next lemmas, we analyze what weak reducibility tells us about the curves $c^{\pm}$, since compressing disks for $C^-$ and $C^+$ are not necessarily unique.  The first lemma is Lemma~4.6 from~\cite{GomSchTho_Fibered-knots_10}.

\begin{lemma}
\label{uniquedisk}
\cite{GomSchTho_Fibered-knots_10}
	Suppose a curve $c \subset \pd_+ C^{\pm}$ bounds a compressing disk for $C^{\pm}$.  If $c^{\pm}$ is separating, then $c$ is isotopic to $c^{\pm}$ in $\pd_+ C^{\pm}$.  If $c^{\pm}$ is non-separating, then either $c$ is isotopic to $c^{\pm}$ in $\pd_+ C^{\pm}$, or $c$ is separating and cuts of a genus one subsurface of $\pd_+ C^{\pm}$ containing $c^{\pm}$.
\end{lemma}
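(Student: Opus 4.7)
Plan: The strategy is to place a compressing disk bounded by $c$ in minimal position with respect to the co-core $D^{\pm}$ and then exploit the product structure that $C^{\pm}$ acquires after cutting along $D^{\pm}$.

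First, I would observe that $C^{\pm}$ is irreducible: the hypothesis that $S$ has no $2$--sphere component gives $\pi_2(S\times I)=0$, and attaching the single 1--handle $H^{\pm}$ preserves this, so every embedded sphere in $C^{\pm}$ bounds a ball. Next, let $D$ be a compressing disk with $\partial D = c$. Put $c$ in minimal position with respect to $c^{\pm}$ in $\Sigma^{\pm}$ and, rel $\partial$, minimize $|D\cap D^{\pm}|$ with all intersections transverse. A standard innermost--disk argument using irreducibility removes every circle component of $D\cap D^{\pm}$. One then argues that all arc components can be removed as well: an outermost arc $\alpha\subset D^{\pm}$ cuts off a subdisk $E\subset D^{\pm}$ with $\partial E=\alpha\cup\beta$ and $\beta\subset c^{\pm}$, and band--surgery of $D$ along $E$ either reduces $|D\cap D^{\pm}|$ while preserving the isotopy class of $\partial D$, or exhibits a bigon between $c$ and $c^{\pm}$ in $\Sigma^{\pm}$, contradicting minimal position. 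The technical obstacle here is controlling what the surgery does to $\partial D$; this is a standard but somewhat delicate disk--swapping argument, and I expect it to be the hardest step to write out carefully.

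Once $D$ has been isotoped to be disjoint from $D^{\pm}$, cut $C^{\pm}$ open along $D^{\pm}$. Cutting at the co-core undoes the attachment of $H^{\pm}$, so the result $C^{\pm}_0$ is diffeomorphic to $S\times I$. Under this cut, $\Sigma^{\pm}$ is replaced by a closed surface $S'\cong S$, obtained from $\Sigma^{\pm}\setminus\nu(c^{\pm})$ by capping its two new boundary circles with disks $\delta_1,\delta_2$, and $S'$ serves as $\partial_+$ of $S\times I$. Because $S\times I$ is a product, every properly embedded disk with boundary on $S'$ has boundary bounding a disk in $S'$, so $c=\partial D$ bounds a disk $\Delta\subset S'$.

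The remainder is case analysis on $|\Delta\cap(\delta_1\cup\delta_2)|$. If $\Delta$ meets neither $\delta_i$, then $\Delta\subset \Sigma^{\pm}$, forcing $c$ to be inessential and contradicting that $D$ is a compressing disk. If $\Delta$ contains exactly one $\delta_i$, then $\Delta\setminus\delta_i$ is an annulus in $\Sigma^{\pm}$ cobounded by $c$ and a pushoff of $c^{\pm}$, so $c$ is isotopic to $c^{\pm}$. If $\Delta$ contains both $\delta_1$ and $\delta_2$, then they lie in the same component of $S'$, which forces $c^{\pm}$ to be non--separating (since in the separating case $S\cong S'$ is disconnected and the two caps lie in different components); in this case $\Delta\setminus(\delta_1\cup\delta_2)$ is a pair of pants in $\Sigma^{\pm}\setminus c^{\pm}$, and reattaching the annular neighborhood of $c^{\pm}$ (identifying its two inner boundary circles into the single curve $c^{\pm}$) produces a once--punctured torus in $\Sigma^{\pm}$ bounded by $c$ and containing $c^{\pm}$. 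These outcomes exactly exhaust the conclusion of the lemma: in the separating case only the second case survives, while in the non--separating case both the second and third can occur.
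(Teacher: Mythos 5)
The paper cites this as Lemma~4.6 of Gompf--Scharlemann--Thompson and gives no proof, so there is no in-text argument to compare against; but your overall strategy --- isotope the disk $D$ off the co-core $D^{\pm}$, cut $C^{\pm}$ along $D^{\pm}$ to recover the product $S\times I$, use incompressibility of $\partial(S\times I)$ (valid since $S$ has no sphere component) to conclude that $c$ bounds a disk $\Delta$ in the capped surface $S'$, and then sort cases by how many scar disks $\Delta$ contains --- is the natural one, and the endgame (zero, one, or two of $\delta_1,\delta_2$ inside $\Delta$) is carried out correctly. One small remark worth adding there: since $\partial\Delta=c$ is disjoint from $\delta_1\cup\delta_2$, the disk $\Delta$ either contains each $\delta_i$ entirely or is disjoint from it, so your trichotomy really is exhaustive.

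The one step that needs repair is the arc-removal, which you rightly flag as the delicate point, but the dichotomy you propose is not quite right. ``Band surgery'' of $D$ along the outermost subdisk $E\subset D^{\pm}$ does \emph{not} preserve the isotopy class of $\partial D$: compressing $D$ along $E$ produces two disks $D_1\cup E$ and $D_2\cup E$, and neither has boundary $c$. The correct move is an \emph{isotopy} of $D$, not a surgery. Since $\alpha$ is outermost in $D^{\pm}$, we have $E\cap D=\alpha$, so the band $\nu(\alpha)\subset D$ can be pushed through a ball neighborhood $\nu(E)$ to the far side of $\beta$. This is an ambient isotopy of $D$ in $C^{\pm}$, so $[\partial D]=[c]$ is automatically preserved; it removes $\alpha$ from $D\cap D^{\pm}$ and creates no new intersections (the push is supported in $\nu(E)$, and $D^{\pm}\cap\nu(E)$ is a neighborhood of $E$ in $D^{\pm}$); and it reduces $|c\cap c^{\pm}|$ by exactly two, because the interior of $\beta$ meets $c$ in no points (any such point would be the endpoint of another intersection arc landing inside $E$, contradicting outermostness). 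Having first put $c$ and $c^{\pm}$ in minimal position, this is a contradiction, so once circles are removed there are in fact no arcs at all and $D\cap D^{\pm}=\emptyset$. With that fix in place, the cut-and-classify argument goes through as written.
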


The next lemma shows how weak reducibility can be leveraged in the present setting of Heegaard doubles.

\begin{lemma}
\label{possible}
	Let $(Y;S,\Delta,h)$ be a Heegaard double, and suppose the Heegaard splitting $Y^* = C^- \cup_h C^+$ is weakly reducible.  Then one of the following holds:
	\begin{enumerate}
		\item $Y$ is a fibered 3--manifold with fiber $S$,
		\item $Y = Y' \# Y''$, where $Y'$ is $S^1 \times S^2$ or a lens space and $Y''$ is a fibered 3--manifold with fiber a component of $S$,
		\item $Y = Y' \# Y''$, where $Y'$ and $Y''$ are fibered 3--manifolds with fibers given by the two components of $S$, or
		\item $h(c^+)$ and $c^-$ are non-isotopic and can be isotoped to be disjoint in $\Sigma^-$.
	\end{enumerate}
\end{lemma}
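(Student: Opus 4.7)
The plan is to proceed by case analysis on the compressing curves $c \subset \Sigma^+$ and $c' \subset \Sigma^-$ that witness the weak reducibility, using Lemma~\ref{uniquedisk} to sharply restrict the possibilities. Since $c^+$ is non-separating in $\Sigma^+$ if and only if $c^-$ is non-separating in $\Sigma^-$ (both hold iff $S$ is connected), the two sub-cases occur simultaneously. If $c^\pm$ is separating, Lemma~\ref{uniquedisk} forces $c$ to be isotopic to $c^+$ and $c'$ to be isotopic to $c^-$, whence $h(c^+)$ and $c^-$ are disjoint in $\Sigma^-$; if they are isotopic, Lemma~\ref{samecurve}(2) gives conclusion (2) or (3), and otherwise we land in conclusion (4).

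Assume now that $c^\pm$ is non-separating. By Lemma~\ref{uniquedisk}, each of $c, c'$ is either isotopic to $c^\pm$ (``standard'') or is a separating curve cutting off a one-holed torus $T^\pm \subset \Sigma^\pm$ containing $c^\pm$ (``banded''). The standard-standard sub-case proceeds exactly as above using Lemma~\ref{samecurve}(1), yielding conclusion (2) or (4). Otherwise, by symmetry we may assume that $c$ is banded, with corresponding compressing disk $D \subset C^+$. A short argument, compressing $C^+$ along $D$, shows that $D$ together with $T^+$ cobound a solid torus $V^+ \subset C^+$ (with $D^+$ as a meridian disk), and that $C^+$ decomposes as $(S \X I) \cup_D V^+$, with $V^+$ attached along the disk $D$.

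The core of the argument is then to examine the position of $h(c) \subset \Sigma^-$, using the classical fact that every essential separating simple closed curve in a one-holed torus is boundary-parallel. If $h(c)$ bounds a compressing disk $D' \subset C^-$, then $\mathbb{S} := D \cup D'$ is a 2--sphere in $Y^*$ separating $V^+ \cup_h V^-$ (two solid tori glued along one-holed tori) from a region that, upon reattaching $S \X \{1/2\}$ to form $Y$, becomes a fibered 3--manifold with fiber $S$; capping $\mathbb{S}$ off with a ball realizes $V^+ \cup_h V^-$ as a lens space or $S^1 \X S^2$, giving conclusion (1) when this factor is $S^3$ and conclusion (2) otherwise. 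If instead $h(c)$ does not bound a compressing disk in $C^-$, then Lemma~\ref{uniquedisk} forces the genus-one subsurface cut off by $h(c)$ to be disjoint from $c^-$, so $h(c^+) \subset h(T^+)$ is disjoint from $c^-$; Lemma~\ref{samecurve}(1) then gives conclusion (2) when the two curves are isotopic, and otherwise we obtain conclusion (4). The hardest part is the banded-banded sub-sub-case, where verifying the above dichotomy requires applying the one-holed torus fact to the two possible positions of $h(c)$ relative to $c'$, concluding in each case that $h(c)$ is either isotopic to $c'$ (reducing-sphere case) or cuts off a one-holed torus disjoint from the one cut off by $c'$ (disjoint-curves case).
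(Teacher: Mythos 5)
Your overall plan tracks the paper's closely, but the dichotomy you use to organize the banded case has a gap. You split on whether $h(c)$ bounds a compressing disk in $C^-$, asserting in the affirmative case that the sphere $D\cup D'$ separates $V^+\cup_h V^-$ (a pair of solid tori glued along one-holed tori) from a product region. This can fail when $g(\Sigma^-)=2$: there \emph{both} components of $\Sigma^-\setminus h(c)$ are one-holed tori, and Lemma~\ref{uniquedisk} only guarantees that the genus-one subsurface containing $c^-$ is one of them --- it need not be $h(T^+)$. If $c^-$ sits in the component complementary to $h(T^+)$, then $h(c)$ still bounds a compressing disk (so your Case~A hypothesis fires), but the resulting solid torus in $C^-$ lies on the far side and is glued to $C^+\setminus V^+$ rather than to $V^+$; the sphere $D\cup D'$ then cuts off $V^+$ glued to a copy of $S\times I$, not $V^+\cup_h V^-$. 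In that situation $Y\cong (S^1\times S^2)\#W$ with $W$ a lens space, $S^3$, or $S^1\times S^2$; none of (1)--(3) holds when $S$ is a torus, and the correct conclusion is (4), which your Case~A would fail to produce.

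The robust dichotomy is the paper's: after isotoping $c^-$ to meet $\partial h(T^+)$ minimally, split on whether $c^-\subset h(T^+)$. In that case your reducing-sphere argument applies unchanged. In the complementary case you must bring in the weak-reducibility curve $c'$ --- as you partially acknowledge in your final sentence --- to show that $c^-$ can be isotoped entirely off $h(T^+)$, hence off $h(c^+)$, giving (4) or (via Lemma~\ref{samecurve}) (2). This also repairs the imprecise step ``Lemma~\ref{uniquedisk} forces the genus-one subsurface cut off by $h(c)$ to be disjoint from $c^-$'': Lemma~\ref{uniquedisk} says nothing about curves that fail to bound compressing disks, and the disjointness genuinely requires the hypothesis $h(c)\cap c'=\emptyset$ together with the case analysis on whether $c'$ is standard or banded.
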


\begin{proof}
	Suppose that $c,c'$ are curves bounding compressing disks in $C^+$ and $C^-$, respectively, and $h(c) \cap c' = \emp$.  There are several cases to consider.  First, suppose that $c^\pm$ is separating in $\pd_+ C^{\pm}$.  By Lemma \ref{uniquedisk}, it follows that $c$ is isotopic to $c^+$ in $\Sigma^+$ and $c'$ is isotopic to $c^-$ in $\Sigma^-$.  If $h(c)$ is isotopic to $c'$ in $\Sigma^-$, then by Lemma \ref{samecurve}, conclusion (2) or (3) holds.  Otherwise, conclusion~(4) holds.

On the other hand, suppose that $c^{\pm}$ is non-separating in $\Sigma^{\pm}$.  For the first sub-case, suppose that $c$ is isotopic to $c^+$ in $\Sigma^+$ and $c'$ is isotopic to $c^-$ in $\Sigma^-$.  If $h(c)$ is also isotopic to $c'$, then by Lemma \ref{samecurve}, conclusion (2) is true.  Otherwise, conclusion (4) holds.  For the second sub-case, suppose without loss of generality that $c$ is not isotopic to $c^+$ in $\Sigma^+$, so that by Lemma~\ref{uniquedisk}, $c$ cuts off a genus one subsurface $T \subset \Sigma^+$ containing $c^+$.  Isotope $c^-$ in $\Sigma^-$ so that it intersects $\partial h(T)$ minimally.  If $c^- \subset h(T)$, then $\pd h(T) = h(c)$ bounds a disk in $C^-$.  In this case, the Heegaard splitting $Y^* = C^- \cup_h C^+$ is reducible, and the reducing sphere given by $c$ and $h(c)$ cuts off a genus one summand from the Heegaard splitting of $Y^*$, and thus $Y^* = Y' \# (S \X I)$ as in the proof of Lemma \ref{samecurve}, where $Y'$ is either $S^3$, $S^1 \X S^2$, or a lens space.  It follows that $Y$ is one of the 3--manifolds described in~(1) or~(2).

For the final sub-case, suppose that $c^-$ is not contained entirely within the subsurface $h(T)$.  If $c'$ is isotopic to $c^-$ in $\Sigma^-$, then the assumption $h(c) \cap c' = \emp$ implies $h(T) \cap c^- = \emp$, and conclusion (4) is satisfied.  Otherwise, by Lemma \ref{uniquedisk}, $c'$ cuts of a genus one subsurface $T' \subset \Sigma^-$ containing $c^-$.  Isotope $c^-$ and $T'$ in $\Sigma^-$ so that they meet $\partial h(T)$  minimally.  Since $c^-$ is not contained $h(T)$, the assumption $h(c) \cap c' = \emp$ implies $h(T) \cap T'  =\emp$.  Thus, after isotopy, $h(c^+) \cap c^- = \emp$, and conclusion (4) holds once again.
\end{proof}


Lemma \ref{possible} has an important consequence, which we record as the following lemma.

\begin{lemma}
\label{y2class}
	Every Heegaard double of $Y_2$ is either the standard Heegaard double, or $h(c^+)$ and $c^-$ are non-isotopic and can be isotoped to be disjoint in $\Sigma^-$
\end{lemma}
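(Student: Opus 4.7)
The plan is to split on whether $S$ has a component of positive genus. In the positive-genus case, I combine Lemmas~\ref{weakred} and~\ref{possible} with the prime decomposition of $Y_2$ to rule out three of the four possible conclusions of Lemma~\ref{possible}; in the $S = S^2$ case, I argue directly that the Heegaard double is the standard one.

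Suppose first that some component of $S$ has positive genus. Since $Y_2 = \#^2(S^1 \times S^2)$ is reducible with every prime summand equal to $S^1 \times S^2$, every closed incompressible surface in $Y_2$ is a $2$--sphere, so $S \times \{1/2\}$ must be compressible. By Lemma~\ref{weakred}, the splitting $Y_2^* = C^- \cup_h C^+$ is then weakly reducible, and Lemma~\ref{possible} applies. I rule out conclusions (1), (2), and (3) as follows: (1) would make $Y_2$ a closed surface bundle with fiber $S$, but such a bundle is irreducible when the fiber has positive genus and is $S^1 \times S^2$ when the fiber is $S^2$, neither of which is $Y_2$; (2) and (3) would express $Y_2 = Y' \# Y''$ with at least one summand fibered having fiber a component of $S$, and uniqueness of prime decomposition would then force each such fibered summand to be $S^1 \times S^2$, making the relevant component of $S$ a $2$--sphere, contradicting either the positive-genus assumption or the standing requirement that no component of a disconnected $S$ is a $2$--sphere. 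Hence conclusion (4) of Lemma~\ref{possible} holds.

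In the remaining case $S = S^2$, the surfaces $\Sigma^{\pm}$ are tori, and, up to isotopy, $c^{\pm}$ is the unique essential simple closed curve in $\Sigma^{\pm}$ bounding a compressing disk in $C^{\pm}$. The sphere $S \times \{1/2\}$ is non-separating in $Y_2$, so cutting along it and capping the two resulting $S^2$ boundary components with $3$--balls produces a closed manifold $\wh{Y_2^*}$ with $Y_2 \cong \wh{Y_2^*} \# (S^1 \times S^2)$; the prime decomposition of $Y_2$ then forces $\wh{Y_2^*} \cong S^1 \times S^2$. The induced genus-one Heegaard splitting of $\wh{Y_2^*}$ must therefore be the unique genus-one Heegaard splitting of $S^1 \times S^2$, so $h(c^+)$ is isotopic to $c^-$. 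Because any two pairs of disks in $S^2$ are ambient-isotopic and because Dehn twists along $c^-$ extend over the compressing disk $D^-$ of $C^-$, the residual freedom in $h$ can be absorbed into the compression body structure, identifying the Heegaard double with the standard one.

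The main obstacle is the $S = S^2$ case: one must verify carefully that the capping operation interacts correctly with the Heegaard splitting structure and that the residual Dehn twists along $c^-$ truly can be absorbed into $C^-$ without changing the equivalence class of the Heegaard double. The positive-genus case, by contrast, is essentially bookkeeping driven by the rigidity of the prime decomposition of $Y_2$.
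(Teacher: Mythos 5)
Your proof is correct and takes essentially the same route as the paper's: split on $g(S)>0$ vs.\ $S=S^2$, use Lemmas~\ref{weakred} and~\ref{possible} in the positive-genus case, and argue directly in the sphere case. The paper is terser in both branches -- it simply asserts that conclusion (4) of Lemma~\ref{possible} ``must'' hold when $g(S)>0$ and that the only gluing yielding $Y_2$ in the $S=S^2$ case is the identity -- so what you add (ruling out (1)--(3) via the prime decomposition and handling the residual Dehn-twist freedom explicitly) is a useful elaboration rather than a different method.
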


\begin{proof}
Let $(Y_2;S,\Delta,h)$ be a Heegaard double.  If $S$ is a 2--sphere, then $\Sigma^{\pm}$ is a torus and the only possible gluing map $h$ yielding $Y_2$ is the identity map, so this is the standard Heegaard double.  Otherwise, $g(S) > 0$ (in either the connected or disconnected case).  By Lemma~\ref{weakred}, the Heegaard splitting $Y_2^* = C^- \cup_h C^+$ is weakly reducible, and by Lemma~\ref{possible}, it must be true that conclusion (4) is satisfied.
\end{proof}

We now undertake a deeper analysis of what can happen in the case that $h(c^+)$ and $c^-$ non-isotopic and can be isotoped to be disjoint in $\Sigma^-$.  In this case, we can simplify the Heegaard double in a process called \emph{untelescoping}.  In order to define untelescoping, we require several new definitions.

Suppose $Y$ is a compact 3--manifold, and let $Y'$ be the result of attaching a 1--handle $H$ along a pair of disks $\Delta \subset \pd Y$.  We call the newly constructed boundary surface of $Y'$ the surface \emph{induced by the 1--handle attachment}.  On the other hand, let $c'$ be an essential curve in a boundary component $S$ of $Y$, and let $Y''$ be the result of attaching a 2--handle $H'$ along $c'$.  We call the newly constructed boundary surface $S''$ the surface \emph{induced by the 2--handle attachment}.  Note $S''$ has one component if $c$ is non-separating and two components if $c$ is separating.  In either case, $H' \cap S''$ is two embedded disks, which we call \emph{scars}.

Attaching 1--handles and 2--handles are dual processes, which we make rigorous in the following standard lemma.  The proof is left to the reader.

\begin{lemma}\label{handleswitch}
Let $S$ be a surface containing an essential curve $c$, let $Y$ be the result of attaching a 2--handle to $S \X I$ along $c \subset S \X \{0\}$, and let $S'$ be the surface induced by the 2--handle attachment containing scars $\Delta$.  Let $Y'$ be the 3--manifold obtained by attaching a 1--handle to $S' \X I$ along the pair of disks $\Delta \X \{1\} \subset S' \X \{1\}$.  Then there is a diffeomorphism $f\colon Y \rightarrow Y'$ such that $f|_{\pd Y}$ is the identity map.  In this case, the surface in $\pd Y'$ induced by the 1--handle attachment is $S \X \{1\}$ and the boundary of a co-core of the 1--handle is the curve $c$.
\end{lemma}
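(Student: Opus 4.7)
The plan is to exploit the duality between 3--dimensional 2--handles and 1--handles: each is a 3--ball whose boundary 2--sphere is partitioned into an attaching region and a complementary region, with the attaching region being an annulus (respectively, a pair of disks) for a 2--handle (respectively, a 1--handle). The diffeomorphism $f\colon Y \to Y'$ will be the identity outside a neighborhood of the 2--handle, with the interior of this neighborhood treated by a direct comparison of 3--balls.

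I would first isolate the relevant 3--balls. Write $H = D^2 \times [-1, 1]$ for the 2--handle, attached to $S \times I$ via an identification of $\partial D^2 \times [-1, 1]$ with $\nu(c) \times \{0\}$, where $\nu(c)$ is an annular neighborhood of $c$ in $S$. Set $W = (\nu(c) \times I) \cup H \subset Y$. Since $\nu(c) \times I \cong S^1 \times D^2$ is a solid torus and $H$ is a 3--ball glued to it along a longitudinal annulus, $W$ is itself a 3--ball, whose boundary 2--sphere decomposes into the two scar disks $\Delta = D^2 \times \{-1, 1\}$, the annulus $\nu(c) \times \{1\}$, and the two side annuli $\partial \nu(c) \times I$. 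Similarly, in $Y'$, set $W' = (\Delta \times I) \cup H_1 \subset Y'$. Since $\Delta \times I$ is a pair of 3--balls and $H_1 = D^1 \times D^2$ is a 1--handle joining them, $W'$ is a 3--ball, with boundary decomposed into the scar disks $\Delta \times \{0\}$, the tube annulus $D^1 \times \partial D^2$ of $H_1$, and the side annuli $\partial \Delta \times I$.

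Next I would match these decompositions. Using $\partial \nu(c) = \partial \Delta$ (both being the same two circles, identified under the attaching map), the combinatorial structures of $\partial W$ and $\partial W'$ agree, so one constructs a diffeomorphism $g\colon W \to W'$ that is the identity on the scars and on the side annuli and that carries $\nu(c) \times \{1\} \cong S^1 \times [-1, 1]$ onto the tube $D^1 \times \partial D^2 \cong [-1, 1] \times S^1$ by the coordinate swap $(e^{i\theta}, s) \mapsto (s, e^{i\theta})$. Extending $g$ by the identity on $(S \setminus \nu(c)) \times I$, which sits naturally in both $Y$ and $Y'$ via the identification $S \setminus \nu(c) = S' \setminus \Delta$, produces the desired diffeomorphism $f\colon Y \to Y'$. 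By construction $f$ is the identity on $S' \subset \partial Y$, and it sends $S \times \{1\} = (S \setminus \nu(c)) \times \{1\} \cup \nu(c) \times \{1\}$ onto $(S' \setminus \Delta) \times \{1\} \cup (D^1 \times \partial D^2)$, which is precisely the surface induced by the 1--handle attachment in $\partial Y'$.

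Finally, the co--core of $H_1$ is $\{0\} \times D^2$, whose boundary circle $\{0\} \times \partial D^2$ lies on the tube and, under the coordinate swap defining $g$, corresponds to $S^1 \times \{0\} = c \subset \nu(c) \times \{1\} \subset S \times \{1\}$, yielding the final assertion. The only technical matter is the coherent smoothing of corners along the intersections $\partial \Delta = \partial \nu(c)$; this is entirely routine in dimension 3 and does not affect the construction.
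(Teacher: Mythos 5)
Your proof is correct. The paper itself gives no argument here — it states explicitly that ``the proof is left to the reader'' — so there is nothing to compare against, but the argument you give is the natural one and is complete. The key move is to localize: both $Y$ and $Y'$ coincide with $(S\setminus\nu(c))\times I$ outside a compact region, and the compact region on each side is a $3$--ball ($W$ a solid torus with a $2$--handle glued along a longitudinal annulus, $W'$ two balls joined by a $1$--handle) whose boundary sphere carries the same combinatorial decomposition into two scar disks, one ``cross'' annulus, and two side annuli interior to the ambient manifold. Matching these piecewise, with the coordinate swap $(e^{i\theta},s)\mapsto(s,e^{i\theta})$ taking $\nu(c)\times\{1\}$ to the tube $D^1\times\partial D^2$ and the identity elsewhere, produces the required diffeomorphism; the identity on the side annuli and on the scar disk boundaries is consistent with this swap because both are parameterized by $\partial\nu(c)=\partial\Delta=S^1\times\{-1,1\}$ in a compatible way (choosing the attaching map of the $1$--handle accordingly, which is always possible). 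Your reading-off of the image of $S\times\{1\}$ and of the co-core boundary is also correct. The only thing worth making explicit, were this to be written out in full, is that the attaching map of $H_1$ must be chosen so that the identification of $\partial D^1\times\partial D^2$ with $\partial\Delta\times\{1\}$ matches the one forced by the identity on the side annuli; you implicitly do this, and it can always be arranged since $1$--handle attaching maps along a fixed pair of disks are unique up to isotopy, as the paper notes.
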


In light of Lemma \ref{handleswitch}, we give the process of reinterpreting a 2--handle attachment as a 1--handle attachment a name:  \emph{Pushing a 2--handle through the product $S \X I$}.

A Heegaard double $(Y;S,\Delta,h)$ decomposes $Y$ as the union of $S \X I$, $H^-$, and $H^+$, where the general structure is set up to suggest attaching $H^{\pm}$ as a 1--handle to $S^{\pm}$ along $\Delta^{\pm}$, followed by gluing the resulting surfaces via the homeomorphism $h$.  Under the assumption $h(c^+) \cap c^- = \emp$, we can rearrange the order of these gluings to get a new Heegaard double, said to be related to the original by \emph{untelescoping}.  We describe this process in the next proposition.  It may aid the reader's intuition to examine the schematic of the case in which $S_1$ is connected, shown in Figure~\ref{fig:untele}, before (or after) reading the proof of Proposition~\ref{untel}.

\begin{proposition}\label{untel}
	Suppose that $(Y;S_1,\Delta_1,h_1)$ is a Heegaard double such that $h_1(c_1^+)$ and $c_1^-$ are non-isotopic and can be isotoped to be disjoint in $\Sigma_1^-$.  Then there is another Heegaard double $(Y;S_2,\Delta_2,h_2)$ such that $\chi(S_2) > \chi(S_1)$.

In addition, if the resulting surface $S_2$ is connected, then
\begin{enumerate}
\item[(a)] $S_1$ is connected,
\item[(b)] $\Sigma^+_2 = S_1^+ = (\Sigma_1^+ \setminus c_1^+) \cup \Delta_1^+$, and
\item[(c)] $h_2|_{\Sigma_2^+ \setminus \Delta_1^+} = h_1|_{\Sigma_1^+ \setminus c_1^+}$.
\end{enumerate}
\end{proposition}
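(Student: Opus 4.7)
My plan is to build the new Heegaard double $(Y;S_2,\Delta_2,h_2)$ explicitly by reorganizing the handle structure of $Y$ using Lemma~\ref{handleswitch}, exploiting the disjointness of $\alpha := c_1^-$ and $\beta := h_1(c_1^+)$ on the common thick surface $\Sigma$ (that is, $\Sigma_1^-$ identified with $\Sigma_1^+$ via $h_1$). First I would invoke Lemma~\ref{handleswitch} to rewrite each compression body $C^\pm$ in its dual form, so the 1-handle $H_1^\pm$ becomes a 2-handle attached to $\Sigma_1^\pm \times I$ along $c_1^\pm$. Assembling these dual forms via $h_1$ displays $Y$ as $(\Sigma \times [-1,1])$ with a 2-handle attached along $\alpha$ on $\Sigma \times \{-1\}$, a 2-handle attached along $\beta$ on $\Sigma \times \{+1\}$, with the two resulting $S_1$-boundary components connected through the original thin cylinder $S_1 \times I$.

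Once $Y$ is in this dual form, the disjointness hypothesis lets me push both 2-handle cores to the middle slice $\Sigma \times \{0\}$, obtaining two disjoint compressing disks there. I would define $S_2$ to be the surface obtained by compressing $\Sigma \times \{0\}$ simultaneously along both cores; an Euler characteristic count gives $\chi(S_2) = \chi(\Sigma) + 4 = \chi(S_1) + 2 > \chi(S_1)$, providing the required inequality. The new 1-handles $H_2^\pm$ then arise by applying Lemma~\ref{handleswitch} in reverse: the old thin cylinder $S_1 \times I$ together with the 2-handles get absorbed into a pair of 1-handles attached to $S_2 \times I$. For the new thick surfaces I would set $\Sigma_2^+$ to be $\Sigma$ compressed along $\beta$ and $\Sigma_2^-$ to be $\Sigma$ compressed along $\alpha$; each of these is naturally identified with $S_1^+ = (\Sigma_1^+ \setminus c_1^+) \cup \Delta_1^+$ and $S_1^-$ respectively, matching condition (b). The gluing $h_2$ is then dictated to agree with $h_1$ on the common subsurface $\Sigma$ with annular neighborhoods of $\alpha$ and $\beta$ removed, and is extended naturally across the four compression disks, giving condition (c) essentially by construction.

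The main obstacle will be the bookkeeping needed to certify that this reassembly genuinely yields a Heegaard double for $Y$: tracking the diffeomorphisms produced by the two applications of Lemma~\ref{handleswitch}, maintaining consistency with the boundary identifications coming from $h_1$, and checking that the disks $\Delta_2^\pm \subset S_2^\pm$ together with the new 1-handles $H_2^\pm$ and gluing $h_2$ do recover $Y$. Once this is in place, conditions (b) and (c) fall out as noted. For condition (a) I would argue the contrapositive: if $S_1$ were disconnected, then by the standing rule in the Heegaard double setup (that $c^\pm$ is separating iff $S$ is disconnected), the curve $\alpha = c_1^-$ would be separating in $\Sigma$, so compressing $\Sigma$ along $\alpha$ would produce two components; since $\beta$ is disjoint from $\alpha$ it would sit inside a single one of these, and further compression along $\beta$ could not reconnect them, forcing $S_2$ to be disconnected as well.
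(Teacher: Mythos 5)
The high-level idea — invoke Lemma~\ref{handleswitch} to trade 1--handles for 2--handles, exploit disjointness to compress the thick surface along both $\alpha$ and $\beta$, and reassemble a Heegaard double with the smaller-genus $S_2$ in the middle — is the same engine that drives the paper's proof, and your Euler characteristic count $\chi(S_2)=\chi(\Sigma)+4=\chi(S_1)+2$ is correct when $S_1$ is connected. But there is a genuine gap in the disconnected case. When $S_1$ is disconnected the curves $c_1^\pm$ are separating, so $\alpha$ and $\beta$ are two disjoint, non-isotopic, essential, separating curves in $\Sigma$. Compressing $\Sigma$ along both yields a surface with \emph{three} components, and the definition of a Heegaard double in this paper only allows $S$ to have one or two (non-sphere) components. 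So the surface you define as $S_2$ is simply not admissible as the thin surface of a Heegaard double, and the main claim of the proposition is not established in this case. This is exactly why the paper's proof, in its second case, pushes $H_*$ through only one component $S_*'\times I$ of $S_*\times I$ and then absorbs the untouched piece $S_*''\times I$ into the new product, producing a two-component $S_2$ whose Euler characteristic is $\chi((S_*')^-)+2$ rather than $\chi(\Sigma)+4$. You do notice that $S_2$ is disconnected in this case (your argument for condition (a) is correct as far as it goes), but you don't notice that your candidate $S_2$ has too many components to qualify.

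There is a second concern, concerning $\Sigma_2^-$. You identify $\Sigma_2^-$ with $S_1^- = \Sigma$ compressed along $\alpha$. The paper's construction yields $\Sigma_2^- = \Sigma_* = (\Sigma_1^-\setminus h_1(c_1^+))\cup\Delta_1'$, which is $\Sigma$ compressed along $\beta$, a different subset of $Y$. With your choice $\Sigma_2^\pm = S_1^\pm$, the region $(S_2\times I)\cup H_2^-\cup H_2^+$ in the putative new Heegaard double would have to coincide with $Y\setminus(S_1\times I)^\circ$ (the two boundary components of $S_1\times I$), and you would need to check explicitly that this complement really does have the form ``product $S_2\times I$ with a single 1--handle attached at each end'' with compatible scars and a gluing map satisfying condition (c). You assert this ``falls out by construction,'' but it is precisely the content that needs to be proved; the asymmetry of the paper's approach (leave $H_1^-$ in place, dualize and push only $H_1^+$) is what makes $\Sigma_2^+=S_1^+$ and the restriction formula for $h_2$ come out for free. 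Relatedly, your description of $Y$ as $\Sigma\times[-1,1]$ with 2--handles at both ends ``connected through the original thin cylinder $S_1\times I$'' double-counts: once both compression bodies are put in dual form, $S_1\times I$ has been absorbed into the two products $\Sigma_1^\pm\times I$, and the two $S_1$-boundary components are simply glued by the identity.
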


\begin{proof}

First, isotope $c_1^-$ in $\Sigma_1^-$ so that the resulting curve, call it $c_2^-$, satisfies $h_1(c_1^+) \cap c_2^- = \emp$.  Instead of attaching the 1--handle $H_1^+$ to $\Delta_1^+$ and gluing the resulting boundary $\Sigma_1^+$ to $\Sigma_1^-$ using $h_1$, we attach $H_1^+$ to $\Sigma_1^-$ as a 2--handle, denoted $H_*$, along the curve $h_1(c_1^+)$ in $\Sigma_1^-$.  Let $\Delta'_1$ be the scars of the 2--handle attachment.  Since $\Sigma_1^+ \setminus c_1^+ = S_1^+ \setminus \Delta_1^+$, this induces a new gluing map $h_*$ taking $S_1^+$ to the closed surface $\Sigma_*=(\Sigma_1^- \setminus h(c_1^+)) \cup \Delta'_1$, where $h_*(\Delta_1^+) = \Delta_1'$ and $h_*|_{S_1^+ \setminus \Delta_1^+} = h_1|_{\Sigma_1^+ \setminus c_1^+}$.

There are two cases to consider.  Suppose first that $S_1$ is connected, so that $c_1^{\pm}$ is non-separating in $\Sigma_1^{\pm}$.  Recall that $c_1^-$ is isotopic to $c_2^-$ in $\Sigma_1^-$, where $h_1(c_1^+) \cap c_2^- = \emp$.  This isotopy induces an isotopy from the co-core $D_1^-$ of $H_1^-$ bounded by $c_1^-$ to a disk $D_2^-$ bounded by $c_2^-$, such that compressing $(S_1 \X I) \cup H_1^-$ along $D_2^-$ yields a 3--manifold $S_* \X I$ diffeomorphic to $S_1 \X I$, and such that $S_*^+ = S_* \X \{1\}$ coincides with $S_1^+$.  Let $\Delta_2^-$ be the pair of disks in $S_*^- = S_* \X \{0\}$ such that attaching a 1--handle $H_2^-$ to $S_* \X I$ along $\Delta_2^-$ yields the same submanifold of $Y$ as attaching $H_1^-$ to $S_1 \X I$ along $\Delta_1^-$.  Here the disk $D_2^-$ is the co-core of $H_2^-$.

Next, observe that the attaching curve $h_1(c_1^+)$ for $H_*$ is contained in $\Sigma_1^- \setminus c_2^- = S_*^- \setminus \Delta_2^-$.  Thus, $H_*$ is attached to a curve in $S_*^-$, and by Lemma~\ref{handleswitch} we can push the 2--handle $H_*$ through the product $S_* \X I$.  In other words, $H_* \cup (S_* \X I)$ can be replaced with $(S_2 \X I) \cup H_2^+$, where $S_2^- = S_2 \X \{0\}$ is the surface
\[ S_2^- = \Sigma_1^- \setminus (h_1(c_1^+) \cup c_2^-) \cup (\Delta_1' \cup \Delta_2^-).\]

In addition, $H^+_2$ is a 1--handle attached to $S_2^+ = S_2 \X \{1\}$ along the disks $\Delta_2^+ = \Delta_1' \X \{1\}$. Note that the boundary component of $(S_2 \X I) \cup H_2^+$ induced by the 1--handle attachment is the surface $S_1^+$, and the other boundary component is $S_2^-$.  Let $\Sigma_2^-$ denote the surface induced by attaching $H^-_2$ to $S^-_2$ along $\Delta^-_2$; that is, $\Sigma_2^-  = (\Sigma_1^- \setminus h_1(c_1^+)) \cup \Delta_1'$, which is the surface $\Sigma_*$ defined at the beginning of the proof.  Let $\Sigma_2^+=S_1^+$, and let $h_2 = h_1'$, so $h_2$ takes $\Sigma_2^+$ to $\Sigma_2^-$.  It follows that $(Y;S_2,\Delta_2,h_2)$ is a Heegaard double, and since $S_2^-$ is obtained by attaching a 2--handle to $S_*^-$, we have $\chi(S_2) > \chi(S_*) = \chi(S_1)$.  Note that in this case, conditions (a), (b), and (c) above are satisfied (whether $S_2$ is connected or not).  If $S_2$ is disconnected, the assumption that $h_1(c_1^+)$ and $c_1^-$ are non-isotopic guarantees that $S_2$ does not have a 2--sphere component.

In the second, more complicated case, suppose that $S_1$ is not connected, so that $c_1^{\pm}$ is separating in $\Sigma_1^{\pm}$.  As above, we isotope $c_1^-$ onto $c_2^-$ disjoint from $h_1(c_1^+)$, inducing isotopies of disk $D_1^-$ to disk $D_2^-$, let $S_*^-$ be obtained by compressing $\Sigma_1^-$ along $D_2^-$, and let $H_2^-$ be the corresponding 1--handle such that attaching $H_2^-$ to disks $\Delta_2^-$ in $S_*^-$ yields $\Sigma_1^-$.  Then the 2--handle $H_*$ is attached to $h_1(c_1^+) \subset \Sigma_1^- \setminus c_2^- = S_*^- \setminus \Delta_2^-$.

Let $S_*'$ and $S_*''$ denote the components of $S_*$, with $(S_*')^{\pm},(S_*'')^{\pm} \subset S_*^{\pm}$, chosen so that $h_1(c_1^+) \subset (S_*')^-$.  As above, we push $H_*$ through the product $S_*' \X I$.  By Lemma~\ref{handleswitch}, we can replace $H_* \cup (S_*' \X I)$ with $(S_2 \X I) \cup H_2^+$, where $S_2$ is given by
$$S_2 = \Sigma_1\setminus(h_1(c_1^+)\cup c_2^-)\cup(\Delta_1'\cup\Delta_2^-),$$
the disks $\Delta_2^+$ are given by $\Delta_2^+ = \Delta_1' \X \{1\} \subset S_2 \X \{1\}$, and $H_2^+$ is a 1--handle attached to $S_2 \X \{1\}$ along $\Delta_2^+$.  The boundary component of $(S_2 \X I) \cup H_2^+$ induced by the 1--handle attachment is the surface $(S'_*)^+$, and the other boundary components are $S_2^- = (S_*')^- \setminus h_1(c_1^+) \cup \Delta_1'$.  Since $h_1(c_1^+)$ is not isotopic to $c_1^-$ in $\Sigma_1^-$ and separates $\Sigma_1^-$, it follows that $h_1(c_1^+)$ is separating in $(S_*')^-$, cutting $(S_*')^-$ into two components of positive genus.  This implies that $S_2$ is disconnected, with components $S_2'$ and $S_2''$.  Since each surface $(S_*)^-$ and $(S_*'')^-$ contains one attaching disk in $\Delta_2^-$ for $H_2^-$, we have that either $(S_2')^-$ or $(S_2'')^-$ contains an attaching disk in $\Delta_2^-$, and so we choose $S_2'$ so that $(S_2')^-$ contains this disk.

\begin{figure}[h!]
	\centering
	\includegraphics[width=\textwidth]{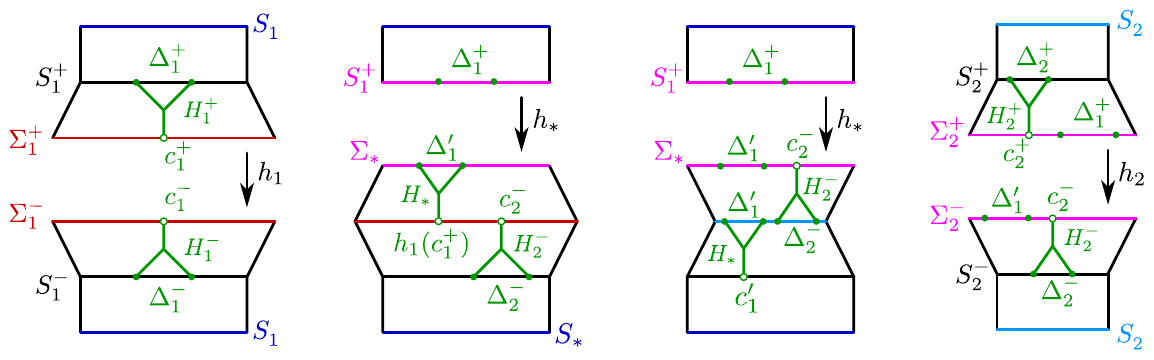}
	\caption{A sequence of schematics describing the process of untelescoping a Heegaard double in the case that $S_1$ is connected.}
	\label{fig:untele}
\end{figure}

Note that by construction $S_1^+ = S_*^+$, and thus $S_1^+$ is disconnected with components $(S_*')^+$ and $(S_*'')^+$. In this case, the gluing map $h_*$ described at the beginning of the proof takes the disconnected surface $S_1^+ = (S_*')^+ \cup (S_*'')^+$ to $(\Sigma_1^- \setminus h_1(c_1^+)) \cup \Delta_1'$, and we may separate $h_*$ into two maps $h'_*$ and $h''_*$ on $(S_*')^+$ and $(S_*'')^+$.  Observe that the image of $h_*$, the two components of $(\Sigma_1^- \setminus h_1(c_1^+)) \cup \Delta_1'$, are obtained by attaching $H_2^-$ to $(S_*'')^- \cup (S_2')^- \cup (S_2'')^-$ along $\Delta_2^-$, where the disks of $\Delta_2^-$ are contained in $(S_*'')^-$ and $(S_2')^-$.  Hence, the image of $h_*$ consists of $(S_2'')^-$ and $\Sigma_2^-$, where $\Sigma_2^-$ is induced by attaching $H_2^-$ to $(S_*'')^- \cup (S_2')^-$.  It follows that $g(\Sigma_2^-) = g(S_*'') + g(S_2') > g(S_*'')$, forcing $h'_*$ to map $(S_*')^+$ onto $\Sigma_2^-$ and $h''_*$ to map $(S_*'')^+$ to $(S_2'')^-$.  Since the result of attaching $S_*'' \X I$ to $S_2 \X I$ by gluing $(S_*'')^+$ to $(S_2'')^-$ is homeomorphic to $S_2 \X I$, we have a new decomposition of $Y$ obtained by attaching $H_2^-$ to $S_2 \X \{0\}$, attaching $H_1^+$ to $S_2 \X \{1\}$, and gluing the resulting boundary components $\Sigma_2^+ = (S_*^+)'$ and $\Sigma_2^-$.  This can be represented by a Heegaard double $(Y;S_2,\Delta_2,h_2)$, and by construction, $\chi(S_2) > \chi(S_1)$.  To complete the proof, we note that in this case, $S_2$ is never connected, and so the additional hypotheses are not satisfied.
\end{proof}
	
As stated above, we call the process of reducing the Heegaard double $(Y;S_1,\Delta_1,h_1)$ to $(Y;S_2,\Delta_2,h_2)$ \emph{untelescoping}.  Note that the situation for Heegaard doubles is somewhat different than for classical Heegaard splittings, since untelescoping a Heegaard double produces another Heegaard double.  Returning to the manifold $Y_2 = \#^2(S^1 \X S^2)$, we have the following lemma.

\begin{lemma}\label{totalreduction}
Any Heegaard double $(Y_2;S_1,\Delta_1,h_1)$ of $Y_2$ can be repeatedly untelescoped until it becomes isotopic to the standard Heegaard double of $Y_2$.  In addition, the curves $c_1^{\pm}$ are non-separating in $\Sigma_1^{\pm}$.
\end{lemma}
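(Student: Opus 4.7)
The plan is to iterate Proposition \ref{untel}, using the dichotomy in Lemma \ref{y2class} as the engine, and to control termination via the Euler characteristic of the central surface.

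The setup is as follows. Any closed orientable surface $S$ admissible as the central surface of a Heegaard double -- connected, or two-component with no $2$--sphere summands -- satisfies $\chi(S)\leq 2$, with equality precisely when $S=S^2$. Moreover, as recorded in the proof of Lemma \ref{y2class}, any Heegaard double of $Y_2$ whose central surface is $S^2$ must be the standard Heegaard double (the torus $\Sigma^\pm$ admits, up to isotopy, only the identity self-homeomorphism producing $Y_2$). This simultaneously provides the upper bound that will drive termination and the identification of the terminal object.

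With this in hand, the main argument is direct. Given a Heegaard double $(Y_2;S_1,\Delta_1,h_1)$, apply Lemma \ref{y2class}: either this is already the standard Heegaard double and we are done, or $h_1(c_1^+)$ and $c_1^-$ are non-isotopic and can be isotoped to be disjoint in $\Sigma_1^-$. In the latter case the hypotheses of Proposition \ref{untel} are satisfied, and untelescoping yields $(Y_2;S_2,\Delta_2,h_2)$ with $\chi(S_2)=\chi(S_1)+2$ (one attaches a $3$--dimensional $2$--handle to $S_\ast$, which raises Euler characteristic by $2$). Iterate. Since $\chi$ is strictly increasing and bounded above by $2$, the sequence terminates after finitely many steps, necessarily at the standard Heegaard double. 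This proves the first claim.

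For the non-separating claim, the terminal central surface $S^2$ is connected, so conclusion~(a) of Proposition \ref{untel} -- at each untelescoping step, $S_{i+1}$ connected implies $S_i$ connected -- propagates backwards through the sequence to force $S_1$ connected. Since the co-core of the $1$--handle attached to a connected surface is non-separating in the induced closed surface, while the co-core of the $1$--handle bridging the two components of a disconnected surface is separating, this is equivalent to $c_1^\pm$ being non-separating in $\Sigma_1^\pm$. The heavy lifting has already been carried out in Proposition \ref{untel} and the lemmas preceding it, so no substantial new obstacle is expected here; the argument is essentially inductive bookkeeping on the Euler characteristic.
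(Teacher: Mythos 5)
Your proof is correct and follows essentially the same path as the paper: invoke the dichotomy of Lemma~\ref{y2class}, untelescope via Proposition~\ref{untel} while tracking the strictly increasing (and bounded) Euler characteristic of the central surface to guarantee termination at the standard double, and then push the connectedness of the terminal $S^2$ backward through Proposition~\ref{untel}(a) to conclude that $S_1$ is connected and hence $c_1^\pm$ is non-separating. You make the termination bookkeeping slightly more explicit than the paper does, but the underlying argument is identical.
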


\begin{proof}
If $(Y_2;S_1,\Delta_1,h_1)$ is not the standard Heegaard double, then by Lemma \ref{y2class}, it can be untelescoped, increasing $\chi(S_1)$.  After finitely many untelescoping operations, Lemma \ref{y2class} implies that the result $(Y_2;S_n,\Delta_n ,h_n)$ is the standard Heegaard double.  In the standard Heegaard double, $S_n$ is connected, and thus by repeated applications of Lemma \ref{untel}, the surface $S_1$ is connected as well.   We conclude $c_1^\pm$ is non-separating in $\Sigma_1^\pm$.
\end{proof}

Now we turn to the specific case of a 2R-link $L = Q \cup J$, where $Q$ is fibered.  Recall the notation and language set up in Section \ref{sec:intro}.  We let $F$ denote a fiber of $Q$ in $S^3$, $Y_Q$ the result of 0--surgery on $Q$ with closed fiber $\wh F$ and closed monodromy $\wh \varphi$.  Recall also that two links $L$ and $L'$ are stably equivalent if there are unlinks $U$ and $U'$ such that $L \sqcup U$ is handleslide-equivalent to $L' \sqcup U'$.

\begin{lemma}
\label{surgerydouble}
	Suppose $L = Q \cup J$ is a 2R-link, where $Q$ is non-trivial and fibered knot with fiber $F$.  In $Y_Q$, the framed knot $J$ can be isotoped to lie in a closed fiber $\wh F$ with the surface framing, and $J \subset \wh F$ naturally induces a Heegaard double $(Y_2;S,\Delta,h)$, where 
	\begin{enumerate}
		\item $\Sigma^+$ and $\Sigma^-$ are copies of $\wh F$;
		\item $S$ is the result of gluing disks $\Delta^*$ to the boundary components of $\wh F \setminus J$;
		\item $\Delta = \Delta^+ \cup \Delta^-$, where $\Delta^+ = \Delta^*\times\{1\} \subset S^+$ and $\Delta^- = \Delta^*\times\{0\}\subset S^-$; and 
		\item $h\colon \Sigma^+\to\Sigma^-$ is the closed monodromy $\wh \varphi$.  
	\end{enumerate} 
\end{lemma}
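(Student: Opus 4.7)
The plan is to separately handle the two assertions of the lemma: (i) that $J$ admits an isotopy in $Y_Q$ placing it inside a closed fiber $\wh F$ with surface framing matching the $0$-framing inherited from $S^3$, and (ii) that this placement gives rise to the Heegaard double with the listed data. The second assertion is essentially a direct verification, while the first is the substantive content.

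For the isotopy, I first note that the R-link hypothesis forces $\lk(Q,J) = 0$, so that $J$ is null-homologous in $Y_Q$, which has $H_1(Y_Q) \cong \Z$. Now $Y_Q$ fibers as $\wh F \X_{\wh\varphi} S^1$, and I put $J$ in general position with respect to this bundle projection, minimizing $|J \cap \wh F'|$ over generic fibers. If some generic fiber is disjoint from $J$ we are done, so suppose $J$ meets every fiber. Since fibers are incompressible in $Y_Q$, but $Y_2$ contains no incompressible surface of positive genus, every fiber $\wh F'$ must compress in $Y_2$ via a disk whose interior meets the dual surgery torus of $J$. Reinterpreting such a disk in $Y_Q$ yields a planar surface with one essential boundary on $\wh F'$ and remaining boundaries meridians of $\nu(J)$, which can be used to push arcs of $J$ across $\wh F'$ and thereby reduce $|J \cap \wh F'|$, contradicting minimality unless $J$ lies entirely in some fiber after isotopy. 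The framing identification then follows from choosing a Seifert surface $\Sigma_J \subset Y_Q$ (which exists because $J$ is null-homologous, and can be arranged disjoint from the surgery solid torus of $Q$ using $\lk(Q,J) = 0$) that meets $\wh F$ tangentially along $J$; the surface framings from $\wh F$ and $\Sigma_J$ then agree.

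For the Heegaard double, I split $Y_Q$ along a fiber $\wh F_0$ chosen disjoint from $J$, identifying the complement with $\wh F \X I$ in such a way that $J \subset \wh F \X \{1/2\}$ and $\wh F \X \{1\}$ is identified with $\wh F \X \{0\}$ via $\wh\varphi$. Surface-framing $0$-surgery on $J$ replaces a tubular neighborhood by a solid torus whose meridian is the surface pushoff, locally compressing $\wh F \X \{1/2\}$ along $J$ to produce $S$ and leaving above and below two product cobordisms from $S$ to $\wh F$. These cobordisms are precisely the attachments of $1$-handles $H^\pm$ along the scar disks $\Delta^\pm \subset S^\pm$, and the mapping-torus gluing $\wh\varphi$ of $\wh F \X \{1\}$ to $\wh F \X \{0\}$ provides the required identification $h \colon \Sigma^+ \to \Sigma^-$. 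This verifies items (1)--(4) of the conclusion.

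The main obstacle is carrying out the isotopy rigorously. Minimizing $|J \cap \wh F'|$ inside a fiber bundle via outermost-disk or thin-position arguments, and converting compressing disks for fibers in $Y_2$ into isotopies of $J$ in $Y_Q$, is where the R-link hypothesis enters in an essential way; this parallels the thin-position-style analyses of Heegaard doubles developed elsewhere in this section.
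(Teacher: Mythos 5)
Your construction of the Heegaard double once $J$ lies in a fiber is essentially the same as the paper's, which invokes the ``surgery principle'' of Gompf--Scharlemann--Thompson: cut $Y_Q$ along fibers, do the $0$--surgery on $J$ at an intermediate level, and push the resulting $2$--handle(s) across the product regions to reinterpret them as $1$--handles $H^\pm$. That part is fine modulo bookkeeping.

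The genuine gap is in the first half. The paper does not reprove that $J$ can be isotoped into a closed fiber with surface framing equal to the $0$--framing; it cites Corollary~4.3 of Scharlemann--Thompson (``Surgery on a knot in Surface~$\times I$''), which is precisely a thin-position theorem tailored to this situation. Your sketch of a proof of that fact has a logical error at the outset: you claim ``if some generic fiber is disjoint from $J$ we are done.'' This is false. If $J$ misses a fiber $\wh F'$, then $J$ lies in the product region $\wh F\times(0,1)$, but a knot in $\wh F\times(0,1)$ is in general \emph{not} isotopic to a curve lying in a single level $\wh F\times\{t\}$. (It can be knotted inside the product, or be a curve whose projection to $\wh F$ is not embedded.) This is exactly the hard part, and it is where the hypothesis that $0$--surgery on $J$ yields $\#^2(S^1\times S^2)$ has to be used seriously. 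Similarly, the claim that one can choose a Seifert surface for $J$ ``that meets $\wh F$ tangentially along $J$'' is an unjustified assumption: that the $\wh F$--framing of $J$ coincides with the $0$--framing is part of what Scharlemann--Thompson prove, not something that follows from null-homology alone. Rather than reproving this, the cleaner route --- and the one the paper takes --- is simply to cite the Scharlemann--Thompson result, which already packages both the isotopy into a fiber and the framing statement.
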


\begin{proof}
Since $J$ is disjoint from $Q$, we may view $J$ as a knot in $Y_Q$, which we will also denote $J$ in an abuse of notation.  By Corollary~4.3 of~\cite{SchTho_Surgery-on-a-knot_09}, the knot $J$ is isotopic in $Y_Q$ into a closed fiber $\wh F$ for $Y_Q$, where the surface slope of $J$ in $\wh F$ is the 0--framing. We now describe the process dubbed the \emph{surgery principle} in Lemma 4.1 of~\cite{GomSchTho_Fibered-knots_10}.
	
Let $\wh F$ and $\wh F^*$ denote two copies of the closed fiber in $Y_Q$ such that $Y_Q$ is the union of $\wh F\times I$ and $\wh F^*\times I$, where $\wh F^*\times\{0\}$ is identified with $\wh F\times \{1\}$ using the diffeomorphism $\wh\varphi\colon \wh F^*\times\{1\}\to\wh F\times\{0\}$, and $\wh F\times\{1\}$ is identified with $\wh F^*\times\{0\}$ using the identity.  Let $\Sigma^- = \wh F\times\{0\}$, and let $\Sigma^+ = \wh F^*\times\{1\}$.

Suppose now that $J$ has been isotoped into $\wh F \X \{1\}$, and let $J^*$ be a copy of $J$ in $\wh F^* \X \{0\}$, which is identified with $J$ in $Y_Q$.  We obtain $Y_2$, the result of 0--surgery on $J$, by attaching a 2--handle $H$ to $\wh F \X \{1\}$ along a copy of $J$ and another 2--handle $H^*$ to $J^* \subset \wh F^* \X \{0\}$.  Then, letting $S$ and $S^*$ be the surfaces induced by the 2--handle attachments, with scars $\Delta'$ and $\Delta^*$, respectively, we glue $S$ to $S^*$ with the identity map and glue $\wh F^* \X \{1\}$ to $\wh F \X \{0\}$ with $\wh \varphi$.  Pushing the 2--handle $H$ across the product $\wh F \X I$ and pushing $H^*$ across the product $\wh F^* \X I$ yields the desired Heegaard double, $(Y_2;S,\Delta,h)$, where $h = \wh \varphi$ and $\Delta$ consists of $\Delta' \X \{0\}$ and $\Delta^* \X \{1\}$.
\end{proof}

We remark that the surfaces $\Sigma^+$ and $\Sigma^-$ of the induced Heegaard double may be viewed as parallel copies of the fiber $\wh F$ in $Y_Q$, in which the compressing curves $c^+$ and $c^-$ are parallel copies of $J \subset \wh F$.  The natural next step is to untelescope this induced double, and with careful bookkeeping, we can prove the main theorem from this section.

\begin{reptheorem}{thmx:CGequiv}
	Suppose $L = Q \cup J$ is a 2R-link, where $Q$ is non-trivial and fibered.  Then $L$ is stably equivalent $Q \cup L^+$, where $L^+$ is a CG-derivative of $Q$.
\end{reptheorem}

\begin{proof}

Let $(Y_2;S_1,\Delta_1,h_1)$ be the Heegaard double described in Lemma~\ref{surgerydouble}.  We will use the same notation as in that lemma, so that $(Y_2; S_1,\Delta_1,h_1)$ is induced by isotoping $J$ to lie in a closed fiber $\wh F$ of $Y_Q$.  In addition, $c_1^+$ and $c_1^-$ are parallel copies of $J$ contained in the surfaces $\Sigma_1^+$ and $\Sigma_1^-$, which can be viewed as parallel copies of the fiber $\wh F$ in $Y_Q$.  By Lemma~\ref{totalreduction}, this Heegaard double can be untelescoped until it becomes the standard Heegaard double of $Y_2$.  Each surface $S_n$ is connected, which means that each untelescoping operation reduces the genus of the surface $S_1$ by one and this process requires a total of $g-1$ untelescoping operations, where $g = g(\wh F)$.  Note that $(Y_2;S_1,\Delta_1,h_1)$ is not standard since $g \geq 2$.  We will let $(Y_2;S_n,\Delta_n,h_n)$ be the result of untelescoping $(Y_2;S_1,\Delta_1,h_1)$ a total of $n-1$ times, for $1 \leq n \leq g$, so that $(Y_2;S_g,\Delta_g,h_g)$ is the standard Heegaard double of $Y_2$.

Consider the surfaces $\Sigma_1^+$ and $\Sigma_1^-$, which (as noted above) may be considered to be parallel copies of $\wh F$ in $Y_Q = \wh F\times[0,1]/\sim$, where $(x,1)\sim(h_1(x),0)$ for $h_1\colon \Sigma^+\to\Sigma^-$. Let $\pi\colon \wh F \X I \to \Sigma_1^+$ be the projection map induced by the product structure.  The map $\pi$ is a mechanism we use to keep track of a fixed copy, $\Sigma_1^+$, of $\wh F$, as opposed to working with two copies, $\Sigma^-$ and $\Sigma^+$, in parallel.  For the remainder of the proof, we will interpret $\wh \varphi$ as a map from $\Sigma_1^+$ to itself, so that $h_1\colon\Sigma_1^+ \rightarrow \Sigma_1^-$ satisfies $\pi \circ h_1 = \wh \varphi$.

By Proposition~\ref{untel}, $\Sigma_2^+ = \Sigma_1^+ \setminus c_1^+ \cup \Delta_1^+$, and thus $c_2^+$ may be chosen so that $c_2^+ \subset \Sigma_1^+ \setminus c_1^+$.  By induction, we have $\Sigma_n^+ = \Sigma_1^+ \setminus (c_1^+ \cup \dots \cup c_{n-1}^+) \cup (\Delta_1^+ \cup \dots \cup \Delta_{n-1}^+)$, so that $c_n^+$ may be chosen so that $c_n^+ \subset \Sigma_1^+ \setminus (c_1^+ \cup \dots \cup c_{n-1}^+)$.  For a set of choices $c_1^+,\dots,c_g^+$, we let $L^+$ be the $g$--component link given by $L^+ = c_1^+ \cup \dots \cup c_g^+$, noting that $L^+$ is a $g$--component link cutting the fiber surface $\Sigma_1^+$ into a planar surface.  Give $L^+$ the surface framing.

\textbf{Claim 1:} For some choice of the curves $c_1^+,\dots,c_g^+$, the link $L^+$ is a Casson-Gordon derivative for $Q$.

We remark that the claim is, in fact, true for all choices of $c_i^+$; however, we need only this weaker statement for the proof of the theorem.


\emph{Proof of Claim 1:} Since $c_1^+,\dots,c_n^+$ are pairwise disjoint in $\Sigma_1^+$, repeated applications of Proposition~\ref{untel} yield that $h_n(c_n^+) = h_1(c_n^+)$.  Following the proof of Proposition~\ref{untel}, there exist disks $\Delta_n'$ contained in $\pd H_n^+$ such that $\Sigma_{n+1}^- = \Sigma_n^- \setminus h_1(c_n^+) \cup \Delta_n' = \Sigma_1^- \setminus (h_1(c_1^+) \cup \dots \cup h_1(c_n^+)) \cup(\Delta_1' \cup \dots \cup \Delta_n')$.  Observe that $c_{n+1}^-$ is necessarily contained in $\Sigma_{n+1}^-$; we will assume that $c_{n+1}^-$ has been chosen to be a curve disjoint from $h_1(c_1^+),\dots,h_1(c_n^+)$ and isotopic to $c_n^-$ in $\Sigma_n^-$.

In order to prove the claim, we establish the following statement:  For every $n$ such that $1 \leq n \leq g$, the two sets of pairwise disjoint curves
\[ L_n^+ = \{c_1^+,\dots,c_n^+\} \qquad \text{ and } \qquad  L_n^- = \{\pi(c_{n}^-),\wh \varphi(c_1^+),\dots,\wh\varphi(c_{n-1}^+)\}\]
define the same compression body as curves in $\Sigma_1^+$.

We induct on $n$.  The case $n = 1$ follows from the fact that $\pi(c_1^-) = c_1^+$.  Suppose $n=2$.  By the proof of Proposition~\ref{untel}, $c_2^-$ is isotopic to $c_1^-$ and disjoint from $h_1(c_1^+)$ in $\Sigma_1^-$, and we obtain $\Sigma_2^-$ by cutting along $h_1(c_1^+)$ and capping off with disks $\Delta_1'$.  There is a curve $c_1' \subset \Sigma_1^-$ such that the disjoint pair $(c_2^-,h_1(c_1^+))$ is isotopic to the disjoint pair $(c_1^-,c_1')$ in $\Sigma_1^-$.  Since $c_1' \cap c_1^- = \emp$, there is a annulus $A_1' \subset S_1 \X I$ such that $\pd(S_1 \X I) = c_1' \cup \pi(c_1')$; thus, we can let $c_2^+ = \pi(c_1')$.  Since $(c_2^-,h_1(c_1^+))$ is isotopic to $(c_1^-,c_1')$ in $\Sigma_1^-$, we have $(\pi(c_2^-),\pi(h_1(c_1^+)))$ is isotopic to $(\pi(c_1^-),\pi(c_1'))$ in $\Sigma_1^+$.  The former pair is $L^-_2=(\pi(c_2^-),\wh\varphi(c_1^+))$, while the latter is $L_2^+=(c_1^+,c_2^+)$.  Since $L_2^-$ is isotopic to $L_2^+$ in $\Sigma_1^+$, they define the same compression body.

Now suppose by way of induction that $L_n^+$ and $L_n^-$ define the same compression body.  This implies that the curves in $L_n^-$ can be changed into the curves in $L_n^+$ by a sequence of isotopies and handleslides in $\Sigma_1^+$.  As above, the curve $c_{n+1}^-$ is isotopic to $c_n^-$ and disjoint from $h_n(c_n^+) = h_1(c_n^+)$ in $\Sigma_n^-$, where $\Sigma_n^- = \Sigma_1^- \setminus (h_1(c_1^+) \cup \dots \cup h_1(c_{n-1}^+)) \cup (\Delta_1' \cup \dots \cup \Delta_{n-1}')$.  It follows that $c_{n+1}^-$ is isotopic to $c_n^-$ in $\Sigma_1$ modulo handleslides over the curves of $\{h_1(c_1^+),\ldots,h_1(c_{n-1}^+)\}$.  Since $c_{n+1}^- \cap h_1(c_n^+) = \emp$, there is a curve $c_n' \subset \Sigma_1^- \setminus (h_1(c_1^+) \cup \dots \cup h_1(c_{n-1}^+))$ such that the disjoint pair $(c_{n+1}^-,h_1(c_n^+))$ is isotopic to the disjoint pair $(c_n^-,c_n')$ in $\Sigma_1^+$, modulo handleslides over the curves of $\{h_1(c_1^+),\ldots,h_1(c_{n-1}^+)\}$.  By applying the projection $\pi$,  we have that the pair $(\pi(c_{n+1}^-),\wh\varphi(c_n^+))$ is isotopic to the pair $(\pi(c_n^-),\pi(c_n'))$ in $\Sigma_1^+$, modulo handleslides over the curves of $\{\wh\varphi(c_1^+),\ldots,\wh\varphi(c_{n-1}^+)\}$.

Now, observe that
\[(\pi(c_n^-) \cup \pi(c_n')) \cup (\wh\varphi(c_1^+) \cup \ldots \cup \wh\varphi(c_{n-1}^+)) = L_n^- \cup \pi(c_n').\]
There exists a curve $c_{n+1}^+ \subset \Sigma_1^+ \setminus L_n^+$ such that the sequence of isotopies and handleslides taking $L_n^-$ to $L_n^+$ give rise to a sequence of isotopies and handleslides taking $L_n^- \cup \pi(c_n')$ to $L_n^+ \cup c_{n+1}^+$.  Note that $S_n^{\pm} \subset \Sigma_n^{\pm} \setminus L_n^{\pm}$, and thus isotopies and handleslides over the curves in $L_n^-$ describe an isotopy from $h_1(c_n^+)$ to $c_{n+1}^+$ in the product $S_n \X I$, verifying that the curve $c_{n+1}^+$ obtained via this process is isotopic in $S_n^+ = \Sigma_{n+1}^+$ to a co-core of $H_{n+1}^+$.  We conclude that $L_{n+1}^+$ and $L_{n+1}^-$ define the same compression body, and by induction this holds for all $n$.

To complete the proof of the claim, we note that $(Y_2;S_g,\Delta_g,h_g)$ is the standard Heegaard double, which implies that $h_g(c_g^+) = h_1(c_g^+)$ is isotopic to $c_g^-$ in $\Sigma_g^-$; equivalently, $h_1(c_g^+)$ is isotopic to $c_g^-$ in $\Sigma_1^-$ modulo handleslides over $h_1(c_1^+),\dots,h_1(c_{g-1}^+)$.  It follows that the curves $\wh \varphi(L_g^+)$ define the same handlebody as $L_g^-$, which defines the same handlebody as $L_g^+$ by the above argument.  We conclude that $\wh \varphi$ extends over the handlebody determined by $L^+ = L_g^+$.

\textbf{Claim 2:} $L = Q \cup J$ is stably equivalent to $Q \cup L^+$.

\emph{Proof of Claim 2:} Let $U$ denote a split, $(g-1)$--component 0--framed unlink in $E(L)$.  We can isotope $U$ in $E(L)$ so that $U \subset F$ and $U$ bounds a collection of $g-1$ disjoint disks in $F$.  By a sequence of handleslides, one for each component of $U$, we may change $U$ to a $(g-1)$--component link in which each component is a parallel copy of $J$ in $F$.  Since $U \subset E(L)$, we may view $U$ as a link in $Y_2$, with each component of $U$ surface-framed and isotopic to $c_1^+$ in $\Sigma_1^+$, as $c_1^+$ is parallel to $J$ in $Y_Q$.  In other words, each component of $U$ is a parallel push-off in $\Sigma_1^+$ of the boundary $c_1^+$ of the co-core $D_1^+$ of the 1--handle $H_1^+$.  As we untelescope $g-1$ times, we isotope the 1--handle $H_1^+$, and for each iteration, we leave behind a component of $U$ as one of the pairwise disjoint curves $c_2^+,\dots,c_g^+$.

It follows that $U$ is isotopic to the link $c_2^+ \cup \dots \cup c_g^+$ in $Y_2$, which implies that $U$ is isotopic to $c_2^+ \cup \dots \cup c_g^+$ modulo handleslides over $J$ in $Y_Q$, and thus $J \cup U$ is handleslide equivalent to $L^+$ in $Y_Q$.  Finally, it follows that $Q \cup J \cup U$ is handleslide equivalent to $Q \cup L^+$ in $S^3$, completing the proof of the theorem.
\end{proof}

\section{Curves on the fiber of a generalized square knot}
\label{sec:square}

In the previous section, we showed in Theorem~\ref{thmx:CGequiv} that in order to understand the possible stable equivalence classes of a 2R-link $L = Q \cup J$ with $Q$ fibered, it suffices to understand Casson-Gordon derivatives for $Q$.  In this section, we build on the approach and techniques of Scharlemann~\cite{Sch_Proposed-Property_16} to develop the background we will need for the classification of Casson-Gordon derivatives for generalized square knots, which we give at the end of the section.

We begin by describing detailed pictures of the monodromies of torus knots and the closed monodromies of generalized square knots.  Next, we show how this closed monodromy generates the group of deck transformations for a branched covering of the capped off fiber surface of a generalized square knot over a 2--sphere.  By lifting distinct curves from the 2--sphere to the fiber surface, we give a list of CG-derivatives for each generalized square knot, and by invoking the Equivariant Loop Theorem, we show that this list is complete. As a consequence, we construct many R-links that are potential counterexamples to the Stable Generalized Property R Conjecture, as in Proposition~\ref{propx:nR}, which we prove in this section.

\subsection{Fibering generalized square knots}
\label{subsec:square}
\ 

Recall that the generalized square knot $Q_{p,q}$ is defined to be $Q_{p,q} = T_{p,q}\#T_{-p,q}$, where $T_{p,q}$ denotes the $(p,q)$--torus knot with $0<q<p$.  For the rest of this section, in order to ease notation we fix the parameters $p$ and $q$, letting $K^\pm = T_{\pm p,q}$ and $Q = Q_{p,q} = K^+\# K^-$. Let $F^\pm$ denote fixed minimal genus Seifert surfaces for $K^\pm$, and let $F = F^+\natural F^-$ denote the corresponding Seifert surface for $Q$, where $\natural$ denotes the natural boundary-connected summation of Seifert surfaces yielding a Seifert surface for $Q$.  It is well-known that~$g(F^\pm) = \frac{1}{2}(p-1)(q-1)$, so $g(F) = (p-1)(q-1)$.  As before, we use $E_{K^{\pm}}$ and $E_Q$ to represent the exteriors of $K^{\pm}$ and $Q$, respectively, and we let $Y_Q$ denote the result of 0--framed Dehn surgery on $Q$, with $\wh F$ the closed fiber in $Y_Q$.  In addition, we let $\varphi^{\pm}$ denote the monodromy for $E_{K^{\pm}}$, $\varphi$ the monodromy for $E_Q$, and $\wh \varphi$ the monodromy for $Y_Q$.

In this subsection, we give an explicit description of the surface bundle structures on $E_{K^\pm}$, $E_Q$, and $Y_Q$.  To begin, we construct the Seifert surface $F^+$ for the torus knot $K^+ = T_{p,q}$, where $K^+$ is contained in a Heegaard torus $T$ cutting $S^3$ into solid tori $V$ and $V'$.  Let $D_1,\dots,D_p$ be disjoint meridian disks for $V$, and let $D'_1,\dots,D_q'$ be disjoint meridian disks for $V'$, so that $\{D_i\}$ and $\{D_j'\}$ meet in $pq$ points $\{x_{i,j}\}$, with $x_{i,j} = D_i \cap D_j'$.  Replace each point of intersection $x_{i,j}$ with a band $B_{i,j}$ containing a negative quarter twist, so that the union $F^+ = \{D_i\} \cup \{D_j'\} \cup \{B_{i,j}\}$ is a Seifert surface for $K^+$.  See Figure~\ref{fig:torus_band}.

\begin{figure}[h!]
	\centering
	\includegraphics[width=.4\textwidth]{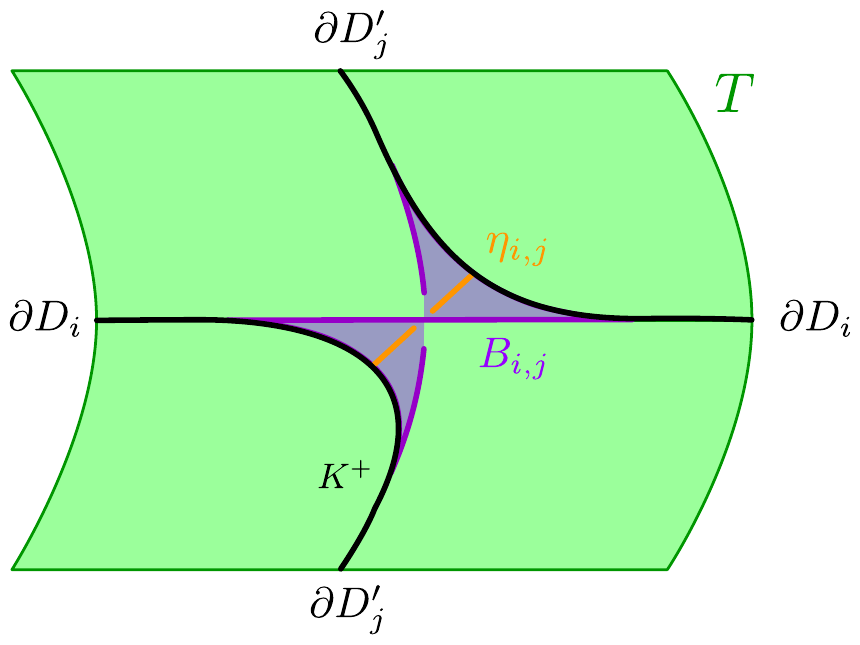}
	\caption{A local picture of the Heegaard torus $T$ near where the boundaries of the disks $D_i$ and $D_j'$ intersect, where the intersection point $x_{i,j}$ has been replaced with the band $B_{i,j}$.}
	\label{fig:torus_band}
\end{figure}

The monodromy $\varphi^+$ corresponding to the fibration of $E_{K^+}$ is well-understood: It can be visualized as a simultaneous cork-screwing of the disks $\{D_i\}$ and $\{D_j'\}$ within the solid tori $V$ and $V'$.  Specifically, $\varphi^+$ cyclically permutes both sets of disks, as well as the bands.  Thus, $\varphi^+$ has order $pq$, and we may assume that the disks are labeled so that $\varphi^+(D_i) = D_{i+1}$, $\varphi^+(D_j) = D_{j+1}$, and $\varphi^+(B_{i,j}) = B_{i+1,j+1}$, with indices $i$ and $j$ considered modulo $p$ and $q$, respectively.  See the left graphic of Figure~\ref{fig:TK_fib}, where we have represented the core of $V$ by the $z$--axis and the core of $V'$ by the unit circle in the plane $\{z=0\}$ in our illustration of the case of $(p,q)=(4,3)$.

In order to better understand the action of $\varphi^+$ on $F^+$, we build an alternative picture as in \cite{Sch_Proposed-Property_16}.  Let $\Gamma^+$ be a graph embedded in $F^+$, where $\Gamma^+$ has a vertex $v_i$ in the center of each disk $D_i$ and a vertex $v_j'$ in the center of each disk $D_j'$, for a total of $p+q$ vertices.  In addition, $\Gamma^+$ has $pq$ edges, labeled $e_{i,j}$, connecting $v_i$ to $v_j'$ and passing through the core of the band $B_{i,j}$.  As such, we may suppose without loss of generality that that $\varphi^+(\Gamma^+) = \Gamma^+$, where $\varphi^+(v_i) = v_{i+1}$, $\varphi^+(v_j') = v_{j+1}'$, and $\varphi^+(e_{i,j}) = e_{i+1,j+1}$, with indices considered modulo $p$ and $q$ as above.  See the left panel of Figure~\ref{fig:TK_fib} for the case of $(p,q)=(4,3)$.

Knowing $\varphi^+(\Gamma^+)$, we now consider the action of $\varphi^+$ on $F^+ \setminus \Gamma^+$.  Cutting $F^+$ along $\Gamma^+$ yields an annulus $A^+$, where one boundary component of $A^+$ is the knot $K^+$ and the other boundary component is a $2pq$--gon coming from $\Gamma^+$.  Each edge $e_{i,j}$ of $\Gamma^+$ gives rise to two edges $e_{i,j}^{\pm}$ in $\pd A^+$, labeled as in the center panel of Figure~\ref{fig:TK_fib}.  Moving clockwise around $\pd A^+$, we see that edges alternate between $+$ and $-$, the edge $e_{i,j}^+$ is adjacent to $e_{i,j+1}^-$, and the edge $e_{i,j}^-$ is adjacent to $e_{i+1,j}^+$.  Moreover, the monodromy $\varphi^+$ preserves the orientation of the edges, and thus $\varphi^+$ acts on the $2pq$--gon by a $2\pi / pq$ clockwise rotation.  As in \cite{Sch_Proposed-Property_16}, we assume that $\varphi^+$ also induces a $2\pi / pq$ rotation of the knot $K^+$.  With this setup, we see a departure from the usual convention that $\varphi^+|_{\pd E_{K^+}}$ is the identity, since the knot itself is rotated along with the fiber surface. We make this choice because is it compatible with the Seifert fibered structure on $E_{K^+}$ in that $\varphi^+$ preserves fibers (see Lemma~\ref{Seif}).   Furthermore, this assumption does not alter our eventual description of the closed monodromy $\wh \varphi$ for $Y_Q$.

\begin{figure}[h!]
	\centering
	\includegraphics[width=\textwidth]{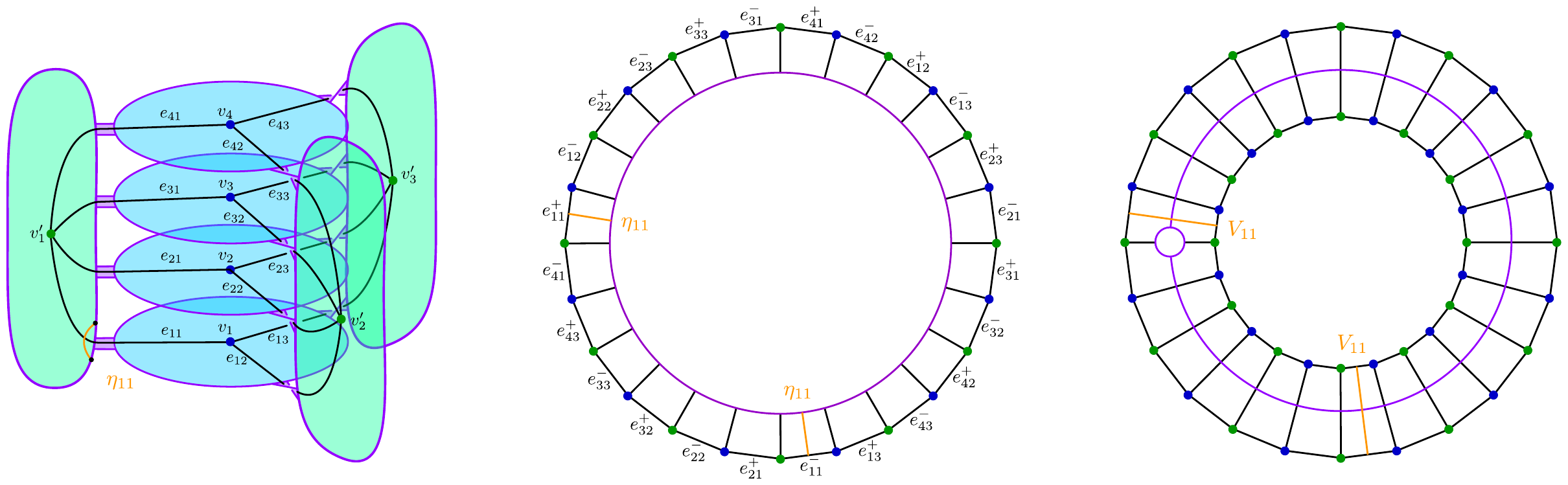}
	\caption{(Left) The surface fiber $F^+$ for the torus knot $K^+$, shown with spine graph $\Gamma^+$.  (Center) The annulus $A^+$ obtained by cutting open $F^+$ along $\Gamma^+$. (Right) The (punctured) annulus $A$ whose edge identifications yield the surface fiber $F$ for $Q$ . Shown in orange is the arc $\eta_{1,1}$ on $F^+$ and the curve $V_{1,1}$ on $F$.}
	\label{fig:TK_fib}
\end{figure}

Since the mapping class group of $A^+$ is $\Z$ and we understand $\varphi^+|_{\pd A^+}$, the map $\varphi^+$ is completely determined up to some number $k$ of Dehn twists about the core of the annulus $A^+$.  Consider the co-core arc $\eta_{i,j}$ of a band $B_{i,j}$.  The arc $\eta_{i,j}$ meets $\Gamma^+$ once, crossing the edge $e_{i,j}$, so that $\eta_{i,j} \cap A^+$ consists of two disjoint arcs, connecting $K$ to $e_{i,j}^+$ and $K$ to $e_{i,j}^-$.  The number of twists $k$ in $\varphi^+$ is equal to zero if and only if $\eta_{i,j} \cap \varphi^+(\eta_{i,j}) = \emp$, and we see that since $\varphi^+(B_{i,j}) = B_{i+1,j+1}$, the map $\varphi^+$ moves $\eta_{i,j}$ completely off of itself.  We conclude that $k = 0$, and the monodromy $\varphi^+$ is isotopic to a $2\pi/pq$ clockwise rotation of the annulus $A^+$. 

The last piece of information we need in order to completely understand $\varphi^+$ is the identification of $(+)$--edges and $(-)$--edges in $\pd A^+$ that recovers the Seifert surface $F^+$.  If we label the sides of the $2pq$--gon component of $\pd A^+$ in clockwise order from 0 to $2pq-1$, where the edge $e_{1,1}^+$ has label zero, we see that $(+)$--edges have even labels and $(-)$--edges have odd labels.  In addition, every edge $e_{1,j}^+$ is labeled $2ap$ for some integer $a$, and every edge $e_{i,1}^+$ is labeled $2bq$ for some integer $q$.  Since the edge $e_{1,1}^-$ is adjacent to both $e_{2,1}^+$ and $e_{1,q}^+$, its label $l$ is equal to $2ap + 1$ and $2bq - 1$.  Equivalently, we have that $ap + 1 = bq$, and thus the $(+)$--edge labeled 0 is identified to the $(-)$--edge labeled $2ap + 1$, where $ap \equiv -1 \pmod q$.  More generally, every $(+)$--edge labeled $l$ is identified to the $(-)$--edge labeled $l + 2ap + 1 \pmod{2pq}$, completing the picture.

\begin{remark}\label{rmk:sch_fiber}
	Upon first glance, the reader might notice that the picture described here is different than the picture described in~\cite{GomSchTho_Fibered-knots_10}~and~\cite{Sch_Proposed-Property_16}, where $(p,q) = (3,2)$.  However, these two descriptions can be seen to be identical after the following observation:  In the case that $q = 2$, we have that $p \equiv 1 \pmod 2$, and thus the $(+)$--edge labeled $l$ is identified with the $(-)$--edge labeled $l + 2p + 1$ in the $4p$--gon boundary component of $A^+$.  In addition, the $(+)$--edge labeled $l + 2p$ is identified with the $(-)$--edge $l + 4p + 1 \equiv l + 1 \pmod {4p}$.  Thus, the consecutive pair of $(\pm)$--edges labeled $l-1$ and $l$ are glued to the consecutive pair of $(\pm)$--edges labeled $l + 2p$ and $l+ 2p + 1$, and our description may be simplified.  In this case, the $4p$--gon boundary component may be viewed as a $2p$--gon in which opposite edges are identified.  Moreover, the monodromy remains a $2\pi/ pq = \pi/p$ clockwise rotation, and we see that the descriptions here and in \cite{Sch_Proposed-Property_16} are identical.  The distinction stems from the fact that when $q = 2$, the vertices $v_j' \in \Gamma^+$ have valence two, and the co-cores  $\eta_{i,1}$ and $\eta_{i,2}$ of the bands $B_{i,1}$ and $B_{i,2}$ are isotopic in $F^+$.
\end{remark}

We may now proceed to understand the monodromy of $Q = K^+ \# K^-$.  The monodromy of $K^-=T_{-p,q}$ can be described by reflecting the annulus $A^+$ through its boundary component coming from $K^+$ to get another annulus $A^-$, corresponding to a Seifert surface $F^-$ for $K^-$ containing an analogous graph $\Gamma^-$.  As such, this monodromy can be represented by a clockwise rotation of $A^-$, and it follows that the once-punctured surface fiber $F$ for $Q$ comes from gluing $A^-$ to $A^+$ along a portion of $K^{\pm}$ to obtain a punctured annulus $A$ with the given edge identifications, mimicking a similar step in \cite{Sch_Proposed-Property_16}.  The result is displayed in the right panel of Figure~\ref{fig:TK_fib}.  The knot $Q$ interferes with the periodicity of the monodromy -- rotation of $A$ moves the puncture -- so the rotation of $A$ must be followed by an isotopy taking $Q$ back to its starting position.  Once this is done, we have recovered the monodromy $\varphi$ corresponding to the surface bundle $E_Q$; see Figure~\ref{fig:mono_drag}.

\begin{figure}[h!]
	\centering
	\includegraphics[width=.7\textwidth]{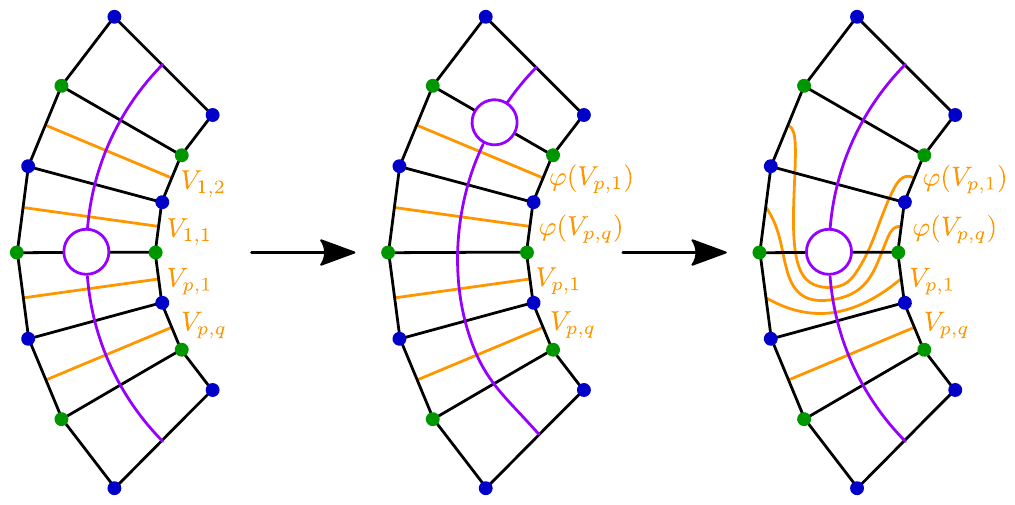}
	\caption{The local model of the monodromy $\varphi$ of $Q$ near the puncture of $A$, featuring the necessary action of dragging the puncture back to its initial position after the $1/pq$ clockwise rotation.}
	\label{fig:mono_drag}
\end{figure}

\subsection{The Seifert fibered structure of $Y_Q$}
\label{subsec:cover}
\ 

Consider $Y_Q$, the result of 0--surgery on $Q$ in $S^3$.  Using our work above, $Y_Q$ is a fibered 3--manifold with periodic monodromy $\wh\varphi$ of order $pq$ and (closed) surface fiber $\wh F$.  Moreover, $\wh F$ can be obtained by performing the above edge identifications on the annulus $\wh A = A^+ \cup A^-$, which has two $2pq$--gon boundary components, in which case $\wh\varphi$ is represented by an honest (clockwise) $2\pi /pq$ rotation of $\wh A$.  (Alternatively, $\wh A$ is obtained by filling in the puncture of $A$, which corresponds to the 0--framed Dehn surgery.)

\begin{lemma}\label{Seif}
The manifold $Y_Q$ is Seifert fibered with base space a 2--sphere $S$ with four exceptional fibers of orders $p$, $q$, $p$, and $q$.
\end{lemma}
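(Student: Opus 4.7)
The plan is to leverage the fact that $\wh\varphi$ is periodic of order $pq$ on $\wh F$, which automatically realizes $Y_Q = \wh F\times_{\wh\varphi}S^1$ as a Seifert fibered space whose base orbifold is $B = \wh F / \langle \wh\varphi \rangle$. The Seifert fibers are the orbits of the $S^1$--action $t \cdot [x,s] = [x,s+t]$, and if $k$ is the smallest positive integer with $\wh\varphi^k(x) = x$, then the orbit through $[x,0]$ is an exceptional fiber whose multiplicity equals the stabilizer order $pq/k$. So the task reduces to computing the orbits of $\langle\wh\varphi\rangle$ on $\wh F$ with nontrivial stabilizer and checking that the underlying surface $|B|$ is a sphere.

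First, I would show that the only points with nontrivial stabilizer come from the vertex classes of the spine graphs $\Gamma^\pm$. Since $\wh\varphi$ is represented as a rigid $2\pi/pq$ rotation of the annulus $\wh A = A^+ \cup A^-$, its action on the interior of $\wh A$ is free. On each boundary $2pq$--gon, $\wh\varphi^k$ shifts edges as $e_l \mapsto e_{l+2k}$, preserving the parity of $l$. Because the identification rule pairs only $(+)$--edges with $(-)$--edges, no interior point of any polygon edge can be identified in $\wh F$ with a nontrivial $\wh\varphi$--translate of itself. Hence nontrivial stabilizers can occur only at the polygon vertex classes.

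Next, I would determine the orbits and stabilizers on these vertex classes. Cutting $F^+$ open along $\Gamma^+$ collapses the $2pq$ polygon vertices of $\partial A^+$ into the $p + q$ vertices of $\Gamma^+$, where each $v_i$ receives $q$ polygon corners and each $v_j'$ receives $p$ polygon corners. Since $\wh\varphi$ acts on $\Gamma^+$ by $v_i \mapsto v_{i+1}$ and $v_j' \mapsto v_{j+1}'$, I would obtain one $\wh\varphi$--orbit of size $p$ with stabilizer $\Z_q$ and one of size $q$ with stabilizer $\Z_p$. Applying the identical argument to $\Gamma^- \subset F^-$ yields two more orbits with stabilizer orders $q$ and $p$, respectively. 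Thus $B$ has precisely four cone points of orders $q,\, p,\, q,\, p$, producing four exceptional fibers in $Y_Q$ of the same multiplicities.

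Finally, I would identify $|B|$ via Riemann--Hurwitz: the identity $\chi(\wh F) = pq \cdot \chi^{\mathrm{orb}}(B)$, together with $\chi(\wh F) = 2 - 2(p-1)(q-1)$ and the cone-point contribution $2(1-1/p) + 2(1-1/q)$, forces $\chi(|B|) = 2$, so $|B| \cong S^2$. The main thing to be careful about is the parity-based step excluding fixed points in the interior of polygon edges; this relies on the explicit edge-pairing formula ($(+)$--edge $l$ glued to $(-)$--edge $l + 2ap + 1$) interacting correctly with the rotation, since any slip there would risk manufacturing spurious exceptional fibers that do not match the claimed Seifert invariants.
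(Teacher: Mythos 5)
Your proof is correct, and it takes a genuinely different route from the paper's. The paper cuts $Y_Q$ open along the vertical torus $W_\infty = \Lambda_\infty \times_{\wh\varphi} S^1$, recognizes the two pieces as the torus knot exteriors $E_{K^\pm}$ (each Seifert fibered over a disk with two exceptional fibers of orders $p$ and $q$), checks that the gluing is fiber-to-fiber because $\varphi^\pm$ agree on $\partial F^\pm$, and then glues the two base disks to obtain $S^2(p,q,p,q)$. You instead work directly with the periodic monodromy: you treat $Y_Q = \wh F \times_{\wh\varphi} S^1$ as a Seifert manifold whose base orbifold is the global quotient $\wh F/\langle\wh\varphi\rangle$, locate the exceptional fibers by a parity/rigidity argument showing that only the vertex classes of $\Gamma^\pm$ have nontrivial stabilizer, read off the cone orders from the orbit sizes ($p$ disks $\mapsto$ stabilizer $\Z_q$, $q$ disks $\mapsto$ stabilizer $\Z_p$, and mirror images in $\Gamma^-$), and then pin down $|B|\cong S^2$ by Riemann--Hurwitz rather than by inspection. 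Your approach is more self-contained (it does not appeal to the known Seifert structure of torus knot exteriors) and it makes explicit why the cone points have the claimed orders, at the cost of the edge-parity bookkeeping. The paper's decomposition along $W_\infty$ has the advantage of introducing a torus that is reused repeatedly in Sections~6 and~7 (e.g.\ for the twist $\Tt_\infty$ and the reflection $\varrho$), so the two arguments are playing slightly different structural roles in the larger development even though they prove the same lemma.
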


\begin{proof}
Let $\Lambda_\infty$ denote the core of the annulus $\wh A$.  Since $\wh\varphi$ maps $\Lambda_{\infty}$ to itself, preserving orientation, it follows that there is a torus $W_{\infty} = \Lambda_{\infty} \X_{\wh \varphi} S^1 \subset Y_Q$.  Cutting $Y_Q$ open along $W_{\infty}$ yields the 3--manifolds, call them $Y^+$ and $Y^-$, fibering over $\wh F \setminus \Lambda_{\infty} = F^+ \cup F^-$.  As the restriction of $\wh \varphi$ to $F^{\pm}$ is $\varphi^{\pm}$, we have that $Y^{\pm}$ is homeomorphic to $E_{K^{\pm}}$.  It follows that $Y_Q$ can be obtained by gluing $E_{K^+}$ to $E_{K^-}$ along their respective boundary tori.

It is well-known that each of $E_{K^+}$ and $E_{K^-}$ is Seifert fibered over a disk with two exceptional fibers of orders $p$ and $q$.  Moreover, the monodromies $\varphi^{\pm}$ act on the Seifert fibers, which are the orbits of points in $F^{\pm}$.  Since these monodromies agree on $\pd F^{\pm}$, it follows that $E_{K^+}$ is glued to $E_{K^-}$ along Seifert fibers, and therefore $Y_Q$ has a Seifert fibered structured over the glued base spaces; namely, over a 2--sphere with four exceptional fibers of orders $p$, $q$, $p$, and $q$.
\end{proof}  

Henceforth, we will let $S(p,q,p,q)$ denote the base space of $Y_Q$, sometimes abbreviating this with just $S$.    A surface in a Seifert fibered space is called \emph{vertical} if it is a union of fibers or \emph{horizontal} if it is transverse to every fiber it meets.  It is well-known that every essential surface in a Seifert fibered space is either vertical or horizontal, and closed vertical surfaces are tori~\cite{Hat__Notes-on-Basic}.  Let $\overline{\rho}\colon Y_Q \rightarrow S(p,q,p,q)$ be the natural projection map that associates each fiber in $Y_Q$ to its corresponding point in $S(p,q,p,q)$, and let $\rho\colon\wh F \rightarrow S(p,q,p,q)$ be the restriction of $\overline{\rho}$ to $\wh F$.

\begin{lemma}
\label{lem:deck}
	The map $\rho\colon \wh F \to S$ is a branched covering of order $pq$, where $S$ is identified with a 2--sphere with four cone points of order $p,q,p,$ and $q$.  The corresponding group of deck transformations is given by $G = \langle\wh\varphi\rangle$, so $\rho\circ\wh\varphi = \rho$.
\end{lemma}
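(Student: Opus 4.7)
The plan is to work from the mapping torus presentation $Y_Q = \wh F \times_{\wh\varphi} S^1$ in tandem with the Seifert fibration of Lemma~\ref{Seif}. The key observation is that the Seifert fibration on $Y_Q$ is inherited from the Seifert fibrations of $E_{K^\pm}$, whose fibers are precisely the orbits of the monodromies $\varphi^\pm$ acting on $F^\pm$. Since $\wh\varphi$ restricts on $F^\pm \subset \wh F \setminus \Lambda_\infty$ to $\varphi^\pm$, the Seifert fiber of $Y_Q$ through any point $x \in \wh F$ is swept out by the $\wh\varphi$-orbit $\{x, \wh\varphi(x), \ldots, \wh\varphi^{pq-1}(x)\}$, and the fibers along $\Lambda_\infty$ are simply the components of $W_\infty$. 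In particular, $\wh F$ meets each Seifert fiber transversely in a full $\wh\varphi$-orbit, so $\wh F$ is horizontal and $\rho$ is a branched covering.

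To pin down the degree and ramification data, I would observe that for a generic point $y \in S$, the preimage $\rho^{-1}(y)$ is a regular (free) orbit of cardinality $pq$, giving $\deg \rho = pq$. Each of the four cone points of $S$ of orders $p,q,p,q$ (from Lemma~\ref{Seif}) corresponds to an exceptional orbit of length $pq/n$ whose stabilizer is the cyclic group $\langle \wh\varphi^{pq/n} \rangle$ of order $n$, acting locally as a rotation; so $\rho$ has ramification index $n$ at each of these $pq/n$ points. As a consistency check, Riemann--Hurwitz yields
$$\chi(\wh F) \;=\; 2pq \;-\; 2\left(\tfrac{pq}{p}(p-1)\right) \;-\; 2\left(\tfrac{pq}{q}(q-1)\right) \;=\; 2p + 2q - 2pq,$$
which matches the known value $\chi(\wh F) = 2 - 2g(\wh F) = 2 - 2(p-1)(q-1)$.

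Finally, the identity $\rho \circ \wh\varphi = \rho$ follows at once from the fact that each Seifert fiber is a single $\wh\varphi$-orbit: for any $x \in \wh F$, the point $\wh\varphi(x)$ lies on the same Seifert fiber as $x$, hence has the same image in $S$. Therefore $\langle \wh\varphi \rangle$ sits inside the deck transformation group of $\rho$; since $|\langle \wh\varphi \rangle| = pq = \deg \rho$, this inclusion is forced to be an equality and $\rho$ is a regular branched cover with deck group $\langle \wh\varphi \rangle$. The one step that requires actual care is the opening identification of the Seifert fibers of Lemma~\ref{Seif} with $\wh\varphi$-orbits of $\wh F$, which amounts to checking that the Seifert fibrations on $E_{K^\pm}$ arising from $\varphi^\pm$ glue together across $W_\infty$ in exactly the manner produced by $\wh\varphi$; this is immediate from the construction of the Seifert fibration in Lemma~\ref{Seif} and the agreement of $\varphi^\pm$ along $\partial F^\pm$.
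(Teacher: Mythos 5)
Your proof is correct, and it arrives at the same conclusion as the paper by a mildly different route. The paper's own argument opens by invoking the vertical/horizontal dichotomy for essential surfaces in Seifert fibered spaces: since $\wh F$ is not a torus it cannot be vertical, hence it is horizontal, and horizontal surfaces give branched covers of the base orbifold (citing Scott). It then simply counts intersections of regular and exceptional fibers with $\wh F$ to obtain the degree and ramification data. You instead establish horizontality directly by identifying each Seifert fiber with a $\wh\varphi$--orbit in $\wh F$ (a fact already implicit in the proof of Lemma~\ref{Seif}, which notes that the $\varphi^\pm$--orbits are Seifert fibers of $E_{K^\pm}$ and that the gluing respects them), which makes both the branched-covering structure and the identity $\rho\circ\wh\varphi=\rho$ immediate, and you add a Riemann--Hurwitz sanity check not present in the paper. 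The final step --- that $\langle\wh\varphi\rangle$ has order $pq=\deg\rho$ and therefore exhausts the deck group, forcing regularity --- is the same in both. The trade-off is that your argument is a bit more self-contained and computational, while the paper's is shorter because it outsources the horizontality and branched-cover claims to the general theory of Seifert fibered spaces.
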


\begin{proof}
Since $\wh F$ is not a vertical torus, it must be a horizontal surface in $Y_Q$, from which it follows that the restriction of $\overline{\rho}$ to $\wh \rho$ is a branched covering map (see~\cite{Sco_83_The-geometries-of-3-manifolds}).  The exceptional fibers meet $\wh F$ in the vertices of the two graphs $\Gamma^{\pm}$, viewed as graphs embedded in $\wh F$ cutting $\wh F$ into $\wh A$.  The regular fibers meet $\wh F$ away from the vertices, where each of these points is contained in fiber that meets $\wh F$ in $pq$ distinct points, so the degree of the cover is $pq$.  The exceptional fibers are precisely the orbits of the vertices of $\Gamma^{\pm}$ under the action of $\wh \varphi$; and each of these orbits meets $\wh F$ either $p$ or $q$ times.  Since $\wh \varphi$ preserves fibers, we have that $\rho \circ \wh \varphi = \rho$.  Finally, as each power of $\wh \varphi$ is a deck transformation and $\langle \wh\varphi\rangle$ contains $pq$ distinct deck transformations, it follows that this is the entire group $G$.
\end{proof}

We refer to $S = S(p,q,p,q)$ as \emph{the pillowcase}, since $S$ can be viewed as the union of two squares along their edges.  The left panel of Figure~\ref{fig:Pillow_to_24-gon} depicts a fundamental domain $R$ of the branched covering map $\rho$.  The center and right panels illustrate the gluings of $R$ induced by $\langle \wh \varphi \rangle$ to form $S$.  In our figures, the cone points are drawn at the corners of $S$. We set the convention that the top corners of the square are the cone points of order $p$ and the bottom corners have order $q$, as in Figure~\ref{fig:Pillow_to_24-gon}. Recall from the proof that $\rho^{-1}$ of a cone point of order $p$ (resp., of order $q$) has a total of $q$ preimages (resp., a total of $p$ preimages) in $\wh F$.

\begin{figure}[h!]
	\centering
	\includegraphics[width=.9\textwidth]{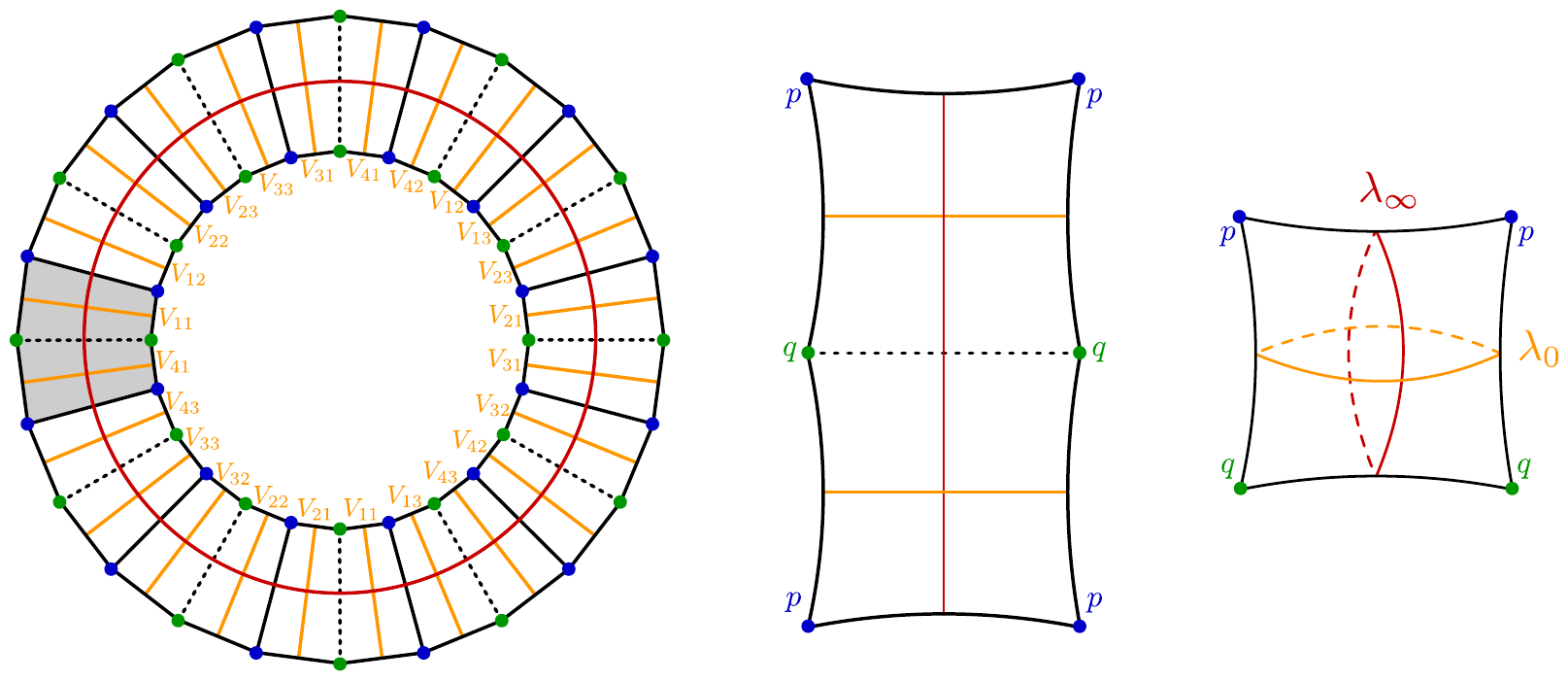}
	\caption{The quotient of $\wh A$ by the action of the monodromy $\wh\varphi$. Shown is the case $(p,q) = (4,3)$. (Left) The annulus $\wh A$ representing the surface fiber $\wh F$, shown with a fundamental domain $\overline S$ for the action of the monodromy $\wh\varphi$ shaded. (Middle) The domain $\overline S$, which can be realized by cutting open the pillowcase.  (Right) The pillowcase $S = S^2(p,q,p,q)$. Shown also are the slopes $\lambda_0$ and $\lambda_\infty$ on $S$, together with their lifts to $\wh F$.}
	\label{fig:Pillow_to_24-gon}
\end{figure}

The next step in this process is to understand the lifting of curves from the pillowcase to $\wh F$.  To begin, let $\lambda_{\infty} = \rho(\Lambda_{\infty})$, where $\Lambda_{\infty}$ is the core of the annulus $\wh A$ described in the proof of Lemma~\ref{Seif}.  Next, note that there is a reflection $\varrho$ of $Y_Q$ through the torus $W_{\infty}$ that swaps $E_{K^+}$ and $E_{K^-}$, in the process transposing the surfaces $F^+$ and $F^-$ and the graphs $\Gamma^{+}$ and $\Gamma^-$.  The reflection $\varrho$ maps Seifert fibers to Seifert fibers; hence it acts on the quotient $S$ as well (as a reflection through the curve $\lambda_{\infty}$).  Let $\lambda_{0}$ be the curve preserved by this reflection, shown at right in Figure~\ref{fig:Pillow_to_24-gon}.

Now, we characterize other essential curves in the pillowcase.  Let $S^*$ be the 4--punctured sphere obtained from $S$ by removing its cone points.  Every curve $\lambda \in S^*$ can be isotoped so that it meets the two unit squares of the pillowcase in parallel arcs with slopes in the extended rational numbers $\Q_{\infty} = \Q\cup\{\infty\}$, where $\infty$ represents the fraction $1/0$.  We call the rational number associated to $\lambda$ the \emph{slope} of $\lambda$.  We let $\lambda_{a/b}$ denote the unique curve in $S^*$ with slope $a/b$, setting the convention that $b \geq 0$.  Note that this definition agrees with our previous descriptions for $\lambda_{\infty}$ and $\lambda_0$.  Since the fractions $\pm 1/1$, $1/0$, and $0/1$ occur frequently, we will use $\pm 1$ in place of $\pm 1/1$, $0$ in place of $0/1$, and $\infty$ in place of $1/0$.

Note that $\Lambda_{\infty} = \rho^{-1}(\lambda_{\infty})$, and let $\Ll_0 = \rho^{-1}(\lambda_0)$.  Recall that $\Lambda_\infty$ is a single curve that separates $\wh F$ into the two surfaces $F^\pm$.  On the other hand, in the example shown in Figure~\ref{fig:Pillow_to_24-gon}, $\Ll_0$ consists of a total of $pq=12$ curves in $\wh F$.  We prove this more generally in the next lemma.  We also show that the lift $\Lambda_1 = \rho^{-1}(\lambda_1)$ is a single curve, just like $\Lambda_{\infty}$.

\begin{lemma}
\label{lem:easy_lifts}
	The lift $\Lambda_1$ is connected, while the lift $\Ll_0$ has of a total of $pq$ connected components.  Moreover, $\wh F \setminus \Ll_0$ is the disjoint union of $q$ copies of the sphere with $p$ boundary components and $p$ copies of the sphere with $q$ boundary components.
\end{lemma}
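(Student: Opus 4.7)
The plan is to exploit the covering space structure of $\rho\colon\wh F\to S$ guaranteed by Lemma~\ref{lem:deck}. Restricted to the complement $\wh F^* := \wh F \setminus \rho^{-1}(\{\text{cone points}\})$, the map $\rho$ is an unbranched regular $\Z/pq$-cover of the 4--punctured sphere $S^*$, classified by a surjection $\phi\colon\pi_1(S^*)\twoheadrightarrow\Z/pq$. The number of components of $\rho^{-1}(\lambda)$ for any simple closed curve $\lambda \subset S^*$ is then $pq/|\phi([\lambda])|$, so the counting problem reduces to computing $\phi([\lambda_0])$ and $\phi([\lambda_1])$ in $\Z/pq$.

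First I determine $\phi$ on generators. Using the standard presentation $\pi_1(S^*) = \langle x_1, x_2, x_3, x_4 \mid x_1x_2x_3x_4 = 1\rangle$, where $x_1, x_2$ are peripheral loops around the two $p$--cones and $x_3, x_4$ around the two $q$--cones, the ramification requirements force each $a_i := \phi(x_i)$ to have order exactly equal to the corresponding cone order. Hence $a_1, a_2$ are of the form $k_iq$ with $\gcd(k_i, p) = 1$, and $a_3, a_4$ are of the form $m_jp$ with $\gcd(m_j, q) = 1$. Reducing the relation $a_1+a_2+a_3+a_4 \equiv 0 \pmod{pq}$ modulo $p$ forces $a_1+a_2 \equiv 0 \pmod p$; since $a_1+a_2$ is already a multiple of $q$, this yields $a_1+a_2 \equiv 0 \pmod{pq}$, and symmetrically $a_3+a_4 \equiv 0$.

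I then compute $\phi$ on $[\lambda_0]$ and $[\lambda_1]$. The curve $\lambda_0$, being the reflection-invariant partner to $\lambda_\infty$ (with $\varrho$ swapping $E_{K^+}$ and $E_{K^-}$), separates the two $p$--cones from the two $q$--cones, so $[\lambda_0]$ is conjugate to $x_1x_2$; thus $\phi([\lambda_0]) = a_1+a_2 = 0$, giving $pq$ components for $\Ll_0$. For $\lambda_1$, a direct examination of slope-1 curves on the pillowcase shows that $\lambda_1$ induces a mixed partition separating one $p$--cone and one $q$--cone from the other pair; hence $[\lambda_1]$ is conjugate to some product $x_ix_j$ with $i\in\{1,2\}$ and $j\in\{3,4\}$, so $\phi([\lambda_1]) = k_iq \pm m_jp$. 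Reducing modulo $p$ and modulo $q$ separately shows this element is coprime to both $p$ and $q$, hence to $pq$, so $\phi([\lambda_1])$ generates $\Z/pq$ and $\Lambda_1$ is connected.

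For the decomposition of $\wh F\setminus \Ll_0$, note that $\lambda_0$ separates $S$ into two disks $D_\pm$, with $D_+$ containing the two $p$--cones and $D_-$ containing the two $q$--cones. The preimage $\rho^{-1}(D_+)$ decomposes into components corresponding to the subgroup $\langle a_1, a_2\rangle = \langle a_1\rangle \le \Z/pq$ (since $a_2 \equiv -a_1$), which has order $p$; hence there are $q$ components. Each component is a degree-$p$ branched cover of $D_+$ with full ramification over each of the two cone points (one preimage per cone), and a Riemann--Hurwitz calculation gives Euler characteristic $p\cdot 1 - 2(p-1) = 2-p$. Since the $pq$ boundary circles of $\Ll_0$ distribute evenly among the $q$ components of $\rho^{-1}(D_+)$, each component has $p$ boundary circles and is therefore a sphere with $p$ holes. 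Symmetrically, $\rho^{-1}(D_-)$ consists of $p$ copies of a sphere with $q$ holes. The main obstacle is the clean geometric identification of the partitions of cone points induced by $\lambda_0$ and $\lambda_1$, which can be handled via direct inspection of slope-$0$ and slope-$1$ curves on the pillowcase.
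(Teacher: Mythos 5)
Your proof is correct, and the first half (computing component counts via the classifying surjection $\phi\colon\pi_1(S^*)\twoheadrightarrow\Z/pq$) takes the same route as the paper, though you are more explicit about pinning down $\phi$ on generators via the relation $a_1+a_2+a_3+a_4\equiv 0$ reduced modulo $p$ and $q$; incidentally, the published argument appears to have a small typo (writing $t^{ap}$ where it should be $t^{aq}$ for the image of a loop around an order-$p$ cone point), which your derivation avoids.

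Where you genuinely diverge is the second half. The paper identifies $\Ll_0$ explicitly as the curves $\eta^+_{i,j}\cup\eta^-_{i,j}$ built from the band co-cores, observes that $F^+\setminus\eta^+$ is a union of $p$ disks with $q$ boundary arcs and $q$ disks with $p$ boundary arcs (these are literally the meridian disks $D_i$ and $D_j'$), and then doubles across the reflection $\varrho$. You instead stay entirely inside the covering-space framework: component counting via the index of $\langle a_1\rangle$, Euler characteristic via Riemann--Hurwitz, and boundary-circle counting from the degree-$p$ restriction to $\partial D_+$, with genus zero then forced numerically. This is cleaner in the sense that it never touches the concrete Seifert-surface picture, and it would generalize to any cyclic branched cover of a pillowcase. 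The one step you state informally is that boundary circles ``distribute evenly'' among the $q$ components of $\rho^{-1}(D_+)$; this should be justified by noting that each component is a degree-$p$ cover of $D_+$, while $\phi([\lambda_0])=0$ forces every preimage circle of $\partial D_+$ to map by degree one, so each component has exactly $p$ of them. Also, your phrase that $[\lambda_1]$ is ``conjugate to $x_ix_j$'' is slightly imprecise when the two cone points are non-adjacent in the cyclic order on the sphere, but since $\phi$ factors through $H_1(S^*)$ only the homology class $[x_i]+[x_j]$ matters, and that computation stands.
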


\begin{proof}
The map $\rho\colon \wh F\to S$ is a cyclic branched covering of order $pq$ and corresponds to a representation $\sigma\colon \pi_1(S^*)\twoheadrightarrow\Z_{pq}$.  For a curve $\lambda\subset S^*$, the cardinality of $\rho^{-1}(\lambda)$ is determined by $\sigma([\lambda])\in\Z_{pq} = \langle t\,|\, t^{pq}\rangle$.
For example, if $\lambda$ is a boundary component of $S^*$ corresponding to a cone point of order $p$ (resp., $q$), then $|\rho^{-1}(\lambda)|$ is $q$ (resp.,~$p$), since $\sigma([\lambda])$ is $t^{ap}$ for some $a=1,\ldots,q-1$ (resp., $t^{bq}$ for some $b=1,\ldots,p-1$).
If $\lambda$ separates $S$ into regions that contain one cone point of order $p$ and one of order $q$ (as in the case of $\lambda_1$), then $\sigma([\lambda])=t^{ap+bq}$, which is a generator.  It follows that $|\Lambda_1| = |\rho^{-1}(\lambda_1)| = 1$.  If $\lambda$ separates the cone points of order $p$ from those of order $q$ (as in the case of $\lambda_0$), then $\sigma([\lambda]) = 1$, since the boundary components of $S^*$ necessarily map to pairs of inverses in $\Z_{pq}$.  In this case, $|\Ll_0| = |\rho^{-1}(\lambda_0)|=pq$.

For the second part of the proof, recall that $\eta^+_{i,j}$ denotes the co-core of the band $B_{i,j}$ in the surface $F^+$, and let $\eta^-_{i,j}$ denote the corresponding co-core in $F^-$.  The reflection $\varrho$ of $Y_Q$ through $W_{\infty}$ sends $\eta^{\pm}_{i,j}$ to $\eta^{\mp}_{i,j}$, and thus the curve $\eta^+_{i,j} \cup \eta^-_{i,j}$ is preserved by $\varrho$ and satisfies $\rho(\eta^+_{i,j} \cup \eta^-_{i,j}) = \lambda_0$.  There are $pq$ curves of this form in $\wh F$, and these curves are permuted by $\wh \varphi$; thus the lift $\Ll_0 = \rho^{-1}(\lambda_0)$ is the union of these $pq$ curves.

For the final part of the proof, let $\eta^{\pm} = \bigcup \eta^{\pm}_{i,j}$.  Note $F^+ \setminus \eta^+$ is $p+q$ disks, where $p$ of these disks each have $q$ boundary arcs in $\eta^+$, and $q$ of these disks each have $p$ boundary arcs in $\eta^+$.  Since $\varrho$ preserves $\Ll_0$, we have that each component of $\wh F \setminus \Ll_0$ is the union of a component of $F^+ \setminus \eta^+$ and its image under $\varrho$, which is a component of $F^- \setminus \eta^-$.  Each of the $p$ disks with $q$ boundary arcs in $\eta^+$ is glued to one of $p$ disks in $F^- \setminus \eta^-$ with $q$ boundary arcs in $\eta^-$ to form a sphere with $q$ boundary components.  Likewise, each of the $q$ disks with $p$ boundary arcs in $\eta^+$ is glued to one of $q$ disks in $F^- \setminus \eta^-$ with $p$ boundary arcs in $\eta^-$ to form a sphere with $p$ boundary components.  The statement of the lemma follows.
\end{proof}

\subsection{Lifting curves and Dehn twists from the pillowcase}
\label{subsec:lifting}
\ 

Let $F^*$ denote $\wh F$ with the vertices of $\Gamma^{\pm}$ removed, so that $\rho \colon F^* \rightarrow S^*$ is a regular covering map of degree $pq$, by Lemma~\ref{lem:deck}, and the group of deck transformations is the cyclic group $G = \Z_{pq}$ generated by $\wh \varphi$.  (In an abuse of notation, we denote the restrictions of $\wh\varphi$ and $\rho$ from $\wh F$ to $F^*$ simply by $\wh\varphi$ and $\rho$, respectively.)  Recall that curves in $S^*$ are parameterized by the extended rational numbers $\Q_{\infty}$.  For two curves $c,c'$ in a surface, the geometric intersection $\iota(c,c')$ is defined to be the minimum of $|c \cap c'|$ up to homotopy.  The next lemma is standard.

\begin{lemma}
\label{inter}
	For any two curves $\lambda_{a/b},\lambda_{c/d} \in S^*$, their intersection number is
	$$\iota(\lambda_{a/b},\lambda_{c/d}) = 2 \cdot |ad-bc|.$$
\end{lemma}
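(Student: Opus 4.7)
The plan is to exploit the fact that $S$ is the quotient of a torus by an involution, reducing the computation on the $4$--punctured sphere to the classical intersection calculation on the torus.

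First, I will realize the pillowcase as a branched double cover. Identify $T^2 = \R^2/\Z^2$ and let $\iota\colon T^2 \to T^2$ be the involution $(x,y) \mapsto (-x,-y)$, whose four fixed points are the $2$--torsion $\{(0,0),(\tfrac12,0),(0,\tfrac12),(\tfrac12,\tfrac12)\}$. The quotient $\pi\colon T^2 \to T^2/\iota$ is a $2$--fold branched cover whose target is a sphere with four cone points of order $2$; identifying this target with $S$ sends the four fixed points to the four cone points, and restricts to an honest $2$--fold cover $\pi\colon T^\ast \to S^\ast$ where $T^\ast$ is the torus with the four $2$--torsion points removed.

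Next, I will describe how $\lambda_{a/b}$ lifts under $\pi$. A line of rational slope $a/b$ (in lowest terms) through a point $(x_0,y_0)$ is sent by $\iota$ to the parallel line through $(-x_0,-y_0)$, so it is preserved by $\iota$ precisely when $(x_0,y_0)$ is a $2$--torsion point. Since $\lambda_{a/b}$ is disjoint from the cone points of $S$, any component of its preimage is a simple closed curve of slope $a/b$ on $T^2$ not through a $2$--torsion point; hence $\iota$ swaps it with a disjoint parallel copy. Therefore $\pi^{-1}(\lambda_{a/b})$ is the disjoint union of two parallel simple closed curves of slope $a/b$ on $T^2$.

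Finally, I will count intersections upstairs and push them down. It is standard that two simple closed curves on $T^2$ in homology classes $(a,b)$ and $(c,d)$ realize their geometric intersection number $|ad-bc|$. Applying this to the four pairs formed by the two components of $\pi^{-1}(\lambda_{a/b})$ and the two components of $\pi^{-1}(\lambda_{c/d})$, and noting that all these curves can be put in minimal position simultaneously (being straight lines on the flat torus), gives a total of $4|ad-bc|$ transverse intersection points in $T^\ast$. Since $\pi$ is a genuine $2$--fold covering away from the branch locus and the intersections occur in $T^\ast$, these points are identified in pairs under $\pi$, yielding exactly $2|ad-bc|$ intersection points of $\lambda_{a/b}$ and $\lambda_{c/d}$ in $S^\ast$; minimality is preserved by the covering, so $\iota(\lambda_{a/b},\lambda_{c/d}) = 2|ad-bc|$. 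The only subtlety to check carefully is the last step — that the lifted minimal position descends to minimal position on $S^\ast$ — but this follows because any bigon between $\lambda_{a/b}$ and $\lambda_{c/d}$ in $S^\ast$ would lift to a bigon between their preimages on $T^\ast$, contradicting minimality upstairs.
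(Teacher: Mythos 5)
Your proof is correct and is the standard argument for this lemma, which the paper states without proof: lift to the genus-one double cover $T^2 \to T^2/\iota$ and count intersections of straight lines upstairs. One small imprecision worth noting: a line of slope $a/b$ may be $\iota$--invariant without the chosen basepoint $(x_0,y_0)$ being $2$--torsion (it suffices that the line pass through \emph{some} $2$--torsion point), but since the curves in question avoid the branch locus entirely, your deduction that $\iota$ moves each preimage component off itself is unaffected.
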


Let $\tau_{a/b}\colon S\to S$ denote a left-handed Dehn twist along $\lambda_{a/b}$.  More precisely, let $\nu_{a/b} = \lambda_{a/b}\times[0,1]$, parameterized by $\psi \in \R/\Z$ and $t \in [0,1]$, and define $\tau_{a/b}$ to be the identity outside of this annulus.  On this annulus, we define
$$\tau_{a/b}(\psi,t) = (\psi-t,t).$$
The action of $\tau^\pm_{a/b}$ on curves in $S$ is as follows; this lemma is also standard.

\begin{lemma}
\label{lem:pillow-twist}
	For any $a/b,c/d\in\Q_{\infty}$, any $n\in\Z$, and $\Delta = |ad-bc|$, we have
	$$\tau^{n}_{a/b}(\lambda_{c/d}) = \lambda_{e/f},$$
	where
	$e = c + 2an\Delta$, $f = d + 2bn\Delta$ if $a$ is odd, and $e = c - 2an\Delta$, $f = d - 2bn\Delta$ if $a$ is even.
\end{lemma}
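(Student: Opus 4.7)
The plan is to lift the entire setup from the pillowcase $S$ to its torus double cover, where Dehn twists have a transparent linear action on homology. Topologically (ignoring cone-point orders), identify $S$ with the quotient of $T = \R^2/2\Z^2$ by the involution $\iota(x,y) = (-x,-y)$; the four fixed points of $\iota$ correspond to the cone points. For each coprime pair $(a,b)$ with $b \geq 0$, the essential curve $\lambda_{a/b}\subset S$ lifts through the branched cover $\pi\colon T \to S$ to a pair of disjoint parallel simple closed curves $\tilde\lambda^{\pm}_{a/b}\subset T$, each of primitive homology class $(b,a)$ in $H_1(T)\cong\Z^2$; this follows from the fact that $\lambda_{a/b}$ separates the cone points into two pairs of equal size. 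The same description holds for $\lambda_{c/d}$, whose lifts form a parallel pair of class $(d,c)$.

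Next, I would argue that the Dehn twist $\tau_{a/b}$ lifts $\iota$-equivariantly to the commuting composition $T_{\tilde\lambda^+_{a/b}}\circ T_{\tilde\lambda^-_{a/b}}$ on $T$. On $H_1(T)$ this composition acts as the square of the shear of a single torus Dehn twist along a curve of class $(b,a)$. Using the standard formula $T_v(w) = w\pm\omega(v,w)\,v$ for twists on a torus, together with $\omega((b,a),(d,c)) = bc - ad$, the $n$-fold iterate of $\tau_{a/b}$ sends $(d,c)$ to $(d + 2nb(ad-bc),\,c + 2na(ad-bc))$. Projecting back down to $S$, this gives a slope of $(c + 2na(ad-bc))/(d + 2nb(ad-bc))$, which, upon replacing $(ad-bc)$ with $\pm\Delta = \pm|ad-bc|$, reproduces the formula in the lemma up to sign.

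The main subtlety, and the step requiring the most care, is the parity-dependent sign in the statement. This reflects the convention $\tau_{a/b}(\psi,t) = (\psi - t, t)$, which depends on a chosen orientation of the tube $\nu_{a/b} = \lambda_{a/b}\times[0,1]$; the relationship between this orientation and the normalization $b\geq 0$ of the lift on $T$ flips according to the parity of $a$. I would pin down the precise sign by direct verification on the generators $\tau_\infty$ (where $a = 1$ is odd, giving the shift $c/d\mapsto(c + 2nd)/d$) and $\tau_0$ (where $a = 0$ is even, giving $c/d\mapsto c/(d - 2n|c|)$). Once the formula is confirmed on these two cases, conjugation by an element of the mapping class group carrying $a/b$ to $\infty$ or $0$ propagates it to every slope $a/b$, with the parity of $a$ being precisely the invariant tracking which of the two cases arises.
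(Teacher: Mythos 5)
The paper records Lemma~\ref{lem:pillow-twist} without proof, labeling it ``standard,'' so there is no internal argument to compare against. Your torus double-cover strategy is a legitimate route, and the homological core of it is correct: lifting $\tau_{a/b}$ to the squared Dehn twist along a primitive class parallel to $(b,a)$ in $H_1(T)$, the $n$-th iterate sends $(d,c)$ to $\bigl(d+2nb(ad-bc),\,c+2na(ad-bc)\bigr)$, which gives slope $\bigl(c+2na(ad-bc)\bigr)/\bigl(d+2nb(ad-bc)\bigr)$. You also correctly check this against the paper's displayed examples $\tau_\infty^n(\lambda_0)$, $\tau_1^n(\lambda_0)$, $\tau_0^n(\lambda_\infty)$.

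The gap is exactly the step you flag as ``requiring the most care,'' and the argument you sketch there is not sound. A Dehn twist is independent of the orientation of its core curve and of which end of the collar $\lambda_{a/b}\times[0,1]$ is labeled $t=1$; it depends only on chirality, and the paper fixes $\tau_{a/b}$ to be left-handed uniformly in $a/b$. So there is no mechanism by which the parity of $a$ can flip the sign, and indeed your homological formula is uniform in sign, as it must be. Your ``direct verification on the generators'' also does not close the gap: for $\tau_0^n$ your own lift computation gives $\lambda_{c/d}\mapsto\lambda_{c/(d-2nc)}$, not the $\lambda_{c/(d-2n|c|)}$ you record, and these disagree whenever $c<0$. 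In fact the parity-conditioned sign in the lemma as printed is not something you should be trying to reproduce: in the proof of Lemma~\ref{twister} the authors themselves use $\tau_0^{-1}(\lambda_{a/b})=\lambda_{a/(b+2a)}$ for $-b<a<0$, relying on $|b+2a|<b$ to run the induction, whereas the parity formula would give $\lambda_{a/(b+2|a|)}=\lambda_{a/(b-2a)}$ with $|b-2a|>b$, which would break that induction. The correct, uniform version is $e=c+2an(ad-bc)$ and $f=d+2bn(ad-bc)$ (no absolute value, no parity case), which is precisely what your homology computation yields and what the rest of the paper actually uses; you should trust your derivation rather than contort it to match the sign convention as printed.
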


We have chosen to define $\tau_{a/b}$ as a left-handed Dehn twist so that it preserves the sign of the slope of $\lambda_{c/d}$ when $c$ is odd. For example, $\tau^n_\infty(\lambda_0) = \lambda_{2n/1}$ and $\tau_1^n(\lambda_0) = \lambda_{2n/(2n+1)}$. On the other hand, $\tau^n_0(\lambda_\infty) = \lambda_{-1/2n}$.

In the next lemma, we show that applying sequences of the twists $\tau_\infty$ and $\tau_0$ and their inverses to the curves $\lambda_0$, $\lambda_\infty$, and $\lambda_1$ generates all curves $\lambda_{a/b}$ in $S^*$.

\begin{lemma}
\label{twister}
	Let $a/b \in \Q_{\infty}$. If $a$ is even (resp., odd), then there is a product of the Dehn twists $\tau^{\pm 1}_{\infty}$ and $\tau_0$ taking $\lambda_{a/b}$ to $\lambda_{0}$ (resp., $\lambda_{\infty}$ or $\lambda_{1}$).
\end{lemma}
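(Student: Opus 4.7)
The plan is to induct on $N(a/b) := |a| + b$, working always with $a/b$ in lowest terms and with $b \geq 0$ as in the paper's convention. By Lemma~\ref{lem:pillow-twist}, the twist $\tau_\infty^{\pm 1}$ acts by $(a,b) \mapsto (a \pm 2b,\, b)$ and $\tau_0$ acts by $(a,b) \mapsto (a,\, b - 2|a|)$; in particular both moves preserve the parity pair $(a \bmod 2,\, b \bmod 2)$. Since $\gcd(a,b)=1$ rules out the even--even pair, this parity is a complete invariant of the orbit, and the three remaining parity classes correspond exactly to the target slopes $0 = 0/1$, $\infty = 1/0$, and $1 = 1/1$ of the statement. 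It therefore suffices to produce, by moves in $\{\tau_\infty^{\pm 1}, \tau_0\}$, a descent to one of the base slopes $0/1$, $\pm 1/0$, or $\pm 1/1$.

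The base cases are $N \leq 1$ and $(a,b) = (\pm 1, 1)$. When $N \leq 1$ the only options are $0/1$ and $\pm 1/0$, which are already equal to the target slopes $\lambda_0$ and $\lambda_\infty$. For $(a,b) = (-1,1)$, Lemma~\ref{lem:pillow-twist} gives $\tau_\infty(\lambda_{-1/1}) = \lambda_{1/1} = \lambda_1$, while $(1,1)$ is itself a target. For the inductive step, assume $N \geq 2$ and $(a,b) \neq (\pm 1, 1)$, so coprimality forces either $|a| > b$ or $|a| < b$. In the first sub-case, choose $n \in \Z$ with $a + 2nb \in (-b, b]$; then $\tau_\infty^n(\lambda_{a/b}) = \lambda_{(a+2nb)/b}$, whose $N$-value satisfies $|a+2nb| + b \leq 2b < |a| + b$, so we may apply the inductive hypothesis. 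In the second sub-case $0 < |a| < b$, apply $\tau_0$ to get $\lambda_{a/(b-2|a|)}$ (renormalizing to $\lambda_{-a/(2|a|-b)}$ when $b < 2|a|$); a short case split on the sign of $b - 2|a|$ gives new $N$-value equal to $b - |a|$ or $3|a| - b$, both strictly less than $|a| + b$ under the hypothesis $0 < |a| < b$.  Induction then terminates at a base case, and the parity invariance noted above forces the terminal slope to be the one prescribed by the parities of $(a,b)$.

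The main obstacle, modest as it is, is purely bookkeeping: one must keep the convention $b \geq 0$ intact when $\tau_0$ drives the denominator through zero, and one must be careful to invoke the correct clause of Lemma~\ref{lem:pillow-twist} (even versus odd numerator of the twist curve) when computing $\tau_\infty$ and $\tau_0$. Once these sign conventions are handled, the Euclidean-style descent on $|a| + b$ closes the induction and gives the desired reduction using only $\tau_\infty^{\pm 1}$ and $\tau_0$ — notably, $\tau_0^{-1}$ is never needed, since it always increases $N$ in the regime $|a| < b$ where $\tau_0$ is invoked.
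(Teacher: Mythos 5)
Your proof is correct and follows the same Euclidean-descent strategy as the paper's, but there are a couple of differences worth noting. First, the bookkeeping: you induct on $|a|+b$ rather than on the lexicographic pair $(b,|a|)$, and you collapse a whole run of $\tau_\infty$-twists into a single $\tau_\infty^n$ — both are cosmetic. Second, and more substantively, you apply the formula for $\tau_0$ exactly as Lemma~\ref{lem:pillow-twist} gives it, namely $\tau_0(\lambda_{a/b}) = \lambda_{a/(b-2|a|)}$; under that formula a \emph{single application of $\tau_0$} decreases the descent quantity whenever $0<|a|<b$, regardless of the sign of $a$, so $\tau_0^{-1}$ is never needed. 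This actually matches the statement of the lemma more faithfully than the paper's own proof: the paper's proof writes $\tau_0^{\pm1}(\lambda_{a/b})=\lambda_{a/(b\mp 2a)}$ — which differs from Lemma~\ref{lem:pillow-twist} when $a<0$ — and then splits into the cases $0<a<b$ (apply $\tau_0$) and $-b<a<0$ (apply $\tau_0^{-1}$), even though the statement only allows $\tau_0$. Your closing remark that $\tau_0^{-1}$ never arises is therefore a genuine clean-up. Finally, you track the full parity class $(a\bmod 2,\,b\bmod 2)$, which determines the terminal slope uniquely among $\lambda_0,\lambda_\infty,\lambda_1$; the paper only tracks the parity of the numerator, which is all the stated conclusion requires. (Your use of the phrase ``complete invariant'' is mildly circular — completeness is a consequence of, not an input to, this lemma — but nothing in your argument leans on completeness, only on invariance, so this is harmless.)
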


\begin{proof}
	To begin, we compute
		\begin{eqnarray*}
			\tau^{\pm 1}_{\infty}(\lambda_{a/b}) &=& \lambda_{(a \pm 2b)/b}\\
			\tau_{0}^{\pm 1}(\lambda_{a/b}) &=& \lambda_{a/\left(b \mp 2a\right)}
		\end{eqnarray*}
	Recall that we assume that $b \geq 0$; if the above formula results in $a/b$ with  $b < 0$, we replace $a$ and $b$ with $-a$ and $-b$.  If $b = 0$, then $a/b = \infty$ and we are done.  If $a/b = \pm 1$, then since $\tau_\infty(\lambda_{-1}) = \lambda_{1}$, we are done.  Thus, suppose that $b > 0$ and $|a| \neq b$.  We will induct on the ordered pair $(b,|a|)$ with the dictionary ordering.  Thus,  suppose that there is a series of Dehn twists taking $\lambda_{a'/b'}$ to one of $\lambda_{0}$, $\lambda_{\infty}$, or $\lambda_{1}$ for all $a'/b'$ such that $(b',|a'|) < (b,|a|)$.

	First, suppose that $|a| > b$.  If $a > b$, then $2a > 2b > 0$ and thus $a > 2b - a > -a$.  It follows that $|a -2b| < a$, and we have $\tau^{-1}_{\infty}(\lambda_{a/b}) = \lambda_{a-2b/b}$, so the claim holds by induction.  If $a < -b$, then $2a < -2b < 0$ and thus $a < -2b - a < -a$, so that $|a+2b| < |a|$.  In this case, $\tau_{\infty}(\lambda_{a/b}) = \lambda_{(a+2b)/b}$ and the claim holds by induction.  On the other hand, suppose that $|a| < b$.  If $0<a<b$, then $-b < b - 2a < b$, so that the claim holds for $\tau_0(\lambda_{a/b})$ by induction in this case too.  Otherwise, $-b < a < 0$, so that $-b < b+2a < b$, and we apply the inductive hypothesis to $\tau_0^{-1}(\lambda_{a/b})$.

We conclude that there exists a sequence of Dehn twists taking $\lambda_{a/b}$ to one of $\lambda_{0}$, $\lambda_{\infty}$, or $\lambda_{1}$.  Finally, observe that each twist preserves the parity of the numerator.  Thus, if $a$ is even, these twists take $\lambda_{a/b}$ to $\lambda_{0}$.  Otherwise, $a$ is odd and the result of the twists is $\lambda_{\infty}$ or $\lambda_{1}$.
\end{proof}

Now, we define homeomorphisms $\wt \tau_0,\wt \tau_{\infty}\colon\wh F \rightarrow \wh F$, which lift the Dehn twists $\tau_0$ and $\tau_{\infty}$.  Recalling that $\Ll_0$ contains $pq$ curves, let $\wt \tau_0$ be the product of a single left-handed Dehn twist performed on each of these curves. (The order is not important since these Dehn twists commute.)  The homeomorphism $\wt \tau_{\infty}$ is slightly more complicated.  Recalling that $\wh F \setminus \Lambda_{\infty} = F^+ \cup F^-$, define $\wt \tau_{\infty}$ to be the identity on $F^-$, the inverse monodromy map $(\varphi^+)^{-1}$ on $F^+$, and a $1/pq$ left-handed Dehn twist in an annular neighborhood of $\Lambda_{\infty}$.  In coordinates, we parameterize the neighborhood $\Lambda_{\infty} \X I$ as $\{(\psi,t)\,|\, \psi\in\R/\Z, t\in[0,1]\}$, where $\pd F^- = \Lambda_{\infty} \X \{0\}$ and $\pd F^+ = \Lambda_{\infty} \X \{1\}$.  On $\Lambda_{\infty} \X I$, the twist is defined as
$$\wt \tau_{\infty}(\psi,t) = \left(\psi -\frac{t}{pq}, t\right).$$
Observe that $\wt \tau_{\infty}$ is well-defined, it restricts to the identity map on $\pd F^-$ and restricts to a $1/pq$ counterclockwise rotation on $\pd F^+$; hence it is a homeomorphism of $\wh F$.  We prove the claimed lifting properties with the next lemma.

\begin{lemma}\label{twist-lift}
The homeomorphism $\wt \tau_0$ is a lift of $\tau_0$, and the homeomorphism $\wt \tau_\infty$ is a lift of $\tau_{\infty}$.
\end{lemma}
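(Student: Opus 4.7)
The plan is to verify directly that $\rho \circ \wt\tau_0 = \tau_0 \circ \rho$ and $\rho \circ \wt\tau_\infty = \tau_\infty \circ \rho$, decomposing $\wh F$ into an annular neighborhood of the relevant lift ($\Ll_0$ or $\Lambda_\infty$) and its complement, and checking the identity on each piece.

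For $\wt\tau_0$, Lemma~\ref{lem:easy_lifts} gives that $\Ll_0 = \rho^{-1}(\lambda_0)$ has $pq$ components. Since $\lambda_0$ avoids the cone points and $\rho\colon F^*\to S^*$ is an unbranched cover of degree $pq$, an annular neighborhood $\nu(\lambda_0)$ lifts to a disjoint union of $pq$ annular neighborhoods of the components of $\Ll_0$, with $\rho$ restricting to a diffeomorphism on each. In coordinates $(\psi, t) \in \R/\Z \times [0,1]$ on such a lifted annulus, $\rho$ is the identity and the restriction of $\wt\tau_0$ acts by $(\psi, t) \mapsto (\psi - t, t)$, matching the formula for $\tau_0$ verbatim. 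Outside these annular neighborhoods both maps are the identity, so $\rho\circ\wt\tau_0 = \tau_0\circ\rho$ on all of $\wh F$.

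For $\wt\tau_\infty$ I would handle the three regions in its defining decomposition separately. On the interior of $F^-$, where $\wt\tau_\infty$ is the identity, the image $\rho(F^-)$ is contained in a side of $\lambda_\infty$ disjoint from the collar on which $\tau_\infty$ is supported, so both $\rho\circ\wt\tau_\infty$ and $\tau_\infty\circ\rho$ reduce to $\rho$. On the interior of $F^+$, where $\wt\tau_\infty = (\varphi^+)^{-1}$, the explicit model of Subsection~\ref{subsec:square} gives $\wh\varphi|_{F^+} = \varphi^+$, and Lemma~\ref{lem:deck} then yields $\rho\circ(\varphi^+)^{-1} = \rho$, while again $\tau_\infty\circ\rho = \rho$ there. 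On the collar $\Lambda_\infty\times[0,1]$ I would fix compatible parameterizations so that $\nu(\lambda_\infty)$ appears as $\lambda_\infty\times[0,1]$ and $\rho(\psi,t) = (pq\psi, t)$; this is possible because $\rho|_{\Lambda_\infty}\colon \Lambda_\infty \to \lambda_\infty$ is a cyclic cover of degree $pq$. The lifting relation then reduces to the single calculation
\[\rho\circ\wt\tau_\infty(\psi, t) = \rho(\psi - t/pq, t) = (pq\psi - t, t) = \tau_\infty(pq\psi, t) = \tau_\infty\circ\rho(\psi, t).\]

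The substantive content of the lemma, rather than a serious obstacle, is the choice of the $1/pq$ twist factor in the definition of $\wt\tau_\infty$: it is exactly tuned so that the degree-$pq$ cover $\rho|_{\Lambda_\infty}$ magnifies the distributed twist upstairs into a full Dehn twist downstairs, while simultaneously matching the boundary value identity on $\partial F^-$ at $t=0$ and the boundary value of $(\varphi^+)^{-1}$ on $\partial F^+$ at $t=1$. This matching is what makes $\wt\tau_\infty$ well-defined as a homeomorphism of $\wh F$ in the first place, and the lifting identities then follow immediately from the regionwise verification above.
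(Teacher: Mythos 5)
Your proposal is correct and takes essentially the same route as the paper: both argue region-wise, reducing to the identity (or, on $F^+$, to $\rho\circ(\wh\varphi)^{-1}=\rho$) away from the relevant collar and then performing the same coordinate computation on the collar, with the $1/pq$ twist matching the degree-$pq$ covering $\rho\vert_{\nu(\Lambda_\infty)}(\psi,t)=(pq\psi,t)$. Your closing remark about the tuning of the $1/pq$ factor is a nice piece of motivation, but the core argument is the same.
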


\begin{proof}
First, we prove that $\rho \circ \wt \tau_0 = \tau_0 \circ \rho$.  Outside of a regular neighborhood of $\Ll_0$, the multi-twist $\wt \tau_0$ is the identity map, and the same is true for $\tau_0$ outside a regular neighborhood of $\lambda_0$.  The restriction of $\rho$ to each component of $\Ll_0$ is a homeomoprhism to $\lambda_0$, which extends to a homeomorphism of an annular neighborhood of each component of $\Ll_0$.  It follows that $\rho \circ \wt \tau_0 = \tau_0 \circ \rho$ in each of these annular neighborhoods, and thus it holds for the entire surface $\wh F$. 

For the second claim, we show that $\rho \circ \wt \tau_{\infty} = \tau_\infty \circ \rho$, proceeding as in the first case.  Outside of a regular neighborhood of $\Lambda_{\infty}$, the map $\wt \tau_{\infty}$ is either the identity or the map $(\varphi^+)^{-1}$, which is the restriction of $(\wh \varphi)^{-1}$ to $F^+$.  Since $\rho \circ \wh \varphi = \rho$, it follows that $\rho \circ \wt \tau_{\infty}  = \rho$ away from $\Lambda_{\infty}$.  Similarly, $\tau_\infty$ is the identity away from $\lambda_{\infty}$, thus $\rho \circ \wt \tau_{\infty} = \tau_\infty \circ \rho$ away from $\Lambda_{\infty}$.  The restriction $\rho\vert_{\nu(\Lambda_{\infty})}\colon \nu(\Lambda_{\infty})\to \nu(\lambda_{\infty})$ is an interval thickening of the canonical $pq$-to-one covering of $S^1$ to $S^1$.  In coordinates, we have
	$$\rho\vert_{\nu(\Lambda_{\infty})}(\psi,t) = (pq\psi,t).$$
	Thus,
	$$
		\rho\vert_{\nu(\Lambda_{\infty})}\circ\wt \tau_{\infty}(\psi,t)
		= \rho\vert_{\nu(\Lambda_{\infty})}\left(\psi-\frac{t}{pq},t\right)
		= (pq\psi - t,t),
	$$
	while
	$$\tau_{\infty}\circ\rho\vert_{\nu(\Lambda_{\infty})}(\psi,t) = \tau_{\infty}(pq\psi,t) = (pq\psi - t,t).$$
It follows that $\rho \circ \wt \tau_{\infty} = \tau_\infty \circ \rho$ on all of $\wh F$, as desired.
\end{proof}


Combining the previous two lemmas, we can show that given any lift $\rho^{-1}(\lambda_{a/b})$, there is a homeomorphism of $\wh F$ that takes this lift to one of $\rho^{-1}(\lambda_0)$, $\rho^{-1}(\lambda_1)$, or $\rho^{-1}(\lambda_\infty)$, depending on the parity of $a$.

\begin{lemma}
\label{lem:autom_lift}
	Given any $a/b \in \Q_{\infty}$, there is a homeomorphism $\wt f\colon \wh F \rightarrow \wh F$ such that $\wt f(\rho^{-1}(\lambda_{a/b}))$ is either $\rho^{-1}(\lambda_0)$ if $a$ is even, or one of $\rho^{-1}(\lambda_1)$ or $\rho^{-1}(\lambda_\infty)$ if $a$ is odd.
\end{lemma}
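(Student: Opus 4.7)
The proof plan is to combine Lemmas \ref{twister} and \ref{twist-lift} in the most direct way possible: take the word in $\tau_0^{\pm 1}$ and $\tau_\infty^{\pm 1}$ that normalizes $\lambda_{a/b}$ downstairs, and lift it letter-by-letter to a composition of $\wt\tau_0^{\pm 1}$ and $\wt\tau_\infty^{\pm 1}$ upstairs.

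First I would record the general principle that lifts compose well: if $\wt g_1,\wt g_2\colon\wh F\to\wh F$ satisfy $\rho\circ\wt g_i = g_i\circ\rho$ for $i=1,2$, then $\rho\circ(\wt g_2\circ\wt g_1)=(g_2\circ g_1)\circ\rho$, and similarly $\rho\circ\wt g_i^{-1}=g_i^{-1}\circ\rho$, since $\rho = g_i\circ\rho\circ\wt g_i^{-1}$. In particular, by Lemma \ref{twist-lift}, any finite product of the homeomorphisms $\wt\tau_0^{\pm 1}$ and $\wt\tau_\infty^{\pm 1}$ is a lift through $\rho$ of the corresponding product of $\tau_0^{\pm 1}$ and $\tau_\infty^{\pm 1}$.

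Next I would apply Lemma \ref{twister} to obtain a product $f$ of Dehn twists $\tau_0^{\pm 1}$ and $\tau_\infty^{\pm 1}$ with the property that $f(\lambda_{a/b})=\lambda_0$ when $a$ is even, and $f(\lambda_{a/b})\in\{\lambda_1,\lambda_\infty\}$ when $a$ is odd (the parity dichotomy is exactly the one recorded at the end of the proof of Lemma \ref{twister}, since every generating twist preserves the parity of the numerator). Let $\wt f\colon\wh F\to\wh F$ be the homeomorphism obtained by replacing each factor $\tau_0^{\pm 1}$ (resp.\ $\tau_\infty^{\pm 1}$) in this word by $\wt\tau_0^{\pm 1}$ (resp.\ $\wt\tau_\infty^{\pm 1}$). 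By the observation of the previous paragraph, $\rho\circ\wt f=f\circ\rho$.

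Finally, I would verify the intertwining identity $\wt f(\rho^{-1}(\lambda_{a/b}))=\rho^{-1}(f(\lambda_{a/b}))$. For the forward inclusion, if $x\in\rho^{-1}(\lambda_{a/b})$, then $\rho(\wt f(x))=f(\rho(x))\in f(\lambda_{a/b})$, so $\wt f(x)\in\rho^{-1}(f(\lambda_{a/b}))$; the reverse inclusion follows by the same argument applied to the lift $\wt f^{-1}$ of $f^{-1}$. Substituting the target of Lemma \ref{twister} then yields $\wt f(\rho^{-1}(\lambda_{a/b}))\in\{\rho^{-1}(\lambda_0),\rho^{-1}(\lambda_1),\rho^{-1}(\lambda_\infty)\}$, with the parity dichotomy as stated. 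There is no real obstacle here; essentially all the work has already been done in Lemmas \ref{twister} and \ref{twist-lift}, and the only thing to notice is that the lifting property passes through inverses and compositions, so a word in the generators lifts to the corresponding word in the chosen lifts.
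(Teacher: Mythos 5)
Your proof is correct and follows the same route as the paper: apply Lemma~\ref{twister} to get the normalizing word downstairs, apply Lemma~\ref{twist-lift} to lift it, and conclude. The only difference is that you carefully justify the facts that the paper leaves implicit (lifts compose and pass through inverses, and $\wt f(\rho^{-1}(\lambda))=\rho^{-1}(f(\lambda))$), which is a welcome but not essentially different elaboration.
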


\begin{proof}
	Let $a/b \in \Q_{\infty}$.  By Lemma~\ref{twister}, there exists a homeomorphism $f\colon S \rightarrow S$, obtained as the product of Dehn twists $\tau^{\pm 1}_{\infty}$ and $\tau_0^{\pm 1}$, such that $f(\lambda_{a/b})$ is either $\lambda_0$ if $a$ is even, or one of $\lambda_1$ or $\lambda_{\infty}$ if $a$ is odd.  By Lemma~\ref{twist-lift}, the homeomorphism $f$ lifts to a homeomorphism $\wt f\colon \wh F \rightarrow \wh F$.  Thus, $\wt f$ maps the lift $\rho^{-1}(\lambda_{a/b})$ to one of the three lifts $\rho^{-1}(\lambda_0)$, $\rho^{-1}(\lambda_1)$, or $\rho^{-1}(\lambda_\infty)$, as desired.
\end{proof}

It follows easily that $\rho^{-1}(\lambda_{a/b})$ contains either one or $pq$ distinct curves, depending on the parity of the numerator $a$.

\begin{proposition}
\label{prop:lift_parity}
	\ 
	\begin{enumerate}
		\item If $a/b \in \Q_{\infty}$ and $a$ is odd, then $\rho^{-1}(\lambda_{a/b})$ is a single separating curve in $\wh F$.
		\item If $c/d\in\Q_\infty$ with $c$ even, then $\rho^{-1}(\lambda_{c/d})$ consists of $pq$ pairwise disjoint curves that are permuted by $\wh \varphi$ and are pairwise non-homotopic in $F^*$.
		\begin{enumerate}
			\item If $q \geq 3$, these curves remain pairwise non-homotopic in $\wh F$.
			\item If $q = 2$, then $\rho^{-1}(\lambda_{c/d})$ contains two curves in each of $p$ distinct homotopy classes of curves in $\wh F$, and $\wh \varphi^p$ swaps a pair of homotopic curves with opposite orientations.
		\end{enumerate}
	\end{enumerate}
\end{proposition}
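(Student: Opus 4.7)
My plan is to use Lemma~\ref{lem:autom_lift} as the main reduction tool: given any $\lambda_{a/b}$, there is a self-homeomorphism $\wt f$ of $\wh F$ taking $\rho^{-1}(\lambda_{a/b})$ to $\rho^{-1}(\lambda_\infty)$ or $\rho^{-1}(\lambda_1)$ when $a$ is odd, and to $\rho^{-1}(\lambda_0) = \Ll_0$ when $a$ is even. Each property asserted in the proposition---``single separating curve,'' ``$pq$ components,'' and the various non-homotopy claims---is preserved under homeomorphism, so it suffices to verify each of them for the three reference lifts $\Lambda_\infty$, $\Lambda_1$, and $\Ll_0$.

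For part~(1) I would handle $\Lambda_\infty$ and $\Lambda_1$ separately. Both are connected by Lemma~\ref{lem:easy_lifts}. That $\Lambda_\infty$ is separating is immediate from the decomposition $\wh F\setminus\Lambda_\infty = F^+\sqcup F^-$. For $\Lambda_1$, I observe that $\lambda_1$ cuts $S$ into two orbifold disks $D_1,D_2$, each containing one cone point of order $p$ and one of order $q$; the associated meridional monodromies in $\Z_{pq}$ have orders $p$ and $q$, and because $\gcd(p,q)=1$ they generate all of $\Z_{pq}$. Each preimage $\rho^{-1}(D_i)$ is therefore connected, so $\Lambda_1$ separates $\wh F$ into exactly two components.

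For part~(2), the count $|\Ll_0|=pq$ is Lemma~\ref{lem:easy_lifts}. The monodromy of $\rho$ around $\lambda_0$ is trivial (the two enclosed cone-point monodromies are inverse in $\Z_{pq}$), so each component of $\Ll_0$ covers $\lambda_0$ homeomorphically; since the deck group $G=\langle\wh\varphi\rangle$ acts freely transitively on any fiber of $\rho$, it must act freely transitively on the set of $pq$ components, and in particular $\wh\varphi$ cyclically permutes them. For non-homotopy in $F^*$, the components of $F^*\setminus\Ll_0$ are obtained from those of $\wh F\setminus\Ll_0$ by removing two preimages of cone points from each piece, yielding spheres with either $p+2\geq 5$ or $q+2\geq 4$ boundary circles (using $p>q\geq 2$). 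A standard innermost argument, combined with essentiality of every component of $\Ll_0$ in $F^*$, then forces any cobounding annulus between two components of $\Ll_0$ to contain a sub-annulus lying inside a single such piece, which is impossible since no two boundary components of a sphere with at least four holes can be parallel.

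The dichotomy between~(a) and~(b) comes down to whether any piece of $\wh F\setminus\Ll_0$ is itself an annulus. When $q\geq 3$, every piece has at least three boundary components, so the same argument also rules out homotopy in $\wh F$. When $q=2$, the $q$-holed pieces become $p$ honest annuli, and the two boundary circles of each are visibly homotopic in $\wh F$; this produces exactly $p$ homotopy classes, each containing two curves, which accounts for all $2p = pq$ components of $\Ll_0$. For the $\wh\varphi^p$ claim, covering-space arithmetic identifies the $G$-stabilizer of a given $q$-holed piece with the unique order-$q$ subgroup $\langle\wh\varphi^p\rangle$ of $G$; since $G$ acts freely on $\Ll_0$, the involution $\wh\varphi^p$ preserves each annulus but cannot fix either of its boundary circles, so it must interchange them, and as it is orientation-preserving on the annulus the swap must reverse boundary orientations. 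The step I expect to be trickiest is the non-homotopy argument, which requires cleanly separating the roles of $F^*$ and $\wh F$, verifying essentiality of $\Ll_0$-components in each, and correctly handling how a putative cobounding annulus can meet the remaining components of $\Ll_0$.
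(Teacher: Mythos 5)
Your proof takes essentially the same route as the paper: reduce via Lemma~\ref{lem:autom_lift} to the reference lifts $\Lambda_\infty$, $\Lambda_1$, and $\Ll_0$, and then verify each claim directly from Lemma~\ref{lem:easy_lifts} and the covering structure. Your write-up is correct and in a few spots more explicit than the published proof---you spell out the separating claim for $\Lambda_1$ and the non-homotopy-in-$F^*$ argument, both of which the paper leaves tacit, and for part (2)(b) you deduce the orientation-reversing boundary swap from freeness of the $G$-action on the components of $\Ll_0$, whereas the paper identifies $\wh\varphi^p\vert_A$ directly as the deck transformation of a degree-two cover of the disk branched at two points; these amount to the same thing.
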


\begin{proof}
	Suppose $a/b\in\Q_\infty$ with $a$ odd. By Lemma~\ref{lem:autom_lift}, there is a homeomorphism $\wt f$ of $\wh F$ taking $\rho^{-1}(\lambda_{a/b})$ to $\Lambda_\infty$ or $\Lambda_1$, each of which is connected by Lemma~\ref{lem:easy_lifts}, so $\rho^{-1}(\lambda_{a/b})$ is connected, as desired.
	
	Suppose $c/d\in\Q$ with $c$ even. By Lemma~\ref{lem:autom_lift}, there is a homeomorphism $\wt f$ of $\wh F$ taking $\rho^{-1}(\lambda_{c/d})$ to $\Ll_0$.  Thus, it suffices to prove part (2) for $\Ll_0$.  By Lemma~\ref{lem:easy_lifts}, we have that $\Ll_0$ is a separating collection of $pq$ curves in $\wh F$, and $\wh F \setminus \Ll_0$ consists of $p$ spheres with $q$ boundary components and $q$ spheres with $p$ boundary components.  It follows that curves of $\Ll_0$ are non-homotopic in $\wh F$ if and only if $q > 2$.  Otherwise, $q=2$ and $\wh F \setminus \Ll_0$ contains $p$ annuli; hence the curves of $\Ll_0$ are parallel in pairs.  The restriction of $\rho$ is a degree two branched cover from each annulus to the disk component of $S \setminus \lambda_0$ containing the two cone points of order 2; the subgroup $\langle\wh\varphi^p\rangle$ of $\langle \wh \varphi \rangle$ has order two.  Thus, $\wh\varphi^p$ is an involution of each annulus, swapping the boundary components with reversed orientations.
\end{proof}

Moving forward, we distinguish these two cases by letting $a/b \in \Q_{\infty}$ represent an arbitrary fraction with odd numerator and $c/d \in \Q_{\infty}$ represent an arbitrary fraction with even numerator.  In addition, we let $\Lambda_{a/b} = \rho^{-1}(a/b)$ when $a$ is odd, and we let $\Ll_{c/d} = \rho^{-1}(\lambda_{c/d})$ for $c$ even.

In the next lemma, we show that all sets of curves preserved setwise by $\wh \varphi$ must be one of the lifts characterized in this section.  This lemma will be especially important in our classification of Casson-Gordon derivatives in Subsection~\ref{subsec:class}.  We note that it may be the case that two curves in $F^*$ are homotopic in $\wh F$ but not homotopic in $F^*$. (Recall that $F^* = \wh F\setminus\rho^{-1}(\text{cone points})$.) This occurs, for instance, whenever $q =2$; as we saw in Lemma~\ref{lem:easy_lifts}, in this case $\wh F \setminus \Ll_0$ contains $p$ annular components. 

\begin{lemma}
\label{projection}
	Let $\Lambda$ be a collection of pairwise disjoint and non-homotopic curves in $F^*$.  Then, $\wh \varphi(\Lambda) = \Lambda$ if and only if $\Lambda = \rho^{-1}(\lambda_{a/b})$ for some $a/b \in \Q_{\infty}$.
\end{lemma}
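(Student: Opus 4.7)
The plan is to pass between $F^*$ and $S^*$ via the regular $G$-cover $\rho$, where $G = \langle\wh\varphi\rangle$ is the deck group by Lemma~\ref{lem:deck}. The forward direction is immediate: $\rho\circ\wh\varphi = \rho$, so $\wh\varphi$ permutes every fiber of $\rho$ and hence preserves $\rho^{-1}(\lambda_{a/b})$.

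For the converse, I would start by observing that $\wh\varphi$-invariance of $\Lambda$ upgrades to $G$-invariance, and that $G$ acts freely on $F^*$ (the preimages of the cone points, where the action fails to be free, are exactly the points removed to form $F^*$). Freeness gives $\rho^{-1}(\rho(x)) = G\cdot x$ for every $x\in F^*$, hence $\rho^{-1}(\rho(\Lambda)) = G\cdot\Lambda = \Lambda$. So it suffices to study $\Lambda_S := \rho(\Lambda)\subset S^*$ and show that, up to isotopy, $\Lambda_S$ is a single curve $\lambda_{a/b}$.

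Next I would check that $\Lambda_S$ consists of disjoint essential simple closed curves in the four-punctured sphere $S^*$. Each $c\in\Lambda$ covers its image $\rho(c)$, which is therefore a simple loop in $S^*$. It is essential: if $\rho(c)$ bounded a disk (resp.\ a once-punctured disk at a cone point) in $S^*$, then the disk (resp.\ once-punctured disk) would lift through the covering to pieces of $F^*$ bounded by components of $\rho^{-1}(\rho(c))$, forcing $c$ to be inessential or peripheral in $F^*$. Disjointness across distinct $G$-orbits in $\Lambda$ follows from the standard fiber-transitivity argument: an intersection point $x\in\rho(c_1)\cap\rho(c_2)$ would lift, via some $g\in G$, to a point of $c_2\cap g(c_1)$, contradicting the pairwise disjointness of the elements $g(c_1)\neq c_2$ of $\Lambda$.

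The hard part, and the place where the pairwise non-homotopic hypothesis does its work, is ruling out the possibility that $\Lambda_S$ has several parallel components rather than just one. By Lemma~\ref{inter}, two disjoint essential simple closed curves in the four-punctured sphere $S^*$ must share a slope, so every component of $\Lambda_S$ is isotopic to a single $\lambda_{a/b}$. If two components of $\Lambda_S$ were distinct but isotopic in $S^*$, they would cobound an unpunctured annulus in $S^*$, whose preimage under $\rho$ is a disjoint union of annuli in $F^*$ cobounded by pairs of components of $\Lambda$. This would produce a pair of homotopic curves in $\Lambda$, contradicting the hypothesis. Hence $\Lambda_S$ is the single curve $\lambda_{a/b}$, and $\Lambda = \rho^{-1}(\Lambda_S) = \rho^{-1}(\lambda_{a/b})$, as claimed.
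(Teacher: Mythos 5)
Your proof is correct and follows essentially the same strategy as the paper: push $\Lambda$ down to $S^*$ via $\rho$, show the image is a single essential slope $\lambda_{a/b}$ using Lemma~\ref{inter} and the non-homotopic hypothesis, and pull back using transitivity of the deck group $G=\langle\wh\varphi\rangle$ on fibers. The only difference is one of ordering — you establish $\Lambda = \rho^{-1}(\rho(\Lambda))$ first, which makes the later step (ruling out parallel components of $\rho(\Lambda)$) slightly cleaner than in the paper, where that identity is deferred to the end.
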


\begin{proof}
	Recall that the restriction $\rho\colon F^*\to S^*$ is a cyclic covering map with group of deck transformations generated by $\wh\varphi$, and assume $\wh\varphi(\Lambda) = \Lambda$.  Since $\Lambda$ is an embedded 1--manifold, $\rho(\Lambda)$ is as well.  If any component of $\rho(\Lambda)$ is inessential or if two components are parallel, then the same is true of components of $\lambda$.  Therefore, $\rho(\Lambda)$ is an essential simple closed curve in $S^*$; i.e., $\rho(\Lambda) = \lambda_{a/b}$ for some $a/b\in\Q_\infty$.

	To finish this direction of the proof, we must show that $\rho^{-1}(\rho(\Lambda)) = \Lambda$, which reduces to showing that $\rho^{-1}(\rho(\Lambda)) \subset \Lambda$.  Let $x\in\rho^{-1}(\rho(\Lambda))$ and let $y = \rho(x)$, so $y = \rho(z)$ for $z\in\Lambda$.  Since $\wh\varphi$ generates the cyclic group of deck transformations for the covering $\rho$, we have $\rho^{-1}(y) = \{\wh \varphi^k(x)\,|\,0\leq k\leq pq\}$.  It follows that $\wh\varphi^k(x) = z\in\Lambda$ for some $k$, but since $\wh\varphi(\Lambda) = \Lambda$, we have that $x\in\Lambda$, as desired.
	
	The converse direction is immediate from Lemma~\ref{lem:deck}.
\end{proof}

\begin{remark}
\label{rmk:q2}
	When $q=2$, the branched double cover $\rho_q\colon S(p,p,p,p)\to S(p,2,p,2)$ is an involution, as shown in Figure~\ref{fig:lifts}, and $\rho_p\colon \wh F \rightarrow S(p,p,p,p)$ has a pillowcase as its base space.  Curves in $S(p,p,p,p)$ avoiding the cone points are parametrized in the natural way.  If $c$ is even, the $(\rho_q)^{-1}(\lambda_{c/d})$ is two copies of the curve $\lambda_{c/2d} \subset S(p,p,p,p)$.  If $a$ is odd, then $(\rho_q)^{-1}(\lambda_{a/b}) = \lambda_{a/2b}$.  See Figure~\ref{fig:lifts}.
	
	In the case of $(p,q) = (3,2)$, the authors of~\cite{GomSchTho_Fibered-knots_10} and~\cite{Sch_Proposed-Property_16} work with the pillowcase $S(3,3,3,3)$, and so the slopes in these references are of the form $c/2d$ compare to our $c/d$.  In addition, our slopes have switched signs.  For example, lifts of curves in $S(3,3,3,3)$ of slopes $1/3, 2/5, 3/7$ as defined in~\cite{GomSchTho_Fibered-knots_10} and~\cite{Sch_Proposed-Property_16} correspond to $\Ll_{-2/3},\Ll_{-4/5},\Ll_{-6/7}$ considered as lifts of curves in $S(3,2,3,2)$.
	(When $q=2$, the curves of $\Ll_{c/d}$ occur as $p$ pairs of parallel curves on $\wh F$; in this case, we follow~\cite	{GomSchTho_Fibered-knots_10} and only consider one curve from each pair, as in the right frame of Figure~\ref{fig:lifts}.)
\end{remark}

\begin{figure}[h!]
	\centering
	\includegraphics[width=.9\textwidth]{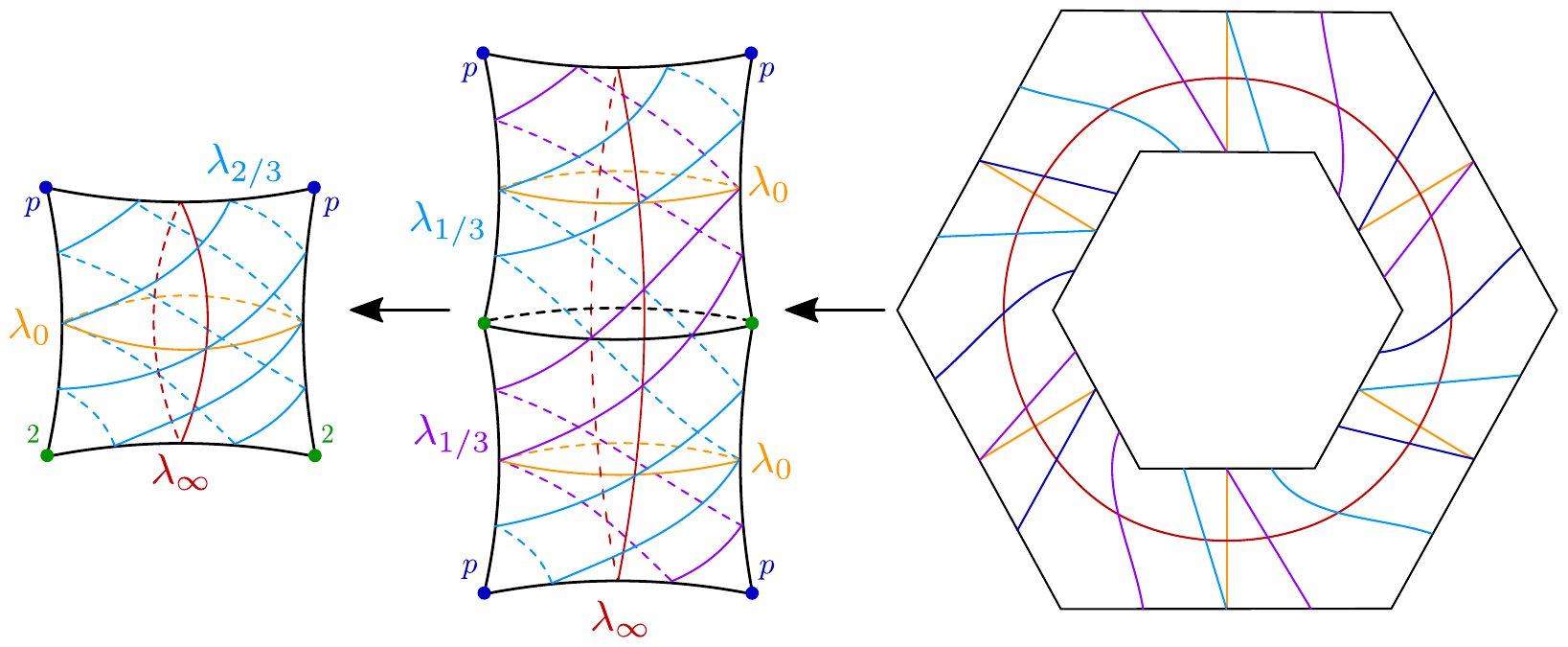}
	\caption{Lifting curves from the pillowcase to the hexulus.  Shown at right are the curves $\Lambda_\infty$ (red), $\Ll_0$ (orange), and $\Ll_{2/3}$ (light blue, dark blue, and violet).}
	\label{fig:lifts}
\end{figure}

\subsection{Classifying the CG-derivatives of $Q_{p,q}$}
\label{subsec:class}

Given any generalized square knot $Q_{p,q}$ and any $c/d\in\Q$ with $c$ even, we have shown how to construct a multi-curve $\Ll_{c/d}$ lying in the closed fiber $\wh F$ for $Q_{p,q}$.  We are now in a position to prove Proposition~\ref{propx:nR}.

\begin{lemma}
\label{lem:cg}
	Every $(p-1)(q-1)$ component sublink $L$ of $\Ll_{c/d}$ that cuts $\wh F$ into a connected planar surface is isotopic to a CG-derivative for $Q_{p,q}$ in $S^3$.
\end{lemma}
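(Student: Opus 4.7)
My plan is to invoke the Casson-Gordon criterion from Proposition~\ref{cggen}. After an isotopy of $L$ in $\wh F$ that makes it disjoint from the capping disk $D = \wh F\setminus F$, the link $L$ lies in the Seifert surface $F\subset S^3$. Because removing an open disk from a connected planar surface yields a connected planar surface, the hypothesis that $\wh F \setminus L$ is connected and planar implies that $F\setminus L$ is also connected and planar, with one additional boundary component $\partial F = Q_{p,q}$. Thus the first hypothesis of Proposition~\ref{cggen} is satisfied, and it remains to verify the kernel condition $\wh\varphi_*(N_L) = N_L$, where $N_L \subset \pi_1(\wh F)$ is the normal closure of the homotopy classes of the components of $L$.

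The key claim will be that $N_L$ coincides with the a priori larger normal subgroup $N_{\Ll_{c/d}}$ generated by the homotopy classes of all $pq$ components of $\Ll_{c/d}$. Granting this, $\wh\varphi$-invariance is immediate: by Proposition~\ref{prop:lift_parity}(2), the monodromy $\wh\varphi$ permutes the components of $\Ll_{c/d}$ setwise and hence permutes the corresponding conjugacy classes in $\pi_1(\wh F)$, so $\wh\varphi_*(N_{\Ll_{c/d}}) = N_{\Ll_{c/d}}$. Combined with the claim, this gives $\wh\varphi_*(N_L) = N_L$, and Proposition~\ref{cggen} then certifies $L$ as a CG-derivative.

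To establish the claim, I would take any component $\Lambda$ of $\Ll_{c/d}\setminus L$ and show that $[\Lambda]\in N_L$. First, $\Lambda$ is essential in $\wh F$: Lemma~\ref{lem:easy_lifts} shows that each component of $\Ll_0$ bounds on both sides a planar subsurface of $\wh F$ with at least two boundary components (never a disk), and Lemma~\ref{lem:autom_lift} transports this property to every $\Ll_{c/d}$. Since $\Lambda$ lies in the planar surface $\wh F\setminus L$, it must be separating there, bounding a planar subsurface whose remaining boundary circles form a subset of the $2g$ pushoffs of the components of $L$. In the free fundamental group of this planar subsurface, the product of the boundary classes is trivial, so $[\Lambda]$ can be expressed as a product of conjugates of the boundary-component classes. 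Each such boundary component is a parallel pushoff of some $L_i$ and is therefore freely homotopic in $\wh F$ to $L_i$, placing $[\Lambda]$ in $N_L$.

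The only real subtlety will be this expression of $[\Lambda]$ as a product of boundary classes, which follows from the standard fact that the fundamental group of a compact planar surface is free on the classes of all but one boundary component. The $q=2$ case, where some components of $\Ll_{c/d}$ can be homotopic in pairs in $\wh F$, causes no trouble since each individual component remains essential and the planar-surface argument is unaffected. With the claim in hand, Proposition~\ref{cggen} yields the result.
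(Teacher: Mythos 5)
Your proof is correct and follows the same strategy as the paper: verify the two hypotheses of Proposition~\ref{cggen}. You supply details the paper elides — in particular, where the paper merely asserts that $N$ is also normally generated by all of the curves in $\Ll_{c/d}$, you give the planar-surface argument showing that each extra component of $\Ll_{c/d}\setminus L$ lies in the normal closure of the components of $L$.
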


\begin{proof}
	Let $N \subset \pi_1(\wh F)$ be the subgroup normally generated by the homotopy classes of curves in $L$, noting that $N$ is also normally generated by all of the curves in $\Ll_{c/d}$.  Since $\wh\varphi$ permutes curves in $\Ll_{c/d}$, it follows that $\wh\varphi_*(N) = N$.  Therefore, Proposition~\ref{cggen} implies that $L$ is a CG-derivative for $Q_{p,q}$.
\end{proof}

Setting $L_{c/d}^{p,q} = L\subset \Ll_{c/d}$, this establishes Proposition~\ref{propx:nR}.  Next, we prove that every CG-derivative for a generalized square knot is equivalent to one of those described in Lemma~\ref{lem:cg}.  In order to understand all CG-derivatives of $Q_{p,q}$ up to handleslide-equivalence, we invoke the Equivariant Loop Theorem (as stated in~\cite{YauMee_The-equivariant-loop_84}).  We also state the Equivariant Sphere Theorem (as stated in~\cite{Dun_An-equivariant-sphere_85}), to be used later to prove Proposition~\ref{prop:rational_branch}. 

\begin{ELT}[\cite{MeeYau_The-classical-Plateau_79,MeeYau_Topology-of-three-dimensional_80,MeeSimYau_Embedded-minimal_82}]
\label{thm:ELT}
	Let $G$ be a finite group acting smoothly on a compact three-dimensional manifold $Y$ such that $Y$ is closed or $\partial Y = F$ and $g(F)=F$ for all $g\in G$.
	\begin{description}
		\item[Loop Theorem] Let $\kappa = \ker(\iota_*)$, where $\iota\colon F\to Y$ is inclusion.  Then there is a collection $\Dd = \{D_i\}_{i=1}^k$ of properly embedded disks in $Y$ with the following properties:
		\begin{enumerate}
			\item $\kappa$ is generated as a normal subgroup of $\pi_1(F)$ by $\{[\partial D_i]\}_{i=1}^k$.
			\item For any $g\in G$ and $1\leq i,j\leq k$, either $g(D_j)\cap D_i = \emptyset$ or $g(D_j) = D_i$.
		\end{enumerate} 
		\item[Sphere Theorem] Let $S\subset Y$ be a two-sphere that does bound a three-ball.  Then there exists such an $S$ such that $g(S) = S$ or $g(S)\cap S = \emptyset$ for all $g\in G$.
	\end{description}
\end{ELT}

We remark that, although the original proofs of the Equivariant Loop Theorem by Meeks and Yau  and the Equivariant Sphere Theorem by Meeks, Simon, and Yau both used analytic techniques, purely topological proofs have since been given by Dunwoody~\cite{Dun_An-equivariant-sphere_85} and Edmonds~\cite{Edm_A-topological-proof_86}.

\begin{proposition}\label{prop:zero}
Suppose that $L^+ \subset S^3$ is a Casson-Gordon derivative for $Q_{p,q}$.  Then there exists $c/d$ with $c$ even such that $Q_{p,q} \cup L^+$ is stably handleslide-equivalent to $Q_{p,q} \cup \Ll_{c/d}$.
\end{proposition}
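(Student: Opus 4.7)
Given the CG-derivative $L^+$, let $H$ be the handlebody it determines with $\partial H = \wh F$, and let $\Phi\colon H\to H$ be the extension of the closed monodromy $\wh\varphi$ guaranteed by the definition of a CG-derivative. The finite cyclic group $G = \langle\Phi\rangle \cong \Z_{pq}$ then acts smoothly on $H$, preserving $\wh F$ setwise, so the Equivariant Loop Theorem applies. This produces a collection $\Dd = \{D_i\}_{i=1}^k$ of properly embedded disks in $H$ whose boundary classes normally generate $\kappa = \ker(\pi_1(\wh F)\to \pi_1(H))$ and which is permuted by $G$: for every $g\in G$ and every pair $i,j$, either $g(D_i)=D_j$ or $g(D_i)\cap D_j = \emptyset$.

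The boundary multi-curve $\partial\Dd \subset \wh F$ is therefore setwise invariant under $\wh\varphi$. After isotopy and the removal of disks whose boundaries are parallel in $\wh F$ to the boundaries of retained disks --- an operation that does not shrink the normal closure $\langle\langle[\partial\Dd]\rangle\rangle = \kappa$ --- the remaining boundaries are pairwise disjoint and pairwise non-homotopic, and the collection is still $\wh\varphi$-invariant. Lemma~\ref{projection} then identifies this multi-curve as $\rho^{-1}(\lambda_{a/b})$ for some $a/b\in\Q_\infty$. I expect the key step to be ruling out odd $a$: if $a$ were odd, Proposition~\ref{prop:lift_parity} would force $\partial\Dd$ to be the single separating curve $\Lambda_{a/b}$, cutting $\wh F$ into closed pieces of genera $g_1,g_2\geq 1$ with $g_1+g_2 = (p-1)(q-1)$; the quotient $\pi_1(\wh F)/\langle\langle[\Lambda_{a/b}]\rangle\rangle$ would be the free product $\pi_1(\wh F_1)*\pi_1(\wh F_2)$ of closed surface groups of positive genus, which is not free and cannot equal $\pi_1(H)$. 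Thus $a=c$ is even and $\partial\Dd$ coincides with the isotopy classes realized by $\Ll_{c/d}$.

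From $\Ll_{c/d}$, extract a $(p-1)(q-1)$-component sublink $L'$ cutting $\wh F$ into a connected planar surface, as in Lemma~\ref{lem:cg}; the disks these curves bound in $H$ form a complete cut system. Hence $L^+$ and $L'$ are two cut systems for the same handlebody $H$, and by the standard uniqueness of cut systems up to handleslide within $H$, they are connected by a finite sequence of handleslides realized by bands lying on $\wh F \subset Y_Q$. Each such band either lies in $F\subset S^3$, giving a genuine handleslide in $S^3$, or crosses the filling disk of the $0$-surgery on $Q$, which descends to a handleslide of a component of $L^+$ over $Q$. Consequently, $Q\cup L^+$ is handleslide-equivalent in $S^3$ to $Q\cup L'$.

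Finally, the components of $\Ll_{c/d}\setminus L'$ are parallel in $\wh F$ to components of $L'$ via embedded annuli on $\wh F$. Adding to $L^+$ a split $0$-framed unlink $U$ with one component for each curve of $\Ll_{c/d}\setminus L'$ and sliding each added unknot over its nearby $L'$-mate across the corresponding annulus yields $Q\cup\Ll_{c/d}$, establishing stable handleslide-equivalence. The principal obstacle is the parity dichotomy in the second paragraph: verifying that the multi-curve produced by the Equivariant Loop Theorem lies in the family $\{\Ll_{c/d}\}$ and is never the odd-numerator separating curve $\Lambda_{a/b}$, which is forced by the incompatibility of $\pi_1(H)$ being free with the rigid structure of $\pi_1(\wh F)/\langle\langle[\Lambda_{a/b}]\rangle\rangle$.
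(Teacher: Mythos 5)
Your overall strategy mirrors the paper's: apply the Equivariant Loop Theorem to the $\Z_{pq}$-action of the extension $\Phi$ on $H$, identify the resulting boundary multi-curve $\partial\Dd$ with a lift $\Ll_{c/d}$ via Lemma~\ref{projection}, and then pass between $L^+$ and a sublink of $\Ll_{c/d}$ as two cut systems for the same handlebody. Your argument for ruling out odd numerator $a$ -- that the quotient $\pi_1(\wh F)/\langle\langle[\Lambda_{a/b}]\rangle\rangle$ is a free product of closed surface groups of positive genus, hence not free, hence not $\pi_1(H)$ -- is cleaner and more explicit than the paper's terse ``the kernel of $\iota_*$ is not cyclic,'' and is a genuine improvement in transparency.

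However, there is a real gap before you invoke Lemma~\ref{projection}. That lemma is stated for multi-curves in $F^* = \wh F\setminus\rho^{-1}(\text{cone points})$, not in $\wh F$: you must first show that $\partial\Dd$ avoids the lifts of the cone points, and your proof never addresses this. The paper has a dedicated argument here: if a curve of $\partial\Dd$ passed through the lift $x$ of a cone point of order $p$ (resp.\ $q$), then $\wh\varphi^q$ (resp.\ $\wh\varphi^p$) rotates a neighborhood of $x$ by $1/p$ (resp.\ $1/q$), which -- combined with equivariance of $\partial\Dd$ -- would produce either a transverse self-intersection (when $p$ or $q\geq 3$) or a curve mapped to itself with reversed orientation (when $q=2$). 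Ruling out that last possibility requires the paper's orientation preprocessing step, which you have also dropped: when $q=2$ the orbit $\{D_1,\phi(D_1),\dots\}$ can close up with $\phi^m(D_1)=D_1$ orientation-reversing, and the paper replaces each disk by the two ends of an equivariant collar $D\times I$ so that $\phi$ cyclically permutes the disks honestly. Your ``remove disks whose boundaries are parallel in $\wh F$'' step actively works against this: when $q=2$, the curves of $\Ll_{c/d}$ come in $p$ parallel pairs in $\wh F$ (Proposition~\ref{prop:lift_parity}(2b)), and removing one curve from each pair destroys $\wh\varphi$-invariance (since $\wh\varphi^p$ swaps the two curves of each pair), so Lemma~\ref{projection} no longer applies. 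You would need to remove only curves parallel in $F^*$ -- which is what the lemma really wants -- and separately establish $\partial\Dd\subset F^*$, and the latter is precisely where the paper's $q=2$ preprocessing is used.

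Your final two paragraphs (handleslides in $\wh F$ descend to $S^3$-handleslides of $L^+$ and over $Q$, and adding a split unlink recovers the parallel copies) are correct and a bit more explicit than the paper's version.
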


\begin{proof}
	By the definition of a Casson-Gordon derivative, there exists a handlebody $H$ such that $L^+$ such that the closed monodromy $\wh\varphi\colon \wh F \rightarrow \wh F$ extends to a homeomorphism $\phi\colon H \rightarrow H$, and such that $L^+$ bounds a cut system for $H$.  Since $\wh\varphi^{pq}$ is the identity, $\phi^{pq}$ must also be isotopic to the identity. Since no lesser power of $\wh\varphi$ is the identity, neither is a lesser power of $\phi$. It follows that $\phi$ generates an action of $\Z_{pq}$ on $H$.  By the Equivariant Loop Theorem, there is a finite collection of disks $\Dd=\{D_i\}$ that are properly embedded in $H$ and have the property that the subgroup of $\pi_1(\wh F)$ generated by the curves $\Ll = \pd \Dd$ is equal to the kernel of the map $\iota_*$ induced by the inclusion $\iota\colon\wh F\to H$.  Moreover, for any $1\leq k\leq pq$, we have that either $\phi^k(D_j)\cap(\cup_iD_i)=\emptyset$ or $\phi^k(D_j)\in\{D_i\}$.

Note that since $\phi^{pq}$ is the identity, after deleting parallel disks, the disks in $\Dd$ can be expressed as $\{D_1,\phi(D_1),\dots,\phi^{m-1}(D_1)\}$ for some integer $m$, where $\phi^m(D_1) = D_1$ and $\phi^k(D_1) \neq D_1$ for $k < m$.  In the event that $\phi^m(D_1)$ and $D_1$ have opposite orientations (which will occur when $q=2$), we replace each disk $D$ in $\Dd$ with the ends of an equivariant collar neighborhood $D\X I$ of $D$ in $H$; that is, $D$ is replaced with $D^- = D \X \{0\}$ and $D^+ = D \X \{1\}$.  In this case, $\Dd = \{D_1,\phi(D_1),\dots,\phi^{2m-1}(D_1)\}$ has the property that $\phi^{2m}(D_1) = D_1$ (preserving orientation), and $\phi^k(D_1) \neq D_1$ setwise for any $k < 2m$. Once this is done, we have that $\phi$ cyclically permutes the disks of $\Dd$.

Note that curves in $\Ll = \pd \Dd$ are of the form $\pd (\phi^k(D_1)) = \wh \varphi^k(\pd D_1)$.  We claim that $\Ll \subset \wh F$ does not meet any of the lifts of the cone points of $S$:  Observe that $\Ll$ is invariant under $\wh \varphi$.  If $\Ll$ passes through the lift $x$ of a cone point of order $p$ (resp. $q$), then $\wh\varphi^q$ (resp. $\wh \varphi^p$) induces a $1/p$ (resp. $1/q$) rotation in a neighborhood of $x$.  However, this implies that either $\Ll$ has a transverse self-intersection (in the case $p$ or $q \geq 3$) or that $\wh \varphi^k$ maps a curve in $\Ll$ to itself with opposite orientation (in the case $q = 2$), which has been ruled out by our choice of the disks $\Dd$.  We conclude that $\Ll$ does not meet a lift of a cone point, so that $\Ll \subset F^*$ as in Lemma~\ref{projection}, which asserts that $\Ll  = \Ll_{c/d} = \rho^{-1}(\lambda_{c/d})$ for some $c/d \in\Q_{\infty}$.  Since the kernel of $\iota_*$ is not cyclic, $\Ll$ contains more than one curve, and by Proposition~\ref{prop:lift_parity}, we have that $c$ is even and $|\Ll| = pq$.

Finally, let $L \subset \Ll_{c/d}$ be any collection of curves cutting $\wh F$ into a connected planar surface.  Since both $L^+$ and $L$ are cut systems for the same handlebody, they are handleslide-equivalent in $\wh F$.  Viewing $\wh F$ as a subspace of $Y_{Q_{p,q}}$, we have that $L^+$ is handleslide-equivalent to $L$ in $Y_{Q_{p,q}}$, so that $Q_{p,q} \cup L^+$ is handleslide-equivalent to $Q_{p,q} \cup L$ in $S^3$.  Adding in the rest of the curves in $\Ll_{c/d}$ may be achieved by stable equivalence; hence $Q_{p,q} \cup L$ is stably equivalent to $Q_{p,q} \cup \Ll_{c/d}$.
\end{proof}

\section{The link $Q_{p,q} \cup \Ll_{0}$ has Property R}
\label{sec:L0}

In this section, we give a detailed analysis of the link $\Ll_0$ lying in the fiber $F$ for $Q=Q_{p,q}$ in $S^3$.  First, we prove that $Q \cup \Ll_{c/d}$ is stably equivalent to $Q \cup V$, where $V$ is any one component of $\Ll_{c/d}$.  We then show directly that $Q \cup \Ll_0$ has Property R by showing that $Q \cup V$ is handleslide trivial when $V\subset \Ll_0$.

\begin{lemma}
For any $c/d$ with $c$ even and for any component $V$ of $\Ll_{c/d}$, the link $Q \cup \Ll_{c/d}$ is handleslide-equivalent to $Q \cup V \sqcup U$, where $U$ is a split unlink.
\end{lemma}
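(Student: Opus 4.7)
The plan is to exploit the fibration of $E_Q = S^3\setminus Q$ to produce $pq-1$ pairwise disjoint embedded annuli in $E_Q$, each cobounded by two components of $\Ll_{c/d}$ that are consecutive with respect to the monodromy action, and then to perform $pq-1$ simultaneous handleslides along these annuli that convert all components of $\Ll_{c/d}$ other than $V$ into a split $0$--framed unlink. Set $V = V_0$ and $V_k = \wh\varphi^k(V)$, and write $E_Q = F\times[0,1]/(x,1)\sim(\wh\varphi(x),0)$. The trace $A_k = V_k\times[0,1]$ is then an embedded annulus in $E_Q$ with $\partial A_k = V_k\cup V_{k+1}$ (indices modulo $pq$). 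Crucially, the interior of each $A_k$ lies in $F\times(0,1)$, hence is disjoint from $F$ and thus from every component of $\Ll_{c/d}$, and the interiors of $A_0,\ldots,A_{pq-2}$ are pairwise disjoint because the $V_j$ are pairwise disjoint in $F$.

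For each $k=0,\ldots,pq-2$, I will handleslide $V_{k+1}$ over $V_k$ using a properly embedded band $b_k\subset A_k$ joining $V_{k+1}$ to a pushoff $V_k^+$ of $V_k$ taken in the interior of $A_k$. Cutting the sub-annulus $V_k\times[\epsilon,1]$ of $A_k$ along $b_k$ produces an embedded disk in the interior of $A_k$ whose boundary is the band-sum, so the new curve $V_{k+1}'$ bounds this disk; moreover this disk is disjoint from $Q\cup V$ and from every $V_j$ with $j\neq k+1$. The essential step to verify is that $V_{k+1}'$ emerges with the $0$--framing, which reduces to checking that $V_k^+$ is a $0$--framed pushoff of $V_k$. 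I plan to obtain this from the observation that the $V_j$ are cyclically permuted by the monodromy and that $\Ll_{c/d}$ contains a CG-derivative sublink (by Lemma~\ref{lem:cg}) whose components have zero self-linking along $F$ by the definition of derivative; hence every $V_j$ has zero self-linking along $F$. Since $A_k$ is transverse to $F$ along its boundary, the $A_k$--pushoff of $V_k$ realizes the $F$--framing of $V_k$, which in turn coincides with the $0$--framing, so $\lk(V_k, V_k^+) = \lk(V_{k+1}, V_k^+) = 0$ and the band-sum across the flat band $b_k$ is $0$--framed.

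These $pq-1$ handleslides can be carried out simultaneously because the bands $b_k$ and pushoffs $V_k^+$ all lie in the pairwise disjoint annuli $A_0,\ldots,A_{pq-2}$. After performing them, $V$ is unchanged and each $V_{k+1}'$ is a $0$--framed unknot bounding a disk in $A_k$, with these $pq-1$ disks pairwise disjoint and disjoint from $Q\cup V$. Hence $V_1'\cup\cdots\cup V_{pq-1}'$ is a split $0$--framed unlink $U$, yielding the desired handleslide-equivalence of $Q\cup\Ll_{c/d}$ with $Q\cup V\sqcup U$. The chief obstacle in fleshing this out is the framing identification described in the middle paragraph; the remaining content is the routine topological picture of cutting an annulus along a proper arc to obtain a disk.
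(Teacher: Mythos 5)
Your argument is a more hands-on, $E_Q$--level version of the paper's proof, which works entirely in $Y_Q$: there, the bundle flow is an ambient isotopy of $Y_Q$ carrying each $V_j$ to a parallel copy of $V$ in a collar of the closed fiber (with no framing bookkeeping needed, since an ambient isotopy preserves framings of framed curves), and these parallel copies are then slid off $V$ to give a split $0$--framed unlink. You instead unravel that flow one period at a time as the annuli $A_k=V_k\times[0,1]$ and perform the slides directly in the knot exterior; this is a legitimate variant, with the modest bonus that none of your slides pass over $Q$. One point to flag: writing $E_Q = F\times[0,1]/(x,1)\sim(\wh\varphi(x),0)$ quietly replaces the monodromy $\varphi$ of Section~\ref{subsec:square} (rotation followed by the puncture-drag) with the restriction of the closed monodromy $\wh\varphi$. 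These are isotopic as self-maps of $F$, so the mapping tori agree, but this choice is exactly what makes $\partial A_k = V_k\cup V_{k+1}$ rather than $V_k\cup\varphi(V_k)$, and it deserves a sentence.

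The framing step is where the gap lies. The leap from ``a CG-derivative sublink has zero $F$--self-linking'' and ``the $V_j$ are cyclically permuted by $\wh\varphi$'' to ``every $V_j$ has zero $F$--self-linking'' is not automatic: $\wh\varphi$ is a diffeomorphism of $\wh F$, not of $S^3$, and it is a nontrivial fact that the monodromy preserves the $S^3$--linking number of a curve with its $F$--pushoff (one has to know, say, that $\wh\varphi_*$ is an isometry of the Seifert form, or that the bundle flow of $Y_Q$ extends to a self-diffeomorphism of $S^3$ isotopic to the identity). You also invoke $\lk(V_{k+1},V_k)=0$ without argument. Happily, neither vanishing is actually needed. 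The embedded annulus $A_k\subset S^3$ with $\partial A_k = V_{k+1}-V_k$ already forces the relation you want: pushing $A_k$ off itself to $A_k^+$ gives $\lk(\partial A_k,\partial A_k^+)=0$, and expanding this yields $f(V_k)+f(V_{k+1})-2\,\lk(V_k,V_{k+1})=0$, where $f$ is the $F$--surface framing (equivalently, the $A_k$--framing, as you correctly note). That relation is precisely what makes the slide of $V_{k+1}$ over $V_k$ along $b_k$ produce a $0$--framed curve bounding the disk in $A_k$. The one fact from your middle paragraph that is genuinely needed is that the $A_k$--pushoff of $V_k$ realizes the framing $V_k$ carries in the surgery diagram --- the surface framing --- and this holds by construction; the vanishing of that framing and of the pairwise linking numbers is a red herring.
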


\begin{proof}
Since $\wh\varphi$ permutes the curves in $\Ll_{c/d}$, it follows that every component of $\Ll_{c/d}$ is isotopic to $V$ in the 3--manifold $Y_Q$.  Thus, in $Y_Q$, the link $\Ll_{c/d}$ is isotopic to a collection of curves parallel to $V$.  Handleslides in $Y_Q$ convert this collection to $V\sqcup U$, where $U$ is a split unlink.  In $S^3$, this implies that $Q \cup \Ll_{c/d}$ is handleslide-equivalent to $Q \cup V \sqcup U$, as desired.
\end{proof}

The \emph{Farey graph} has vertices corresponding to the extended rational numbers $\Q^{\infty}$, where two rational numbers $p/q$ and $r/s$ are connected by an edge whenever $|ps - qr| = 1$.  A \emph{Farey triangle} is a triple of rational numbers, each connected by an edge.  The Farey graph can also be associated to the 1-skeleton of the curve complex of the torus, as well as the arc complex of the torus with one boundary component.  For further background information on the Farey graph, see~\cite{Hat_Topology-of-Numbers_}.

Now, we turn our attention to understanding the knot types of the components of $\Ll_0$ in $S^3$.  To this end, fix a band $B_{i,j}$ connecting meridional disks $D_i$ of $V$ and $D_j'$ of $V'$, as in Subsection~\ref{subsec:square}. We may suppose $B_{i,j}$ is transverse to the Heegaard torus $T$ containing $K^+$, so that the co-core $\eta^+=\eta_{i,j}^+$ of $B_{i,j}$ is contained in $T$.  As in~\cite{Sch_Proposed-Property_16}, we obtain the curve $V_{i,j} \subset \Ll_0$ in $\wh F$ by gluing $\eta^+ \subset F^+$ to its image $\eta^- \subset F^-$ under the reflection of $\wh F$ across $\Lambda_\infty$ that interchanges $F^+$ and $F^-$. (See Figure~\ref{fig:TK_fib} for an example.) This construction, however, does little to help us determine the knot type of $V_{i,j}$ in $S^3$.  For this purpose, we follow~\cite{Sch_Proposed-Property_16}:  We may homotope $\eta^+$ in $F^+$ (via a homotopy that does not fix $\pd \eta^+$) until its boundary points coincide, yielding a knot.  Since the two points $\pd \eta^+$ cut $K^+ = \pd F^+$ into two arcs, there are two choices for this homotopy; we will let $J_1^+$ and $J_2^+$ denote the resulting knots.  In addition, we let $J_1^-$ and $J_2^-$ denote the corresponding mirror images in $F^-$ obtained from $\eta^-$.

Since components of $\Ll_0$ are constructed by gluing a given co-core to its mirror image, we can mirror the homotopy of $\eta^+$ in $F^-$, so that $V_{i,j} = J_1^+ \# J_1^-$ or $V_{i,j} = J_2^+ \# J_2^-$.  In $Y_Q$ these two knots are isotopic into $\wh F$ and are related by a single handleslide over $Q$, which may be viewed as a homotopy across the disk $D \subset \wh F$.

\begin{lemma}
\label{lem:L0}
	\ 
	\begin{enumerate}
		\item Let $J^+_1$ and $J^+_2$ be defined as above.  As knots in $S^3$, the curves $J^+_1$ and $J^+_2$ are the torus knots $T_{r_1,s_1}$ and $T_{r_2,s_2}$, such that
\begin{enumerate}
\item $0<s_i<r_i$,
\item $|ps_1-qr_1| = |ps_2-qr_2| = |r_1s_2-s_1r_2| = 1$,
\item $0 < r_1,r_2 < p$, and
\item $0 < s_1,s_2 < q$.
\end{enumerate}

		\item After slides over $Q$ in $S^3$, each component $V_{i,j}$ of $\Ll_0$ is either $Q_{r_1,s_1}$ or $Q_{r_2,s_2}$.
		\item After slides over $Q$ in $S^3$, there is a genus two Heegaard surface $\Sigma$ for $S^3$ and a component $V=Q_{r_1,s_1}$ of $\Ll_0$ such that $Q \cup V \subset \Sigma$, and there is a reducing curve $\delta$ for $\Sigma$ cutting $Q$ and $V$ into their respective summands.
	\end{enumerate}
\end{lemma}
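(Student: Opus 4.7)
The plan has three parts corresponding to the three conclusions.

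For part (1), the idea is to place everything on the Heegaard torus $T$. Since the band $B_{i,j}$ can be isotoped to lie in a neighborhood of $x_{i,j}$ on $T$, its co-core $\eta^+$ may be arranged to lie in $T$, and each $J^+_k = \eta^+\cup \alpha_k$ (where $\alpha_1,\alpha_2$ are the two complementary arcs of $K^+\setminus\partial\eta^+$) is an embedded simple closed curve on $T$. Hence each $J^+_k$ is a torus knot $T_{r_k,s_k}$. The bounds $0<r_k<p$ and $0<s_k<q$ follow because each $\alpha_k$ is a proper sub-arc of the $(p,q)$-curve $K^+$ and so wraps a proper amount around each generator of $H_1(T)$. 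For the Farey conditions, recall that two torus knots $T_{a,b}$ and $T_{c,d}$ on $T$ have minimal geometric intersection $|ad-bc|$. After perturbing $J^+_k$ off $K^+$ by isotoping $\alpha_k$ to a parallel copy, the only remaining intersection comes from the perturbed $\eta^+$ meeting $K^+$; the quarter twist in $B_{i,j}$ forces this count to be exactly one, giving $|ps_k-qr_k|=1$. Similarly, perturbing $J^+_1$ and $J^+_2$ to opposite sides of $K^+$ leaves only the single intersection coming from the two perturbed copies of $\eta^+$, yielding $|r_1s_2-s_1r_2|=1$.

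For part (2), the curve $V_{i,j}$ is the closed loop $\eta^+\cup\eta^-$, where $\eta^-$ is the image of $\eta^+$ under the reflection $\varrho$ across $\Lambda_\infty$. Its endpoints with $\partial F$ lie on the arcs of $K^\pm\subset Q$, so $V_{i,j}$ meets $Q$ transversely in two pairs of points. Band-summing with a parallel (zero-framed) copy of $Q$ is a handleslide over $Q$, and successive such handleslides allow us to redirect the portions of $V_{i,j}$ lying near $Q$ along any chosen arc of $K^+$ in $F^+$ and the corresponding arc of $K^-$ in $F^-$. Choosing the arc $\alpha_k$ as in part~(1), the result of these slides is exactly the connected sum $J^+_k\#J^-_k = T_{r_k,s_k}\#T_{-r_k,s_k}=Q_{r_k,s_k}$ for some $k\in\{1,2\}$. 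This is the generalization of Scharlemann's analysis in~\cite{Sch_Proposed-Property_16} of the classical square knot case.

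For part (3), the standard genus-two Heegaard surface displaying $Q=K^+\#K^-$ is obtained by tubing together the Heegaard torus $T^+$ containing $K^+$ and a mirror Heegaard torus $T^-$ containing $K^-$, where the tube is attached at a small disk meeting each torus knot in an arc. By part~(1), $T^+$ also contains $J^+_1$, and $K^+\cap J^+_1$ is a single point on $T^+$. We place the tube's foot in a small disk $D^+\subset T^+$ near this intersection so that $D^+$ contains a sub-arc of both $K^+$ and $J^+_1$, and we make the analogous choice on $T^-$ for $K^-$ and $J^-_1$. The resulting tubing produces a genus-two Heegaard surface $\Sigma$ for $S^3$ on which both $Q$ and $V=J^+_1\#J^-_1=Q_{r_1,s_1}$ lie. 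The meridian $\delta$ of the tube bounds a disk on each side of $\Sigma$ and is met twice by each of $Q$ and $V$, so it is a reducing curve cutting them into their summands, as required. The main obstacle is the careful intersection count in part~(1): one must verify that the geometric count between the perturbed $\eta^+$ and $K^+$ (and between perturbed copies of $\eta^+$ in the two $J^+_k$) is exactly one, which is where the twist convention on the bands $B_{i,j}$ plays its essential role.
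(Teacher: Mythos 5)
Your proposal takes essentially the same approach as the paper: put everything on the Heegaard torus $T^+$ to see the Farey triangle in part~(1), use handleslides over $Q$ to realize the two possible closures of $\eta^\pm$ in part~(2), and connect-sum the tori $T^\pm$ along disks near $\partial\eta^\pm$ in part~(3). Two details need tightening, however. First, for the bounds $0<r_i<p$ and $0<s_i<q$, saying that $\omega_k$ is a proper sub-arc of $K^+$ and therefore ``wraps a proper amount'' does not quite finish the argument, because $\eta^+$ could in principle contribute additional wrapping to $J_k^+=\eta^+\cup\omega_k$. What the paper does (and what you need to say) is that a slight pushoff of $\eta^+$ is disjoint from the relevant meridian disk $D_j'$ (resp.\ $D_i$), so the count of signed intersections with $D_j'$ comes entirely from $\omega_k$, giving $r_i\leq p$; then the Farey relation $|ps_i-qr_i|=1$ rules out equality. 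Second, in part~(2) the assertion that ``$V_{i,j}$ meets $Q$ transversely in two pairs of points'' is false: $V_{i,j}=\eta^+\cup\eta^-$ is a closed curve in the interior of the Seifert surface $F$ (its endpoints having been glued along $\Lambda_\infty$), so it is disjoint from $Q=\partial F$. The correct statement is that $V_{i,j}$ passes close to $Q$ near the (glued) endpoints of $\eta^\pm$, and the ambiguity between $J_1^+\#J_1^-$ and $J_2^+\#J_2^-$ is resolved by a single handleslide over $Q$, as you go on to describe. Finally, the ``quarter twist forces the count to be one'' remark is a red herring: the intersection count $|ps_k-qr_k|=1$ is purely about curves on the torus $T^+$, and the cleanest justification is simply that a parallel pushoff of $K^+$ in $T^+$ is disjoint from $\omega_k$ and crosses the co-core $\eta^+$ exactly once.
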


\begin{proof}
First, observe that we may crush each band $B_{i,j}$ to its co-core $\eta^+_{i,j}$, so that $F^+$ may be viewed as the union of the disks $D_1,\dots,D_p$ and $D_1',\dots,D_q'$, where disks meet along the co-cores $\eta^+_{i,j}$.  This implies that $K^+ \cup \eta^+$ is an embedded graph in the Heegaard torus $T^+$.  The endpoints of $\eta^+$ cut the knot $K^+ = T_{p,q}$ into arcs $\omega_1$ and $\omega_2$ in $T$, where $J^+_i = \eta^+ \cup \omega_i$, from which it follows that $J^+_i$ is a torus knot $T_{r_i,s_i}$.  Note further that a parallel pushoff of $K^+$ in $T$ meets $J^+_i$ in a single point.  Moreover, $J^+_1$ and $J^+_2$ may be constructed by taking the disjoint arcs $\omega_1$ and $\omega_2$ and connecting them with copies of $\eta^+$ that meet in a single point, as shown in Figure~\ref{fig:farey}, so that $|J^+_1 \cap J^+_2| = 1$.  We conclude that the curves $K^+,J^+_1,J^+_2$ form a Farey triangle in the curve graph of $T^+$.

\begin{figure}[h!]
	\centering
	\includegraphics[width=.5\textwidth]{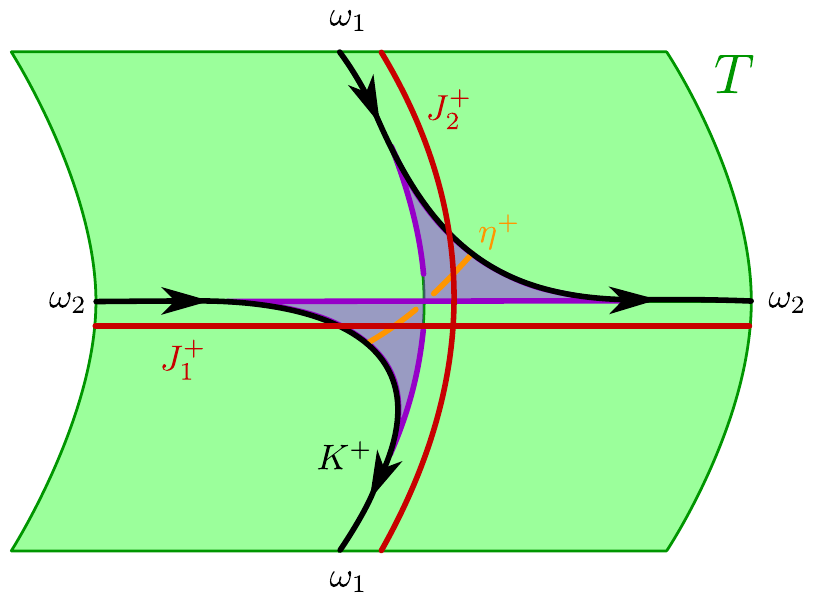}
	\caption{A local picture of the Heegaard torus $T^+$ containing $K^+$, $J_1^+$, and $J_2^+$ near a band of a Seifert surface for $K^+$ whose co-core is $\eta^+$.}
	\label{fig:farey}
\end{figure}

Recall that $D_j'$ is the meridian disk for $V'$ containing $\eta_+$, and a pushoff of $K^+$ meets $D_j'$ transversely in $p$ points of positive sign.  A slight pushoff of $\eta_+$ is disjoint from $D_j'$, and assuming each arc $\omega_i$ meets $D_j'$ transversely in at most $p$ points of positive sign, we have that $r_i \leq p$ for $i = 1,2$, forcing $0 < r_i < p$ since the curves meet pairwise once.  A similar argument using $D_i$ instead of $D_j'$ shows that $0 < s_i < q$.  The second statement of the lemma follows from the fact that $V_{i,j}$ is the connected sum $J^+_1 \# J^-_1$ or $J^+_2 \# J^-_2$.

To see that the final statement is true, we first homotope the arc $\eta^+$ along $K^+$ in $T^+$ so that its boundary points are close, we let $K^- \cup \eta^-  \subset T^-$ be the corresponding mirror images, and we take the connected sum of $T^+$ and $T^-$ along disks that contain the boundary points of $\eta^{\pm}$.  The resulting link is $Q \cup V$, contained in the Heegaard surface $T^+ \# T^-$ with a reducing curve $\delta$ as desired.
\end{proof}

As mentioned above, the Farey graph corresponds to the arc complex of $T^+$, a torus with one boundary component, and every triple $(\A_0,\A_1,\A_2)$ of pairwise disjoint non-homotopic arcs in $T^+$ corresponds to a triangle in the Farey graph. The process of replacing a pair of curves in a triple, say $(\A_0,\A_1)$, with a different pair from the same triple, say $(\A_1,\A_2)$, is called an \emph{arc-slide}.  Any two edges in the Farey graph can be connected by a path of Farey triangles, and thus any two pairs of disjoint arcs in $T^+$ can be related by a sequence of arc-slides.  We use these ideas in the proof of the next proposition.

\begin{proposition}\label{prop:zeror}
There is a component $V\subset\Ll_0$ such that the link $Q\cup V$ has Property~R.
\end{proposition}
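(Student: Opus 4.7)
The plan is to invoke part~(3) of Lemma~\ref{lem:L0} to place $Q \cup V$ on a genus-two Heegaard surface $\Sigma = T^+ \# T^-$ of $S^3$, with a reducing curve $\delta$ cutting each link component into its summands: $Q = K^+ \# K^-$ with $K^\pm = T_{\pm p, q} \subset T^\pm$, and $V = J^+ \# J^-$ with $J^\pm = T_{\pm r_1, s_1} \subset T^\pm$. By part~(1) of the lemma, the condition $|ps_1 - qr_1| = 1$ says that on each torus $T^\pm$ the curves $K^\pm$ and $J^\pm$ intersect transversely in a single point, so $(K^\pm, J^\pm)$ represents a Farey edge in the curve graph of $T^\pm$.

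Next, I would observe that any handleslide of $Q$ over $V$ realized by a band contained in $T^+ \setminus \delta$ leaves the $T^-$ summand of $Q$ untouched and replaces $K^+$ by a band sum with a parallel push-off of $J^+$ in $T^+$; since the surface framing of $Q$ on $\Sigma$ is $pq + (-pq) = 0$ and that of $V$ is $r_1 s_1 + (-r_1 s_1) = 0$, both agreeing with the $0$-framings, these bands implement honest handleslides of $0$-framed links in $S^3$. The resulting curve is a Farey neighbor of $J^+$; dually, sliding $V$ over $Q$ along a band in $T^+$ produces a Farey neighbor of the current $K^+$. The analogous statement holds for bands on $T^-$.

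The key step is then to run the Euclidean algorithm on the Farey graph applied to the pair $(p/q, r_1/s_1)$. Since $|ps_1 - qr_1| = 1$, this algorithm terminates along a finite Farey path ending at the standard basis $(1/0, 0/1)$; realizing each edge by an appropriate handleslide of $Q$ or $V$ as above, and simultaneously carrying out the mirror-image sequence on $T^-$, transforms $(K^+, J^+)$ and $(K^-, J^-)$ into pairs $(m^+, \ell^+)$ and $(m^-, \ell^-)$ that bound disks in the two complementary solid tori of $T^\pm$ and are therefore unknots in $S^3$. Consequently $Q$ becomes $m^+ \# m^- = U$ and $V$ becomes $\ell^+ \# \ell^- = U$, while $\delta$ continues to reduce $\Sigma$ and exhibits them as split. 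Hence $Q \cup V$ is handleslide-equivalent to a 2-component unlink; since handleslides preserve the result of Dehn surgery and the only framing on a 2-component unlink yielding $Y_2$ is the $0$-framing, both framings are zero, and so $Q \cup V$ has Property~R.

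The main technical hurdle is the explicit verification that a band lying entirely on a single summand $T^\pm$ actually realizes a Farey move on the associated genus-one pair — one must choose the band correctly to produce the desired sign of the band sum and keep the result disjoint from the complementary summand and from $\delta$. This is exactly the step carried out by Scharlemann in~\cite{Sch_Proposed-Property_16} for $(p,q) = (3,2)$, and the expectation is that the argument adapts uniformly for all $(p,q)$, using the generalized Diophantine relation $|ps_1 - qr_1| = 1$ supplied by Lemma~\ref{lem:L0}(1) in place of the specific relation $|3s_1 - 2r_1| = 1$ exploited there.
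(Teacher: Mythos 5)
Your plan shares the paper's high-level strategy---invoke Lemma~\ref{lem:L0}(3) to place $Q\cup V$ on a genus-two Heegaard surface $\Sigma$ with reducing curve $\delta$, then reduce the pair of slopes $(p/q,\,r_1/s_1)$ to $(0/1,\,1/0)$ by walking the Farey graph---but the mechanism you propose for realizing the Farey moves by handleslides does not work. You claim that a handleslide of $Q$ over $V$ realized by a band contained in $T^+\setminus\delta$ ``leaves the $T^-$ summand of $Q$ untouched and replaces $K^+$ by a band sum with a parallel push-off of $J^+$.'' This is false: a handleslide always bands the \emph{entire} component $Q$ with a parallel pushoff of the \emph{entire} component $V$, so if the band core is an arc in $T^+\setminus\delta$ the resulting curve $Q'$ contains, in addition to $\A_0^-$, the full mirrored arc $(\A_1^-)'$ carried along from the pushoff of $V$. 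Concretely, $Q'$ now meets $\delta$ in four points rather than two, so it is no longer a connected sum of arcs across $\delta$, and the two extra intersections cannot be isotoped away because $\A_0^-$ and $(\A_1^-)'$ are non-$\partial$-parallel arcs of distinct slopes in the once-punctured torus $T^-$. The inductive Farey reduction therefore cannot be iterated, and the invocation of ``the mirror-image sequence on $T^-$'' does not repair this, since the structure needed for that sequence has already been destroyed after the first slide.

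The paper avoids this by taking the band core to be an arc \emph{inside} $\delta$, joining a point of $Q\cap\delta$ to a point of $V\cap\delta$. With that choice, a single handleslide effects \emph{mirrored arc slides} on both sides simultaneously: the endpoint of $\A_0^+$ slides along $\delta$ onto $\A_1^+$, and by the reflective symmetry built into Lemma~\ref{lem:L0} the endpoint of $\A_0^-$ slides onto $\A_1^-$ at the same time. The new $Q'$ still meets $\delta$ in two points, so the connected-sum-of-arcs picture persists and the Farey induction goes through. This is exactly where the shift from closed curves in closed tori to \emph{arcs} in once-punctured tori matters: the relevant Farey complex is the arc complex of the punctured torus, and a $\delta$-adjacent handleslide is precisely a mirrored arc slide. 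Your identification of Lemma~\ref{lem:L0}, the generalized relation $|ps_1-qr_1|=1$, and the Farey path to $(0/1,1/0)$ is all correct; replace the localized band-on-$T^+$ mechanism with the arc-slide picture and the argument closes.
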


\begin{proof}
By Lemma~\ref{lem:L0}, there exists a component $V$ of $\Ll_0$ and a genus two Heegaard surface with reducing curve $\delta$, where $\delta$ cuts $\Sigma$ into $T^+ \cup T^-$, $Q$ into $\A_0^+ \cup \A_0^-$, and $V$ into $\A_1^+ \cup \A_1^-$, such that $\A_i^-$ is the mirror image of $\A_i^+$ over $\delta$.  Since $\A_0^+$ and $\A_1^+$ are disjoint, non-homotopic arcs in $T^+$, they determine an edge in the Farey graph.  Any handleslide of $Q$ over $V$ along an arc contained in $\delta$ can be realized as a pair of mirrored arc slides in $T^+$ and $T^-$, and vice versa.

Again using Lemma~\ref{lem:L0}, we have that the arcs $\A_0^{\pm}$ and $\A_1^{\pm}$ are identified with the fractions $p/q$ and $r_1/s_1$, respectively.  By the remark preceding the proposition, there is a sequence of (mirrored) arc slides taking $(\A_0^{\pm},\A_1^{\pm})$ to the pair $(\n_0^{\pm},\n_1^{\pm})$ corresponding to the fractions $0/1$ and $1/0$ respectively.  This implies there is a sequence of handleslides taking $Q \cup V$ to $Q_{0,1} \cup Q_{1,0} \subset \Sigma$, which is the 2-component unlink.

\end{proof}

\section{Twisting on vertical tori in $Y_Q$}\label{sec:autom}

The purpose of this section is to define two useful diffeomorphisms, $\Tt_0$ and $\Tt_{\infty}$, of the 3--manifold $Y_Q$, each of which is described as a twist on a vertical torus.  Recall that in Lemma~\ref{twist-lift}, we showed that the Dehn twists $\tau_0$ and $\tau_{\infty}$ of $S$ lift to homeomorphisms $\wt \tau_0$ and $\wt \tau_{\infty}$ of $\wh F$.  The main result in this section is that the twist $\Tt_0$ preserves fibers of $Y_Q$, and the restriction of $\Tt_0$ to $\wh F$ is $\wt \tau_0$.  The same is not quite true for $\Tt_{\infty}$, but we show that $\Tt_{\infty}$ is isotopic to a diffeomorphism $\Tt_{\infty}'$ that preserves fibers of $Y_Q$ and acts on $\wh F$ as $\wt \tau_{\infty}$.  It follows that the links $\Tt_0(\Ll_{c/d})$ and $\Tt_{\infty}(\Ll_{c/d})$ are isotopic in $Y_Q$ to the links $\wt \tau_0(\Ll_{c/d})$ and $\wt \tau_{\infty}(\Ll_{c/d})$, respectively, which will allow us to extend $\Tt_0$ and $\Tt_{\infty}$ over the 4--manifolds determined by these links.

By Lemma~\ref{Seif}, we have that $Y_Q$ is Seifert fibered over the base space $S = S(p,q,p,q)$.  We previously defined $W_{\infty}$ to be the vertical torus that projects to the curve $\lambda_{\infty} \subset S$.   Define $W_0$ to be the vertical torus in $Y_Q$ that projects to the curve $\lambda_0 \subset S$.

\subsection{Twisting the torus $W_0$}
\label{subsec:even_twist}
\ 

Recall that the multi-curve $\Ll_0$ is contained in $\wh F$ as a collection of $pq$ curves that are cyclically permuted by the monodromy $\wh \varphi$.  The torus $W_0$, which is vertical with respect the to Seifert fibration of $Y_Q$, intersects $\wh F$ in the $pq$ curves of $\Ll_0$.  We parameterize $W_0$ with a meridian and longitude.  Let $\ell_0 \subset W_0$ be a curve parallel to components of $\Ll_0$, and let $\mu_0$ be a regular Seifert fiber in $W_0$.  Parameterize $W_0$ as $(\theta,\psi)\in\R^2/\Z^2$, where $\mu_0 = \{(\theta,0)\,:\,\theta\in\R/\Z\}$ and $\ell_0 = \{(0,\psi)\,:\,\psi\in\R/\Z\}$.

We define a automorphism $\Tt_0$ of $Y_Q$ that is given as a Dehn twist along the torus $W_0$.  Let $N_0 = W_0\times[0,1]$ be a regular neighborhood of $W_0$, parameterized by $(\theta,\psi,t)$, and identify $W_0$ with $W_0\times\{0\}$. Define $\Tt_0$ to be the identity on $Y_Q \setminus N_0$.  On $N_0$, define
$$\Tt_0(\theta,\psi,t) = (\theta,\psi-t,t),$$
noting that $\Tt_0|_{W_0 \X \{0,1\}} = \text{Id}$, and thus $\Tt_0\colon Y_Q \rightarrow Y_Q$ is a diffeomorphism.  A diffeomorphism $\Tt\colon Y_Q \rightarrow Y_Q$ is said to be \emph{surface-fiber-preserving} if it maps surface-fibers to surface-fibers.

\begin{lemma}
\label{lem:even_torus}
The torus twist $\Tt_{0}$ is surface-fiber-preserving, and $\Tt_{0}|_{\wh F} = \wt \tau_0$.\end{lemma}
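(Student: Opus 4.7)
The plan is to reduce both claims to explicit computations in carefully chosen coordinates on $N_0$. The key preliminary observation is transversality: since $\wh F$ is horizontal and $W_0$ is vertical in the Seifert fibration, every closed fiber $\wh F_s$ meets $W_0$ transversely, and at each intersection point the tangent intersection $T_x\wh F_s\cap T_xW_0$ is precisely $T_x\ell_0$. Consequently, for every $s\in S^1$, the curve $\wh F_s\cap W_0$ is a disjoint union of $pq$ circles parallel to $\ell_0$, situated at $pq$ distinct $\theta$-levels $\theta_1(s),\dots,\theta_{pq}(s)$ along $\mu_0$; when $s=0$ these circles are precisely the components of $\Ll_0$.

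Next I would refine the product structure on $N_0=W_0\times[0,1]$ so that it is compatible with the surface fiber foliation. At each $x\in W_0$ there is a unique direction in $T_x\wh F_{s(x)}$ transverse to $T_xW_0$; following this normal foliation within each surface fiber produces a diffeomorphism $W_0\times[0,1]\cong N_0$ in which the original coordinates $(\theta,\psi)$ on $W_0$ extend to each parallel slice and in which every intersection $\wh F_s\cap N_0$ decomposes as $pq$ product annuli $\{\theta=\theta_i(s)\}\times S^1_\psi\times[0,1]_t$. I would take this to be the collar used in the definition of $\Tt_0$.

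Both conclusions now fall out immediately. Since $\Tt_0(\theta,\psi,t)=(\theta,\psi-t,t)$ fixes $\theta$ and $t$ and merely shifts $\psi$, each product annulus $\{\theta=\theta_i(s)\}\times S^1_\psi\times[0,1]_t$ is preserved as a set; combined with $\Tt_0=\mathrm{id}$ off $N_0$, this gives $\Tt_0(\wh F_s)=\wh F_s$ for every $s$, which is even stronger than surface-fiber-preserving. Restricting to $s=0$, the intersection $\wh F\cap N_0$ is the union of $pq$ annular neighborhoods $V_i\times[0,1]$ of the components of $\Ll_0$, on each of which $\Tt_0$ reads $(\psi,t)\mapsto(\psi-t,t)$, a left-handed Dehn twist about $V_i$. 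By definition, the product of these $pq$ twists is $\wt\tau_0$, so $\Tt_0|_{\wh F}=\wt\tau_0$. The only substantive step is the coordinate refinement in the second paragraph---the rest is formula-reading---and the main subtlety is that the collar in the definition of $\Tt_0$ must be chosen compatibly with the surface fiber foliation, which the transversality above makes possible.
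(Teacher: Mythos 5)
Your proof takes essentially the same route as the paper's: arrange the collar $N_0 = W_0\times[0,1]$ so that $\wh F \cap N_0$ consists of constant-$\theta$ product annuli, observe that the twist formula fixes $\theta$ and $t$, and conclude that $\Tt_0$ preserves $\wh F$ (indeed every $\wh F_s$) and restricts on each annulus to a Dehn twist. You are actually more careful than the paper on the collar: the paper simply asserts that $\wh F\cap N_0$ has the product-annulus form $\{(\theta_i,\psi,t)\}$, whereas you construct the collar explicitly by flowing along the line field transverse to $W_0$ inside each surface-fiber leaf, which is what makes the assertion legitimate.

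Where you are less careful than the paper is the orientation step. You assert that on each annulus the map $(\psi,t)\mapsto(\psi-t,t)$ is a left-handed Dehn twist about $V_i$, and that ``by definition, the product of these $pq$ twists is $\wt\tau_0$.'' But $\wt\tau_0$ is by definition a product of \emph{coherently} left-handed twists, and whether $(\psi,t)\mapsto(\psi-t,t)$ reads as a left-handed twist in $\wh F$ depends on the orientation of the frame $(\partial_\psi,\partial_t)$ relative to $\wh F$ at each component $V_i$; one needs this orientation to be positive, and to be the \emph{same} sign at all $pq$ components of $\Ll_0$, or else $\Tt_0|_{\wh F}$ would be a mixed product of left- and right-handed twists. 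This coherence does not follow from the formula alone. The paper supplies the missing observation: the regular fiber $\mu_0$ meets $\wh F$ with the same sign at every point of intersection, which is exactly equivalent to the frames $(\partial_\psi,\partial_t)$ being coherently oriented across all of $\Ll_0$. Adding that observation (or an equivalent one, such as noting that the $pq$ annuli are permuted by the orientation-preserving deck transformation $\wh\varphi$) closes the gap.
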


\begin{proof}
First, we note that $\Tt_0(\wh F) = \wh F$, since $\Tt_0$ fixes the $\theta$ parameter of $N_0$ and is the identity away from $N_0$.  The intersection of $\wh F \cap N_0$ is $pq$ disjoint annuli of the form $\{(\theta_0,\psi,t):\psi \in \R/\Z, t \in [0,1]\}$, and the restriction of $\Tt_0$ to each annulus is a Dehn twist about a component of $\Ll_0$.  Note that the regular fiber $\mu_0$ meets $\wh F$ coherently in each point of intersection; thus all of these Dehn twists are coherently oriented.  Thus, following the proof of Lemma~\ref{twist-lift}, we have that $\Tt_0|_{\wh F} = \tilde\tau_0$, as desired.
\end{proof}

\subsection{Twisting the torus $W_\infty$}
\label{subsec:odd_twist}
\ 

In this subsection, we examine the more complicated case of a twist on $W_{\infty}$.  Here the twist does not preserve $\wh F$ but is isotopic to a diffeomorphism that does; hence, we take care to keep track of this isotopy.

Recall that $W_{\infty} \cap \wh F$ is the curve $\Lambda_\infty$, and $\Lambda_{\infty}$ cuts $\wh F$ into $F^{\pm}$.  Moreover, the torus $W_\infty$ cuts $Y_Q$ into $E_{K^+}$ and $E_{K^-}$.  In Subsection~\ref{subsec:cover}, we defined the orientation-reversing reflection $\varrho$ of $Y_Q$ through the torus $W_\infty$ taking $E_{K^+}$ to $E_{K^-}$.  Using $\varrho$, we see that the natural meridian and longitude of $E_{K^{\pm}}$ are identified in $Y_Q$, and thus the torus $W_\infty$ has a natural longitude $\ell_\infty$ and meridian $\mu_\infty$.  The longitude $\ell_{\infty}$ can be viewed as the identified boundary curves of $F^{\pm}$ in $\wh F$, and thus $\ell_{\infty} = \Lambda_{\infty}$.  Parameterize $W_\infty$ as $(\theta,\psi)\in\R^2/\Z^2$, where $\mu_\infty = \{(\theta,0)\,:\,\theta\in\R/\Z\}$ and $\ell_\infty = \{(0,\psi)\,:\,\psi\in\R/\Z\}$.

As in the previous subsection, we define a automorphism $\Tt_\infty$ of $Y_Q$ that is given as a Dehn twist along the torus $W_\infty$.  Let $N_\infty = W_\infty\times[0,1]$ be a regular neighborhood of $W_\infty$, parameterized by $(\theta,\psi,t)$, and identify $W_\infty$ with $W_\infty\times\{0\}$. Define $\Tt_\infty$ to be the identity on $Y_Q \setminus N_\infty$.  On $N_\infty$, define
$$\Tt_\infty(\theta,\psi,t) = (\theta + t,\psi,t),$$
noting that $\Tt_\infty|_{W_\infty \X \{0,1\}} = \text{Id}$, and thus $\Tt_\infty\colon Y_Q \rightarrow Y_Q$ is a diffeomorphism.

\begin{lemma}
\label{lem:odd_twist}
The torus twist $\Tt_{\infty}$ is isotopic to a surface-fiber-preserving diffeomorphism $\Tt_{\infty}'\colon Y_Q \rightarrow Y_Q$ such that $\Tt_{\infty}'|_{\wh F} = \wt \tau_{\infty}$.
\end{lemma}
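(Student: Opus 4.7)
The plan is to define $\Tt_\infty'$ as the fiber-preserving extension of $\wt\tau_\infty$ to $Y_Q$, and then to exhibit an isotopy from $\Tt_\infty$ to $\Tt_\infty'$. I would first verify that $\wt\tau_\infty$ commutes with the closed monodromy $\wh\varphi\colon\wh F\to\wh F$: this is trivial on $F^-$ where $\wt\tau_\infty$ is the identity; it is automatic on $F^+$ since $(\varphi^+)^{-1}$ commutes with $\varphi^+ = \wh\varphi|_{F^+}$; and on the collar $\Lambda_\infty\times I$ it follows by direct calculation, because both $\wt\tau_\infty$ and $\wh\varphi$ act as $\psi$-translations at each fixed level $t$ and therefore commute. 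Consequently, the formula $\Tt_\infty'(x,s):=(\wt\tau_\infty(x),s)$ descends to a well-defined diffeomorphism of $Y_Q = \wh F\times_{\wh\varphi}S^1$ that manifestly preserves every surface fiber $\wh F_s$ and restricts to $\wt\tau_\infty$ on $\wh F = \wh F_0$.

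Next I would exhibit an isotopy $\Tt_\infty\simeq\Tt_\infty'$. The key observation is that the coordinate $\theta$ parameterizing the meridional direction $\mu_\infty$ on $W_\infty$ coincides with the $S^1$-factor $s$ of the mapping torus structure on $Y_Q$: since the mapping torus identification restricts to the $-1/pq$ rotation on $\partial F^+$, going once around $\mu_\infty$ corresponds to going once around $s\in S^1$. Thus the formula $\Tt_\infty(\theta,\psi,t) = (\theta+t,\psi,t)$ realizes the torus twist as a localized bundle-parameter shift: inside the thin collar $N_\infty$, $\Tt_\infty$ shifts $s$ by $t$, accumulating a full unit of $s$-shift as $t$ goes from $0$ to $1$. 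On the other hand, the restriction $\Tt_\infty'|_{E_{K^+}}$ is the fiber-preserving extension of $(\varphi^+)^{-1}$, which in the mapping torus description is the global shift $[x,s]\mapsto [x,s-1]$. I would then build the isotopy by spreading the localized $s$-shift of $\Tt_\infty$ out from $N_\infty$ across the entire $E_{K^+}$-side of $W_\infty$, via a one-parameter family of Dehn twists about $W_\infty$ supported in collars of increasing width, all representing the same mapping class. The fractional Dehn twist $(\psi,t)\mapsto(\psi-t/pq,t)$ appearing in the definition of $\wt\tau_\infty$ provides exactly the interpolation across $W_\infty$ needed to reconcile the $s$-shift on the $E_{K^+}$-side with the identity on the $E_{K^-}$-side.

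The principal obstacle will be checking the signs and orientation conventions to confirm that spreading $\Tt_\infty$ to $E_{K^+}$ yields the inverse monodromy $(\varphi^+)^{-1}$ rather than $\varphi^+$, and verifying that the fractional twist of $\wt\tau_\infty$ matches the required continuity interpolation precisely, not merely up to some inessential collar isotopy.
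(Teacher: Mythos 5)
Your first step is sound and in fact cleaner than the paper's: checking that $\wt\tau_\infty$ commutes with $\wh\varphi$ (identity on $F^-$, $(\varphi^+)^{-1}$ versus $\varphi^+$ on $F^+$, and commuting $\psi$-translations on the collar) lets you define $\Tt_\infty'$ directly as the descent of $(x,s)\mapsto(\wt\tau_\infty(x),s)$, with surface-fiber-preservation then automatic. The paper instead produces $\Tt_\infty'$ as the endpoint of an explicit isotopy and checks these properties afterward, so your set-up saves a verification.

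The second half, however, has a genuine gap. First, a smaller imprecision: $\theta$ does not literally coincide with the bundle parameter $s$. Regular Seifert fibers on $W_\infty$ have class $pq[\mu_\infty]+[\ell_\infty]$, so one unit of bundle time flows points by $\mu_\infty + \frac{1}{pq}\ell_\infty$, shifting $\theta$ by $1$ \emph{and} $\psi$ by $1/pq$. This $\psi$-component is exactly where the fractional twist in $\wt\tau_\infty$ comes from; conflating $\theta$ and $s$ discards it. More importantly, the isotopy mechanism you propose cannot work as stated: a ``one-parameter family of Dehn twists about $W_\infty$ supported in collars of increasing width'' always consists of maps that are the identity outside a product neighborhood of $W_\infty$. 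No such collar reaches the exceptional fibers in $E_{K^+}$, yet $\Tt_\infty'|_{E_{K^+}}$ acts nontrivially there (it shifts each exceptional fiber along itself, being $(\varphi^+)^{-1}$ on every $F^+$-fiber). So the family you describe can never converge to $\Tt_\infty'$; all its members are isotopic to $\Tt_\infty$ but never leave the collar. The paper's isotopy is structurally different: it takes $\Tt_\infty$ and post-composes with the time-$s$ flow once around the $E_{K^+}$-bundle in the negative direction, while interpolating the boundary motion through $N_\infty$ by $H_\infty(\theta,\psi,t,s)=(\theta - st,\psi - st/pq,t)$. The intermediate maps are not Dehn twists (nor even fiber-preserving); only the endpoint is, and it is precisely the $N_\infty$-interpolation that leaves behind the fractional twist $\psi\mapsto\psi - t/pq$ you need. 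Replace the collar-widening picture with this flow-composition (or an equivalent construction) and the argument closes.
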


\begin{proof}
Recall from Lemma~\ref{Seif} that $Y_Q = E_{K^-} \cup N_{\infty} \cup E_{K^+}$.  We define isotopies on each of these components and glue them together to construct the desired isotopy.  Let $H^-\colon E_{K^-} \X I \rightarrow E_{K^-}$ be the trivial isotopy $H^-(x,s) = x$.  Let $H^+\colon E_{K^+} \X I \rightarrow E_{K^+}$ be the isotopy obtained by flowing once around the bundle structure in the negative direction.  On $\pd E_{K^+} = W_{\infty} \X \{1\}$, this isotopy flows points along regular fibers of the Seifert fibered structure.  In $H_1(W_\infty)$, regular fibers are expressed as $pq[\mu_\infty] + [\Lambda_\infty]$, since the boundary slope of the essential annulus in $E(K^+)$ has slope $pq/1$.  Since the isotopy $H^+$ traverses $1/pq^\text{th}$ of each regular fiber, the restriction of $H^+$ to the boundary $W_\infty \X \{1\}$ of $E_{K^+}$, parameterized as $(\theta,\psi,1)$, is the isotopy $(\theta,\psi,1,s) \mapsto (\theta - s, \psi - s/pq,1)$.  

Define an isotopy $H_{\infty}\colon N_{\infty} \X I \rightarrow N_{\infty}$ by $H_{\infty}(\theta,\psi,t,s) = (\theta - st, \psi - st/pq, t)$.  Then the restriction of $H_{\infty}$ to $W_{\infty} \X \{0\}$ sends $(\theta,\psi,0,s)$ to $(\theta,\psi,0)$, which agrees with $H^-$, and the restriction of $H_{\infty}$ to $W_{\infty} \X \{1\}$ sends $(\theta,\psi,1,s)$ to $(\theta-s,\psi-s/pq,1)$, which agrees with $H^+$.  It follows that we can paste the isotopies $H^{\pm}$ and $H_{\infty}$ together to get an isotopy $H\colon ~Y_Q \X I \rightarrow Y_Q$, where $H(x,0) = x$ by construction.

Define $\Tt'_{\infty}\colon ~ Y_Q \rightarrow Y_Q$ by $\Tt_{\infty}'(x) = H(\Tt_{\infty}(x),1)$.  Then $\Tt'_{\infty}$ is isotopic to $\Tt_{\infty}$ via the isotopy $H(\Tt(x),s)$.  We are left to verify that $\Tt_{\infty}'$ is the desired diffeomorphism.  The restriction $\Tt_{\infty}'|_{E_{K^-}}$ is the identity, and the restriction $\Tt_{\infty}'|_{E_{K^+}}$ is the surface-fiber-preserving diffeomorphism that maps each fiber to its image under $(\varphi^+)^{-1}$.  Consider $\Tt_{\infty}'|_{N_{\infty}}$.  We compute
\[ \Tt_{\infty}'(\theta,\psi,t) = H(\Tt_{\infty}(\theta,\psi,t),1) = H(\theta+t,\psi,t,1) = (\theta,\psi-t/pq,t).\]
Since the $\theta$--coordinate is preserved, it follows that $\Tt'_{\infty}$ is surface-fiber-preserving on $N_{\infty}$, and thus $\Tt'_{\infty}$ is surface-fiber-preserving on the entirety of $Y_Q$.  Finally, note that we have already shown that $\Tt'_{\infty}|_{\wh F}$ agrees with $\wt \tau_{\infty}$ outside of $N_{\infty}$.  If we consider $\wh F$ to be the fiber that meets $N_{\infty}$ in those points such that $\theta=0$, we have that
\[ \Tt'_{\infty}|_{\wh F \cap N_{\infty}}(0,\psi,t) = (0,\psi-t/pq,t) = \wt \tau_{\infty}(\psi,t).\]
From the definition of $\wt\tau_{\infty}$, we conclude that $\Tt'_{\infty}|_{\wh F} = \wt\tau_{\infty}$, as desired.
\end{proof}

In the left panel of Figure~\ref{fig:links_isotopy}, we illustrate a collection of arcs $\eta$ contained in $\wh F  \cap N_{\infty}$.  In the middle panel, we see the image $\Tt_{\infty}(\eta)$, and the in right panel, the image $\Tt_{\infty}'(\eta)$ of the arcs under the isotopy $H$.  

\begin{figure}[ht]
	\centering
	\includegraphics[width=\textwidth]{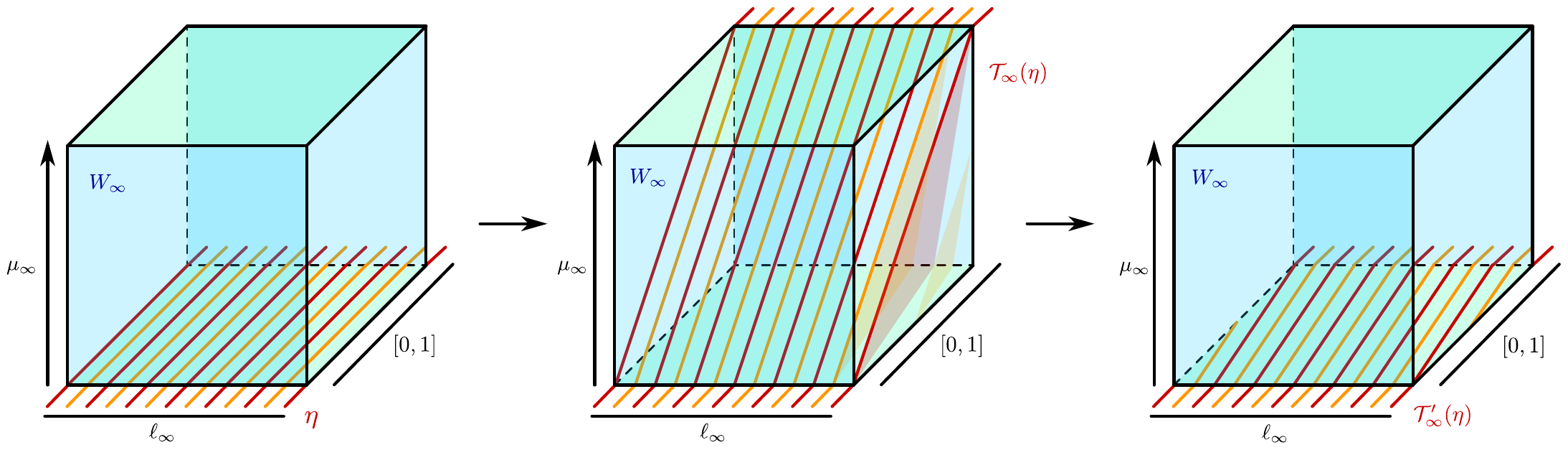}
	\caption{Left: a collection of arcs $\eta$ in $\wh F \cap N_{\infty}$.  Middle: $\Tt_{\infty}(\eta)$.  Right: $\Tt'_{\infty}(\eta)$. Each cube represents $N_\infty$ after the identification of the top side with the bottom side and the right side with the left side.}
	\label{fig:links_isotopy}
\end{figure}

Suppose that $W$ is a torus in a 3--manifold $Y$ and $\Tt\colon Y\to Y$ is a torus twist along $W$ in the direction of a curve $\mu$ on $W$.  If $W$ bounds a solid torus $V$ such that $\mu$ bounds a meridional disk $D$ in $V$, then the twisting can be interpolated to the identity across the solid torus $V$, and thus $\Tt$ is isotopic to the identity.  This property is a higher-dimensional analogue of the fact that a Dehn twist about an inessential curve in a surface is also trivial.

Note that $\Lambda_\infty$ can be isotoped in $\wh F$ to avoid the puncture of $F$ corresponding to $Q$, so that the result is a curve in the punctured surface $F$ preserved set-wise by the (non-closed) monodromy $\varphi$.  (See the right side of Figure~\ref{fig:TK_fib}.)  This isotopy of $\Lambda_{\infty}$ lifts to an isotopy pushing $W_{\infty}$ into $E_Q$.  The two choices for this isotopy correspond the the two distinct swallow-follow tori in $E_Q$, which become isotopic after Dehn filling $E_Q$ to get $Y_Q$, and the choices of $\ell_{\infty}$ and $\mu_{\infty}$ correspond with the natural parameterizations of either torus in $S^3$.  Similarly, we can regard $\widetilde\tau_\infty$ as an automorphism of $F$, as opposed to $\wh F$.

\begin{lemma}
\label{lem:SF_twist}
	The links $Q\cup\Ll_{c/d}$ and $Q\cup\Ll_{(c\pm 2nd)/d}$ are isotopic for any $n\in\N$.  Moreover, the isotopy is the $(\pm n)$--fold meridional Dehn twist about a swallow-follow solid torus for $Q$ in $S^3$.
\end{lemma}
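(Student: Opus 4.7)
My plan is to realize the stated isotopy as a meridional Dehn twist along a swallow-follow torus for $Q$, using the torus twist $\Tt_\infty$ constructed in Subsection~\ref{subsec:odd_twist}. I would begin with the pillowcase computation: applying Lemma~\ref{lem:pillow-twist} with $a/b = 1/0$ (so $a$ is odd and $\Delta = d$) gives $\tau_\infty^{\pm n}(\lambda_{c/d}) = \lambda_{(c\pm 2nd)/d}$. Since $\Ll_{c/d} = \rho^{-1}(\lambda_{c/d})$ and $\wt\tau_\infty$ lifts $\tau_\infty$ by Lemma~\ref{twist-lift}, we immediately obtain $\wt\tau_\infty^{\pm n}(\Ll_{c/d}) = \Ll_{(c\pm 2nd)/d}$ as sublinks of $\wh F$, with $Q = \partial F$ fixed setwise.

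The next step is to upgrade this surface-level identity to an ambient isotopy inside $Y_Q$. By Lemma~\ref{lem:odd_twist}, the torus twist $\Tt_\infty^{\pm n}$ along $W_\infty$ is isotopic in $Y_Q$ to a surface-fiber-preserving diffeomorphism whose restriction to $\wh F$ coincides with $\wt\tau_\infty^{\pm n}$. Surface-fiber preservation sends the surface framing of each component of $\Ll_{c/d}$ in $\wh F$ to the surface framing of its image, and these agree with the respective $0$--framings; consequently $\Tt_\infty^{\pm n}$ yields an ambient framed isotopy from $Q\cup\Ll_{c/d}$ to $Q\cup\Ll_{(c\pm 2nd)/d}$ inside $Y_Q$.

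The final step transfers the isotopy from $Y_Q$ to $S^3$. As noted immediately before the lemma, an isotopy of $\Lambda_\infty$ in $\wh F$ off the puncture coming from $Q$ lifts to an isotopy pushing $W_\infty$ into $E_Q$, where it becomes a swallow-follow torus $W$ for $Q$ bounding a solid torus $V\subset S^3$. Under this identification $\mu_\infty$ becomes the meridian of $V$. After arranging the auxiliary isotopy from Lemma~\ref{lem:odd_twist} to also be supported in $E_Q$, the diffeomorphism $\Tt_\infty^{\pm n}$ descends to an automorphism of $(S^3,Q)$ supported in $E_Q$ which extends by the identity across $\nu(Q)$ to be precisely the $(\pm n)$--fold meridional Dehn twist $\tau_W^{\pm n}$ along $W$. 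Since any orientation-preserving diffeomorphism of $S^3$ is isotopic to the identity (by Cerf's theorem), $\tau_W^{\pm n}$ realizes the desired ambient isotopy from $Q\cup\Ll_{c/d}$ to $Q\cup\Ll_{(c\pm 2nd)/d}$ in $S^3$.

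The main technical subtlety I anticipate is verifying that the auxiliary isotopy supplied by Lemma~\ref{lem:odd_twist}, which is a $(1/pq)$--portion of the Seifert flow on $E_{K^+}$, can be arranged with support inside $E_Q$ rather than intersecting the $0$--framed filling solid torus of $Q$. Only once this is confirmed does the induced automorphism of $(S^3,Q)$ equal the pure meridional Dehn twist $\tau_W^{\pm n}$, as opposed to this twist composed with a residual Seifert-flow correction; a careful choice of the collar neighborhood used in the construction of $\Tt_\infty$ should suffice.
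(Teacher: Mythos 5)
Your strategy matches the paper's --- compute the pillowcase twist via Lemma~\ref{lem:pillow-twist}, lift to $\wt\tau_\infty$ by Lemma~\ref{twist-lift}, promote to an ambient diffeomorphism of $Y_Q$ using $\Tt_\infty$ and Lemma~\ref{lem:odd_twist}, then transfer to $S^3$ via the swallow-follow structure of $W_\infty$ --- but the final step and the handling of the support subtlety both differ from the paper's argument.

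On the final step: invoking Cerf's theorem is overkill and, more to the point, does not establish the lemma's second sentence. Cerf shows that $\tau_W^{\pm n}$ is isotopic to the identity, hence that \emph{some} ambient isotopy between the two links exists, but the lemma asserts that the isotopy \emph{is} the meridional twist about the swallow-follow solid torus. That claim is exactly the elementary interpolation argument laid out in the paragraph immediately preceding the lemma: since $W_\infty$ bounds a solid torus $V_\infty\subset S^3$ in which $\mu_\infty$ bounds a meridional disk, the torus twist interpolates to the identity across $V_\infty$, producing the specific isotopy claimed. That interpolation is both what the paper uses and what the ``Moreover'' clause is asking for; Cerf gives only existence.

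On the support issue you flag at the end: you correctly identify that the auxiliary isotopy $H$ from Lemma~\ref{lem:odd_twist} must be supported away from $Q$ (equivalently, away from the surgery dual $Q^*\subset Y_Q$) before one can speak of an induced diffeomorphism of $(S^3,Q)$, but ``a careful choice of the collar neighborhood'' is not the paper's resolution. The paper instead pushes $Q^*$ into a parallel copy of $W_\infty\times\{0\}$ lying in $E_{K^-}$; since $H$ restricts to the trivial isotopy $H^-$ on $E_{K^-}$, both $\Tt_\infty$ and $H$ are automatically supported away from a neighborhood of $Q^*$, so they descend to diffeomorphisms of $E_Q$ and hence of $(S^3,Q)$ with no residual Seifert-flow correction. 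Filling this in turns your sketch into the paper's proof.
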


\begin{proof}

Recalling the notation from the previous proof, we may decompose $Y_Q$ into $E_{K^-} \cup N_{\infty} \cup E_{K^+}$, so that $W_{\infty}$ is isotopic to $W_{\infty} \X \{0\}$, and in particular, $W_\infty$ can be made disjoint from the surgery dual knot $Q^*$ to $Q$ in $Y_Q$.  We push $Q^*$ into a parallel copy of $W_{\infty} \X \{0\}$ just outside of $N_{\infty}$ and contained in $E_{K^-}$.  It follows that the torus twist $\Tt_\infty\colon Y_Q \to Y_Q$ and ensuing isotopy are supported away from a neighborhood of $Q^*$.  As such, $\Tt_{\infty}$ and $\Tt_{\infty}'$ can be regarded as (isotopic) diffeomorphisms of either $E_Q$ or of $S^3$.  Viewing $Q \cup \Ll_{c/d}$ as a link in $S^3$, this implies
$$\Tt_{\infty}'(Q \cup \Ll_{c/d}) = Q \cup \wt \tau_{\infty}(\Ll_{c/d}) = Q\cup\Ll_{(c-2d)/d},$$
and so $\Tt_{\infty}(Q \cup \Ll_{c/d})$ is isotopic to $Q \cup \Ll_{(c-2d)/d}$.  (Here, we regard $\widetilde\tau_\infty$ as an automorphism of $F$.)  Since $W_\infty$ is a swallow-follow torus, it bounds a solid torus $V_\infty \subset S^3$ such that $\mu_\infty$ bounds a disk in $V_\infty$, as discussed above.  Thus, $\Tt_{\infty}$, regarded as a diffeomorphism of $S^3$, is isotopic to the identity.  By repeated iterations of $\Tt_{\infty}$ or its inverse, we can conclude that $Q\cup \Ll_{c/d}$ is isotopic to $Q\cup \Ll_{(c\pm 2d)/d}$.
\end{proof}

\section{Standarding the Casson-Gordon Spheres}
\label{sec:std}

In this section, we prove the main theorems, Theorems ~\ref{thmx:PC} and~\ref{thmx:weak}, which assert that any two component R-link of the form $Q_{p,q} \cup J$ has Weak Property R.  As above, fix $Q = Q_{p,q}$.
As in the proof of Proposition~\ref{CGR}, let $B_{c/d}$ denote the Casson-Gordon ball obtained by adding 4--dimensional 2--handles to $S^3 \X I$ along $Q \cup \Ll_{c/d}$, followed by 3--handles and a 4--handle. Let $X_{c/d}$ denote the Casson-Gordon sphere, obtained by capping off $B_{c/d}$ with a standard $B^4$.
We define $Z_{c/d} \subset B_{c/d}$ to be the compact 4--manifold obtained by attaching 2--handles to $Y_Q \X I$ along $\Ll_{c/d}$, followed by 3--handles and a 4--handle.  Let $X_Q$ be the compact 4--manifold obtained by attaching a 2--handle to $B^4$ along the 0--framed knot $Q \subset S^3$, which we refer to as the \emph{trace} of $Q$.

\begin{lemma}\label{lem:deccom}
The Casson-Gordon sphere $X_{c/d}$ decomposes as $X_Q \cup_{Y_Q} Z_{c/d}$.
\end{lemma}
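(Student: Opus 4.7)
The plan is to show both $X_{c/d}$ and $X_Q\cup_{Y_Q}Z_{c/d}$ arise from the same handle decomposition after rearranging 2--handles and absorbing collars. The key observation is that $Q$ and $\Ll_{c/d}$ are disjoint in $S^3 \X \{1\} \subset S^3 \X I$, so the 2--handles of $B_{c/d}$ attached along $Q \cup \Ll_{c/d}$ may be attached in either order without changing the diffeomorphism type of the resulting 4--manifold.

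First I would attach only the 0--framed 2--handle $h_Q$ along $Q \X \{1\}$ to the collar $S^3 \X I$. The result is a cobordism from $S^3$ to $Y_Q$, and since $\Ll_{c/d}$ was disjoint from $Q$ in $S^3$, it persists unchanged as a framed link in the $Y_Q$ boundary (which one naturally identifies with the copy of $\wh F \subset Y_Q$ originally containing it). Capping the $S^3$ end with a 4--ball then absorbs the $S^3 \X I$ collar and produces the trace $X_Q = B^4 \cup h_Q$, whose boundary $Y_Q$ contains $\Ll_{c/d}$ as a framed link.

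The remaining handles of $B_{c/d}$---the 2--handles attached along $\Ll_{c/d}$, the 3--handles, and the 4--handle---are now attached in sequence to $\pd X_Q = Y_Q$. Up to absorbing a collar, this sequential attachment is the same as first inserting a collar $Y_Q \X I$ along $\pd X_Q$ and then attaching those same handles to $Y_Q \X \{1\}$. By definition, this latter construction is precisely $Z_{c/d}$, with its remaining boundary $Y_Q \X \{0\}$ identified with $\pd X_Q$. Hence $X_{c/d} \cong X_Q \cup_{Y_Q} Z_{c/d}$.

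The only real obstacle here is careful bookkeeping of handle orderings and collar absorptions; both are standard moves in handle theory, so the proof amounts to reorganizing the handle decomposition of $X_{c/d}$ in the fashion described.
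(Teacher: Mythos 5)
Your proof is correct and takes essentially the same approach as the paper's: build $B_{c/d}$ by attaching the $Q$--handle first, observe that the remaining handles attached to the new $Y_Q$ boundary constitute $Z_{c/d}$, and cap the $S^3$ end with $B^4$ to get $X_Q$. You have simply spelled out the collar-absorption and handle-reordering bookkeeping that the paper's one-line proof leaves implicit.
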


\begin{proof}
Observe that $B_{c/d}$ can be obtained by attaching a 2--handle to $S^3 \X I$ along $Q$ and then capping off the resulting $Y_Q$ boundary component with $Z_{c/d}$.  Thus, we can construct $X_{c/d}$ by attaching a 2--handle to $B^4$ along $Q$ followed by capping off the resulting $Y_Q$ boundary component with $Z_{c/d}$. In other words, $X_{c/d} = X_Q \cup_{Y_Q} Z_{c/d}$.
\end{proof}

Above, $X_Q$ is obtained by attaching a 0--framed 2--handle to $B^4$ along $Q$.  Dually, we obtain a relative handle decomposition of $X_Q$ by starting with its boundary $Y_Q$, attaching a 2--handle to $Y_Q$ along the surgery dual $Q^*$, and capping off the resulting $S^3$ with a 4--ball.  Let $\wh F'$ be a slight pushoff of $\wh F$ in $Y_Q$ in the positive direction.  The surgery dual $Q^*$ decomposes as the union of two arcs, $e \cup f$, where $f$ is a component of a regular Seifert fiber cut along $\wh F'$, and $e$ is an arc connecting $\pd f$ in a parallel copy of $\Lambda_{\infty} \subset \wh F'$.  Observe that if $\pd f = \{x_0,x_1\}$, we have that $\wh\varphi(x_0) = x_1$, and $e$ is the trace of the isotopy dragging $x_1$ back to $x_0$ in the description of the monodromy for $E_Q$ shown in Figure~\ref{fig:mono_drag}.  See Figure~\ref{fig:core} for a depiction of $Q^*$.  The description of $Q^*$ with $e \subset \wh F'$ instead of $\wh F$ is important below, where we consider $Q^*$ and a component $V \subset \Ll_0$ to be disjoint components of a link.

\begin{figure}[h!]
	\centering
	\includegraphics[width=.15\textwidth]{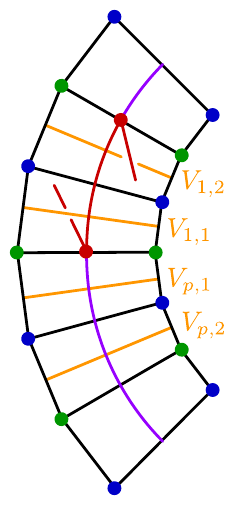}
	\caption{The curve $Q^*\subset Y_Q$.}
	\label{fig:core}
\end{figure}

By Proposition~\ref{prop:zero}, the CG-ball $B_{0}$ is the standard 4--ball; hence, $X_Q$ can alternatively be obtained by attaching a 2--handle to $Q^* \subset Y_Q$, attaching two zero-framed 2--handles to $Q \cup V \subset S^3$, where $V$ is any component of $\Ll_0$, followed by two 3--handles and a 4--handle.  In total, $X_Q$ is determined by the attaching link $L^* \subset Y_Q$ for its three 2--handles, consisting of $Q^*$, a 0--framed meridian $\mu^*$ of $Q^*$, and a curve $V \subset \Ll_0 \subset \wh F$.  The framing on $V$ is the surface-framing induced by $\wh F$.

Recall that $\Tt_0\colon Y_Q \rightarrow Y_Q$ denotes the torus twist along $W_0$ discussed in Subsection~\ref{subsec:odd_twist}.

\begin{lemma}\label{lem:twistq}
The framed link $\Tt_0(L^*)$ is handleslide-equivalent to the framed link $L^*$ in $Y_Q$.
\end{lemma}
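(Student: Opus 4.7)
The plan is to analyze the effect of $\Tt_0$ on each component of $L^* = Q^*\cup\mu^*\cup V$ separately and show that any modification introduced by the torus twist can be undone by handleslides over the component $V$.

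First I would handle the component $V$. Since $V\subset\Ll_0 = W_0\cap\wh F\subset W_0 = W_0\times\{0\}$, and the formula $\Tt_0(\theta,\psi,t)=(\theta,\psi-t,t)$ restricts to the identity at $t=0$, the diffeomorphism $\Tt_0$ fixes $V$ pointwise. For the surface framing of $V$ induced by $\wh F$, Lemma~\ref{lem:even_torus} identifies $\Tt_0|_{\wh F}$ with $\wt\tau_0$, a product of left-handed Dehn twists along the components of $\Ll_0$. Near $V$ only the factor twisting about $V$ itself is active, and a Dehn twist about $V$ preserves every parallel pushoff of $V$ in an annular neighborhood of $V$ in $\wh F$ as a set, so the surface framing of $V$ is preserved.

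Next I would deal with $\mu^*$ and $Q^*$. The meridian $\mu^*$ may be chosen as a small loop in a tubular neighborhood of $Q^*$ disjoint from $W_0$; since $\Tt_0$ is the identity outside $N_0$, it fixes $\mu^*$ together with its $0$--framing. For $Q^*$, after a small isotopy we may assume $Q^*$ meets $W_0$ transversely, and a local analysis of the twist formula shows that $\Tt_0(Q^*)$ is obtained from $Q^*$ by inserting a signed parallel copy of $\ell_0$ at each transverse intersection point. Because $\ell_0$ is parallel to $V$ on $W_0$, each inserted copy of $\ell_0$ can be realized as a copy of $V$ attached to $Q^*$ by a short band lying near $W_0$ (after a small isotopy to make the bands disjoint from the rest of $L^*$); such an attachment is precisely a handleslide of $Q^*$ over $V$. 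Performing one handleslide per transverse intersection returns $\Tt_0(Q^*)$ to the isotopy class of $Q^*$.

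The principal subtlety to address is framing book-keeping: one must verify that the framing change on $Q^*$ induced by the torus twist at each transverse crossing with $W_0$ exactly matches the framing change produced by the corresponding handleslide of $Q^*$ over $V$. This reduces to a local computation using the explicit twist formula, the parallelism of $\ell_0$ and $V$ on $W_0$, and the fact that $V$ carries the surface framing from $\wh F$. Once this matching is established, the three pieces combine to exhibit $\Tt_0(L^*)$ as $L^*$ modified by a finite sequence of handleslides of $Q^*$ over $V$, so $\Tt_0(L^*)$ is handleslide-equivalent to $L^*$ in $Y_Q$, as desired.
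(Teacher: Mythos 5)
Your overall strategy---treat the components of $L^* = Q^*\cup\mu^*\cup V$ separately, note that $\Tt_0$ fixes $V$ and $\mu^*$, and undo the effect on $Q^*$ by handleslides over the derivative---is the same route the paper takes. The treatment of $V$ and $\mu^*$ is fine. However, there is a genuine gap in the step that handles $Q^*$, concentrated in exactly the ``subtlety'' you postpone.

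First, you assert that the framing change introduced by the torus twist at each transverse crossing of $Q^*$ with $W_0$ \emph{exactly matches} the framing change produced by the corresponding handleslide of $Q^*$ over $V$, so that only slides over $V$ are needed. This is false, and the paper has to account for it: the two handleslides (one per intersection point, and $Q^*\cap W_0$ consists of exactly two points, lying on the pushoffs $V_{1,1}'$ and $V_{1,2}'$) each shift the framing of $Q^*$ by $\pm 1$, and after both slides the framing is off by a net $\pm 2$. This residual discrepancy is \emph{not} absorbed by the band choice; it is corrected by an additional handleslide of $Q''$ over the $0$--framed meridian $\mu^*$, which leaves the isotopy type unchanged but shifts the framing by $\mp 2$. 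In other words, the conclusion of your argument, that $\Tt_0(L^*)$ is obtained from $L^*$ by ``a finite sequence of handleslides of $Q^*$ over $V$,'' is not quite right---slides over $\mu^*$ are essential, both to correct the framing and (in the paper's version) to realize the ambient isotopies moving $V$ to $V_{1,1}$ and then to $V_{1,2}$ when those isotopies must cross $Q^*$. Second, and relatedly, the local picture is that the twist inserts copies of $\ell_0$ at the two crossings with $W_0$, which are located near $V_{1,1}'$ and $V_{1,2}'$, not near $V$; one either runs a long band along $W_0$ from the crossing point to $V$ (at which point ``short band'' is misleading and the framing contribution of the band needs to be tracked), or, as the paper does, first isotopes $V$ to $V_{1,1}$ and $V_{1,2}$ in turn (using slides over $\mu^*$), slides $Q^*$ over those, and then re-normalizes the framing with a slide over $\mu^*$. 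So the approach is salvageable, but the framing bookkeeping needs to be carried out honestly, and the meridian $\mu^*$ plays a nontrivial role that your argument omits.
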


\begin{proof}
First, recall that $\Tt_0$ acts on individual fibers as $\wt \tau_0$ by Lemma~\ref{lem:even_torus}, and thus $\Tt_0$ fixes every component of $\Ll_0$, including $V$.  Similarly, $\mu^*$ is isotopic into a ball disjoint from $W_0$, so that $\Tt_0(\mu^*) = \mu^*$.  Using the notation for components of $\Ll_0$ from Subsection~\ref{subsec:square}, let $V_{1,1}',V_{1,2}' \subset \wh F'$ be pushoffs of the corresponding curves in $\Ll_0$.  Then $Q^*$ meets $W_0$ in precisely two points, the points $e \cap V_{1,1}'$ and $e \cap V_{1,2}'$.  It follows that $\Tt_0(f) = f$ and $\Tt_0(e) = e'$, where $e'$ is an arc in $\wh F'$ obtained by a Dehn twist of the arc $e$ about the curves $V_{1,1}'$ and $V_{1,2}'$, so that $\Tt_0(Q^*) = e' \cup f$.

In the manifold $Y_Q$, the curve $V \subset L^*$ is isotopic to $V_{1,1}$.  If this isotopy meets $\Tt_0(Q^*)$, it can be achieved by isotopy and handleslides over the meridian $\mu^*$.  Thus, after isotopy and handleslides, $V$ can be converted to $V_{1,1}$.  Let $Q'$ be the result of a handleslide of $\Tt_0(Q^*)$ over $V_{1,1}$ that undoes the Dehn twist about $V_{1,1}$, although it changes the framing of $\Tt_0(Q^*)$ by $\pm 1$ (see Figure~\ref{fig:twist_slide}).  Similarly, $V_{1,1}$ is isotopic to $V_{1,2}$ in $Y_Q$, and thus after isotopy and handeslides over $\mu^*$, $V_{1,1}$ can be converted to $V_{1,2}$.  Let $Q''$ be the result of a similar handleslide of $Q'$ over $V_{1,2}$ that undoes the other Dehn twist, so that $Q''$ is isotopic to $Q^*$, where the framings differ by $\pm 2$.  Finally, a slide of $Q''$ over its meridian $\mu^*$ preserves the isotopy type of $Q''$ but changes the framing by $\pm 2$, converting the framed component $Q''$ to $Q^*$.  We conclude that $\Tt_0(L^*)$ is handleslide-equivalent to $L^*$ in $Y_Q$.
\end{proof}

\begin{figure}[h!]
	\centering
	\includegraphics[width=\textwidth]{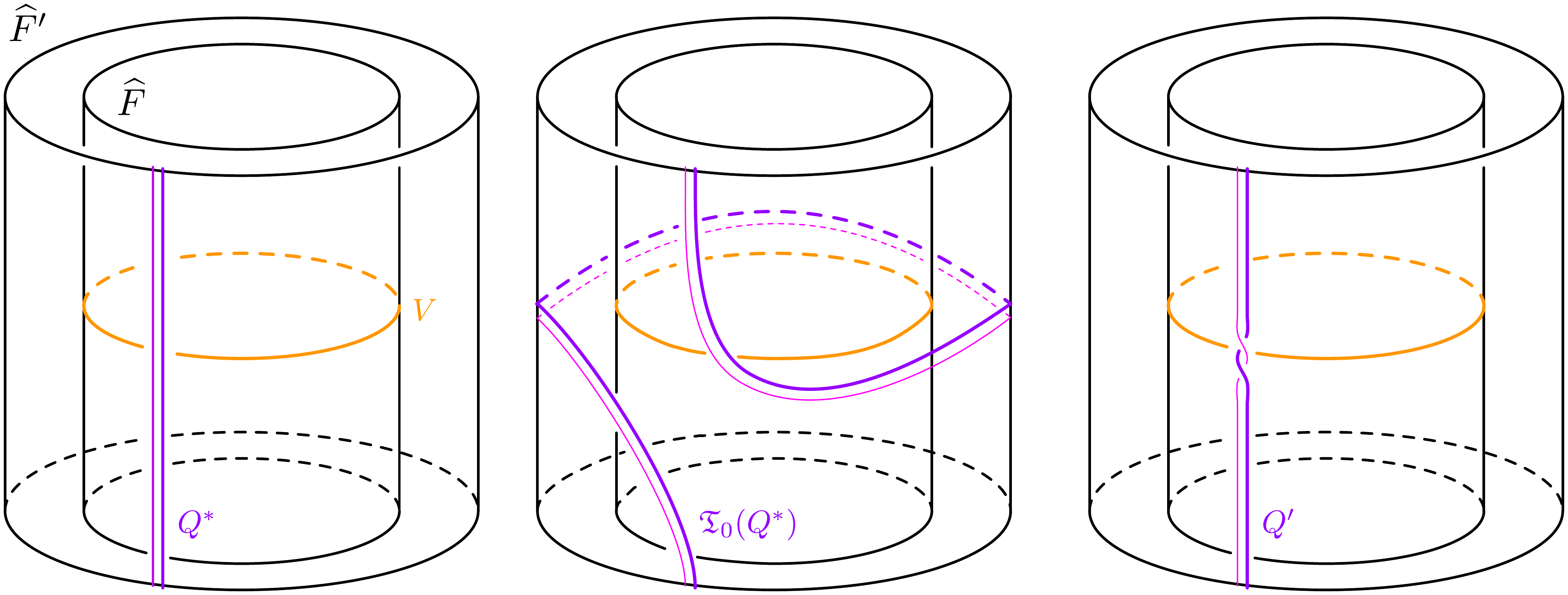}
	\caption{An illustration of the twist $\widetilde\tau_0$ of $Q^*$ about $V'$ producing $\Tt_0(Q^*)$, followed by the slide of $\Tt_0(Q^*)$ over $V$ producing $Q'$, with framings considered throughout.}
	\label{fig:twist_slide}
\end{figure}

Note that Lemma~\ref{lem:twistq} will allow us to extend the automorphism $\Tt_0$ across the trace of $Q$.  The last remaining piece of the puzzle in the proof of the main theorem is the following proposition.

\begin{proposition}
\label{prop:bopit}
	The torus twist $\Tt_0$ extends to a diffeomorphism
	$$\frak T_0: X_{c/d} \rightarrow X_{c/(d-2c)}.$$
	Moreover, the two handle decompositions given by $\frak T_0(Q \cup \Ll_{c/d})$ and $Q \cup \Ll_{c/(d-2c)}$ of $X_{c/(d-2c)}$ become handleslide-equivalent after adding two Hopf pairs to each link.
\end{proposition}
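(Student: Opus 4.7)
The plan is to construct $\frak T_0$ by extending $\Tt_0$ piecewise across the decomposition $X_{c/d} = X_Q \cup_{Y_Q} Z_{c/d}$ of Lemma~\ref{lem:deccom}, and then to recover the two Hopf pairs in the conclusion by turning the alternate handle decomposition of $X_Q$ upside down into a stabilization of the standard one.

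To extend $\Tt_0$ across $Z_{c/d}$: by Lemma~\ref{lem:even_torus}, the twist $\Tt_0$ is surface-fiber-preserving with $\Tt_0|_{\wh F} = \wt\tau_0$, and Lemma~\ref{lem:pillow-twist} (applied to $a/b = 0$ with $n=1$) gives $\tau_0(\lambda_{c/d}) = \lambda_{c/(d-2c)}$, whence $\Tt_0(\Ll_{c/d}) = \Ll_{c/(d-2c)}$. As a surface diffeomorphism of $\wh F$, $\wt\tau_0$ preserves the surface framing, which agrees with the $0$-framing in $Y_Q$ for any link in $\wh F$. Thus $\Tt_0$ takes the $0$-framed 2-handle attaching link of $Z_{c/d}$ to that of $Z_{c/(d-2c)}$ with matched framings, and extends uniquely (using Laudenbach-Poenaru across the 3-handles and trivially across the 4-handle) to a diffeomorphism $\frak T_0|_{Z_{c/d}}\colon Z_{c/d}\to Z_{c/(d-2c)}$. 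To extend $\Tt_0$ across $X_Q$, I would invoke Lemma~\ref{lem:twistq}, which shows that the attaching link $L^* = Q^*\cup\mu^*\cup V$ for the alternate dual decomposition of $X_Q$ satisfies that $\Tt_0(L^*)$ is handleslide-equivalent to $L^*$ in $Y_Q$; since handleslide-equivalent framed links yield the same 4-manifold rel boundary, this produces a self-diffeomorphism $\frak T_0|_{X_Q}\colon X_Q\to X_Q$ restricting to $\Tt_0$ on $Y_Q$. Gluing the two extensions along $Y_Q$ yields the desired diffeomorphism $\frak T_0\colon X_{c/d}\to X_{c/(d-2c)}$.

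For the handleslide-equivalence statement, I would turn the alternate decomposition of $X_Q$ (three 2-handles along $L^*$ in $Y_Q$, two 3-handles, one 4-handle) upside down to view $X_Q$ as a $0$-handle with two 1-handles and three 2-handles attached in $\#^2(S^1\times S^2)$. Relative to the standard decomposition of $X_Q$ (one $0$-handle, one 2-handle along $Q$), this is a stabilization by two canceling 1-handle/2-handle pairs, i.e., by two Hopf pairs. In the resulting stabilized presentation of $X_{c/d}$, the 2-handle attaching link in $S^3$ is $Q\cup\Ll_{c/d}$ together with the two Hopf pair $0$-framed components, and the 2-handles on the $X_Q$ side are dual to the 2-handles along $L^*$ in $Y_Q$. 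Applying $\frak T_0$, the $\Ll_{c/d}$ portion maps to $\Ll_{c/(d-2c)}$ by the first step, while the $X_Q$ portion corresponds dually to $\Tt_0(L^*)\subset Y_Q$, which Lemma~\ref{lem:twistq} shows is handleslide-equivalent to $L^*$; translating these handleslides in $Y_Q$ back through the duality to the stabilized $S^3$ diagram yields the desired handleslide-equivalence between $\frak T_0(Q\cup\Ll_{c/d})$ and $Q\cup\Ll_{c/(d-2c)}$ once two Hopf pairs are added to each link.

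The principal technical hurdle I anticipate is the translation of the handleslides afforded by Lemma~\ref{lem:twistq} -- which occur inside $Y_Q$ and involve slides over the meridian $\mu^*$ of $Q^*$ together with slides over the curves $V_{1,1}$ and $V_{1,2}$ -- into bona fide handleslides in the stabilized $S^3$ link diagram, where the two 1-handles (dual to the two 3-handles of the alternate decomposition) must be tracked as dotted circles and all slides must respect the 1-handle/2-handle structure in the sense of Kirby calculus. Once this dictionary is set up carefully, the handleslide-equivalence in $Y_Q$ transfers directly to the stabilized link in $S^3$, and the proposition follows.
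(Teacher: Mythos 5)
Your extension of $\Tt_0$ to $\frak T_0\colon X_{c/d}\to X_{c/(d-2c)}$ in the first part is correct and matches the paper's argument: glue together the extensions over $X_Q$ (via Lemma~\ref{lem:twistq}) and over $Z_{c/d}$ (via Lemma~\ref{lem:even_torus} and the computation $\wt\tau_0(\Ll_{c/d}) = \Ll_{c/(d-2c)}$).

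The second part, however, has a gap. You assert that turning the alternate relative handle decomposition of $X_Q$ (three 2--handles along $L^*$ in $Y_Q$, two 3--handles, a 4--handle) upside down produces a decomposition that ``is a stabilization by two canceling 1--handle/2--handle pairs'' of the standard one (a 0--handle plus one 2--handle along $Q$). This is not automatic: two handle decompositions of $X_Q$ with the same boundary that differ in handle count by two 1--handles and two 2--handles need not be related by Hopf-pair stabilizations alone, and nothing you have said rules out the need for additional canceling 2--handle/3--handle pairs. What actually makes your claim true is Proposition~\ref{prop:zeror}, which shows $Q\cup V$ has Property R, and hence that $L^*$ is handleslide-equivalent in $Y_Q$ to the split link $L' = Q^*\sqcup U$, where $U$ is a two-component unlink bounding the cores of the two 3--handles. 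It is the split form $L'$, not $L^*$ itself, whose inversion manifestly produces the two Hopf pairs; you never invoke this fact. The paper's proof cites Proposition~\ref{prop:zeror} precisely to pass between $L^*$ and $L'$, establishes the chain $\frak T_0(L')\sim\frak T_0(L^*)\sim L^*\sim L'$ entirely inside $Y_Q$ (the middle equivalence is Lemma~\ref{lem:twistq}), and only at the very end inverts the relative decomposition once. This also dissolves the ``technical hurdle'' you flag: carrying out all handleslide bookkeeping in $Y_Q$ on the already-split $L'$ avoids having to transport individual slides through the dotted-circle/1--handle dictionary step by step.
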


\begin{proof}
	By Lemma~\ref{lem:deccom}, we have that $X_{c/d} = X_Q \cup_{Y_Q} Z_{c/d}$ and $X_{c/(d-2c)} = X_Q \cup_{Y_Q} Z_{c/(d-2c)}$.  By Lemma~\ref{lem:twistq}, we have that $L^*$ and $\Tt_0(L^*)$ are handleslide equivalent, which implies that $\Tt_0$ can be extended to a diffeomorphism from $X_Q$ to $X_Q$.  Similarly, Lemma~\ref{lem:even_torus} asserts that $\Tt_0$ acts on $\wh F$ as $\wt\tau_0$, and thus $\Tt_0(\Ll_{c/d}) = \Ll_{c/(d-2c)}$ by Lemmas~\ref{twister} and~\ref{twist-lift}.  It follows that $\Tt_0$ extends to a diffeomorphism from $Z_{c/d}$ to $Z_{c/(d-2c)}$.  By gluing these diffeomorphisms together, we get a diffeomorphism $\frak T_0$ from $X_{c/d}$ to $X_{c/(d-2c)}$, as desired.

	For the second claim, we again use Lemma~\ref{lem:deccom} to view $X_Q\subset X_{c/d}$.  First, the dual knot $Q^* \subset Y_Q$ determines a relative handle decomposition of $X_Q$ with no 3--handles and a single 4--handle. Consider the split union $L' = Q^*\sqcup U$, where $U$ is a two-component unlink. We consider $U$ to be the attaching circles of the 2--handles of two canceling 2--handle/3--handle pairs.  By Proposition~\ref{prop:zeror}, $L'$ is handleslide-equivalent to $L^*$.  Since $U$ is contained in a ball, it follows that $\frak T_0(L') = \frak T_0(Q^*)\sqcup U$ is handleslide-equivalent to $\frak T_0(L^*)$, which is handleslide-equivalent to $L^*$ by Lemma~\ref{lem:twistq}.  Finally, as noted above, $L^*$ is handleslide equivalent to $L'$.

	In total, $\frak T_0(L')$ is handleslide-equivalent to $L'$.  Noting that $\frak T_0(\Ll_{c/d}) = \Tt_0(\Ll_{c/d}) = \Ll_{c/(d-2c)}$, we can invert the relative handle decompositions of $X_Q$, changing $Q^*$ to $Q$, and the desired statement follows, since the two canceling 2--handle/3--handle pairs described by $U$ invert to become two canceling 1--handle/2--handle pairs -- i.e., two Hopf pairs.
\end{proof}

We note that Theorem~\ref{thmx:PC} follows from the combination of the first statement of Proposition~\ref{prop:bopit} with the previous results in the paper, but we need the second statement to prove the stronger Theorem~\ref{thmx:weak}.

\begin{proof}[Proof of Theorem~\ref{thmx:weak}]
Suppose $Q = Q_{p,q}$ is a generalized square knot, and suppose that $L = Q \cup J$ is a 2R-link.  By Theorem~\ref{thmx:CGequiv}, $L$ is stably equivalent to $Q \cup L^+$, where $L^+$ is a Casson-Gordon derivative for $Q$.  Next, we invoke Proposition~\ref{prop:zero}, which asserts that $Q \cup L^+$ is stably equivalent to $Q \cup \Ll_{c/d}$, where $c$ is even.  By Lemma~\ref{twister}, there is a sequence of Dehn twists $\tau_{\infty}^{\pm 1}$ and $\tau_0^{\pm 1}$ of $S$ taking $\lambda_{c/d}$ to $\lambda_0$, implying that there is a sequence of homeomorphisms $\wt \tau_{\infty}^{\pm 1}$ and $\wt \tau_0^{\pm 1}$ taking $\Ll_{c/d}$ to $\Ll_0$ by Lemma~\ref{twist-lift}.  By Lemma~\ref{lem:SF_twist}, the links $Q \cup \Ll_{c/d}$ and $Q \cup \wt \tau_{\infty}(\Ll_{c/d})$ are isotopic.  By Proposition~\ref{prop:bopit}, we have that the disjoint union of $Q \cup \Ll_{c/d}$ and two Hopf pairs is handleslide-equivalent to an unlink and two Hopf pairs if and only if the same statement is true for $Q \cup \wt\tau_0(\Ll_{c/d})$.  Finally, by Proposition~\ref{prop:zeror}, we have that $Q \cup \Ll_0$ is handleslide-equivalent to an unlink, and thus the disjoint union $Q \cup \Ll_0$ and two Hopf pairs is handleslide-equivalent to an unlink and two Hopf pairs.  We conclude that the same property holds for every link $Q \cup \Ll_{c/d}$, completing the proof.
\end{proof}

\begin{remark}
	One can define torus twists on $Y_Q$ corresponding to a vertical torus lifting any essential curve in $S^*$.  It is possible that a detailed analysis of these twists could yield extra information about the relationships between the the links $Q\cup\Ll_{c/d}$.  In fact, we know this is true in some cases:  A key insight from \cite{GomSchTho_Fibered-knots_10} and~\cite{Sch_Proposed-Property_16} is that the vertical tori $W_{\pm 1}$ sitting above the slopes $\lambda_{\pm1}$ lie in \emph{fishtail neighborhoods} inside the Casson-Gordon 4--sphere in the case of $(p,q) = (3,2)$.  Such neighborhoods have played a central role in the standardization of homotopy 4--spheres.  (See the last paragraph of~\cite{GomSchTho_Fibered-knots_10}.)

	Our techniques, combined with those of~\cite{GomSchTho_Fibered-knots_10}, can be used to show that in the case of $q=2$, the tori $W_{\pm 1}$ lie in fishtail neighborhoods for all odd $p\geq 3$.  In the present development, this is equivalent to showing that the meridian $\mu_{\pm}$, as a curve in $S^3$, has a surface-framing coming from $W_{\pm 1}$ of $\pm 1$.  In light of this, we can conclude not only that $Q_{p,2}$ has Weak Property 2R, but also that only one Hopf pair is required to trivialize any 2R-link $L = Q_{p,2}\cup J$.
\end{remark}

\section{Classifying handlebody-extensions and fibered, homotopy-ribbon disks}
\label{sec:class_CG}

In this section, we show that handlebody-extensions of the closed monodromy $\wh\varphi$ can be understood as deck transformations of branched coverings, and we prove Theorem~\ref{thmx:extensions}.  We also enhance our development of the CG-balls $B_{c/d}$ -- in the vein of Corollary~\ref{coro:CGEX2} -- to take into consideration the fibered, homotopy ribbon disks they contain, and we prove Theorem~\ref{thmx:disks}.

\subsection{Tangles and handlebody extensions}
\label{subsec:tangle}
\ 

Here we discuss yet another perspective on the handlebody extensions of generalized square knots.  Consider the curve $\lambda_{c/d} \subset S$, where $c$ is an even integer.  By Proposition~\ref{prop:lift_parity}, $\lambda_{c/d}$ separates the cone points of order $p$ from those of order $q$.  Thus, there exists an arc $\omega_p$ (resp. $\omega_q$) connecting the cone points of order $p$ (resp. $q$) in $S\setminus\lambda_{c/d}$.  If we consider $S$ as the boundary of a $3$--ball $B^3$, then we can perturb the interiors of the arcs $\omega_p$ and $\omega_q$ into $B^3$ to obtain a rational tangle $T[c/d]$ whose boundary is the orbifold $S$. Since the strands of $T[c/d]$ connect cone points of matching order, we can naturally regard the tangle as a 3--dimensional orbifold.  Let $H_{c/d}$ denote the handlebody with boundary $\wh F$ determined by the curves in $\Ll_{c/d}$, so that $\Ll_{c/d}$ bounds a collection of compressing disks for $H_{c/d}$.

\begin{lemma}
\label{lem:tangle_branch}
	The branched covering $\rho\colon \wh F\to S$ extends to a branched covering
	$$\Rr_{c/d}\colon H_{c/d} \rightarrow T[c/d].$$
\end{lemma}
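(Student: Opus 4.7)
The plan is to construct $\Rr_{c/d}$ by decomposing both $T[c/d]$ and $H_{c/d}$ along a compatible system of separating disks, defining the branched cover piecewise, and gluing. Since $c$ is even, $\lambda_{c/d}$ separates the cone points of order $p$ from those of order $q$ in $S$, and so it bounds a separating disk $D \subset B^3$ that cuts $T[c/d]$ into two 3-ball orbifolds $B_p$ and $B_q$, containing only $\omega_p$ and only $\omega_q$, respectively. Upstairs, I would fix a disjoint collection $\Dd_{c/d}$ of compressing disks in $H_{c/d}$ with $\pd\Dd_{c/d} = \Ll_{c/d}$. Combining Lemma~\ref{lem:easy_lifts} and Lemma~\ref{lem:autom_lift} (or a direct orbifold Euler-characteristic count), cutting $H_{c/d}$ along $\Dd_{c/d}$ yields $p+q$ topological 3-balls: $q$ of them bounded by a $p$-holed-sphere component of $\wh F\setminus\Ll_{c/d}$ capped off by $p$ disks of $\Dd_{c/d}$, and the other $p$ of the symmetric type.

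The second step is to extend $\rho$ across $\Dd_{c/d}$ by sending each compressing disk homeomorphically onto $D$. This is legitimate because, by Proposition~\ref{prop:lift_parity}(2), $\rho$ restricts to a homeomorphism on each of the $pq$ components of $\Ll_{c/d}$. After this extension, the map sends the 2-sphere boundary of each 3-ball of $H_{c/d}\setminus\Dd_{c/d}$ onto $\pd B_p$ or $\pd B_q$ as a branched covering: for the first family, a degree-$p$ branched cover of $S^2$ branched over the two cone points of $\omega_p$, and for the second family, the symmetric degree-$q$ branched cover.

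The final step is to extend each boundary cover across the interior of its 3-ball so that the branch locus becomes a properly embedded arc mapping to $\omega_p$ or $\omega_q$. For the first family, I would use $\pi_1^{\mathrm{orb}}(B_p) = \Z_p$: the orbifold universal cover of $B_p$ is a $p$-fold branched cover $B^3 \to B_p$, and its restriction to the boundary is precisely the unique connected degree-$p$ branched cover of $S^2$ over two points. The extension across each 3-ball is thus determined (up to equivalence) by the boundary data. The symmetric argument covers each of the $p$ remaining balls mapping to $B_q$ with degree $q$. Gluing all these local extensions along $\Dd_{c/d}$ yields $\Rr_{c/d}$, and by construction $\Rr_{c/d}\vert_{\wh F} = \rho$.

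The main obstacle I anticipate is verifying that the local extensions are consistent along $\Dd_{c/d}$; this follows because the cover over $\Dd_{c/d}$ is prescribed once and for all by the extension of $\rho$, and each interior extension is uniquely determined by its boundary data. A secondary bookkeeping concern is identifying the correct connected extension among the possibilities, which relies on $\gcd(p,q)=1$ (automatic for torus knots) to ensure that the global map $H_{c/d}\to T[c/d]$ realizes the cyclic deck group $\Z_{pq}$ inherited from $\rho$.
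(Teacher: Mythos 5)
Your proof is correct, but it takes a genuinely different route from the paper's. The paper's argument is a one-liner built on a product structure: as a 3--orbifold, the rational tangle $T[c/d]$ is $D^2(p,q)\times I$ (a trivial $I$--bundle over a disk with cone points of orders $p$ and $q$, the arcs $\omega_p,\omega_q$ being the vertical fibers over the cone points). The degree-$pq$ cyclic branched cover of $D^2(p,q)$ is a genus-$(p-1)(q-1)/2$ surface $\Sigma'$ with one boundary component, so crossing with $I$ yields a branched cover from the handlebody $\Sigma'\times I$ to $T[c/d]$ extending $\rho$; the vertical disk over an arc in $D^2(p,q)$ separating the cone points lifts to the $pq$ compressing disks bounded by $\Ll_{c/d}$, which identifies $\Sigma'\times I$ with $H_{c/d}$. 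You instead cut $T[c/d]$ along the disk bounded by $\lambda_{c/d}$ into the two ball-orbifolds $B_p$ and $B_q$, cut $H_{c/d}$ along the full disk system $\Dd_{c/d}$ into $p+q$ balls, and assemble $\Rr_{c/d}$ piecewise from the orbifold universal covers $B^3\to B_p$ and $B^3\to B_q$. The paper's route avoids the gluing bookkeeping and the consistency check along $\Dd_{c/d}$ entirely; your route is more hands-on and makes the ball decomposition of $H_{c/d}$ underlying the tangle structure explicit. A minor remark: your last-paragraph appeal to $\gcd(p,q)=1$ isn't really needed for the extension step you invoke it for --- the uniqueness of each local filling $B^3\to B_p$ (resp.\ $B_q$) already follows from the uniqueness of the connected cyclic cover of $S^2$ branched over two points; coprimality of $p$ and $q$ is of course in force throughout the paper since $T_{p,q}$ is a knot, but it does no extra work in your gluing.
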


\begin{proof}	
	The tangle $\Tt[c/d]$ is homeomorphic to $D^2(p,q)\times I$.  Combining Lemmas~\ref{lem:easy_lifts} and~\ref{lem:autom_lift}, we see that the branched cover of $D^2(p,q)$ is a surface $\Sigma'$ of genus $(p-1)(q-1)/2$ with one boundary component.  Taking $\Rr_{c/d}$ to be the product of this covering with $I$, we have $\Rr_{c/d}$ maps $\Sigma' \X I$ to $D^2(p,q) \times I$, where $\Sigma' \X I$ is a handlebody of genus $(p-1)(q-1)$. The curve $\lambda_{c/d}$ bounds a disk in the exterior of the arcs of $\Tt[c/d]$, and this disk lifts to $pq$ disks in $H_{c/d}$, which are bounded by the lifts $\Ll_{c/d}$ of $\lambda_{c/d}$.
\end{proof}

Let $K[c/d]$ denote the rational link obtained as the numerator closure of the rational tangle $\Tt[c/d]$.  Equivalently, $K[c/d]$ is obtained by gluing $\Tt[0]$ to $\Tt[c/d]$ along $S$ via an orientation-reversing homeomorphism identifying the curves $\lambda_0$ on either boundary component.  This link has two components since $c$ is even.   Each component $K_i$ of $K[c/d] = K_1\cup K_2$ admits a 1--bridge splitting, hence is an unknot.  Let $\Sigma_{p,q}(K[c/d])$ denote the $pq$--fold cover of $S^3$ branched along $K[c/d]$, where the component $K_1$ has branching index $p$ and the component $K_2$ has branching index $q$.  One way to construct this cover is to first take the $p$--fold cover of $S^3$ branched along $K_1$, and let $\widetilde K_2$ denote the lift of $K_2$.  Since $K_1$ is unknotted, the result is a new link in $S^3$.  Finish by taking the $q$--fold cover of this $S^3$, branched along the link $\widetilde K_2$.  Alternatively, we could first branch along $K_2$, then over the lift of $K_1$.  For example, if $c/d=2n/1$ for some $n\in\N$, then
$$\Sigma_{p,q}(K[2n/1]) \cong \Sigma_p(T_{n,q})\cong \Sigma_q(T_{n,p})\cong\Sigma(p,q,n),$$
where $\Sigma_m(K)$ denotes the $m$--fold cover of $S^3$ branched over the knot $K$, and $\Sigma(p,q,n)$ denotes the Brieskorn sphere described by Milnor~\cite{Mil_On-the-3-dimensional-Brieskorn_75}. 

Now, we have by definition that $K[c/d] = \Tt[c/d]\cup_S\Tt[0]$.  By taking the union of the branched covering maps, we have $\Sigma_{p,q}(K[c/d]) = H_{c/d}\cup_{\wh F}H_0$.  Henceforth, we let
$$M_{c/d} = \Sigma_{p,q}(K[c/d]) = H_{c/d}\cup_{\wh F} H_0.$$
In addition, since $\wh \varphi$ permutes the disks bounded by $\Ll_{c/d}$, we know that $\wh \varphi$ extends to a homeomorphism $\phi_{c/d}\colon H_{c/d} \rightarrow H_{c/d}$. 

\begin{lemma}\label{lem:auto}
The automorphism $\Phi_{c/d}\colon M_{c/d} \rightarrow M_{c/d}$ defined by $\Phi_{c/d} = \phi_{c/d} \cup_{\wh\varphi} \phi_0$ generates the group of deck transformations for the branched covering $M_{c/d}\to S^3$ with branch locus $K[c/d]$.
\end{lemma}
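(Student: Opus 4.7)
The plan is to verify three things: that $\Phi_{c/d}$ is well-defined, that it lies in the deck group of the branched covering $\Rr := \Rr_{c/d}\cup_\rho\Rr_0 : M_{c/d}\to S^3$ obtained by gluing the extensions of Lemma~\ref{lem:tangle_branch}, and that it has order exactly $pq$, which equals the degree of $\Rr$. Well-definedness is immediate, since by construction both $\phi_{c/d}$ and $\phi_0$ restrict to $\wh\varphi$ on the gluing surface $\wh F$.

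To check that $\Phi_{c/d}$ is a deck transformation, it suffices to verify the two equations $\Rr_{c/d}\circ\phi_{c/d}=\Rr_{c/d}$ and $\Rr_0\circ\phi_0=\Rr_0$; these then glue consistently along $\wh F$ because $\rho\circ\wh\varphi=\rho$ by Lemma~\ref{lem:deck}. Away from $\wh F$, the handlebody $H_{c/d}$ is built from $\wh F\times I$ by attaching $pq$ 2--handles along the components of $\Ll_{c/d}$ and one 3--handle. The extension $\phi_{c/d}$ permutes these $pq$ 2--handles (since $\wh\varphi$ permutes the components of $\Ll_{c/d}$) and sends the 3--handle to itself, while $\Rr_{c/d}$ collapses all $pq$ of the 2--handles onto the single disk in $T[c/d]$ bounded by $\lambda_{c/d}$ and sends the 3--handle to the corresponding 3--ball in $T[c/d]$. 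Thus $\Rr_{c/d}\circ\phi_{c/d}$ and $\Rr_{c/d}$ agree on all of $H_{c/d}$. The argument for $\phi_0$ and $\Rr_0$ is identical.

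For the order, each iterate $\Phi_{c/d}^k$ restricts on $\wh F$ to $\wh\varphi^k$, which by Lemma~\ref{lem:deck} is nontrivial for $0<k<pq$; hence $\Phi_{c/d}$ has order exactly $pq$. On the other hand, the branched cover $\Rr$ has degree $pq$ and, by its construction as the union of the cyclic branched covers $\Rr_{c/d}$ and $\Rr_0$, realizes the $pq$--fold cyclic branched cover of $S^3$ with branching indices $p$ and $q$ over the two components of the rational link $K[c/d]$. Since $\gcd(p,q)=1$, this cover is regular with deck group $\Z_{pq}$, so the cyclic subgroup $\langle\Phi_{c/d}\rangle$ of order $pq$ must fill the entire deck group.

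The only real subtlety is the verification in the second paragraph that $\phi_{c/d}$ and $\phi_0$ actually descend through $\Rr_{c/d}$ and $\Rr_0$, but this ultimately boils down to the fact that both the cover and the extension are built from the same disk system — the lifts under $\rho$ of the separating disk in $T[c/d]$ bounded by $\lambda_{c/d}$ — together with the fact, already used to define $\phi_{c/d}$, that $\wh\varphi$ permutes this lifted disk system.
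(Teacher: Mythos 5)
Your proof is correct and takes essentially the same approach as the paper: identify $\Phi_{c/d}$ as an automorphism compatible with the branched covering $\Rr_{c/d}\cup_\rho\Rr_0$, note it has order $pq$ because its restriction to $\wh F$ is $\wh\varphi$, and conclude it generates the full deck group since the cover has degree $pq$. You supply somewhat more detail than the paper (in particular, the handle-theoretic verification that $\phi_{c/d}$ descends through $\Rr_{c/d}$, which the paper compresses into the assertion that $\phi_{c/d}$ generates the deck group of $\Rr_{c/d}$), and your regularity observation via $\gcd(p,q)=1$ is actually redundant once you know the cyclic subgroup has order $pq$ and the cover has degree $pq$, but these are stylistic differences rather than a departure in method.
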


\begin{proof}
First, recall that $\wh \varphi$ generates the group of deck transformations for the branched covering $\rho\colon \wh F \rightarrow S$, and thus $\phi_{c/d}$ generates the group of deck transformations for the branched covering $\Rr_{c/d}\colon H_{c/d}\to \Tt[c/d]$.  The homeomorphism $\Phi_{c/d}\colon M_{c/d} \rightarrow M_{c/d}$ is defined by taking $\Phi_{c/d}$ on the components of $H_{c/d} \cup_{\wh F} H_0$ to be $\phi_{c/d}\cup_{\wh\varphi}\phi_0$.  Then $\Phi_{c/d}$ is an automorphism of $M_{c/d}$ of order $pq$ that is compatible with the branched covering; hence, $\Phi_{c/d}$ generates the group of deck transformations as desired.
\end{proof}

\begin{lemma}\label{prop:rational_branch}
The 3--manifold $M_{c/d}$ is reducible if and only if $c/d = 0$.  Moreover, the 3--manifold $H_{c'/d'} \cup_{\wh F} H_{c''/d''}$ is reducible if and only if $c'/d' = c''/d''$.
\end{lemma}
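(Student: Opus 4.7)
The first statement reduces to the second by setting $c''/d'' = 0$, so I would focus on the second. The easy direction is immediate: when $c'/d' = c''/d''$, the handlebodies $H_{c'/d'}$ and $H_{c''/d''}$ coincide (they are bounded by the same disk system $\Ll_{c'/d'}$ on $\wh F$), so their union along $\wh F$ is the double of a handlebody of genus $g = (p-1)(q-1) \geq 2$, which is $\#^g(S^1 \times S^2)$, manifestly reducible.

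For the converse, assume $M := H_{c'/d'} \cup_{\wh F} H_{c''/d''}$ is reducible. By Lemma~\ref{lem:auto}, the automorphism $\Phi = \phi_{c'/d'} \cup_{\wh\varphi} \phi_{c''/d''}$ generates a cyclic $\Z/pq$--action exhibiting $M$ as a branched cover $\pi \colon M \to S^3$ with branch locus $K = K[c'/d', c''/d''] = K_1 \cup K_2$. Since $K_1 \cap K_2 = \emptyset$, the generator $\Phi$ has no fixed points in $M$, because any fixed point would project to a point with stabilizer $\Z/pq$. I then apply the Equivariant Sphere Theorem to obtain an essential equivariant 2-sphere $S \subset M$.

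I claim $S$ cannot be $\Phi$-invariant. Were it invariant, the effective order $k$ of $\Phi$ acting on $S\cong S^2$ would divide $pq$, and each case is ruled out by a stabilizer computation: $k=1$ contradicts that $\Phi$ is fixed-point-free; $k=pq$ forces the two fixed points of the rotation on $S$ to be globally stabilized, hence to lie over $K_1\cap K_2 = \emptyset$; and $k\in\{p,q\}$ forces every point of $S$ to lie over $K_2$ or $K_1$ respectively, which are $1$--dimensional. Therefore $S$ has a free orbit of size $pq$, and $\pi|_S$ is an embedding onto a sphere $\pi(S)\subset S^3$ disjoint from $K$. If $\pi(S)$ bounded a ball in $S^3$ disjoint from $K$, that ball would lift to $pq$ disjoint balls in $M$, one bounded by $S$, contradicting essentiality. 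Hence $\pi(S)$ separates $K_1$ from $K_2$ in $S^3$, so $K$ is a split link.

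Finally, I need to show that $K[c'/d',c''/d'']$ is split if and only if $c'/d' = c''/d''$. Each of $K_1,K_2$ is an unknot by construction, so splitness is equivalent to being the 2--component unlink. By standard rational tangle calculus, the link $\Tt[c'/d']\cup_S\Tt[c''/d'']$ is the numerator closure of a rational tangle whose slope is determined by the difference $c'/d'-c''/d''$, and a rational link is an unlink precisely when this slope vanishes. This final step---translating the tangle sum into a numerator closure compatibly with the conventions of the preceding subsection---is the part I expect to require the most careful bookkeeping. Combining it with the reducibility argument above completes the proof.
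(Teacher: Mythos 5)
Your argument has a genuine gap at the step ``Therefore $S$ has a free orbit of size $pq$.'' You correctly rule out $\Phi(S)=S$ (though note your case list $k\in\{1,p,q,pq\}$ should be all divisors of $pq$, since $p$ and $q$ need not be prime --- the same argument handles the extra cases), but ruling out the generator stabilizing $S$ does \emph{not} imply the orbit is free. The stabilizer of $S$ in $G=\langle\Phi\rangle\cong\Z/pq$ could be a proper nontrivial subgroup, say $\langle\Phi^q\rangle$ of order $p$. In that case $\Phi^q|_S$ is a faithful rotation of $S^2$ of order $p$, and its two fixed points lie in $\mathrm{Fix}(\Phi^q)=\pi^{-1}(K_1)$, which is nonempty; nothing in your stabilizer computation forbids this. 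The orbit of $S$ then has size $q<pq$, the restriction $\pi|_S$ is a branched covering (not an embedding), and $\pi(S)$ \emph{does} meet the branch locus. The paper handles exactly this case: when some nontrivial $g$ stabilizes $S$, the Riemann--Hurwitz formula forces $\pi(S)$ to meet $K[c/d]$ in exactly two points, and one then invokes \emph{primeness} (not just non-splitness) of $K[c/d]$ to conclude $\pi(S)$ bounds a ball meeting the link in an unknotted arc, which lifts to give the contradiction. Your proposal never invokes primeness, so this case is unaddressed.

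A second, smaller issue is structural: you reduce the first statement to the second and then try to prove the second directly, which forces you to set up a branched covering $H_{c'/d'}\cup H_{c''/d''}\to S^3$ over a two-parameter family of rational links and to identify which of them are split --- the ``careful bookkeeping'' you flag. The paper instead proves the first statement and reduces the second to it using Lemma~\ref{lem:autom_lift}: a product of the lifted twists $\wt\tau_0^{\pm1},\wt\tau_\infty^{\pm1}$ carries $\Ll_{c'/d'}$ to $\Ll_0$ and $\Ll_{c''/d''}$ to some $\Ll_{c/d}$, inducing a diffeomorphism $H_{c'/d'}\cup H_{c''/d''}\cong M_{c/d}$. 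This avoids the rational-tangle computation entirely, and I would recommend that route. Your easy direction and the ``disjoint orbit implies the sphere image bounds a lifting ball'' half of the converse are fine.
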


\begin{proof}
First, note that $M_0 = H_0 \cup_{\wh F} H_0$ is obtained by gluing two identical genus $(p-1)(q-1)$ handlebodies; thus $M_0 = \#^{(p-1)(q-1)}(S^1\times S^2)$, a reducible 3--manifold.  In the reverse direction, let $c/d\not=0$ and suppose by way of contradiction that $M_{c/d}$ contains an essential 2--sphere $S$.  Let $S_0$ denote the image of $S$ in $S^3$ under the branched covering map.  By Lemma~\ref{lem:auto}, the finite group $G$ generated by $\Phi_{c/d}$ acts on $M_{c/d}$.  Invoking the Equivariant Sphere Theorem (Theorem~\ref{thm:ELT}), we have that $g(S) = S$ or $g(S) = \emp$ for every $g \in G$.
		
	If $g(S)\cap S = \emptyset$, then $S$ is disjoint from the branch locus, so that its image $S_0$ is disjoint from $K[c/d]$.  Since $c/d\not=0$, the link $K[c/d]$ is prime and non-split.  It follows that $S_0$ bounds a three-ball $B$ in $S^3 \setminus K_{c/d}$.  However, $B$ lifts to a three-ball in $M_{c/d}$ bounded by $S$, a contradiction.
	
	If $g(S) = S$, then $S$ intersects the branch locus in a collection of points.  Since $G$ acts cyclically, the induced map on the sphere is a cyclic branched covering, so it must have singular set consisting of two points, by the Riemann-Hurwitz Formula~\cite{Oor_16_The-Riemann-Hurwitz-formula}.  It follows that image $S_0$ is a sphere intersecting $K[c/d]$ in a pair of points.  Since $K[c/d]$ is prime and non-split, $S_0$ must bound a three-ball $B'$ intersecting $K[c/d]$ in a single, unknotted arc.  But then $B'$ lifts to a three-ball bounded by $S$ upstairs, a contradiction. Thus, we conclude that $M_{c/d}$ contains no essential two-spheres, as desired.
	
	For the second statement, note that by Lemma~\ref{lem:autom_lift} there exists a map $\wt f\colon \wh F \rightarrow \wh F$ such that $\wt f(\lambda_{c'/d'}) = \lambda_0$.  Since $\wt f$ is a product of the lifts $\wt \tau_{\infty}^{\pm 1}$ and $\wt \tau_0^{\pm 1}$, there exists $c/d \in \Q_{\infty}$ with $c$ even such that $\wt f(\lambda_{c''/d''}) = \lambda_{c/d}$.  Thus, we can extend $\wt f$ to a diffeomorphism $\wh f\colon H_{c'/d'} \cup H_{c''/d''} \rightarrow M_{c/d}$.  By the first part of the lemma, we have that $M_{c/d}$ is reducible if and only if $c/d = 0$, which is true if and only if $\Ll_{c'/d'} = \wt f^{-1}(\Ll_{0}) = \wt f^{-1}(\Ll_{c/d}) = \Ll_{c''/d''}$, or equivalently, $c'/d' = c''/d''$.
\end{proof}

\subsection{The classification of handlebody extensions}
\label{subsec:h_extensions}
\ 

Recall that $Z_{c/d}$ is defined in Section~\ref{sec:std} as the compact 4--manifold constructed by adding 2--handles to $Y_Q$ along $\Ll_{c/d}$, followed by 3--handles and a 4--handle.  In fact, we can make a stronger assertion following Corollary~\ref{coro:CGEX2} and Proposition~\ref{CGR}.

\begin{lemma}
The 4--manifold $Z_{c/d}$ is diffeomorphic to $H_{c/d} \X_{\phi_{c/d}} S^1$.
\end{lemma}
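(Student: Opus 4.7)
The plan is to prove this lemma by invoking Corollary~\ref{coro:CGEX2} (equivalently, the explicit construction in Proposition~\ref{CGR}) and then identifying $Z_{c/d}$ with the complement of a tubular neighborhood of the CG-disk $R_{c/d}$ in the CG-ball $B_{c/d}$. First, I would recall from the construction in Proposition~\ref{CGR}, applied with $K = Q_{p,q}$ and $\Phi = \phi_{c/d}$, that the CG-ball decomposes as
$$B_{c/d} \cong (S^3 \X I) \cup (\text{0-framed 2--handle on } Q_{p,q}\X\{1\}) \cup (H_{c/d}\X_{\phi_{c/d}} S^1),$$
where the last piece caps off the $Y_Q$ boundary produced by the 2--handle. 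The CG-disk $R_{c/d}$ is the core of that 2--handle extended by the collar arc $Q_{p,q}\X I \subset S^3\X I$ down to $\partial B_{c/d}$, so that $\partial R_{c/d} = Q_{p,q}\subset \partial B_{c/d}$. Removing $\nu(R_{c/d})$ removes precisely the 2--handle together with $\nu(Q_{p,q})\X I$, so that the complement $B_{c/d}\setminus R_{c/d}$ is diffeomorphic to $H_{c/d}\X_{\phi_{c/d}} S^1$ (with a collar of $E_{Q_{p,q}}\subset Y_Q$ attached). Corollary~\ref{coro:CGEX2} records precisely this diffeomorphism.

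Next, I would verify that $B_{c/d}\setminus R_{c/d}$ is diffeomorphic to $Z_{c/d}$ as defined in Section~\ref{sec:std}. Recall that $Z_{c/d}$ is built from $Y_Q\X I$ by attaching 2--handles along $\Ll_{c/d}$, then 3--handles and a 4--handle. By Lemma~\ref{handlebundle} applied to $(H_{c/d},\phi_{c/d})$, the mapping torus $H_{c/d}\X_{\phi_{c/d}} S^1$ admits a relative handle decomposition starting from $Y_Q$ consisting of $g = (p-1)(q-1)$ 2--handles attached along a cut system for $H_{c/d}$ sitting in a fiber $\wh F\subset Y_Q$ (with surface framing), followed by $g+1$ 3--handles and a 4--handle. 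Using the combinatorial description in Lemma~\ref{lem:easy_lifts} (combined with the homeomorphism in Lemma~\ref{lem:autom_lift} to reduce to the case $c/d=0$), the planar decomposition $\wh F\setminus \Ll_{c/d}$ has $p+q$ components and $pq$ dividing curves, so a spanning-tree argument on the dual graph yields a sublink $L\subset \Ll_{c/d}$ with $|L| = g$ forming a cut system for $H_{c/d}$, with $|\Ll_{c/d}\setminus L| = p+q-1$.

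The main obstacle is the final handle-cancellation step: the definition of $Z_{c/d}$ uses all $pq$ curves of $\Ll_{c/d}$, not just the minimal cut system $L$. I would resolve this by arguing that each of the $p+q-1$ extra curves in $L' = \Ll_{c/d}\setminus L$ bounds a disk in $H_{c/d}$, and so can be handleslid over components of $L$ (via bands in the planar surface $\wh F\setminus L$) until its attaching circle bounds an embedded disk in the boundary of the intermediate 4--manifold after the 2--handles along $L$ have been attached. Each such 2--handle then forms a canceling pair with a 3--handle, reducing the relative handle decomposition of $Z_{c/d}$ to the minimal one of Lemma~\ref{handlebundle}. Combining this with the complement identification from the first paragraph yields
$$Z_{c/d} \cong B_{c/d}\setminus R_{c/d} \cong H_{c/d}\X_{\phi_{c/d}} S^1,$$
completing the proof.
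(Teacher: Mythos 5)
Your proof is correct and, in its core (second and third paragraphs), uses essentially the same argument as the paper: by Proposition~\ref{CGR} and Lemma~\ref{handlebundle}, the bundle $H_{c/d}\times_{\phi_{c/d}}S^1$ admits a relative handle decomposition from $Y_Q$ with $g=(p-1)(q-1)$ two-handles attached along a cut system in $\wh F$, and one then matches this with the defining handle decomposition of $Z_{c/d}$. The paper's proof is actually terser than yours --- it simply asserts that the two relative handle decompositions are ``identical'' --- which elides the point you carefully address, namely that $\Ll_{c/d}$ has $pq$ components rather than $g$, so the $p+q-1$ redundant curves (each bounding a compressing disk for $H_{c/d}$) must be cancelled against $3$--handles after sliding in the planar surface $\wh F\setminus L$. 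Your explicit treatment of this is a genuine improvement in rigor. Your opening paragraph, routing through $B_{c/d}\setminus\nu(R_{c/d})$ and Corollary~\ref{coro:CGEX2}, is a valid alternative framing but is logically downstream of (not independent from) the handle-cancellation step: the identification $B_{c/d}\cong(S^3\times I)\cup(\text{2--handle on }Q)\cup(H_{c/d}\times_{\phi_{c/d}}S^1)$ that you invoke there is only available once the cancellations of paragraph three have been carried out, since $B_{c/d}$ as defined in Section~\ref{sec:std} has more $2$--handles than the $B_{\Phi}$ constructed in Proposition~\ref{CGR}. So that paragraph is best regarded as a reformulation of the conclusion rather than as an independent input, and the proof could be streamlined by leading with the handle-decomposition comparison. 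Also a minor slip: $Q_{p,q}\times I$ in the collar is an annulus, not an arc.
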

\begin{proof}
By Proposition~\ref{CGR}, a CG-derivative $L$ may be viewed as a relative handle decomposition for the corresponding bundle $H \X_{\Phi} S^1$.  Thus, $Z_{c/d}$ and $H_{c/d} \X_{\phi_{c/d}} S^1$ have identical relative handle decompositions and as such are diffeomorphic.
\end{proof}

Our next proposition shows that, while on one hand the CG-extensions $\Ll_{c/d}$ and $\Ll_{c'/d'}$ give rise to diffeomorphic handlebody bundles, these bundles are distinct rel-$\partial$ for $c/d\not=c'/d'$.  In other words, these CG-derivatives represent distinct extensions of the closed monodromy $\wh\varphi$.  We say that a diffeomorphism $\Psi_{c/d}\colon Z_{c/d}\to Z_0$ is \emph{handlebody-fiber-preserving} if it sends handlebody-fibers to handlebody-fibers.

\begin{proposition}
\label{prop:HBB_total}
	\ 
	\begin{enumerate}
		\item For any $c/d\in\Q_{\infty}$ with $c$ even, there is a handlebody-fiber-preserving diffeomorphism 
	$$\Psi_{c/d}\colon Z_{c/d}\to Z_{0}.$$
		\item If there is a diffeomorphism from $Z_{c/d}$ to $Z_{c'/d'}$ that restricts to the identity on $Y_Q$, then $c/d=c'/d'$.
	\end{enumerate}
\end{proposition}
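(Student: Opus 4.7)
The plan is to prove both parts using the framework developed in Sections~\ref{sec:square} and~\ref{sec:autom}.

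For part~(1), by Lemma~\ref{lem:autom_lift} there is a composition $\wt f$ of the twist-lifts $\wt\tau_0^{\pm 1}$ and $\wt\tau_\infty^{\pm 1}$ with $\wt f(\Ll_{c/d}) = \Ll_0$. The first step is to verify that $\wt f\circ\wh\varphi = \wh\varphi\circ\wt f$. Indeed, $\wt\tau_0$ commutes with $\wh\varphi$ because $\wh\varphi$ permutes the components of $\Ll_0$ and Dehn twists about disjoint curves commute, so conjugation by $\wh\varphi$ fixes the multi-twist. For $\wt\tau_\infty$, each of its three defining pieces commutes with the corresponding restriction of $\wh\varphi$: on $F^-$ it is the identity, on $F^+$ it is $(\varphi^+)^{-1}$ which commutes with $\wh\varphi|_{F^+}=\varphi^+$, and on the annular neighborhood of $\Lambda_\infty$ both $\wt\tau_\infty$ and $\wh\varphi$ act by $t$-dependent translations of $\psi$ (which commute). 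Since $\wt f(\Ll_{c/d})=\Ll_0$ sends a cut system of $H_{c/d}$ to a cut system of $H_0$, the map extends to a diffeomorphism $\hat f\colon H_{c/d}\to H_0$; the boundary commutation then promotes to $\hat f\circ\phi_{c/d} = \phi_0\circ\hat f$ (after an isotopy rel $\wh F$, if needed). Spinning $\hat f$ over $S^1$ yields the desired handlebody-fiber-preserving diffeomorphism $\Psi_{c/d}\colon Z_{c/d}\to Z_0$.

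For part~(2), I argue by contradiction: suppose $\psi\colon Z_{c/d}\to Z_{c'/d'}$ restricts to the identity on $Y_Q$ with $c/d\ne c'/d'$. Gluing $\psi$ with $\id_{\overline{Z_{c/d}}}$ along $Y_Q$ produces a diffeomorphism of closed 4--manifolds
$$\psi\cup\id\colon\ Z_{c/d}\cup_{Y_Q}\overline{Z_{c/d}}\ \longrightarrow\ Z_{c'/d'}\cup_{Y_Q}\overline{Z_{c/d}}.$$
Because the boundary identifications respect the $\wh F\times_{\wh\varphi}S^1$ fibration on $Y_Q$, each side inherits the structure of a 3--manifold bundle over $S^1$. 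The fiber of the left-hand side is the double $H_{c/d}\cup_{\wh F}H_{c/d}\cong \#^g(S^1\times S^2)$, where $g=(p-1)(q-1)$, while the fiber of the right-hand side is $M_{c'/d',c/d} = H_{c'/d'}\cup_{\wh F}H_{c/d}$. Under the assumption $c/d\ne c'/d'$, Proposition~\ref{prop:rational_branch} ensures the latter is irreducible, whereas the former is plainly reducible. Applying the long exact sequence for the fibration over $S^1$ gives $\pi_2(\text{total})\cong \pi_2(\text{fiber})$; hence $\pi_2$ vanishes on the right (by the Sphere Theorem), but is nonzero on the left (since $S^1\times S^2$ summands contribute essential spheres). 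This contradicts the existence of the diffeomorphism.

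The main obstacle in part~(1) is the bookkeeping needed to confirm that $\wt\tau_\infty$ commutes with $\wh\varphi$ on the collar of $\Lambda_\infty$, where the explicit Seifert-fibered coordinates of Section~\ref{sec:autom} must be matched carefully. For part~(2), the key trick is packaging the obstruction as a diffeomorphism invariant ($\pi_2$) of the doubled 4--manifold that depends only on the fibers, thereby reducing the rel-$\partial$ question to the 3-dimensional reducibility criterion that Proposition~\ref{prop:rational_branch} has already established.
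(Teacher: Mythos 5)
Your proof is correct in both parts, but the routes differ from the paper's in interesting ways.

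For part~(1), the paper does not verify that the surface-level twist-lift $\wt f$ commutes with $\wh\varphi$ directly. Instead it invokes Lemmas~\ref{lem:even_torus} and~\ref{lem:odd_twist} to realize each factor of $\wt f$ as the restriction to $\wh F$ of a surface-fiber-preserving torus twist $\Tt_0$ or $\Tt_\infty'$ of all of $Y_Q$; this gives the commutation ``for free'' by producing a fiber-preserving self-diffeomorphism $\psi_{c/d}$ of $Y_Q$ carrying $\Ll_{c/d}$ to $\Ll_0$, which is then extended across the cut system. Your route --- verifying $\wt f\circ\wh\varphi=\wh\varphi\circ\wt f$ piece-by-piece on $\wh F$, extending $\wt f$ across the handlebody, and spinning --- is more elementary in that it bypasses the three-dimensional torus twists entirely, but it silently relies on the (true but nontrivial) fact that the mapping class group of a handlebody rel boundary is trivial when you claim ``the boundary commutation promotes to $\hat f\circ\phi_{c/d}=\phi_0\circ\hat f$ after an isotopy rel $\wh F$.'' That step deserves at least a citation, since without it the spinning over $S^1$ is not justified. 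Your computation of the commutation itself is sound; the three pieces of $\wt\tau_\infty$ do commute with the corresponding restrictions of $\wh\varphi$, provided one chooses the collar of $\Lambda_\infty$ to be $\wh\varphi$-invariant, which is implicitly assumed in the paper's coordinates.

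For part~(2), your argument is essentially the paper's with a minor cosmetic rearrangement (you double $Z_{c/d}$ rather than $Z_{c'/d'}$) and a cleaner endgame: the paper passes to the infinite cyclic cover, uses its uniqueness (via $H_1(\wh Z)\cong\Z$) to conclude the two fibers $M$ and $Y_g$ are homotopy equivalent, and only then invokes the Sphere Theorem. You instead apply the long exact sequence of the fibration over $S^1$ to read off $\pi_2(\text{total})\cong\pi_2(\text{fiber})$ directly and compare the two sides, which avoids the homotopy-equivalence detour and the uniqueness-of-infinite-cyclic-cover argument. Both arguments ultimately rest on Proposition~\ref{prop:rational_branch} and the Sphere Theorem; yours is marginally shorter.
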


\begin{proof}
	By Lemma~\ref{lem:autom_lift}, there is a homeomorphism $\widetilde f \colon \wh F\to \wh F$, obtained as a product of $\wt \tau_{\infty}^{\pm 1}$ and $\wh \tau_0^{\pm 1}$, that covers a homeomorphism $f \colon S\to S$ that preserves cone points and satisfies $f(\lambda_{c/d}) = \lambda_0$.  It follows from Lemmas~\ref{lem:even_torus} and~\ref{lem:odd_twist} that there is a diffeomorphism $\psi_{c/d}\colon Y_Q \rightarrow Y_Q$ obtained as a product of the surface-fiber-preserving maps $(\Tt_{\infty}')^{\pm 1}$ and $\Tt_0^{\pm 1}$ that satisfies $\psi_{c/d}\vert_{\wh F} = \widetilde f$.  By further extending $\psi_{c/d}$ across the cut system for $\Ll_{c/d}$, we get a diffemorphism $\Psi_{c/d}\colon Z_{c/d}\to Z_0$ such that the image of each copy of the handlebody $H_{c/d}$ is a corresponding copy of the handlebody $H_0$, and we conclude that $\Psi_{c/d}$ is handlebody-fiber-preserving.

	Next, suppose that $\Psi\colon Z_{c/d} \to Z_{c'/d'}$ is a diffeomorphism such that $\Psi\vert_{Y_Q} =\id_{Y_Q}$.  Let $\wh Z$ be the closed 4--manifold obtained by gluing $Z_{c/d}$ to $Z_{c'/d'}$ via the identity map on their common boundary $Y_Q$.  Since the handlebody fibers of $Z_{c/d}$ and $Z_{c'/d'}$ have identical boundaries in $Y_Q \subset \wh Z$, it follows that $\wh Z$ fibers over $S^1$, with fibers diffeomorphic to the closed three-manifold
	$$M = H_{c/d}\cup_{\wh F} H_{c'/d'}.$$
	Let $D(Z_{c'/d'})$ be the double of $Z_{c'/d'}$.  Then we may extend the map $\Psi$ to a diffeormorphism $\wh \Psi\colon \wh Z \rightarrow D(Z_{c'/d'})$, by letting $\wh \Psi|_{Z_{c/d}} = \Psi$ and $\wh \Psi|_{Z_{c'/d'}} = \text{Id}$.  Since $D(Z_{c',d'})$ fibers over $S^1$, with fibers the double $Y_g = \#^g(S^1\times S^2)$ of $H_{c'/d'}$ (where $g=(p-1)(q-1)$), the same is true for $\wh Z$.  Note that $Z_{c/d}$ is the complement of a properly embedded disk in a homotopy 4--ball, so that $\Z \cong H_1(Z_{c',d'}) \cong H_1(D(Z_{c',d'})) \cong H_1(\wh Z)$; and thus $\wh Z$ has a unique infinite cyclic cover.

	Since $\wh Z$ fibers over both $M$ and $Y_g$, the infinite cyclic cover of $\wh Z$ must be diffeomorphic to both $M\times\R$ and $Y_g\times \R$.  It follows that $M$ and $Y_g$ are homotopy equivalent.  By the Sphere Theorem~\cite{Pap_On-Dehns-lemma_57}, $M$ is reducible, and thus Proposition~\ref{prop:rational_branch} implies $c/d = c'/d'$.
	\end{proof}

We can now prove Theorem~\ref{thmx:extensions}.

\begin{proof}[Proof of Theorem~\ref{thmx:extensions}]
Suppose $\phi\colon H \rightarrow H$ is a handlebody extension of $\wh \varphi$, and let $L^+$ be a collection of curves bounding disks in $H$ that cut $H$ into a 3-ball.  By Proposition~\ref{prop:zero}, there exists some $c/d \in \Q_{\infty}$ with $c$ even such that after adding some additional curves bounding disks to $L^+$, the collection $L^+$ is handleslide-equivalent in $\wh F$ to $\Ll_{c/d}$.  Thus, $H = H_{c/d}$ and $\phi$ is isotopic to $\phi_{c/d}$.  If there exists some $c'/d' \in \Q_{\infty}$ such that $\Ll_{c/d}$ and $\Ll_{c'/d'}$ determine the same handlebody, then $H_{c/d} \cup H_{c'/d'}$ is a reducible 3--manifold and $c/d = c'/d'$ by Lemma~\ref{prop:rational_branch}.
\end{proof}

\subsection{The classification of fibered, homotopy-ribbon disks}
\label{subsec:ribbon_disks}
\ 

Recall from Section~\ref{sec:std} that the Casson-Gordon ball $B_{c/d}$ is constructed by attaching a 0--framed 2--handle to $S^3 \X I$ along $Q$, followed by gluing in the handlebody bundle $Z_{c/d}$ along the resulting $Y_Q$ boundary component.  Let $R_{c/d} \subset B_{c/d}$ be the core of the 2--handle attached along $Q$, so that $R_{c/d}$ is a disk-knot in $B_{c/d}$, which is diffeomorphic to the standard $B^4$ by Theorem~\ref{thmx:PC}.  By the discussion in Subsection~\ref{subsec:FhRKs}, the disk $R_{c/d}$ is homotopy-ribbon and fibered since $B_{c/d} \setminus R_{c/d} = Z_{c/d}$.

Given any knot $K$, there is a well known ribbon disk $R_K$ for $K\#\overline K$ given as
$$(B^4, R_K) = (S^3,K)^\circ\times I.$$
We refer to $R_K$ as the \emph{product} ribbon disk for $K\#\overline K$.  The following lemma identifies the product ribbon disk for a generalized square knot among the collection $\{(B_{c/d},R_{c/d})\}$ (cf. Section~6 of~\cite{LarMei_Fibered-ribbon_15}).

\begin{lemma}
\label{lem:product}
	The CG-pair $(B_{0},R_{0})$ is the product ribbon disk $(B^4,R_{T_{p,q}})$.
\end{lemma}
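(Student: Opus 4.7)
The plan is to recognize both $(B_0, R_0)$ and $(B^4, R_{T_{p,q}})$ as having exterior $E_{T_{p,q}} \X I$ with compatible gluings to the common boundary $Y_Q$. First, since $\wh F$ is the double $F^+ \cup_{\Lambda_\infty} F^-$ of $F^+$ along its boundary, we obtain a natural identification $\wh F \cong \partial(F^+ \X I)$ taking $F^\pm$ to $F^+ \X \{0, 1\}$. Under this identification, each curve $V_{i,j} = \eta^+_{i,j} \cup \eta^-_{i,j} \in \Ll_0$ corresponds to the boundary $\partial(\eta_{i,j} \X I)$ of a product disk in $F^+ \X I$. Since a subcollection of $\{\eta_{i,j} \X I\}$ provides a complete disk system for $F^+ \X I$, we obtain $H_0 \cong F^+ \X I$ as handlebodies, with $\Ll_0$ bounding these product disks. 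Moreover, under this identification, the extension $\phi_0$ of $\wh\varphi$ is isotopic to $\varphi^+ \X \id_I$, since both preserve the product structure and restrict to $\wh\varphi$ on $\partial(F^+ \X I) = \wh F$. Consequently,
\[ Z_0 = H_0 \X_{\phi_0} S^1 \cong (F^+ \X I) \X_{\varphi^+ \X \id_I} S^1 \cong (F^+ \X_{\varphi^+} S^1) \X I = E_{T_{p,q}} \X I. \]

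For the product ribbon disk, the exterior is $B^4 \setminus \nu(R_{T_{p,q}}) = (B^3 \setminus \nu(T_{p,q}^\circ)) \X I$. Choosing the removed ball $B_\epsilon$ small enough that $B_\epsilon \subset \nu(T_{p,q})$, we have $B_\epsilon \cup \nu(T_{p,q}^\circ) = \nu(T_{p,q})$, and hence $B^3 \setminus \nu(T_{p,q}^\circ) = S^3 \setminus \nu(T_{p,q}) = E_{T_{p,q}}$. Thus the exterior of the product ribbon disk is also $E_{T_{p,q}} \X I$, with compatible $Y_Q$ boundary identification (cf.~Section~6 of~\cite{LarMei_Fibered-ribbon_15}).

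Finally, both disk-knots are recovered from this common exterior by attaching a single 4-dimensional 2-handle along the surgery-dual $Q^* \subset Y_Q$ with $0$-framing, the core of which is the disk-knot in question. Since the diffeomorphism of exteriors respects the $S^1$-fibration and hence preserves $Q^*$, and since the $0$-framing on $Q = \partial R$ agrees in both cases, the exterior diffeomorphism extends to the desired pair-diffeomorphism $(B_0, R_0) \cong (B^4, R_{T_{p,q}})$. The main obstacle is the careful verification of the monodromy identification $\phi_0 \simeq \varphi^+ \X \id_I$, which requires tracking how $\wh\varphi$ acts under the reflection identification between $F^+$ and $F^-$, together with ensuring compatibility of framings on the disk boundary.
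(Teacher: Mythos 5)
Your proposal follows the same core idea as the paper's proof: identifying $H_0 \cong (F^+)^\circ \times I$ by observing that the product disks $\{\eta_{i,j}\times I\}$ have boundaries $\partial(\eta_{i,j}\times I) = V_{i,j}$, so that $\Ll_0$ is the product cut system. Where the paper stops there and concludes $R_{T_{p,q}} = R_0$ directly, you unpack the conclusion by explicitly computing $Z_0 \cong E_{T_{p,q}}\times I$ and re-attaching the final $2$--handle along $Q^*$. This is more verbose but not incorrect.

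The one genuine soft spot is the claim $\phi_0 \simeq \varphi^+\times\id_I$, which you flag as the main obstacle. Your stated justification --- ``since both preserve the product structure and restrict to $\wh\varphi$'' --- is circular, because whether $\phi_0$ preserves the product structure is precisely what is at issue. There are two clean ways to close this. First, you can observe directly that $\varphi^+\times\id_I$ cyclically permutes the product disks $\eta_{i,j}\times I$ (since $\varphi^+(\eta_{i,j}) = \eta_{i+1,j+1}$), which is exactly the defining property of $\phi_0$ in Subsection~\ref{subsec:tangle}, so $\varphi^+\times\id_I$ \emph{is} a valid choice of $\phi_0$. Second, and independently, the uniqueness of handlebody-extensions for generalized square knots (Theorem~\ref{thmx:extensions}, proved earlier in Section~\ref{sec:class_CG}) shows that any extension of $\wh\varphi$ to $H_0$ is isotopic to $\phi_0$, so $\varphi^+\times\id_I \simeq \phi_0$ automatically. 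Either patch makes your argument complete, and the second is the implicit reasoning in the paper's terse ``it follows that $R_{K^+} = R_0$.'' One minor caution: referring to a ``$0$--framing'' on $Q^* \subset Y_Q$ is imprecise since $Y_Q$ is not $S^3$; the relevant framing is the one induced by gluing back the tubular neighborhood $\nu(R)$, i.e., the meridional slope of $Q^*$.
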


\begin{proof}
Let $F^+$ be the genus $(p-1)(q-1)/2$ Seifert surface for $K^+ = T_{p,q}$ discussed in Section~\ref{sec:square}, and let $\Aa$ be the union of the co-cores $\eta_{i,j}^+$ of the bands $B_{i,j}$ on $F^+$, as in Subsection~\ref{subsec:square}.  Puncture the triple $(S^3,F^+,K^+)$ at a point in $K^+$ to get $(B^3,(F^+)^{\circ},(K^+)^\circ)$, and isotope $\partial \Aa$ near $K^+$ in $(F^+)^\circ$ so that $\partial \Aa$ is contained in the puncture; i.e., $\partial \Aa \cap (K^+)^\circ = \emptyset$. Note that
$$(B^3,(F^+)^\circ,(K^+)^\circ)\times I  = (B^4, H, R_{K^+}),$$
where $H=(F^+)^\circ\times I$ is a handlebody of genus $(p-1)(q-1)$ with
	$$\partial H = ((F^+)^\circ \times\{0\})\cup (\pd((F^+)^{\circ}) \X I) \cup ((F^+)^{\circ}\times\{1\}).$$
Furthermore, $\Aa\times I$ is a disk system for $H$.  Let $L = \partial(\Aa \times I)$ be the corresponding cut system of $\partial H$.  By construction, $L$ coincides exactly with the curves $\Ll_0$ (see Figure~\ref{fig:TK_fib}).  Since $K^+$ is fibered, $R_{K^+}$ is fibered as well (via the product fibering) with fiber $H = H_0$.  It follows that $R_{K^+} = R_0$, as desired.
\end{proof}

Recall that a Casson-Gordon sphere is obtained from a Casson-Gordon ball $B_{c/d}$ by capping off with $B^4$.  In what follows, we not only cap off $B_{c/d}$ with $B_0 \cong B^4$, but we also cap off $R_{c/d}$ with $R_0\subset B_0$.  Consider the pair
$$(X_{c/d}, \Kk_{c/d}) = (B_0,R_0)\cup_{(S^3,Q)} (B_{c/d},R_{c/d}),$$
which consists of the Casson-Gordon homotopy 4--sphere $X_{c/d}$ and an embedded 2--sphere $\Kk_{c/d}$ therein. This union respects the fibration of the components, so it follows that $\Kk_{c/d}$ is fibered in $X_{c/d}$.  The fiber is a copy of $H_0$ glued to a copy of $H_{c/d}$ along $F$, which is viewed as $\wh F$ with a disk removed.  Compare this with the 4--manifold $\wh Z$ from Proposition~\ref{prop:HBB_total}, in which these handlebodies are glued along $\wh F$ to obtain the closed fiber $M_{c/d}$ (In fact, $\wh Z$ is obtained from surgery on $X_{c/d}$ along the 2--knot $\Kk_{c/d}$.)  In this context, the fiber of $\Kk_{c/d}$ is $M_{c/d}^\circ$, a punctured version of $M_{c/d}$, and monodromy is $\Phi_{c/d}^\circ$.


\begin{proposition}
\label{prop:disk_rel}
	If there is a diffeomorphism from $(B_{c/d},R_{c/d}
	)$ to $(B_{c'/d'},R_{c'/d'})$ that restricts to the identity on the common boundary $(S^3,Q)$, then $c/d = c'/d'$.
\end{proposition}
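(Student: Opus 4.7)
The plan is to reduce Proposition~\ref{prop:disk_rel} to Proposition~\ref{prop:HBB_total}(2) by passing from the given pair diffeomorphism to a diffeomorphism of exteriors that is the identity on $Y_Q$. Recall that $Z_{c/d}=B_{c/d}\setminus\nu(R_{c/d})$, and Proposition~\ref{prop:HBB_total}(2) asserts that any diffeomorphism $Z_{c/d}\to Z_{c'/d'}$ restricting to the identity on the common boundary $Y_Q$ forces $c/d=c'/d'$.

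Given a pair diffeomorphism $F\colon(B_{c/d},R_{c/d})\to(B_{c'/d'},R_{c'/d'})$ with $F|_{(S^3,Q)}=\id$, the first step is to arrange that $F$ carries a chosen tubular neighborhood $\nu(R_{c/d})$ diffeomorphically onto $\nu(R_{c'/d'})$. Using uniqueness of tubular neighborhoods rel boundary for properly embedded disks in a 4--manifold, together with the fact that $F$ is already the identity in a collar of $(S^3,Q)$, this adjustment is achieved by an isotopy through pair diffeomorphisms fixing $(S^3,Q)$. Restricting to exteriors then yields a diffeomorphism $G\colon Z_{c/d}\to Z_{c'/d'}$.

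The boundary $Y_Q$ decomposes as $E_Q\cup_{\partial\nu(Q)}(D^2\times S^1)$, where $E_Q=S^3\setminus\nu(Q)$ lies in $\partial B_{c/d}$ and the solid torus $D^2\times S^1$ is the portion of $\partial\nu(R_{c/d})$ in $\mathrm{Int}(B_{c/d})$, with gluing attaching the meridian $\partial D^2\times\{*\}$ along the $0$--framed longitude of $Q$. By construction, $G|_{E_Q}=\id$. To make $G$ the identity on the $D^2\times S^1$ piece as well, I observe that the normal $D^2$--bundle of the contractible disk $R_{c/d}$ is trivial, and its trivialization is pinned down along $\partial R_{c/d}=Q$ by the Seifert $0$--framing coming from the slice disk; hence the trivialization is unique up to isotopy rel boundary, and on both sides of $F$ we may use the same product framing. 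The resulting bundle map $G$ covers a self-diffeomorphism of $R_{c/d}\cong D^2$ fixing $\partial D^2$, which by Smale's theorem that $\mathrm{Diff}(D^2,\partial D^2)$ is contractible is isotopic to the identity rel boundary. Absorbing this isotopy into a further isotopy of $F$ supported in $\nu(R_{c/d})$ rel $S^3$ produces $G|_{Y_Q}=\id_{Y_Q}$, and Proposition~\ref{prop:HBB_total}(2) then yields $c/d=c'/d'$.

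The main technical point is ensuring that the induced map on the meridional disk bundle is literally the identity rather than merely a framed automorphism covering the identity on the base; this is where the canonical Seifert $0$--framing of the slice disk and the contractibility of $\mathrm{Diff}(D^2,\partial D^2)$ enter, the remainder of the argument being a formal reduction to the result already established for handlebody bundles.
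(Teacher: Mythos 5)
Your proof is correct, but it takes a genuinely different route from the paper's. The paper forms the $2$--knot $(B_{c/d},R_{c/d})\cup_{(S^3,Q)}(B_{c'/d'},R_{c'/d'})$, notes that this glued pair is fibered with fiber $M^\circ=(H_{c/d}\cup H_{c'/d'})^\circ$, and uses the hypothesized rel-$\partial$ pair diffeomorphism to identify it with the double of $(B_{c'/d'},R_{c'/d'})$, whose fiber is $(\#^g(S^1\times S^2))^\circ$. Passing to the (unique) infinite cyclic cover of the $2$--knot exterior --- exactly as in the proof of Proposition~\ref{prop:HBB_total}(2), but now in the punctured setting --- shows these fibers are homotopy equivalent, and Proposition~\ref{prop:rational_branch} finishes. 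In other words, the paper re-runs the cyclic-cover argument rather than citing Proposition~\ref{prop:HBB_total}(2) as a black box. You instead reduce directly to Proposition~\ref{prop:HBB_total}(2) by normalizing the pair diffeomorphism so that the induced map of exteriors $G\colon Z_{c/d}\to Z_{c'/d'}$ is literally the identity on $Y_Q=E_Q\cup_{T^2}(D^2\times S^1)$: identity on $E_Q$ is automatic, and identity on the solid torus piece is arranged via uniqueness of tubular neighborhoods rel $S^3$ together with $\mathrm{Diff}(D^2,\partial D^2)\simeq *$ and the vanishing of $\pi_2$ of the relevant gauge group. Both arguments are sound. The paper's is shorter and sidesteps the normalization lemma entirely (at the cost of redoing the cyclic-cover argument); yours is a cleaner logical reduction but requires the technical care you rightly flagged --- in particular, it must be checked not only that the base map $R_{c/d}\to D^2$ can be straightened by Smale, but also that the fiberwise gauge part (a map $(D^2,\partial D^2)\to(\mathrm{Diff}^+(S^1),\mathrm{id})$, controlled by $\pi_2$ of the target) is nullhomotopic rel boundary; your appeal to the uniqueness of the rel-boundary trivialization covers this, though it is worth spelling out.
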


\begin{proof}
	Suppose there is such a diffeomorphism. Then 2--knot
	$$(B_{c/d},R_{c/d})\cup_{(S^3,Q)}(B_{c'/d'},R_{c'/d'})$$
is fibered with fiber $M^{\circ} = (H_{c/d} \cup H_{c'/d'})^{\circ}$, and in addition it is diffeomorphic to the fibered 2--knot obtained by doubling $(B_{c'/d'},R_{c'/d'})$.  This double necessarily has fiber $(\#^g(S^1\times S^2))^\circ$. As in the proof of Proposition~\ref{prop:HBB_total}, we can pass to the (unique) infinite cyclic cover of the 2--knot exterior to conclude that $M^\circ$ must be homotopy-equivalent to $Y_g^\circ$.  Again, by Proposition~\ref{prop:rational_branch}, this implies $c/d = c'/d'$.
\end{proof}

On the other hand, if we are allowed to consider diffeomorphisms that act non-trivially on the boundary, many of these CG-pairs become diffeomorphic.

\begin{proposition}
\label{prop:diff_SF}
	For any $n\in\N$, the CG-pairs $(B_{c/d},R_{c/d})$ and $(B_{(c\pm2nd)/d},R_{(c\pm2nd)/d})$ are diffeomorphic.
\end{proposition}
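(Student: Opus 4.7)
The plan is to exhibit an ambient self-diffeomorphism $\Psi$ of $S^3$ that preserves $Q$ pointwise and carries the $0$--framed link $Q\cup\Ll_{c/d}$ to the $0$--framed link $Q\cup\Ll_{(c\pm 2nd)/d}$, and then to extend $\Psi$ to a diffeomorphism of the Casson--Gordon pairs. Recall that $(B_{c/d},R_{c/d})$ is constructed from $S^3\times I$ by attaching a $0$--framed 2--handle along $Q\times\{1\}$ (whose core is $R_{c/d}$), together with $0$--framed 2--handles along $\Ll_{c/d}\times\{1\}$, followed by 3--handles and a 4--handle. Any ambient self-diffeomorphism of $S^3$ preserving $Q$ setwise and identifying these two $0$--framed links will extend via $\Psi\times\id$ on the collar, then across the 2--handles (sending the 2--handle attached to $Q$ to the 2--handle attached to $Q$, and hence $R_{c/d}$ to $R_{(c\pm 2nd)/d}$), and finally across the 3--handles and the 4--handle by Laudenbach--Po\'enaru uniqueness~\cite{LauPoe_A-note-on-4-dimensional_72}.

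To construct $\Psi$, I will iterate the torus twist $\Tt_\infty$ from Section~\ref{sec:autom}. As in the discussion preceding Lemma~\ref{lem:SF_twist}, after isotoping the swallow-follow torus $W_\infty$ into $E_Q$, the twist $\Tt_\infty$ is supported in a neighborhood of $W_\infty$ lying in $E_Q$; in particular, $\Tt_\infty$ restricts to the identity on $\nu(Q)$ and may be regarded as a self-diffeomorphism of the pair $(S^3,Q)$. Because the meridian $\mu_\infty$ bounds a meridional disk in the swallow-follow solid torus bounded by $W_\infty$ in $S^3$, the diffeomorphism $\Tt_\infty$ is isotopic to the identity on $S^3$ rel $\nu(Q)$. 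By Lemma~\ref{lem:odd_twist} combined with the proof of Lemma~\ref{lem:SF_twist}, we have $\Tt_\infty(\Ll_{c/d})=\Ll_{(c-2d)/d}$ as links in $S^3$, so setting $\Psi=\Tt_\infty^{\mp n}$ produces a self-diffeomorphism of $S^3$ with $\Psi|_{\nu(Q)}=\id$ and $\Psi(\Ll_{c/d})=\Ll_{(c\pm 2nd)/d}$.

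Since $\Psi$ is ambient and fixes $\nu(Q)$ pointwise, framings read from surface pushoffs are preserved, so $\Psi$ takes the $0$--framed link $Q\cup\Ll_{c/d}$ to the $0$--framed link $Q\cup\Ll_{(c\pm 2nd)/d}$. Carrying out the handle-by-handle extension described in the first paragraph then produces the required diffeomorphism of CG-pairs. The only subtle step is ensuring that the isotopy pushing $W_\infty$ into $E_Q$ can be chosen so that $\Tt_\infty$ fixes $Q$ pointwise (rather than merely setwise), since this is what guarantees that the induced 4--manifold diffeomorphism identifies the 2--handle on $Q$ in each CG-ball in a way that matches their cores; this fact was already arranged in the setup of Lemma~\ref{lem:SF_twist}, so what remains is routine handlebody-calculus bookkeeping.
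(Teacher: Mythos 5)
Your proof is correct and takes essentially the same approach as the paper's, which also rests on Lemma~\ref{lem:SF_twist} (the torus twist $\Tt_\infty$ giving an isotopy between $Q\cup\Ll_{c/d}$ and $Q\cup\Ll_{(c\pm 2nd)/d}$ in $S^3$). You simply unpack the handle-by-handle extension of the diffeomorphism to the CG-pairs, and correctly flag that $\Tt_\infty$ fixes $\nu(Q)$ pointwise so that the cores $R_{c/d}$ match up, a point the paper states more tersely.
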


\begin{proof}
	By definition, $B_{c/d}$ is built by attaching 0--framed 2--handles to $Q\cup\Ll_{c/d}$, before capping off with a 4--dimensional 1--handlebody, and $R_{c/d}\subset B_{c/d}$ is the core of the 2--handle attached along $Q$.  By Lemma~\ref{lem:SF_twist}, the links $Q\cup\Ll_{c/d}$ and $Q\cup\Ll_{(c\pm2nd)/d}$ are isotopic in $S^3$.  It follows that $B_{c/d}$ is diffeomorphic to $B_{(c\pm2nd)/d}$, and that this diffeomorphism equates the cores of the two 2--handles attached along the two copies of $Q$.
	\end{proof}

\begin{proof}[Proof of Theorem~\ref{thmx:disks}]
	Part (1) is Lemma~\ref{lem:product}. Part (2) is Proposition~\ref{prop:disk_rel}. Part (3) follows from Proposition~\ref{prop:zero}.  Part (4) is Proposition~\ref{prop:HBB_total}(1).
\end{proof}

\begin{remark}
	The second part of the proof of Proposition~\ref{prop:diff_SF} implies (in particular) that all pairs of the form $(B_{\pm 2n/1},R_{\pm 2n/1})$ are diffeomorphic.  Because this isotopy is given by the torus twist $\Tt_{\infty}$ taking $Q \cup \Ll_{c/d}$ to $Q \cup \Ll_{(c \pm 2nd)/d}$ (as in Lemma~\ref{lem:SF_twist}), we find that $(X_{\pm 2n/1},\Kk_{\pm 2n/1})$ is the $n$--twist spin of the torus knot $T_{p,q}$.  The authors are not aware of a classification of the fibered 2--knots $\Kk_{c/d}$ in the case that $c/d \neq \pm 2n/1$.
\end{remark}

\subsection{Classical examples and results}
\label{subsec:classical}
\ 

In this subsection, we prove Corollary~\ref{corox:classical}, which recovers classical results of Akbulut and Gompf.  Recal the homotopy 4--spheres $\Sigma_m$ and $H(n,k)$ discussed in the introduction; Gompf showed $\Sigma_0$ and the $H(n,k)$ are standard~\cite{Gom_Killing-the-Akbulut-Kirby_91}, while Akbulut showed $\Sigma_m$ is standard for $m>0$~\cite{Akb_Cappell-Shaneson-homotopy_10}.

\begin{proof}[Proof of Corollary~\ref{corox:classical}]
	To apply Theorem~\ref{thmx:PC} to these families of examples, we must show that each admits a handle decomposition with no 1--handles and two 2--handles such that one 2--handle is attached along a generalized square knot.  Such a handle decomposition for $H(n,k)$ is given in Figure~14 of~\cite{GomSchTho_Fibered-knots_10}.  By their discussion in Section 8, page~2334, one of the components of the attaching link $L(n,k)$ for the two 2--handles is the generalized square knot $Q_{n+1,n}$.  This proves the corollary in the case of $H(n,k)$.
	
	Gompf gives a handle decomposition for $\Sigma_m$ in Figure~8 of~\cite{Gom_On-Cappell-Shaneson-4-spheres_91}, and he describes on pages~130--131 how to eliminate the two 1--handles present, as well as one of the 2--handles.  The instructions are to remove the two dotted circles, but add full-twists (one of each sign) to all of the strands passing through them.  Afterwards, the 2--handle given by $xy$ can be cancelled with a 3--handle, so the resulting diagram will have two 2--handles, given by $xz$ and $\alpha$.  We claim that $xz$ is the square knot, $Q_{3,2}$.
	
	To see this, we discard everything from Gompf's Figure~8 except for $xz$ and the two dotted circles.  We remove these dotted circles and add a positive full-twist to the strands passing through the top one and a negative full-twist to the strands passing through the bottom one.  See~Figure~\ref{fig:Gompf}.  That the resulting knot is $Q_{3,2}$ can be verified by simplifying the right frame of Figure~\ref{fig:Gompf}.  This proves the corollary in the case of $\Sigma_m$.
\end{proof}

\begin{figure}[h!]
	\centering
	\includegraphics[width=.9\textwidth]{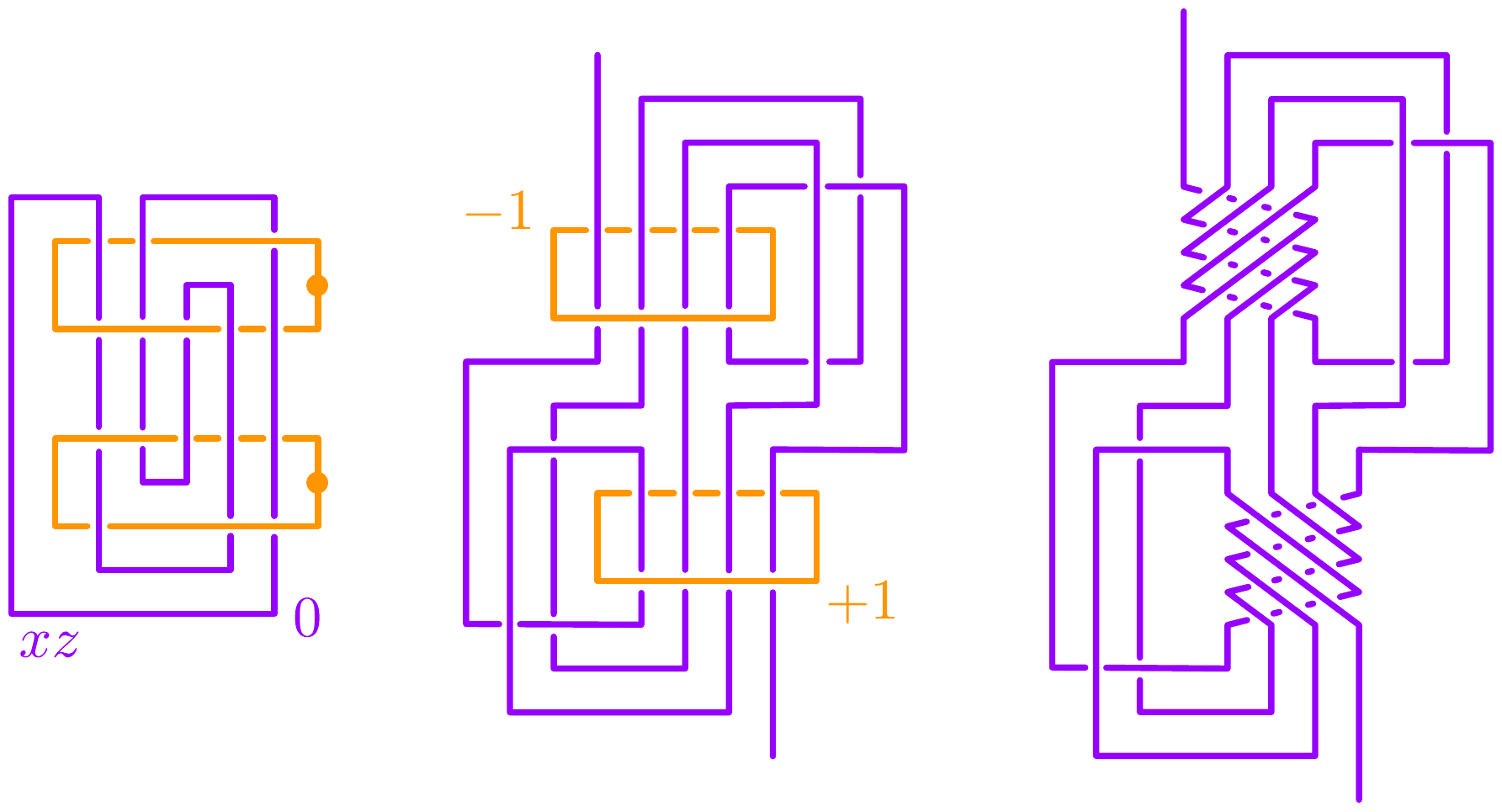}
	\caption{(Left) A sublink from Figure~8 of~\cite{Gom_On-Cappell-Shaneson-4-spheres_91}. (Right) The knot $Q_{3,2} = T_{3,2}\#T_{-3,2}$ in disguise.}
	\label{fig:Gompf}
\end{figure}

We remark that the $\Sigma_m$ are a sub-family of a larger class of homotopy 4--spheres described by Cappell and Shaneson~\cite{CapSha_Some-new-four-manifolds_76}.  Many of these Cappell-Shaneson spheres (beyond the $\Sigma_m$) are known to be standard by work Gompf~\cite{Gom_10_More-Cappell-Shaneson} and Kim and Yamada~\cite{KimYam_17_Ideal-classes}, though handle diagrams have not been given in these cases. General Cappell-Shaneson spheres are not known to be geometrically simply-connected.

\section{Trisecting the Casson-Gordon homotopy four-spheres}
\label{sec:tris}

In this section, we describe a natural trisection of the Casson-Gordon homotopy 4--sphere $X_{\phi}$ corresponding to a handlebody extension $\phi$ of the closed monodromy of a fibered, homotopy-ribbon knot $K \subset S^3$.  We also describe the connections between the R-links arising as Casson-Gordon derivatives, trisections, and the GPRC and Stable GPRC using the authors' framework from~\cite{MeiZup_Characterizing-Dehn_17}.

A Heegaard splitting of a closed 3--manifold $Y$ is a decomposition $Y = H \cup H'$, where $H$ and $H'$ are handlebodies that intersect in their common boundary $\Sigma$, called a \emph{Heegaard surface}.  One dimension higher, a \emph{trisection} $\Tt$ of a closed, smooth 4--manifold $X$ is a decomposition $X = X_1 \cup X_2 \cup X_3$, where $X_i$ is a 4--dimensional handlebody, $H_{ij} = X_i \cap X_j$ is a (3--dimensional) handlebody, and $\Sigma = X_1 \cap X_2 \cap X_3$ is a closed surface.  If $X_i$ has genus equal to $k_i$ and $\Sigma$ has genus $g$, we say that $\Tt$ is a $(g;k_1,k_2,k_3)$--trisection.  A trisection $\Tt$ is uniquely determined by the union $H_{12} \cup H_{23} \cup H_{31}$, called the \emph{spine} of $\Tt$~\cite{GayKir_Trisecting-4-manifolds_16}.

Given a fibered, homotopy-ribbon knot $K \subset S^3$ and a handlebody extension $\phi$ of the closed monodromy of $K$, there is a natural trisection of the CG-sphere $X_{\phi}$, as described in the next proposition.

\begin{proposition}
\label{bundletri}
	Suppose that $K$ is a fibered, homotopy-ribbon knot in $S^3$, with genus $g$ fiber $F$, monodromy $\varphi$, and extension $\phi$ of the closed monodromy $\wh\varphi$.  Then the CG-sphere $X_{\phi}$ admits a $(2g;0,g,g)$--trisection.
\end{proposition}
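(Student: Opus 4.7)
The plan is to construct the $(2g;0,g,g)$-trisection of $X_\phi$ using the decomposition $X_\phi = X_K \cup_{Y_K} Z_\phi$ (where $X_K$ denotes the trace of the $0$-framed $2$-handle on $K$ in $B^4$) together with the circle-bundle structure $Z_\phi = H \X_\phi S^1$ from Lemma~\ref{handlebundle}.

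The two four-dimensional genus-$g$ pieces $X_2$ and $X_3$ arise naturally from the bundle structure of $Z_\phi$. I would pick two handlebody fibers $H_\alpha, H_\beta \subset Z_\phi$ over antipodal points $\alpha, \beta \in S^1$; these cut $Z_\phi$ into two pieces $Z_\phi^\pm$, each diffeomorphic to $H \times I$ and hence to $\natural^g(S^1 \X B^3)$. Their boundaries on the $Y_K$ side are the two surface fibers $\wh F_\alpha, \wh F_\beta$, which cobound two cylindrical regions $\wh F \X I$ in $Y_K$.

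The genus-zero piece $X_1$ is constructed from $X_K$ combined with appropriate collar neighborhoods of $H_\alpha \cup H_\beta$ inside $Z_\phi$. Using the inverted relative handle decomposition of $X_K$ (starting from $\partial X_K = Y_K$), $X_K$ consists of a $2$-handle attached along the dual knot $K^* \subset Y_K$ followed by a $4$-handle; together these form a four-ball $\wt B \subset X_K$ whose attaching region on $Y_K$ is a solid-torus neighborhood of $K^*$. By the theorem of Scharlemann-Thompson~\cite{SchTho_Surgery-on-a-knot_09}, $K^*$ can be isotoped to lie in the fiber $\wh F_\alpha \subset Y_K$. Then $X_1$ is defined as the union of $\wt B$ with sliver collars of $Z_\phi$ near $H_\alpha$ and $H_\beta$, glued along suitable subsurfaces so that $\partial X_1 = S^3$. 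The trisection surface $\Sigma$ of genus $2g$ is obtained from $\wh F_\alpha \cup \wh F_\beta$ by tubing along an arc in $Y_K$ guided by $K^*$, and the three spine handlebodies $H_{12}, H_{13}, H_{23}$ are assembled from the handlebody fibers $H_\alpha, H_\beta$, sections of $X_K$, and the cylindrical regions of $Y_K$.

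The main technical obstacle will be verifying that $X_1$ is a genuine four-ball rather than merely a contractible four-manifold. An Euler characteristic count gives $\chi(X_1) = 1$, consistent with $B^4$, and $X_1$ is simply connected since $X_\phi$ is geometrically simply connected and all one-dimensional complexity of $Z_\phi$ is absorbed into $X_2$ and $X_3$. Upgrading this to $X_1 \cong B^4$ requires a careful handle-level analysis showing that the attaching of $\wt B$ combines compatibly with the handlebody-fiber collar structure of $Z_\phi$ near $H_\alpha$ and $H_\beta$ to give a standard ball; the essential inputs are that $\wt B$ is already a standard $B^4$ in $X_K$, that the Scharlemann-Thompson isotopy positions $K^*$ compatibly with the bundle structure, and that the resulting Heegaard splittings $\partial X_i = H_{ij}\cup_\Sigma H_{ik}$ match the required genus $2g$ structure.
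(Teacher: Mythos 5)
Your intuition that $X_2$ and $X_3$ should come from cutting $Z_\phi = H \times_\phi S^1$ along two handlebody fibers is correct and agrees with the paper; each piece is $H \times I \cong \natural^g(S^1 \times B^3)$. But the construction of $X_1$ contains two real errors, and the paper's route avoids both.

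First, the claim that $\wt B = h^2 \cup h^4$ (the 2--handle and 4--handle of the inverted decomposition of $X_K$) is a four-ball is false. Chasing the gluings, $h^2 \cup h^4 \cong D^4 \cup (\text{0--framed 2--handle along } K)$, which is $X_K$ itself: it has $\chi = 2$ and $H_2 \cong \Z$, so it is never $B^4$. The $B^4$ you want is just the 4--handle alone (equivalently, the 0--handle in the un-inverted decomposition of $X_\phi$). Second, the appeal to Scharlemann--Thompson to isotope $K^*$ into a fiber $\wh F$ fails: $K^*$ is the surgery dual to $K$, so it has algebraic intersection $1$ with every fiber and cannot be isotoped into one. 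Corollary~4.3 of~\cite{SchTho_Surgery-on-a-knot_09} applies to knots in $Y_K$ disjoint from the dual circle, not to the dual circle itself. As you yourself flag, the step "verify $X_1 \cong B^4$" is the real content, and it is left open.

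The paper sidesteps all of this by starting from the decomposition $X_\phi = B^4 \cup_{S^3} B_\phi$ instead of $X_K \cup_{Y_K} Z_\phi$, and by placing the central surface in $S^3$ rather than $Y_K$. Concretely, $\Sigma = F_+ \cup F_-$ is the genus-$2g$ open book Heegaard surface for $S^3$ (two pages glued along $K$), and its two complementary handlebodies $H_\alpha, H_\beta \subset S^3$ are the first two spine handlebodies; then $X_1 = B^4$ is immediate, with nothing to prove. The third spine handlebody $H_\gamma = H_+ \cup H_-$ is built by capping $F_\pm$ off with the CG-disk $R_\phi$ to get $\wh F_\pm$ and filling with handlebody fibers of $Z_\phi$; cutting $B_\phi$ along $H_\gamma$ yields the two $H \times I$ sectors. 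The missing insight in your proposal is exactly this choice of where $\Sigma$ lives: putting it in $S^3$ makes the genus-zero sector free, whereas putting it in $Y_K$ forces the reassembly and the $B^4$-recognition problem you ran into.
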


\begin{proof}
There is a well-known construction of a Heegaard surface for $S^3$ coming from the open book decomposition induced by the fibration of $E_K$:  Let $F_+$ and $F_-$ be two copies of the fiber $F$ in $S^3$, so that $F_+ \cap F_- = K$, and let $\Sigma = F_+ \cup F_-$.  Then each component of $S^3 \setminus \Sigma$ is diffeomorphic to the product $F \X I$ collapsed along $K \X I$; thus, $\Sigma$ cuts $S^3$ into two genus $2g$ handlebodies, which we will call $H_{\A}$ and $H_{\n}$.

Let $R_\phi$ be the CG-disk in $B_{\phi}$ bounded by $K$, and let $\wh F_{\pm}$ be a copy of $F_{\pm}$ capped off in $B_{\phi}$ with the disk $R_\phi$, so that $\wh F_+ \cap \wh F_- = R_\phi$.  Note that $\wh F_\pm$ is not properly embedded: $\wh F_\pm\cap\partial B_{\phi} = F_\pm$ and $\wh F_\pm\cap\Int(B_{\phi}) = \Int(R_\phi)$.  By assumption, $B_{\phi} \setminus R_\phi = H \X_{\phi} S^1$, and as such there is a pair of handlebodies $H_\pm$ in $B_{\phi}$ with $\partial H_\pm = \wh F_\pm$ and $H_+\cap H_- = R_\phi$.  Let $H_{\g} = H_+ \cup H_-$, so that $H_\g$ is the boundary connected sum of $H_+$ and $H_-$, a genus $2g$ handlebody.

We claim that $H_{\A} \cup H_{\n} \cup H_{\g}$ is the spine of a $(2g;0,g,g)$--trisection of $X_{\phi}$.  First, we note that $H_{\A} \cap H_{\n} \cap H_{\g} = \Sigma$, so the triple intersection is as desired.  To complete the proof, it suffices to show that $X \setminus (H_{\A} \cup H_{\n} \cup H_{\g})$ has three components, two of which are genus $g$ 4--dimensional 1--handlebodies and one of which is a 4--ball.  Note that $X_{\phi} \setminus (H_{\A} \cup H_{\n})$ consists of a 4--ball $B^4$ and $B_{\phi}$.  In addition, $B_{\phi}$ cut open along $H_{\g}$ is diffeormorphic to $B_{\phi} \setminus R_\phi = H \X_{\phi} S^1$ cut open along two fibers, say $H \X_{\phi} \{0\}$ and $H \X_{\phi} \{1/2\}$.  Each of the two resulting components is diffeomorphic to $H \X I$, a genus $g$ 4--dimensional 1--handlebody, as desired.
\end{proof}

Note that the above construction depends only on the extension $\phi$ of the closed monodromy of the initial knot $K$; the choices of $F_{\pm}$ are unique up to isotopy.  Therefore, we will let $\Tt_{\phi}$ represent the trisection resulting from Proposition~\ref{bundletri}, without ambiguity.

Every trisection $\Tt$ can be encoded by a \emph{trisection diagram}, which we will define shortly.  A \emph{cut system} in a genus $g$ surface $\Sigma$ is a collection of $g$ pairwise disjoint homotopy classes of curves that cut $\Sigma$ into a connected planar surface.  A cut system $\A$ determines a handlebody $H_{\A}$ by adding 3--dimensional 2--handles to $\Sigma$ along $\A$ and capping off the resulting 2--sphere boundary component with a 3--ball (as in the proof of Lemma \ref{handlebundle}).  A \emph{trisection diagram} for a trisection $\Tt$ is a triple $(\A,\n,\g)$ of cut systems in $\Sigma$ such that $H_{\A} \cup H_{\n} \cup H_{\g}$ is a spine for $\Tt$.  As such, each pair of curves defines a Heegaard diagram for one of the 3--manifold $\pd X_i$.

As an example, there are three genus one trisections of $S^4$, a $(1;1,0,0)$--trisection denoted $\Ss_1$, a $(1;0,1,0)$--trisection denoted $\Ss_2$, and a $(1;0,0,1)$--trisection denoted $\Ss_3$.  Diagrams for $\Ss_1$, $\Ss_2$, and $\Ss_3$ are depicted in Figure~\ref{fig:StdDiag}.

\begin{figure}[h!]
	\centering
	\includegraphics[width=.6\textwidth]{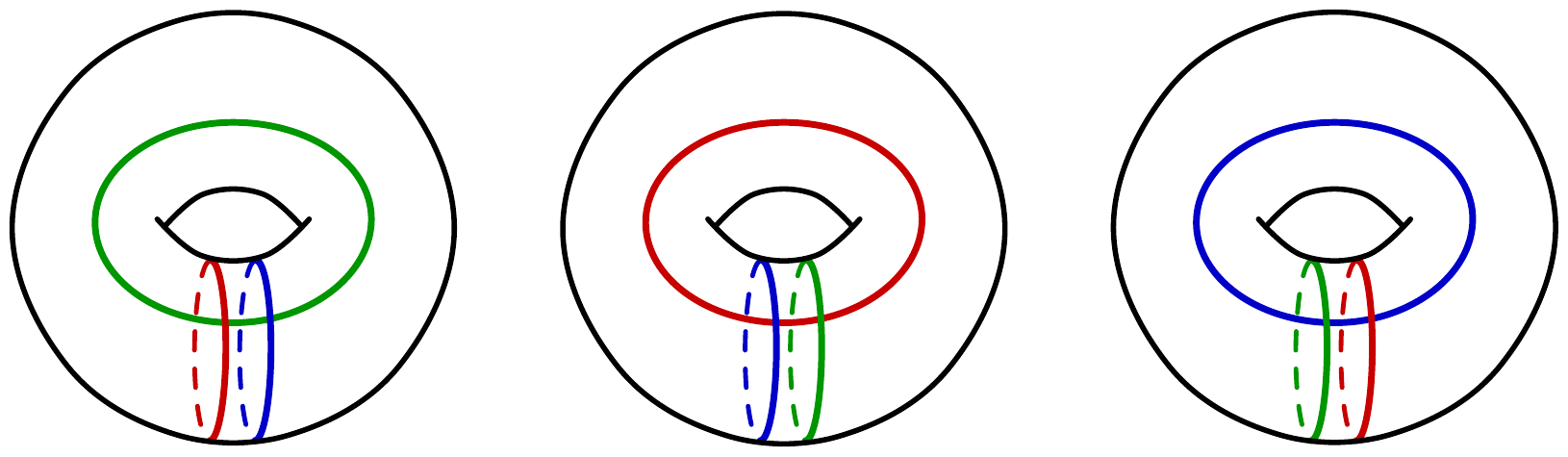}
	\caption{The three unbalanced, genus 1 trisection diagrams $\Ss_1$, $\Ss_2$, and $\Ss_3$ for $S^4$, which are used to perform stabilizations of trisection diagrams.}
	\label{fig:StdDiag}
\end{figure}

To find a trisection diagram for $\Tt_{\phi}$ and to see the connection between R-links and trisections via Heegaard surfaces, we appeal to machinery developed in~\cite{{MeiZup_Characterizing-Dehn_17}}, setting up the next lemma with several more definitions.  Let $L$ be an $n$--component R-link in $S^3$.  A Heegaard surface $\Sigma \subset S^3$ is called \emph{admissible} if $S^3 = H \cup_\Sigma H'$ and $L$ is isotopic to a subset of a core of $H$, so that $H \setminus L$ is a compression body.  Let $H_L$ denote the handlebody resulting from 0--framed surgery on $L$ in $H$.  A genus $g$ Heegaard diagram $(\A,\n)$ for $Y_k = \#^k(S^1 \X S^2)$ is said to be \emph{standard} if $\A$ and $\n$ have $k$ curves in common, and the remaining curves consist of $g-k$ mutually disjoint pairs of curves that intersect each other once.

The next lemma, proved as Lemma 4 in~\cite{{MeiZup_Characterizing-Dehn_17}}, connects R-links to trisections via admissible surfaces:

\begin{lemma}
\label{admissible}
	Let $L$ be an $n$--component R-link with admissible genus $g$ surface $\Sigma$.  Then there is a $(g;0,g-n,n)$--trisection, denoted $\Tt(L,\Sigma)$, of $X_L$ with spine $H' \cup H \cup H_L$.  Moreover, there is a trisection diagram $(\A,\n,\g)$ for $\Tt(L,\Sigma)$ such that
	\begin{enumerate}
		\item $H_{\A} = H'$, $H_{\n} = H$, and $H_{\g} = H_L$;
		\item $L$ is a sublink of $\g$, where $\g$ is viewed as a link framed by $\Sigma$ in $S^3 = H_{\A} \cup H_{\n}$; and
		\item $(\n,\g)$ is a standard diagram for $Y_{g-n}$, where $\n \cap \g = \g - L$.
	\end{enumerate}
\end{lemma}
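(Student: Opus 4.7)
My plan is to construct the trisection $\Tt(L,\Sigma)$ directly from the admissibility data and then read off the claimed trisection diagram.

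For the spine, I would use the admissibility hypothesis $S^3 = H\cup_\Sigma H'$ with $L$ isotopic to a sublink of a core of $H$. Choose a cut system $\n = \{\mu_1,\dots,\mu_g\}$ for $H$ so that $\mu_i$ is dual to a core curve parallel to $L_i$ for $i\le n$, and let $\lambda_i$ denote the 0--framed longitude of $L_i$ pushed onto $\Sigma$. Then $\g = \{\lambda_1,\dots,\lambda_n,\mu_{n+1},\dots,\mu_g\}$ bounds a cut system for the handlebody $H_L$ obtained by 0--surgery on $L$ inside $H$. With any cut system $\A$ for $H'$, the three handlebodies $H_\A = H'$, $H_\n = H$, and $H_\g = H_L$ share the boundary $\Sigma$; their pairwise unions realize the expected 3--manifolds: $H\cup H' = S^3$, the diagram $(\n,\g)$ is standard of genus $g$ so $H\cup H_L \cong Y_{g-n}$, and $H_L\cup H' \cong Y_n$ is the R--link surgery.

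To realize this as the spine of a trisection, I would decompose $X_L = B^4 \cup (\text{2--handles along } L)\cup (\text{3--handles})\cup (\text{4--handle})$. Set $X_1 = B^4$, the 0--handle, bounded by $H\cup H' = S^3$. Build $X_2$ from a collar $H\times I$ of $H$ pushed off $\partial X_1$ into $X_L$, together with the 2--handles of $X_L$ attached along $L\times\{1\}$; since $L$ is isotopic to $n$ of the cores of $H$, the 0--framed 2--handles cancel these $n$ 1--handles of $H\times I$, producing a 4--dimensional handlebody of genus $g-n$ with boundary $H\cup_\Sigma H_L = Y_{g-n}$. Take $X_3$ to be the closure of the complement $X_L\setminus(X_1\cup X_2)$; this consists of a collar of $H'$ together with the 3--handles and the 4--handle of $X_L$. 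Dualizing $X_3$ (so the 4--handle becomes a 0--handle and the 3--handles become 1--handles) shows that $X_3$ is a 4--dimensional handlebody of genus $n$ with boundary $H'\cup_\Sigma H_L = Y_n$. The triple intersection is $\Sigma$ by construction.

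The three properties of the diagram now follow quickly. Property (1) is automatic from the choice of handlebody assignments. For property (2), each $\lambda_i\in\g$ is isotopic in $S^3$ to $L_i$, with framing on $\Sigma$ equal to the 0--framing forced by the R--link hypothesis on linking numbers. For property (3), $\n\cap\g = \{\mu_{n+1},\dots,\mu_g\} = \g\setminus L$, and the remaining $n$ pairs $(\mu_i,\lambda_i)$ each meet geometrically once on a genus--one subsurface, giving the standard genus--$g$ Heegaard diagram for $Y_{g-n}$.

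The hard part will be the careful identification of $X_2$ and $X_3$ as 4--dimensional handlebodies of the claimed genera within the ambient $X_L$. For $X_2$, this relies on the handle cancellation lemma applied to 0--framed 2--handles attached along isotopic copies of 1--handle cores of $H\times I$. For $X_3$, one must dualize the upper portion of the handle decomposition and verify that the resulting boundary identification $Y_n = H_L\cup_\Sigma H'$ is compatible with 0--surgery on $L$, so that the three pieces glue along the spine exactly as required.
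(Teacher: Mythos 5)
The paper does not prove this lemma; it quotes it directly as Lemma~4 of the authors' earlier trisections paper, so there is no in-text proof to compare against. Judged on its own terms, your argument is a correct reconstruction of the standard Gay--Kirby construction of a trisection from a handle decomposition without 1--handles, adapted to the admissible surface $\Sigma$: take $X_1$ to be the 0--handle, $X_2$ a thickened $H$ plus the 2--handles, and $X_3$ the remainder, and read off the genera via handle cancellation and dualization. Claims (1) and (3) of the diagram follow cleanly from this setup, and the Euler characteristic bookkeeping checks out.

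The one place the argument is under-justified is the framing assertion in (2). You write that the surface framing of $\lambda_i$ on $\Sigma$ equals the 0--framing "forced by the R--link hypothesis on linking numbers," but the R--link hypothesis alone does not control the $\Sigma$-framing of an arbitrary pushoff of $L_i$ onto $\Sigma$ --- different pushoffs of the same core realize different integer surface framings. What makes your particular choice of $\lambda_i$ (the 0--framed longitude of $L_i$ pushed out to $\Sigma$ through $H\setminus\nu(L_i)$) have $\Sigma$-framing 0 is a short linking computation that should be spelled out: $\lambda_i$ bounds a compressing disk $D$ in $H_L$ with interior disjoint from $\Sigma$, and cutting $D$ along the surgery solid tori and capping the resulting 0--framed longitude boundaries with pushed-off Seifert surfaces for the $L_j$ produces a Seifert surface for $\lambda_i$ in $S^3$ that is disjoint from a $\Sigma$-parallel pushoff $\lambda_i^+$; the R--link hypothesis enters only through $\lk(L_i,L_j)=0$ and the choice of 0--framed surgery. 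With that detail filled in, the proof is complete and matches the intended construction.
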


As an application of Lemma~\ref{admissible}, suppose that $K$ is a fibered, homotopy-ribbon knot with genus $g$ fiber $F$, extension $\phi$, and CG-derivative $L$.  As in the proof of Proposition~\ref{bundletri}, there is a natural Heegaard surface obtained by viewing the fibration of $K$ as an open book decomposition of $S^3$, a fibration $\pi\colon S^3 - K \rightarrow S^1$ so that for each $\theta \in S^1$, $\pi^{-1}(\theta)$ is the interior of a Seifert surface $F_{\theta}$ for $K$.  This Heegaard surface is $\Sigma = F_0 \cup F_{1/2}$, which cuts $S^3$ into two genus $2g$ handlebodies $H$ and $H'$, both viewed as a copy of $F \X I$ with $\pd F \X I$ crushed to $\pd F \X \{pt\}$, such that $F_{1/2} \subset H$ is glued to $F_{1/2} \subset H'$ via the identity map and $F_1 \subset H'$ is glued to $F_0 \subset H$ with the monodromy $\varphi$.  Note that these gluings respect the boundary crushing since $\varphi|_K = \text{id}$.


Given an arc $a \subset F$, let $a_{\theta}$ be the corresponding arc in $F_{\theta}$.  Then every arc $a \subset F$ gives rise to product disks $D(a) \subset H$ with boundary $a_0 \cup a_{1/2}$ and $D'(a) \subset H'$ with boundary $a_{1/2} \cup a_1 = a_{1/2} \cup \varphi(a)_0$.

\begin{lemma}\label{admissCG}
With $K$, $L$, $F$, $\phi$, and $\Sigma$ as above, the surface $\Sigma$ is an admissible surface for~$L$.
\end{lemma}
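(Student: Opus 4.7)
Since the construction preceding the lemma already establishes that $\Sigma = F_0 \cup F_{1/2}$ is a Heegaard surface of $S^3$ splitting it into the handlebodies $H$ and $H'$, the only remaining requirement for admissibility is that $L$ is isotopic to a subset of a core of $H$. I focus on this.

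Because $L$ is a CG-derivative for $K$, it has the derivative property: the $g$ components $L_1,\ldots,L_g$ are pairwise disjoint simple closed curves in the fiber $F$ whose homology classes are linearly independent in $H_1(F)$ and whose complement $P = F\setminus L$ is a connected planar surface. Using the open book structure on $S^3$, I would first isotope $L$ into the middle fiber $F_{1/4}$, which lies inside $H \cong F\X[0,1/2]/\!\sim$.

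The central step is the construction of a cut system of product disks for $H$ whose first $g$ members are dual to the components of $L$. By the nondegeneracy of the intersection pairing $H_1(F)\times H_1(F,\pd F)\to\Z$ together with the homological independence of $[L_1],\ldots,[L_g]$, I would find pairwise disjoint proper arcs $\alpha_1,\ldots,\alpha_g\subset F$ with endpoints on $\pd F = K$ such that $|\alpha_i\cap L_j| = \delta_{ij}$. I would then extend to pairwise disjoint arcs $\alpha_{g+1},\ldots,\alpha_{2g}$ contained in $P$ (hence disjoint from $L$) so that the full system $\{\alpha_i\}_{i=1}^{2g}$ is a cut system of arcs for $F$, cutting $F$ into a disk. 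The resulting product disks $D_i = \alpha_i\X[0,1/2]\subset H$ (with boundary on $\Sigma$ after the collapse $\pd F\X[0,1/2]\to K$) then form a cut system for $H$, and the dual core contains each $L_i$ as a loop meeting $D_i$ transversely in a single point and disjoint from the other $D_j$. Hence $L$ is isotopic to a subset of a core of $H$, so $H\setminus L$ is a compression body, establishing admissibility.

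The main obstacle is the extension step: producing $\alpha_{g+1},\ldots,\alpha_{2g}$ in $P$ so that the combined system actually cuts $F$ into a single disk. The classes $[\alpha_{g+1}],\ldots,[\alpha_{2g}]$ must lie in the rank-$g$ annihilator of $\text{span}\{[L_i]\}$ inside $H_1(F,\pd F)$ and together with $[\alpha_1],\ldots,[\alpha_g]$ form a basis; by Lefschetz duality this annihilator is precisely the image of $H_1(P,\pd F)\to H_1(F,\pd F)$, so the required classes are representable by arcs in $P$. Realizing them as pairwise disjoint simple arcs and arranging that the combined system cuts $F$ into a single disk then follows from the standard theory of arc systems on surfaces, or alternatively from the transitive action of the mapping class group of $F$ on derivatives, which reduces the verification to a model case.
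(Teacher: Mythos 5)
Your approach is essentially the paper's: both exhibit arcs $a_1,\dots,a_g$ in the fiber $F$ with $|a_i\cap L_j|=\delta_{ij}$ and use the associated product disks $D(a_i)$ as dualizing disks for $L$ in $H$. The paper stops there (the $g$ dualizing disks already suffice because $L\subset F_0\subset\partial H$), whereas you additionally complete to a full cut system of $2g$ arcs and appeal to the intersection pairing and Lefschetz duality to produce the dualizing arcs---harmless extra work, and a more abstract route to existence than simply taking a fan of arcs in the planar surface $P=F\setminus L$, but the same underlying idea.
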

\begin{proof}
We have established that $S^3 = H \cup H'$, so remains to check that $L$ is isotopic to a core of $H$.  It suffices to find a collection of \emph{dualizing disks} for $L$ in $H$; that is, pairwise disjoint compressing disks $\{D_1,\dots,D_g\}$ such that $|D_i \cap L_j| = \delta_{ij}$.  Let $\{a_1,\dots,a_n\}$ be a collection of arcs in $F$ such that $|a_i \cap L_j| = \delta_{ij}$.  Then the disks $\{D(a_1),\dots,D(a_n)\}$ dualize $L \subset F_0$, and we conclude that $\Sigma$ is an admissible surface for $L$.
\end{proof}

It follows immediately from Lemma \ref{admissible} that $X_L = X_{\phi}$ has a $(2g;0,g,g)$--trisection, and it should come as no surprise that these parameters are the same ones guaranteed by Proposition~\ref{bundletri}.  Indeed, we will see that $\Tt(L,\Sigma)$ and $\Tt_{\phi}$ are two ways of constructing identical trisections.

Let $L$ be a $g$--component derivative of a knot $K$ contained in a Seifert surface $F$.  A collection of \emph{dualizing arcs} for $L$ in $F$ is defined to be a set $\{a_1,b_1,\dots,a_g,b_g\}$ of $2g$ pairwise disjoint and non-isotopic arcs such that $|a_i \cap L_j| = \delta_{ij}$ and $b_i \cap L = \emp$.

\begin{lemma}\label{diagram}
Let $K$, $L$, $F$, $\phi$, and $\Sigma$ be defined as above, and let $\{a_1,b_1,\dots,a_g,b_g\}$ be a collection of dualizing arcs for $L$ in $F$.  Then there is a trisection diagram $(\A,\n,\g)$ for $\Tt(L,\Sigma)$ given by
\begin{eqnarray*}
\A &=& \{\pd D'(a_1), \pd D'(b_1),\dots, \pd D'(a_g),\pd D'(b_g)\}\\
\n &=& \{\pd D(a_1), \pd D(b_1),\dots, \pd D(a_g),\pd D(b_g)\}\\
\g &=& \{L_1,\dots,L_g,\pd D(b_1),\dots,\pd D(b_g)\}.
\end{eqnarray*}
Moreover, $\Tt(L,\Sigma) = \Tt_{\phi}$.
\end{lemma}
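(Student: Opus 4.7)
The plan is to verify that $(\A,\n,\g)$ satisfies the hypotheses of Lemma \ref{admissible} and then to identify the resulting trisection with $\Tt_\phi$ by comparing spines. The dualizing arcs $\{a_i,b_i\}_{i=1}^g$ form a complete arc system on the once--punctured surface $F$, cutting it into a single disk, so their product disks $D(a_i),D(b_i)\subset H$ are pairwise disjoint compressing disks whose complement is a ball. Hence $\n$ is a cut system defining $H_\n = H$, and symmetrically $\A$ defines $H_\A = H'$. For $\g$: the $L_i$ bound compressing disks in $H_L$ coming from the 0--surgery on $L$, and since $b_i\cap L=\emptyset$, each product disk $D(b_i)\subset H$ survives the surgery to yield a disk in $H_L$ bounded by $\pd D(b_i)$; these $2g$ disks cut $H_L$ into a ball, so $\g$ is a cut system defining $H_L$.

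To verify the Heegaard diagram condition from Lemma \ref{admissible}, I observe that $\n\cap\g = \{\pd D(b_1),\dots,\pd D(b_g)\} = \g\setminus L$, and the remaining unmatched curve--pairs $(\pd D(a_i), L_i)$ meet in exactly one point each, since $|\pd D(a_i)\cap L_j| = |a_i\cap L_j| = \delta_{ij}$ and all other pairwise intersections among curves in $\n\cup\g$ vanish. Thus $(\n,\g)$ is the standard Heegaard diagram for $Y_g = \#^g(S^1\times S^2)$, and Lemma \ref{admissible} then yields that $(\A,\n,\g)$ is a trisection diagram for $\Tt(L,\Sigma)$.

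For the identification $\Tt(L,\Sigma) = \Tt_\phi$, both trisections are $(2g;0,g,g)$--trisections of $X_\phi$ sharing the open--book handlebodies $H_\A=H'$ and $H_\n=H$; it remains to show that their third spine--handlebodies agree. The third handlebody of $\Tt(L,\Sigma)$ is $H_L$, and that of $\Tt_\phi$ is $H_\g = H_+\cup_{R_\phi} H_-$ from Proposition \ref{bundletri}; both are genus $2g$ handlebodies with boundary $\Sigma$ contained on the $B_\phi$--side of $\Sigma\subset\pd B_\phi$. The strategy is to show that $\g = \{L_i,\pd D(b_i)\}$ is also a cut system for $H_\g$: the $L_i$ bound compressing disks in $H_+\subset H_\g$ by the extension hypothesis on $\phi$, and each $\pd D(b_i)$ bounds a disk in $H_\g$ constructed by gluing compatible disks in $H_+$ and $H_-$ across a suitable arc in $R_\phi$. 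The main obstacle is verifying this last fact: it reduces to showing that the closed curve $b_{i,0}\cup\alpha_i$ (with $\alpha_i\subset R_\phi$ joining the endpoints of $b_{i,0}$) is null--homotopic in $H_+$, which I will deduce from the homological calculation $[\wh b_i]=-[L_i]$ in $H_1(\wh F)$ --- a direct consequence of $|a_i\cap L_j|=\delta_{ij}$ and $b_i\cap L=\emptyset$ via the symplectic pairing --- combined with Dehn's Lemma applied in the handlebody $H_+$. Once $\g$ is established as a common cut system for $H_L$ and $H_\g$, uniqueness of the handlebody on a given side of $\Sigma$ determined by a cut system gives $H_L=H_\g$, completing the identification $\Tt(L,\Sigma)=\Tt_\phi$.
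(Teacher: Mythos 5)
Your verification of the first claim is correct and takes essentially the same route as the paper (you add the welcome check that $(\n,\g)$ is a standard Heegaard diagram, confirming the conclusions of Lemma~\ref{admissible}). For the identification $\Tt(L,\Sigma)=\Tt_\phi$, your overall strategy — show that $\g$ is also a cut system for $H_\g = H_+\cup_{R_\phi}H_-$ and invoke uniqueness — is the right one, but the step where you argue that $\pd D(b_i)$ bounds a disk in $H_\g$ has a genuine gap. You propose to conclude that $\wh b_i = b_{i,0}\cup\alpha_i$ is null-\emph{homotopic} in $H_+$ from a \emph{homological} identity $[\wh b_i]=-[L_i]$ together with Dehn's lemma; but being null-homologous in a handlebody (or homologous to a curve that bounds) does not imply being null-homotopic, and Dehn's lemma cannot upgrade homology to homotopy — it only converts an existing null-homotopy into an embedded disk. (The homological identity itself is also not immediate from the dualizing conditions without first establishing that $\{\wh a_i,\wh b_i\}$ forms a symplectic basis, which depends on the choice of capping arcs.)

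The repair is simpler than what you were attempting. Since $b_i\cap L=\emp$, the curve $\wh b_i$ is \emph{disjoint} from the cut system $L_0\subset\wh F_+$ for $H_+$; cutting $H_+$ along these disks yields a ball whose boundary sphere contains $\wh b_i$, so $\wh b_i$ bounds a disk in $H_+$ outright. The paper's argument is even more economical and avoids discussing $R_\phi$ or gluing half-disks altogether: it observes that $\pd D(b_i) = b_{i,0}\cup b_{i,1/2}\subset\Sigma$ is already disjoint from the full cut system $\g'=L_0\cup L_{1/2}$ for $G_\g$, hence bounds a disk in $G_\g$; since $\g$ and $\g'$ share the $g$ curves $L_0$ and the remaining curves of $\g$ bound in the handlebody determined by $\g'$, the two cut systems determine the same handlebody, giving $H_L=G_\g$ and $\Tt(L,\Sigma)=\Tt_\phi$.
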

\begin{proof}
First, we observe that $\A$ is a cut system for $H$ and $\n$ is a cut system for $H'$.  By definition, the third handlebody $H_L$ in the spine of $\Tt(L,\Sigma)$ is obtained by doing 0--framed Dehn surgery on $L$ in $H$, and thus a set of cut disks for $H_L$ consists of the $g$ disks bounded by $L$ and $g$ disks unaffected by the Dehn surgery on $H$; that is, $\{D(b_1),\dots,D(b_g)\}$.  We conclude that $\g$ bounds a cut system for $H_L$, and $(\A,\n,\g)$ is a trisection diagram for $\Tt(\Sigma,L)$.

For the second claim, let $G_{\A}$, $G_{\n}$, and $G_{\g}$ denote the handlebodies in the spine of $\Tt_{\phi}$ constructed in Proposition~\ref{bundletri}.  By construction, we may suppose that $H_{\A} = G_{\A}$ and $H_{\n} = G_{\n}$.  In addition, $G_{\g}$ is defined by a cut system bounded by curves $L_0 \subset F_0$ and $L_{1/2} \subset F_{1/2}$.  Let $\g' \subset \Sigma$ denote this cut system.  We claim that $\g$ and $\g'$ determine the same handlebody.  Note that $\g$ and $\g'$ contain the $g$ curves in $L_0$ in common, and the remaining $g$ curves $\pd D(b_1),\dots, \pd D(b_g)$ in $\g$ are disjoint from the remaining $g$ curves $L_{1/2}$ in $\g'$.  It follows that $\pd D(b_1),\dots,\pd D(b_g)$ bound disks in $\g'$ and thus $\g$ and $\g'$ determine the same handlebody.  We conclude that $\Tt(\Sigma,L) = \Tt_{\phi}$.
\end{proof}

Next, we connect R-links to the GPRC and Stable GPRC using the theorems of~\cite{{MeiZup_Characterizing-Dehn_17}}.  Trisections $\Tt$ of $X$ and $\Tt'$ of $X'$ can be glued together to get a trisection $\Tt \# \Tt'$ of $X \# X'$ in the obvious way; the connected sum of diagrams for $\Tt$ and $\Tt'$ is a diagram for $\Tt \# \Tt'$.  A trisection $\Tt$ of $S^4$ is called \emph{standard} if $\Tt$ can be expressed as the connected sum of copies of $\Ss_1$, $\Ss_2$, and $\Ss_3$.  Whereas Waldhausen's Theorem~\cite{Wal_Heegaard-Zerlegungen-der-3-Sphare_68} implies that every Heegaard splitting of $S^3$ is standard (i.e. can be expressed as connected sums of standard genus one splittings), the question of whether all trisections of $S^4$ are standard remains open \cite{MeiSchZup_Classification-of-trisections_16}.

It was proved in~\cite{GayKir_Trisecting-4-manifolds_16} that every trisection $\Tt$ of $S^4$ becomes standard after taking the connected sum of $\Tt$ with a standard trisection of $S^4$.  A related notion, defined in~\cite{{MeiZup_Characterizing-Dehn_17}}, is the idea of being $\{i\}$--standard or $\{i,j\}$--standard:  A trisection $\Tt$ is said to be \emph{$\{i\}$--standard} if the connected sum of $\Tt$ with some number of copies of $\Ss_i$ is standard; similarly, $\Tt$ is \emph{$\{i,j\}$--standard} if the connected sum with copies of $\Ss_i$ and $\Ss_j$ is standard.  Note that if $\Tt$ is $\{i\}$--standard or $\{i,j\}$--standard for some $i,j$, the definition implies that $\Tt$ must be a trisection of $S^4$. With this terminology in mind, the uniqueness result of Gay and Kirby implies that every trisection of $S^4$ is $\{1,2,3\}$-standard.

The following is Theorem 3 from~\cite{{MeiZup_Characterizing-Dehn_17}}.

\begin{theorem}\label{equivstab}
	Suppose $L$ is an R-link and $\Sigma$ is an admissible surface for $L$.
	\begin{enumerate}
		\item If $L$ has Property R, then $\Tt(L,\Sigma)$ is $\{2\}$--standard.
		\item The link $L$ has Stable Property R if and only if $\Tt(L,\Sigma)$ is $\{2,3\}$--standard.
	\end{enumerate}
\end{theorem}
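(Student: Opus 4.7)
The plan is to translate between handleslide operations on the R-link $L$ and diagrammatic operations on the trisection $\Tt(L,\Sigma)$, using the explicit description of the diagram $(\A,\n,\g)$ provided by Lemma~\ref{admissible}. Recall that $(\A,\n)$ is a Heegaard diagram for $S^3$, $(\n,\g)$ is a standard Heegaard diagram for $Y_{g-n}$, and $L$ coincides with the sublink $\g\setminus \n$ framed by $\Sigma$. The key translation is this: handleslides of $L$ over itself in $S^3$, or over the $g-n$ common curves of $\n\cap\g$ (which bound disks in $H_\n$ and are meridians to cancelling pairs), can be realized as handleslides of curves in $\g$ on the surface $\Sigma$, and thus preserve the trisection.

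For part (1), assume $L$ has Property R. Using the translation above, a sequence of handleslides taking $L$ to an unlink $L'$ yields a new diagram $(\A,\n,\g')$ for the same trisection, with $L'\subset \g'$ an $n$--component unlink. Because $L'$ bounds disjoint disks in $S^3$ that can be put in bridge position with respect to $\Sigma$, a controlled sequence of $\Ss_2$ stabilizations allows us to arrange that each component of $L'$ becomes parallel to a distinct curve of $\n$ after suitable handleslides among $\A$ and $\n$. At that point the three systems decompose as a connected sum of $\Ss_2$ and $\Ss_3$ pieces, which exhibits $\Tt(L,\Sigma)$ as $\{2\}$--standard.

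For the forward direction of part (2), if $L\sqcup U$ is handleslide-equivalent to an unlink for some 0--framed unlink $U$ of $k$ components, then adjoining $U$ to $L$ as $k$ new $\g$--curves disjoint from $\A\cup\n$ (after an obvious enlargement of $\Sigma$) corresponds precisely to forming $\Tt(L,\Sigma)\#\Ss_3^{\#k}$, since each added $\g$--curve introduces a cancelling 2--handle/3--handle pair in the relative handle decomposition determined by $\g$. Part (1) applied to $L\sqcup U$ then shows this connected sum is $\{2\}$--standard, so $\Tt(L,\Sigma)$ is $\{2,3\}$--standard. Conversely, suppose $\Tt(L,\Sigma)\#\Ss_2^{\#a}\#\Ss_3^{\#b}$ is standard; the $\Ss_3$ summands supply a 0--framed unlink $U$ of $b$ components in $S^3$ such that the stabilized trisection equals $\Tt(L\sqcup U,\Sigma')$, and the assumed standardization is realized by handleslides on the trisection diagram. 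Restricting these handleslides to the curves of $\g$ and tracking their effect in $S^3$ produces a sequence of Kirby handleslides converting $L\sqcup U$ into a 0--framed unlink, so $L$ has Stable Property R.

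The main obstacle is this final direction: one must verify that the handleslides and isotopies needed to standardize the stabilized diagram can always be chosen so that, when restricted to $\g$, they are induced by slides of components of $L\sqcup U$ over either each other or over the meridional $\g$--curves coming from $\Ss_3$ stabilizations, rather than requiring modifications of $\A$ or $\n$ that would correspond to changing the Heegaard surface $\Sigma$ itself. Carrying this out requires a careful normal-form argument that uses the disk-busting property of $L\sqcup U$ in $H_\n$ (guaranteed by admissibility) to arrange that every handleslide of a $\g$--curve across an $\A$-- or $\n$--curve can be replaced by an equivalent slide over another $\g$--curve; the rest of the argument is then a systematic unwinding of the $\Ss_2$ and $\Ss_3$ summands from the standard diagram.
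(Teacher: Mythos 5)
The paper does not prove Theorem~\ref{equivstab}; it is cited directly as Theorem~3 of \cite{MeiZup_Characterizing-Dehn_17}, so there is no internal proof against which to compare your argument. Evaluating your sketch on its own terms, it contains two substantive gaps, both of which sit exactly where the real content of the theorem lives.

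First, your opening ``translation'' asserts that handleslides of $L$ over itself in $S^3$ ``can be realized as handleslides of curves in $\g$ on the surface $\Sigma$, and thus preserve the trisection.'' This is false as stated: a handleslide of $L$ in $S^3$ is performed along a band that need not lie in the admissible surface $\Sigma$, and in general such a band meets $\Sigma$ transversely in several arcs. Pushing the band onto (or near) $\Sigma$ typically requires stabilizing $\Sigma$, which is exactly what forces the appearance of $\Ss_2$--stabilizations in the conclusion of part~(1). Your argument for part~(1) then contradicts this claim by invoking $\Ss_2$--stabilizations \emph{after} the handleslides to the unlink $L'$, but the location and nature of those stabilizations are what must be controlled, and you give no argument for the assertion that ``a controlled sequence of $\Ss_2$ stabilizations allows us to arrange that each component of $L'$ becomes parallel to a distinct curve of $\n$.'' This is not a cosmetic omission: identifying handleslide moves in $S^3$ with diagram moves on a stabilized admissible surface, and then showing the resulting $\g$--curves can be isotoped to coincide with $\n$--curves, is the core technical step.

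Second, you yourself flag the converse of part~(2) as the ``main obstacle'' and then do not supply the needed argument. The assertion that the diffeomorphisms and handleslides standardizing the stabilized diagram can be ``chosen so that, when restricted to $\g$, they are induced by slides of components of $L\sqcup U$'' is a strong normal-form statement, and the mechanism you propose (using the disk-busting property of $L\sqcup U$ in $H_\n$ to trade $\g$--slides over $\A$ or $\n$ for slides over other $\g$--curves) is not a move that exists in trisection diagram calculus --- handleslides in a trisection diagram occur only within a single color, and what you must rule out is a change of the Heegaard surface $\Sigma$ itself (i.e.\ handleslides or diffeomorphisms acting on $\A$ and $\n$) that cannot be seen from the $\g$--curves alone. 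Without a precise statement and proof of this reduction, the converse direction of~(2) is unproved.

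In short, your plan of attack is reasonable in outline, but the two steps you gloss over --- realizing $S^3$--handleslides as $\Ss_2$--stabilized diagram moves in~(1), and the normal-form reduction for the converse of~(2) --- are the theorem's actual content, and the proposal does not establish either.
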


Note that the link $L$ has Weak Property R if and only if $\Tt(L,\Sigma)$ is $\{1,2,3\}$--standard; i.e., if and only if $X_L\cong S^4$, by the uniqueness result of~\cite{GayKir_Trisecting-4-manifolds_16}. As a corollary to this theorem, we have the following.

\begin{corollary}\label{prop:23-std}
Let $K$ be a fibered, homotopy-ribbon knot with extension $\phi$.  The following are equivalent.
	\begin{enumerate}
		\item The trisection $\Tt_\phi$ is $\{2,3\}$--standard.
		\item Some CG-derivative corresponding to $\phi$ has Stable Property R.
		\item Every CG-derivative corresponding to $\phi$ has Stable Property R.
	\end{enumerate}
\end{corollary}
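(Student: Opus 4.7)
The plan is to leverage the identification $\Tt(L,\Sigma) = \Tt_\phi$ established in Lemma~\ref{diagram}, together with the characterization of Stable Property R in terms of $\{2,3\}$--standardness from Theorem~\ref{equivstab}(2). Specifically, Lemma~\ref{admissCG} shows that the open-book Heegaard surface $\Sigma = F_0\cup F_{1/2}$ appearing in the construction of Proposition~\ref{bundletri} is admissible for every CG-derivative $L$ corresponding to the extension $\phi$, and Lemma~\ref{diagram} shows that the resulting trisection $\Tt(L,\Sigma)$ coincides with $\Tt_\phi$, independent of the choice of such $L$. The key observation is thus that the $\{2,3\}$--standardness condition in (1) is a property of $\phi$ alone, which is precisely what allows (2) and (3) to be equivalent rather than just (3)$\Rightarrow$(2).

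For the three implications: (3)$\Rightarrow$(2) is trivial once we know that at least one CG-derivative corresponding to $\phi$ exists, which follows from the theorem of Casson and Gordon (Theorem~\ref{thm:CGEX}) and Proposition~\ref{CGR}. For (2)$\Rightarrow$(1), fix a CG-derivative $L$ with Stable Property R; by Lemma~\ref{admissCG} the surface $\Sigma$ is admissible for $L$, by Lemma~\ref{diagram} we have $\Tt(L,\Sigma) = \Tt_\phi$, and by Theorem~\ref{equivstab}(2) the trisection $\Tt(L,\Sigma) = \Tt_\phi$ is $\{2,3\}$--standard. For (1)$\Rightarrow$(3), take any CG-derivative $L'$ corresponding to $\phi$; once again Lemma~\ref{diagram} gives $\Tt(L',\Sigma) = \Tt_\phi$, which is $\{2,3\}$--standard by hypothesis, and the reverse direction of Theorem~\ref{equivstab}(2) then yields that $L'$ has Stable Property R.

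Because each step of the argument is a direct citation of prior results in the excerpt, there is no genuine obstacle to overcome; the corollary is a formal consequence of (i) the invariance of $\Tt_\phi$ under the choice of CG-derivative used to realize it as an admissible-surface trisection and (ii) the R-link/trisection dictionary of Theorem~\ref{equivstab}. The only point requiring care is to verify at the outset that every CG-derivative corresponding to the same extension produces the same trisection up to diffeomorphism, but this is exactly the content of the final sentence of Lemma~\ref{diagram}.
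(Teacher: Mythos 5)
Your proof is correct and takes essentially the same approach as the paper: both rely on Lemmas~\ref{admissCG} and~\ref{diagram} to identify $\Tt(L,\Sigma)$ with $\Tt_\phi$ independently of the choice of CG-derivative $L$, and then apply Theorem~\ref{equivstab}(2) to convert $\{2,3\}$--standardness of $\Tt_\phi$ into Stable Property R for each such $L$.
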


\begin{proof}
The proof follows immediately from the observation that if $L$ and $L'$ are distinct CG-derivatives corresponding to $\phi$, Lemmas~\ref{admissCG} and~\ref{diagram} yield that there are admissible surfaces $\Sigma$ and $\Sigma'$ for $L$ and $L'$, respectively, and moreover, $\Tt(L,\Sigma) = \Tt_{\phi} = \Tt(L,\Sigma')$.  Applying Theorem~\ref{equivstab} completes the proof.
\end{proof}

We conclude this section by pointing out a connection between trisections and the Slice-Ribbon Conjecture.  First, we recall a proposition of Abe and Tange (Lemma~5.1 of~\cite{AbeTan_13_A-construction-of-slice}).  For convenience, we present a novel proof here; we acknowledge Christopher Davis, with whom we discovered this simple fact.

\begin{proposition}
\label{prop:ribbon}
	Suppose $L$ is an R-link. If $L$ has Stable Property R, then $L$ is a ribbon link.
\end{proposition}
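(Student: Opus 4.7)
My plan is to prove the proposition in two steps: first, that handleslides preserve the class of ribbon links; and second, that subcollections of a ribbon disk-link remain ribbon. Since $L$ has Stable Property R, there is a $0$--framed unlink $U$ such that $L\sqcup U$ is handleslide-equivalent to an unlink $\tilde U$. The unlink $\tilde U$ is trivially ribbon: its components bound disjoint flat disks in $S^3$, which can be pushed slightly into $B^4$ to give ribbon disks. Applying the inverse handleslides one at a time (noting that the inverse of a handleslide is a handleslide), each preserving ribbon-ness, we obtain a ribbon disk-link for $L\sqcup U$ in $B^4$. Restricting to the disks bounded by components of $L$ yields a disk-link for $L$ whose components individually have no local maxima of the radial height function, so $L$ is ribbon.

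The key lemma to establish is the following: if $L$ is a ribbon link with ribbon disk-link $\Dd = \{D_k\}$ in $B^4$, and $L'$ is obtained from $L$ by a handleslide of $L_i$ over $L_j$ using a band $b\subset S^3$, then $L'$ is ribbon. To construct a ribbon disk-link for $L'$, take $D_k$ for $k\neq i$ unchanged, and set $D_i' = D_i \cup_b \tilde D_j$, where $\tilde D_j$ is a small parallel pushoff of $D_j$ with boundary $L_j^{(0)}$, the $0$--framed pushoff of $L_j$. Because $L$ is an R-link with vanishing framings and linking numbers, the surface framing on $L_j$ induced by $D_j$ is the $0$--framing, so $\tilde D_j$ is well-defined (and disjoint from all disks in $\Dd$). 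An Euler characteristic computation confirms that $D_i'$ is a disk, and by placing the band $b$ in an arbitrarily thin collar of $S^3$, the band and its junctions with $D_i$ and $\tilde D_j$ contribute no local maxima of the radial coordinate, so $D_i'$ can be realized as a ribbon disk.

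The main obstacle is the Morse-theoretic verification in this last step: showing that the band sum of two ribbon disks along a band in (a collar of) $S^3$ can be smoothed to preserve the no-local-maxima condition. This requires choosing $b$ to lie in a region where the radial coordinate is strictly decreasing as one moves from $S^3$ into $B^4$, and verifying that the resulting surface can be smoothed without creating new critical points of the radial height function. Once this lemma is in hand, the main conclusion follows by direct iteration along the sequence of handleslides taking $\tilde U$ to $L\sqcup U$, followed by the obvious fact that any sub-collection of a disk-link with no local maxima still has no local maxima.
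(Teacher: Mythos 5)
Your proof is correct and takes essentially the same approach as the paper: both establish the key lemma that a handleslide on a ribbon link produces a ribbon link by banding a pushoff of the ribbon disk for the slid-over component to the ribbon disk for the moving component, then conclude by induction along the sequence of slides from the unlink $\tilde U$ back to $L\sqcup U$. Your extra attention to the Morse-theoretic bookkeeping (placing the band in a boundary collar so that it contributes only a saddle, not a local maximum) and to the Euler characteristic check is sound, though the paper leaves these routine verifications implicit.
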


\begin{proof}
	By hypothesis, $L\sqcup U$ is handleslide-equivalent to $U'$, where $U$ is an unlink of $r$ components and $U'$ is an unlink of $r+n$ components.  Since $U'$ is a ribbon link, our claim will follow if we can show that the result $L''$ of a handleslide on a ribbon link $L'$ is a ribbon link.  Suppose $L''$ is obtained from $L'$ via a slide of component $J'$ of $L'$ over component $J$ of $L'$, producing the new component $J''$ of $L''$.  So, $L'' = (L'\setminus J')\cup J''$.  Let $R'$ be a collection of ribbon disks for $L'$, and let $R_J$ and $R_{J'}$ denote the disks corresponding to $J$ and $J'$.  Let $R_{J''}$ denote the result of taking a push-off of $R_J$ and banding it to $R_{J'}$ along the framed arc corresponding to the handleslide.  It follows that $R'' = (R'\setminus R_{J'})\cup R_{J''}$ is a collection of ribbon disks for $L''$, as desired.  The proof of the proposition by inducting on the number of handleslides necessary to convert $L \sqcup U$ to $U'$.
\end{proof}

It is not known if an R-link with Weak Property R is necessarily ribbon.  Theorem~\ref{equivstab}, Corollary~\ref{prop:23-std}, and Proposition~\ref{prop:ribbon} combine to give the following trisection-theoretic sufficient conditions for a knot or link to be ribbon.

\begin{corollary}
\label{coro:slice-ribbon}
	\ 
	\begin{enumerate}
		\item Let $K$ be a fibered, homotopy-ribbon knot with extension $\phi$.  Then $K$ is ribbon if $\Tt_\phi$ is $\{2,3\}$--standard.
		\item Let $L$ be an R-link and $\Sigma$ an admissible surface for $L$.  Then $L$ is ribbon if $\Tt(L,\Sigma)$ is $\{2,3\}$--standard.
	\end{enumerate}
\end{corollary}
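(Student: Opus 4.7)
\bigskip
\noindent\textbf{Proof proposal.}

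Part (2) follows almost immediately from the preceding machinery. Suppose $\Tt(L,\Sigma)$ is $\{2,3\}$--standard. Then Theorem~\ref{equivstab}(2) gives that $L$ has Stable Property R, and Proposition~\ref{prop:ribbon} then implies that $L$ is a ribbon link. The main work for this corollary is therefore in part (1).

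For part (1), the plan is to first extract a CG-derivative and then bootstrap from it to $K$. Let $L^+$ be any CG-derivative for $K$ corresponding to $\phi$, with $g = |L^+| = g(F)$. By Lemma~\ref{admissCG} and Lemma~\ref{diagram}, the open-book surface $\Sigma = F_0\cup F_{1/2}\subset S^3$ is admissible for $L^+$ and $\Tt(L^+,\Sigma) = \Tt_\phi$. Since $\Tt_\phi$ is $\{2,3\}$--standard by hypothesis, Corollary~\ref{prop:23-std} (equivalently, Theorem~\ref{equivstab}(2) applied to $L^+$ and $\Sigma$) yields that $L^+$ has Stable Property R. So $L^+$ is a ribbon link by Proposition~\ref{prop:ribbon}. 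However, this does not literally say that the knot $K$ bounds a ribbon disk, so the final step must produce a ribbon disk for $K$ itself.

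The key step will be to upgrade Stable Property R from $L^+$ to the larger R-link $K\cup L^+$. For this I invoke the observation from the proof of Proposition~\ref{CGR}: since $L^+$ is a CG-derivative, $F\setminus L^+$ is a connected planar surface with $2g+1$ boundary components (one copy of $K$ and two copies of each component of $L^+$). Hence there is a finite sequence of handleslides of $K$ over components of $L^+$ that transforms $K$ into a knot $K'$ bounding an embedded disk in $F\setminus L^+\subset S^3$, making $K'$ an unknot disjoint from $L^+$. Thus $K\cup L^+$ is handleslide-equivalent to the split union $K'\sqcup L^+$. Now choose an unlink $U$ with $L^+\sqcup U$ handleslide-equivalent to an unlink $U'$, as provided by Stable Property R of $L^+$. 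Running the slides in the correct order gives
\[ K\cup L^+\sqcup U\;\sim\; K'\sqcup L^+\sqcup U\;\sim\;K'\sqcup U', \]
where all slides in the second step are among components of $L^+\sqcup U$ (in particular, disjoint from $K'$). Since $K'\sqcup U'$ is an unlink, $K\cup L^+$ has Stable Property R. Applying Proposition~\ref{prop:ribbon} once more, $K\cup L^+$ is a ribbon link, i.e., bounds a collection of pairwise disjoint ribbon disks in $B^4$. The component of this collection bounded by $K$ is the desired ribbon disk, so $K$ is ribbon.

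The only place requiring real care is the ordering of handleslides in the displayed chain: one must verify that the slides producing $K'$ involve only $K$ sliding over components of $L^+$, while the slides producing $U'$ from $L^+\sqcup U$ never move $K'$, which is automatic once $K'$ is split off. I expect this to be the main (minor) obstacle—really a bookkeeping matter rather than a mathematical one—so the overall argument is short given the earlier results in the paper.
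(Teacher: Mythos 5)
Part~(2) of your proposal is exactly the paper's proof: Theorem~\ref{equivstab}(2) gives Stable Property~R, then Proposition~\ref{prop:ribbon} gives ribbon-ness. For Part~(1), the paper's entire proof reads ``Part (1) follows from Proposition~\ref{prop:ribbon} and Corollary~\ref{prop:23-std},'' which, read literally, only establishes that the CG-derivative $L^+$ is a ribbon \emph{link} (since Corollary~\ref{prop:23-std} asserts Stable Property~R for $L^+$, and Proposition~\ref{prop:ribbon} applies to the R-link $L^+$). You correctly noticed that this does not, on its face, say that the knot $K$ itself bounds a ribbon disk, and you supplied the missing bridge: upgrade Stable Property~R from $L^+$ to the $(g+1)$-component R-link $K\cup L^+$ by sliding $K$ over components of $L^+$ to an unknot $K'$ bounding a disk in $F\setminus L^+$ (as in the proof of Proposition~\ref{CGR}), so $K'$ is split from $L^+$, and hence $(K\cup L^+)\sqcup U\sim K'\sqcup L^+\sqcup U\sim K'\sqcup U'$; then apply Proposition~\ref{prop:ribbon} to $K\cup L^+$ and read off a ribbon disk for the component $K$. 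The bookkeeping of the slide order that you flag is genuinely fine: $K'$ is split from $L^+\sqcup U$ because the disk it bounds lies in $F\setminus L^+$ and $U$ is split, so the subsequent slides among $L^+\sqcup U$ can be performed away from $K'$. In short, your argument is correct and takes the same route as the paper (same two cited results), but makes explicit a step that the paper leaves implicit; without that step, the paper's one-line proof of Part~(1) is incomplete as stated.
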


\begin{proof}
	Part (1) follows from Proposition~\ref{prop:ribbon} and Corollary~\ref{prop:23-std}.  Part (2) follows from Proposition~\ref{prop:ribbon} and part (2) of Theorem~\ref{equivstab}.
\end{proof}


\bibliographystyle{amsalpha}
\bibliography{FhRKs}

\end{document}